\documentclass[a4paper,twoside,11pt]{amsart}
\usepackage{amssymb, amsfonts,amsmath}
\usepackage{graphicx}
\usepackage[enableskew,vcentermath]{youngtab}
\usepackage{ytableau}
\usepackage{etex}
\usepackage[all]{xy}
\usepackage[small,nohug,heads=vee]{diagrams}
\setlength{\textwidth}{16.25cm}
\setlength{\textheight}{21.6cm}
\setlength{\oddsidemargin}{0.25in}
\setlength{\evensidemargin}{-0.25in}
\newtheorem{theorem}{Theorem}
\newtheorem{corollary}{Corollary}
\newtheorem{lemma}{Lemma}
\newtheorem{proposition}{Proposition}
\theoremstyle{definition}
\newtheorem{definition}{Definition}
\newtheorem{example}{Example}
\theoremstyle{remark}
\newtheorem{remark}{Remark}

\newcommand{\gl}{\mathfrak{gl}}

\newcommand{\gr}{\operatorname{gr}}

\newcommand{\Diag}{\operatorname{Diag}}
\newcommand{\ad}{\operatorname{ad}}
\newcommand{\Tab}{\operatorname{Tab}}
\newcommand{\rank}{\operatorname{rank}}
\newcommand{\Span}{\operatorname{span}}
\newcommand{\Id}{\operatorname{Id}}
\newcommand{\pf}{\operatorname{pf}}
\renewcommand{\mod}{\operatorname{mod}}
\newcommand{\sll}{\mathfrak{sl}}

\renewcommand{\P}{\mathbb P}

\newcommand{\R}{\mathbb R}

\newcommand{\mg}{\mathfrak g}
\newcommand{\vf}{\varphi}
\newcommand{\Hom}{\operatorname{Hom}}
\newcommand{\pg}{\mathfrak p}

\renewcommand{\S}{\mathcal S}
\begin{document}

\title{On local geometry of vector distributions with given Jacobi symbols}
\author
{Boris Doubrov
\address{Belarussian State University, Nezavisimosti Ave.~4, Minsk 220030, Belarus;
 E-mail: doubrov@bsu.by}
 \and Igor Zelenko
\address{Department of Mathematics, Texas A$\&$M University,
   College Station, TX 77843-3368, USA; E-mail: zelenko@math.tamu.edu}
}
\thanks{Research of I. Zelenko is supported by NSF grant DMS-1406193.}
\subjclass[2000]{58A30, 58A17, 53A33}
\keywords{vector distributions, Tanaka prolongation, canonical frames, flag symbol,  algebra of infinitesimal symmetries, representation of $\sll_2$}
\begin{abstract}
Based on the ideas of Optimal Control, we introduce the new basic characteristic of a bracket generating distribution, the Jacobi symbol.
 In contrast to the classical Tanaka symbol, the set of Jacobi symbols is discrete and classifiable. We give an explicit and unified algebraic procedure for the construction of the canonical frames (the structure of absolute parallelism) for all distribution with given Jacobi symbol.
 We describe all Jacobi symbols for which this procedure ends up in a finite number of steps (i.e. all Jacobi symbols of finite type) and distinguish the symplectically flat distributions with given Jacobi symbol, which in the case of finite type are the maximally symmetric distribution among all distribution with given Jacobi symbol. Also, for the most of Jacobi symbols we relate the prolongation procedure and the resulting prolongation algebra, which is the algebra of infinitesimal symmetries of the corresponding symplectically flat distribution, to the standard (in the sense of Kobayashi and Sternberg) prolongation of certain subspace in the algebra of infinitesimal symmetries of the flat (unparametrized) curve of flags associated with the Jacobi symbol. In particular, we show
 that as such subspace one can take the algebra of infinitesimal symmetries of the flat curve considered as parametrized curve (i.e. only symmetries preserving the parametrization are considered). Finally, we give an upper bounds for the size of the resulting prolongation algebra in terms of certain spaces of polynomials vanishing on certain projective varieties.
For some class of Jacobi symbols we are able to identify the resulting prolongation algebra with such spaces of polynomials. In this way we get the description of the prolongation algebra for all Jacobi symbols of finite type appearing in rank $2$ and rank $3$ distributions and also for most of such Jacobi symbols appearing in rank $4$ distributions. In particular, in the case of rank $3$ distributions this description is given in terms of the tangential developable and the secant varieties of a rational normal curve.
\end{abstract}

\maketitle\markboth{Boris Doubrov and Igor Zelenko}{On local geometry of vector distributions with given Jacobi symbols}

\section{Introduction}
\setcounter{equation}{0}
\setcounter{theorem}{0}
\setcounter{lemma}{0}
\setcounter{proposition}{0}
\setcounter{definition}{0}
\setcounter{corollary}{0}
\setcounter{remark}{0}
\setcounter{example}{0}
This paper is a culmination  of a long-standing program for study of the geometry
of non-holonomic vector distributions via a novel approach called the \emph{symplectification/linearization procedure}. This approach takes its origin in Optimal Control and  in essence consists of the reduction
of the equivalence problem for distributions
to the (extrinsic) differential geometry of curves of isotropic and coisotropic subspaces  (or, shortly, of curves of symplectic flags) in a linear symplectic space.
The extrinsic geometry of
 such curves
is simpler in many respects than the original equivalence problem.


In the earlier articles~\cite{doubzel1,doubzel2}  and the preprint \cite{doubzel3} we used this approach to construct the canonical
frames (the canonical structures of absolute parallelism)  for distributions of rank 2 and rank 3 on manifolds of arbitrarily large dimension under very mild genericity assumptions. Later on, in order to interpret these constructions in more conceptual way and generalize them to distributions of higher rank, we developed several new prolongation techniques such as Tanaka like theory for curves of flags \cite{flag1, flag2} and
for flag structures on manifolds
\cite{quasi}.
In the course of our work we realized that we can distinguish a new basic invariant of distributions, the \emph{Jacobi symbol} \cite{quasi}, which is discrete and classifiable in contrast to the Tanaka symbol.
In the present paper we combine all this together to describe the uniform (i.e. without any additional branching) construction of canonical frames for distribution with given Jacobi symbol. Our approach significantly extends the scope of the distributions for which the canonical frames can be constructed in explicit algebraic terms,  avoiding the problems of the classification of Tanaka symbols (graded nilpotent Lie algebras) and the possible presence of moduli in the set of Tanaka symbols.

\subsection{Equivalence problem for vector distributions: statement of the problem.}
A rank $l$ vector distribution on an $n$-dimensional  manifold $M$ (or, shortly, an $(l,n)$-distribution) is a rank $l$ subbundle of the tangent bundle $TM$.
The group of
germs of diffeomorphisms of $M$ acts naturally on the set of germs of
$(l,n)$-distributions and defines the equivalence relation there.

Vector distributions appear naturally in
Geometric Control Theory as the sets of admissible velocities for control systems linear with respect to
control parameters  and Geometric Theory of Differential Equations as
natural distributions on submanifolds of jet spaces.

A simple estimation shows that at least $l(n-l)-n$ functions of $n$ variables
are required to describe generic germs of $(l,n)$-distribution, up to the
equivalence.
 There are only three cases, where $l(n-l)-n$ is not positive: $l=1$ (line distributions),
$l=n-1$, and $(l,n)=(2,4)$. Moreover, it is well known that in these cases
generic germs of distributions are equivalent. For $l=1$ it is just a
particular case of Frobenius theorem. For $l=n-1$ all generic germs are
equivalent to Darboux's model, while for $(l,n)=(2,4)$ they are equivalent to
so-called Engel's model (see, for example, \cite{bryantbook}). In all other
cases generic $(l,n)$-distributions have functional, and, thus, non-trivial
differential invariants. Often it is rather difficult to compute such invariants and to
interpret them geometrically.

The general way to solve equivalence problems was developed by E. Cartan by mean of his method of ``moving frames''. The main goal of this method is
to assign to a geometric structure on $M$ the (co)frame  or the structure of absolute parallelism on some (fiber) bundle $P(M)$ over $M$
in a canonical way.
First, it gives the sharp upper bounds for the group of local symmetries of geometric structures and the explicit description of the group of local symmetries for the maximally symmetric models. Second, it reduces the construction of full system of invariants for the original geometric structure
to the construction of the full system of invariants for the absolute parallelism and the latter can be done easily \cite{stern}.

For example,
in his classical paper~\cite{cart10} Cartan
associates a canonical coframe to any maximally nonholonomic $(2,5)$-distribution.
This is the case of distributions of smallest rank in the manifold of smallest dimensions where generic distributions have functional invariants.
Cartan's method of equivalence was successfully applied to other classes
of low-dimensional distributions, for example, for $(3,6)$-distributions by R. Bryant
 (\cite{Bryant,BryantPhd}). However, in the original Cartan prolongation procedure one cannot predict the number
 of prolongation steps and the dimension of the bundle, where the canonical coframe lives, without making the concrete normalizations on each step. The latter often is accompanied with extremely tedious calculations.

 Cartan's prolongation procedure has been reformulated by S. Sternberg in the framework of $G$-structures. However, geometric structures involving nonholonomic distributions do not properly fit into this framework, because the information about the nontrivial commutators of vector fields tangent to  these distributions is not encoded in this framework.

\subsection{Tanaka approach.}
\label{Tanakasub}
N.~Tanaka developed a deep generalization of the theory of $G$-structures which is perfectly adopted to nonholonomic structures, i.e. to nonholonomic distributions themselves or to nonholonomic distribution together with additional structures on them \cite{tan1, mori,aleks, zeltan}. Here we shortly describe the main notions and results of the Tanaka theory in context of distributions.

First, passing from the natural filtered structure generated by a distribution to the corresponding graded objects, Tanaka assigned to the distribution at a point a special graded nilpotent Lie algebra, called the \emph{symbol of a distribution $D$ at this point}, which contains the information about the principal parts of all commutators of vector fields tangent to $D$.

More precisely, taking Lie brackets of vector fields tangent to a distribution $D$ (i.e. sections of $D$)   one can define a filtration
\begin{equation}
\label{wdf}
D^{-1}\subset D^{-2}\subset\ldots
\end{equation}
of the tangent bundle, called a \emph{weak derived flag}  or a \emph{small flag (of $D$)}. More precisely, set $D=D^{-1}$ and define recursively
$D^{-j}=D^{-j+1}+[D,D^{-j+1}]$, $j>1$.
Let  $X_1,\ldots X_l$ be $l$ vector fields constituting a \emph{local basis} of a distribution $D$, i.e. $D= \Span \{X_1, \ldots, X_l\}$
in some open set in $M$. Then $D^{-j}(x)$ is the linear span of all iterated Lie brackets of these vector fields, of length not greater than  $j$,  evaluated at a point $x$.
A distribution $D$ is called \emph{bracket-generating} (or \emph{completely nonholonomic}) if for any $x$ there exists $\mu\in\mathbb N$ such that $D^{-\mu}(x)=T_x M$. The number $\mu$ is called the \emph{degree of nonholonomy} of $D$ at a point $x$.
Let $\mg^{-1}(x)\stackrel{\text{def}}{=}D^{-1}(x)$ and $\mg^{j}(x)\stackrel{\text{def}}{=}D^{j}(x)/D^{j+1}(x)$ for $j<-1$ . Consider the graded space
\begin{equation}
\label{symbdef}
\mathfrak{t}(x)=\bigoplus_{j=-\mu}^{-1}\mg^j(x),
\end{equation}
corresponding to the filtration
\begin{equation*}
D(x)=D^{-1}(x)\subset D^{-2}(x)\subset\ldots\subset D^{-\mu+1}(x)
\subset D^{-\mu}(x)=T_xM.
\end{equation*}
 This space is endowed naturally with the structure of a graded nilpotent Lie algebra, generated by
$\mg^{-1}(x)$. Indeed, let $\mathfrak p_j:D^j(x)\mapsto \mg^j(x)$ be the canonical projection to a factor space. Take $Y_1\in\mg^i(x)$ and $Y_2\in \mg^j(x)$. To define the Lie bracket $[Y_1,Y_2]$ take a local section $\widetilde Y_1$ of the distribution $D^i$ and
a local section $\widetilde Y_2$ of the distribution $D^j$ such that $\mathfrak p_i\bigl(\widetilde Y_1(x)\bigr) =Y_1$
and $\mathfrak p_j\bigl(\widetilde Y_2(x)\bigr)=Y_2$. It is clear that $[Y_1,Y_2]\in\mg^{i+j}(x)$. Put
\begin{equation}
\label{Liebrackets}
[Y_1,Y_2]\stackrel{\text{def}}{=}\mathfrak p_{i+j}\bigl([\widetilde Y_1,\widetilde Y_2](x)\bigr).
\end{equation}
It is easy to see that the right-hand side  of \eqref{Liebrackets} does not depend on the choice of sections $\widetilde Y_1$ and
$\widetilde Y_2$. Besides, $\mg^{-1}(x)$ generates the whole algebra $\mathfrak{t}(x)$.
A graded Lie algebra satisfying the last property is called \emph{fundamental}.
The graded nilpotent Lie algebra $\mathfrak{m}(x)$ is called the \emph {(Tanaka) symbol of the distribution $D$ at the point $x$}.

Given a fundamental graded nilpotent Lie algebra $\mathfrak t=\displaystyle{\bigoplus_{j\le -1}\mathfrak g^j}$ Tanaka considered \emph{distributions of constant type $\mathfrak t$}, i.e. distributions $D$ with symbols at any point isomorphic to 
$\mathfrak t$.  Note that in general the assumption of constancy of Tanaka symbol is quite restrictive, see discussions in subsection \label{tanakalim} below.
Among all such distributions he distinguish the most simple distribution  called the \emph{flat (or standard) distribution} of type $\mathfrak t$, namely,
the left-invariant distribution on the connected simply-connected Lie group with Lie algebra $\mathfrak t$  such that $D_{\mathfrak t}(e)=\mg^{-1}$, where $e$ is the identty of the group.

Remarkably, the prolongation procedure for the construction of canonical frames can be described uniformly, i.e. without any additional branching, for all distribution of constant Tanaka type $\mathfrak t$. This construction
can be imitated by the construction of such canonical frame for the flat distribution of type $\mathfrak t$.
The latter construction can be described purely algebraically in terms of the so-called universal algebraic prolongation of the symbol $\mathfrak t$,
which is in essence the maximal (nondegenerate) graded Lie algebra, containing the graded algebra $\mathfrak t$ as its negative part.

More precisely, the \emph{universal algebraic prolongation of the symbol $\mathfrak t$} is the graded Lie algebra $\mathfrak u(\mathfrak t)=\displaystyle{\bigoplus_{i\in\mathbb{Z}}}\mg^i(\mathfrak t)$ determined  by the following three conditions:
\begin{enumerate}
  \item the graded subalgebra $\displaystyle{\bigoplus_{i<0}}\mg^i(\mathfrak t)$ of $\mathfrak u(\mathfrak t)$, i.e. the negative part of $\mathfrak u(\mathfrak t)$, coincides with $\mathfrak t$;

\item (\emph{non-degenericity assumption}) for any  nozero  $x\in {\mathfrak g}^i(\mathfrak t)$ with  $i\geq 0$
      there exists $y\in \mathfrak t$ such that $[x,y]\neq 0$ (i.e. $\ad x|_{\mathfrak m}\neq 0$);

\item $\mathfrak{u}(\mathfrak{m})$ is the maximal graded algebra satisfying conditions (1) and (2) above.
\end{enumerate}

Note  that if
$\dim \mathfrak u(\mathfrak t)<\infty$, then  $\mathfrak u(\mathfrak t)$ is isomorphic
to the algebra of infinitesimal symmetries of the flat distribution of type $\mathfrak t$; if  $\dim \mathfrak u(\mathfrak t)=\infty$, then the completion of $\mathfrak u(\mathfrak t)$ is isomorphic to the algebra of formal power series of infinitesimal symmetries of the flat distribution of $\mathfrak t$ (\cite{tan1} section 6, \cite{yamag} section 2).

The universal algebraic prolongation can be explicitly realized by constructing $\mg^0(\mathfrak t)$,  $\mg^1(\mathfrak t)$ etc inductively.
Indeed, $\mg^0(\mathfrak t)$ is realized as the Lie algebra of all derivations $a$ of $\mathfrak t$, preserving the grading (i.e. such that  $a \mg^i\subset \mg^i$ for all $i<0$). Further, the space $\mathfrak t\oplus \mg^0(\mathfrak t)$ is equipped with the structure of Lie algebra in the natural way and we can define $\mg^1(\mathfrak t)$ as follows:

$$\mg^{1}(\mathfrak t)=\left\{f\in\displaystyle{\bigoplus_{i<0}}\mathrm{Hom}(\mg^i(\mathfrak t),\mg^{i+1}(\mathfrak t)):f([v_1,v_2])=[f(v_1),v_2]+
   [v_1,f(v_2)],
  \forall v_1,v_2\in \mathfrak t \right\}$$

Now assume that spaces $\mg^l(\mathfrak t)\subset \displaystyle{\bigoplus_{i<0}{\rm Hom}(\mg^i(\mathfrak t),\mg^{i+l}(\mathfrak t))}$ are defined  for
all $0<l<k$. Set

\begin{equation}
\label{br2}
[f,v]=-[v, f]=f(v) \quad \forall f\in \mg^l (\mathfrak t), 0\leq l<k, \text{ and }v\in\mathfrak t.
\end{equation}
 Then let
\begin{equation}
\label{mgk}
\mg^k(\mathfrak t)\stackrel{\text{def}}{=}\left\{f\in \bigoplus_{i<0}{\rm Hom}(\mg^i(\mathfrak t),\mg^{i+k}(\mathfrak t)): f ([v_1,v_2])=[f (v_1),v_2]+[v_1, f( v_2)]\,\,\forall\, v_1, v_2 \in \mathfrak t\right\}.
\end{equation}
The space $\bigoplus_{i\in Z} \mg^i(\mathfrak t)$ can be naturally endowed with the structure of a graded Lie algebra.
The brackets of two elements from $\mathfrak t$ are as in $\mathfrak t$. The brackets of an element with non-negative weight and an element from $\mathfrak t$ are already defined by  \eqref{br2}.
It only remains to define  the brackets $[f_1,f_2]$ for $f_1\in\mg^k(\mathfrak t)$, $f_2\in \mg^l(\mathfrak t)$ with $k,l\geq 0$.
The definition is inductive with respect to $k$ and $l$: if $k=l=0$ then the bracket $[f_1,f_2]$ is as in $\mg^0(\mathfrak t)$. Assume that $[f_1,f_2]$
is defined for all $f_1\in\mg^k(\mathfrak t)$, $f_2\in \mg^l(\mathfrak t)$ such that a pair  $(k,l)$ belongs to  the set
\begin{equation*}
\{(k,l):0 \leq k\leq \bar k, 0\leq l\leq \bar l\}\backslash \{(\bar k,\bar l)\}.
\end{equation*}
Then define $[f_1, f_2]$  for $f_1\in\mg^{\bar k}(\mathfrak t)$, $f_2\in \mg^{\bar l}(\mathfrak t)$ to be the element of $\displaystyle{\bigoplus_{i<0}{\rm Hom}(\mg^i(\mathfrak t),\mg^{i+\bar k+\bar l})(\mathfrak t)}$ given by
\begin{equation}
\label{posbr}
[f_1,f_2]v\stackrel{\text{def}}{=} [f_1(v), f_2]+[f_1,f_2(v)]\quad \forall v\in\mathfrak t.
\end{equation}
It is easy to see that $[f_1,f_2]\in \mg^{k+l}(\mathfrak t)$ and that $\bigoplus_{i\in Z} \mg^i$ with bracket product defined as above is a graded Lie algebra.

One of the main results of Tanaka theory \cite{tan1} applied to the geometry of distributions can be formulated as follows:

\begin{theorem}
\label{tanthm}
Assume that $\dim \mathfrak u(\mathfrak t)<\infty$.
Then to a distribution $D$ with constant symbol $\mathfrak t$ one can assign in a canonical way a bundle over $M$ of dimension equal to $\dim \mathfrak u(\mathfrak t)$ equipped with a canonical
frame.
In particular the dimension of algebra of infinitesimal symmetries of $D$ is not greater than $\dim \mathfrak u(\mathfrak t)$.
Moreover, this upper bound is sharp and is achieved if and only if a distribution is locally equivalent to the flat distribution $D_\mathfrak t$.
\end{theorem}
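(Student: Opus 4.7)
The plan is to build the canonical coframed bundle as the top of an inductive tower of principal bundles $M \leftarrow P^0(M) \leftarrow P^1(M) \leftarrow \ldots$, with one level for each non-negative summand of $\mathfrak u(\mathfrak t)$, so that on the last nontrivial level a tautological absolute parallelism appears. The zero step $P^0(M)$ is the bundle whose fiber at $x\in M$ consists of all graded Lie algebra isomorphisms $\mathfrak t\to \mathfrak t(x)$ between the model symbol and the symbol of $D$ at $x$. Under the constant symbol assumption this is a principal $G^0$-bundle with $\mathrm{Lie}(G^0)=\mg^0(\mathfrak t)$, and it carries a tautological $\mathfrak t$-valued form $\theta^{-1}$ encoding the small flag of $D$.

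Each subsequent step $P^{k+1}(M)\to P^k(M)$ is constructed as follows. On $P^k(M)$ one has partial tautological forms with values in $\bigoplus_{i\le k}\mg^i(\mathfrak t)$; the compatible horizontal lifts turning this data into a partial frame of order $k+1$ form an affine space. Two such lifts differ by a cochain in $\bigoplus_i \Hom(\mg^i(\mathfrak t),\mg^{i+k+1}(\mathfrak t))$, and the ambiguity modulo the kernel of the Chevalley--Eilenberg coboundary $\partial$ induced by the Lie bracket on $\mathfrak t$ is precisely $\mg^{k+1}(\mathfrak t)$, by the very definition \eqref{mgk}. One therefore fixes, once and for all at the symbol level, a $G^0$-invariant complement to $\im \partial$ in the appropriate cochain space, and normalizes the torsion of the partial connection to lie in this complement. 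This produces $P^{k+1}(M)$ canonically as a principal $\mg^{k+1}(\mathfrak t)$-bundle over $P^k(M)$. The main obstacle in the whole proof is verifying that this Spencer-type normalization is simultaneously well-defined, $G^k$-equivariant, and functorial under local diffeomorphisms of $M$; establishing these three properties is what makes the output truly canonical, and it is where most of Tanaka's original argument is spent.

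Since $\dim\mathfrak u(\mathfrak t)<\infty$, we reach some $N$ with $\mg^N(\mathfrak t)=0$; the tower then stabilises and the collected tautological forms on $P^N(M)$ glue into an absolute parallelism. Its total dimension is $\sum_{i\ge -\mu}\dim\mg^i(\mathfrak t)=\dim\mathfrak u(\mathfrak t)$, which proves the first assertion. Any infinitesimal symmetry of $D$ lifts uniquely to $P^N(M)$ preserving this canonical coframe, so the symmetry algebra embeds into the finite-dimensional space of frame-preserving vector fields, which has dimension at most $\dim P^N(M)=\dim\mathfrak u(\mathfrak t)$, yielding the second assertion.

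For sharpness, one checks directly on the connected simply connected Lie group $T$ with $\mathrm{Lie}(T)=\mathfrak t$ that the symmetry algebra of the flat distribution $D_{\mathfrak t}$ is isomorphic to $\mathfrak u(\mathfrak t)$: the non-negative part $\bigoplus_{k\ge 0}\mg^k(\mathfrak t)$ is realized by the polynomial vector fields on $T$ vanishing to the appropriate grading order at $e$ and preserving $D_{\mathfrak t}$. Conversely, suppose the symmetry algebra of $D$ attains the bound $\dim\mathfrak u(\mathfrak t)$. By uniqueness of lifts, the induced action on $P^N(M)$ is locally simply transitive and preserves $\theta^N$; all structure functions of the canonical coframe must then be constants, necessarily equal to those of the flat model, and a Frobenius-type integration argument yields a local equivalence $D\simeq D_{\mathfrak t}$.
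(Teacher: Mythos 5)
The paper does not prove this theorem itself --- it is quoted from Tanaka's work \cite{tan1} --- so your proposal is being measured against the standard prolongation argument, and for the first two assertions (construction of the frame bundle tower via normalization of torsion in a complement to $\im\partial$, identification of the fiber ambiguity with $\ker\partial=\mg^{k+1}(\mathfrak t)$ from \eqref{mgk}, termination by finiteness, and the bound on symmetries via automorphisms of an absolute parallelism) your outline is essentially the correct standard route.

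There is, however, a genuine gap in the last step. From maximality of the symmetry algebra you correctly deduce local transitivity on $P^N(M)$ and hence that all structure functions of the canonical coframe are constant; but the assertion that these constants are ``necessarily equal to those of the flat model'' does not follow from homogeneity alone. For a general Tanaka structure of type $(\mathfrak t,\mg^0)$ this implication is simply false --- constant-curvature Riemannian metrics are maximally symmetric without being flat, a point the paper itself makes in Remark \ref{Tanakastr}. What rescues the statement for distributions (i.e.\ for $\mg^0=\mg^0(\mathfrak t)$) is the presence of the grading element $E\in\mg^0(\mathfrak t)$: the normalized structure functions take values in components of strictly positive weight for $\ad E$, so equivariance of the coframe under the one-parameter group generated by $E$ forces a constant structure function of positive weight to scale nontrivially under dilations, hence to vanish. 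Without invoking this (or an equivalent argument, as in \cite{tan2}, \cite{yamag}), your proof only shows that a maximally symmetric distribution is locally homogeneous with some constant structure functions, not that it is locally equivalent to $D_{\mathfrak t}$.
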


Thus, the Tanaka approach allows one to predict the number of prolongations steps and the dimension of the bundle, where the canonical frame lives,
without making concrete normalization on each step as the original Cartan method of equivalence suggests. For this it is enough to calculate the universal algebraic prolongation of the symbol $\mathfrak t$, or, equivalently, all algebraic prolongations $\mg^i(\mathfrak t)$ up to the first vanishing one (if it exists). According to \eqref{mgk} the latter consists of solving systems of linear algebraic equations.

\begin{remark}
\label{Tanakastr}
{\rm
More generally, given any subalgebra $\mg^0$ of the algebra $\mg^0(\mathfrak t)$ one can define the \emph{universal algebraic prolongation
\begin{equation}
\label{iprolong}
\mathfrak u(\mathfrak t, \mg^0)=\bigoplus_{i\in \mathbb Z}\mathfrak u^i(\mathfrak t, \mg^0)
\end{equation}
of the pair $(\mathfrak t,\mg^0)$} as the maximal (nondegenerate) graded Lie algebra, containing the graded algebra $\mathfrak t\oplus\mg^0$ as its non-positive part. This algebra can be constructed by the inductive procedure described by formulas analogous to \eqref{mgk}: for this set $u^i(\mathfrak t, \mg^0)=\mg^i$ for $i\leq 0$ and recursively
\begin{equation}
\label{prolongk}
u^i(\mathfrak t, \mg^0)\stackrel{\text{def}}{=}\left\{f\in \bigoplus_{j<0}{\rm Hom}\bigl(u^j(\mathfrak t, \mg^0),u^{j+i}(\mathfrak t, \mg^0)\bigr): f ([v_1,v_2])=[f (v_1),v_2]+[v_1, f( v_2)]\,\,\forall\, v_1, v_2 \in \mathfrak t\right\}
\end{equation}
for $i>0$. The space $u^i(\mathfrak t, \mg^0)$ is called the \emph{$i$th Tanaka algebraic prolongation of the pair $(\mathfrak t, \mg^0)$}.

The pairs $(\mathfrak t, \mg^0)$ appear as basic characteristics of distributions with constant Tanaka symbol $\mathfrak t$ and with certain additional structures on them. Such structures are called \emph {Tanaka structures of type $(\mathfrak t, \mg^0)$}.  To define these structures first
to a distribution of type $\mathfrak{t}$ one assigns the principal bundle $P^0(\mathfrak t)$ of partial frames with the structure group $\rm{Aut}(\mathfrak{t})$ of automorphisms
of the graded Lie algebra $\mathfrak{t}$:
$P^0(\mathfrak t)$ is the set of all pairs $(x,\vf)$, where  $x\in M$ and  $\vf:\mathfrak{t}\to\mathfrak t(x)$
is an isomorphism of the graded Lie algebras $\mathfrak {t}$ and $\mathfrak t(x)$ .
Since $\mg^{-1}$ generates
 $\mathfrak t$, the group $\rm{Aut}(\mathfrak{t})$ can be identified with a subgroup of $\text{GL}(\mg^{-1})$. By the same reason a point $(x,\vf)\in P^0(\mathfrak t)$ of a fiber of $P^0(\mathfrak t)$ is uniquely defined by $\vf|_{\mg^{-1}}$. So one can identify
$P^0(\mathfrak t)$ with the set of pairs $(x,\psi)$, where  $x\in M $ and $\psi:\mg^{-1}\to D(x)$ i.e. it is a partial frame that can be extended to an automorphism of the graded Lie algebras $\mathfrak {t}$ and $\mathfrak t(x)$. Let $G^0\subset {\rm Aut}(\mathfrak t)$ be a Lie group
with the Lie algebra $\mg^0$.
A \emph{Tanaka structure of constant type $(\mathfrak t, \mg^0)$} is any $G^0$-reduction of the bundle  $P^0(\mathfrak t)$.
In \cite{tan1} Theorem \ref{tanthm} (except the last statement about the uniqueness of the maximally symmetric model) was proved not only for distributions but for any Tanaka structure of type $(\mathfrak t, \mg^0)$. The last statement of Theorem \ref{tanthm} is not true for general Tanaka structures (for example it is not true for the classcal Riemannian metrics which is a particular case of them) but it is true for distributions (i.e. when $\mg^0=\mg^0(\mathfrak t)$) due to the presence of the grading element in $\mg^0(\mathfrak t)$.



}
\end{remark}

\begin{remark}
\label{parabrem}
{\rm Further, Tanaka showed \cite{tan2, mori, cap1, yamag} that if the algebra $\mathfrak u(\mathfrak t, \mathfrak g^0)$ is semisimple, then the normalization conditions in the prolongation procedure can be chosen such that the resulting coframe is the Cartan connection (a coframe with special nice properties) modeled by a parabolic geometry. The E. Cartan and R. Bryant constructions for $(2,5)$ and $(3,6)$ distribution are the particular cases of this theory, which includes also conformal and CR structures 
\cite{tanCR,cap2}.}
\end{remark}

\subsection {Limits and weak points of Tanaka approach}
\label{tanakalim}
All constructions of the Tanaka theory depend on the symbol. Therefore
in order to apply the Tanaka machinery
to all bracket-generating distributions of the given rank $l$ on a manifold of the given dimension $n$, one has to classify all $n$-dimensional graded nilpotent Lie algebra with $l$ generators. Note also that  the set of all possible Tanaka symbols may contain moduli, i.e. may depend on continuous parameters.

The problem
of classification of all Tanaka symbols is quite
nontrivial already in dimension $7$ (see \cite{kuz}) and it looks completely
hopeless for arbitrary dimensions. For example, as was shown in \cite{kuz}
already in dimension $7$ the continuous parameters appear in symbols of rank
$(3,6,\ldots )$-distributions
(see models $m7\_3\_3(\alpha)$ and
$m7\_3\_13(\alpha)$ there), and there are $6$ more non-isomorphic symbols in
addition to that. In the case of $(2,n)$-distributions with $\dim D^{-3}=5$ there are $3$ non-isomorphic symbols Tanaka symbols if $n=6$, then  $8$ non-isomorphic symbols if $n=7$, and continuous parameters appear in the set of all symbols for $n\geq 8$.

Hence,  first,  generic distributions may have non-isomorphic Tanaka symbols at different points so that in this case the Tanaka theory cannot be directly applied, although a generalization of the Tanaka prolongation procedure to distributions with non-constant Tanaka symbols is certainly possible ( a slightly different aspect concerning the upper bound for the infinitesimal symmetries algebra of distribution with non-constant Tanaka symbols was addressed in \cite{krug1, krug2}).
Second, even restricting to distributions with a constant Tanaka symbol, without the classification of these symbols we do not have a complete picture about the geometry of distributions.
In particular, even in the case of rank 2 and rank 3 distributions the Tanaka theory gives a complete picture only in few cases of low dimensions.

\subsection {Geometry of distribution via symplectification.}
In \cite{agrachev, agrgam1} Andrei Agrachev put forward the idea that invariants of a submanifold of the tangent bundle of a manifold $M$ (with respect to the natural action of the group of diffeomorphisms of $M$) can be obtained by studying the flow of extremals of variational problems naturally associated with this submanifold. In the case when this submanifold is a bracket-generating distribution, the Agrachev idea can be briefly described as follows (see section \ref{abnsec} below for more detail).

First recall that a Lipschitzian curve $\alpha(t)$ is called \emph{admissible (horizontal)} curve of a distribution $D$, if $\alpha'(t)\in \mathcal D\cap T_{\alpha(t)}M$ for almost every $t$. Fixing any two points on $M$ consider any functional of integral type (for example, the length functional with respect to some Riemannian metric) on the space of admissible curves
connecting these points. Note that this space is not empty by the Rashevsky-Chow theorem.
The Pontryagin extremals of a variational problem are special curves in the cotangent bundle $T^*M$ such that their projections to the base manifold $M$ are candidates for an extremum. They are described by the Pontryagin Maximum Principal of Optimal Control \cite{pontr}. Among all this extremals there can be extremals with zero Lagrangian multiplier near the functional. Such extremals are called \emph{abnormal}.

Abnormal extremals do not depend on the functional but on the distribution $D$ only and can be described purely geometrically in terms of $D$ without referring to any auxiliary functional on the space of admissible curves of $D$ and using just the canonical symplectic structure on the cotangent bundle $T^*M$.

For this consider the dual object to the distribution $D$ in $T^*M$, which is its annihilator $D^\perp$, i.e. the submanifold of $T^*M$ consisting of all $(p,q)\in T^*M$, where  $q\in M$ and $p\in T^*_qM$, such that the covector $p$ annihilates the space $D(q)$. Also let  $(TM)^\perp$ be the zero section of $T^*M$.

An abnormal extremal is a Lipshitzian curve in $D^\perp\backslash (TM)^\perp$ such that the tangent line to it at
almost every point belongs to the kernel of the restriction
of the canonical symplectic form $\widehat\sigma$ of $T^*M$
to $D^\perp$ at this point. Hence, abnormal extremals lie in the \emph {degeneracy locus} $\widetilde W_D$ of the form $\widehat\sigma|_{(D^\perp}$, i.e. in the set of all points, where the form $\widehat\sigma|_{D^\perp}$ is degenerate.

It turns out (see section \ref{abnsec}) that if $\rank \, D$ is odd then $\widetilde W_D=D^\perp$ and if $\rank\, D$ is even then for any point $q\in M$ the intersection $\widetilde W_D(q)$ of $W_D$ with the fiber $D^\perp(q)$ over $q$ is the zero level set of a homogeneous polynomial of degree $\cfrac {1}{2}\, \rank\, D$, which is the Pfaffian of the so-called Goh matrix. In particular, in the case of rank $2$ distributions $\widetilde W_D(q)$ is a vector subspace of $D^\perp(q)$. Moreover, in this case $\widetilde W_D$ is equal
to the annihilator of $D^{-2}$.

As a consequence of the previous paragraph, for all odd rank distributions $D$ the set $\widetilde W_D$ is an odd dimensional submanifold of $T^*M$  and for even rank distributions satisfying very mild genericity assumptions an open and dense subset  of $\widetilde W_D$ is a smooth submanifold of odd dimension. In particular, in the case of $(2, n)$-distributions  with $n\geq 3$ bracket genericity assumption can be taken as such genericity assumption.
Thus in all these cases the form $\widehat\sigma|_{\widetilde W_D}$  is degenerate at the point of smoothness of $\widetilde W_D$.

Further, consider the ``regular part'' $W_D$ of the degeneracy locus $\widetilde W_D$ consisting of all points of smoothness $\lambda$ in $\widetilde W_D$ such that the kernel of the form  $\widehat\sigma(\lambda)|_{\widetilde W_D}$ is one dimensional.
These kernels define the characteristic rank $1$ distributions $\widehat C$ on $W_D$ and any integral curve of this characteristic distribution  is an abnormal extremal. Such abnormal extremals are called \emph{regular}.

The set $W_D$ and the characteristic rank $1$ distribution on it can be described explicitly in terms of the weak derived flag \eqref{wdf} and a local basis of $D$ and the set $W_D$ is non-empty (and therefore open in $\widetilde W_D$) for generic germs of distributions (see section \ref{abnsec} for details). In particular,
$W_D=(D^{-2})^\perp\backslash (D^{-3})^\perp$ if $\rank\, D=2$ and $W_D=(D)^\perp\backslash (D^{-2})^\perp$ if $\rank\, D=3$ and therefore $W_D$ is non-empty for bracket generating distributions of rank $2$ and $3$ in an ambient manifold $M$ of dimension greater or equal than $4$.

After the projectivization of each fiber of the cotangent bundle $T^*M$ regular abnormal extremals foliate
the even-dimensional submanifold 
$\mathbb P W_D$
of the  projectivized cotangent bundle $\mathbb P T^*M$. 
%
The space of regular abnormal extremals can be considered as the quotient manifold $\mathcal N$ by the foliation of these extremals.
The natural contact distribution $\Delta$ is induced on $\mathcal N$ by the tautological $1$-form (the Liouville form)  in $T^*M$. The natural conformal symplectic structure , i.e. a symplectic form $\sigma_\gamma$
is defined, up to a nonzero scalar multiple, on each space $\Delta(\gamma)$, $\gamma\in \mathcal N$.

Further, the distributions $D$ can be lifted to $W_D$ by the pullback with respect to the canonical projection  $W_D\rightarrow M$.
Denote the resulting distribution by $\mathcal J$.
Given an extremal $\gamma\in \mathcal N$, for any point $\lambda\in \gamma$ consider the images $\mathfrak J_\gamma(\lambda)$ of the space $\mathcal J(\lambda)$ under the differentials of the canonical map from $W_D$ to the quotient manifold $\mathcal N$. These images define a curve
$\lambda\mapsto \mathfrak J_{\gamma}(\lambda)$, $\lambda\in\gamma$ of subspaces in the space $\Delta(\gamma)$, called the \emph{Jacobi curve} of $\gamma$.

The motivation for this name comes from the considering the case of rank $2$ distributions. In this case all subspaces $\mathfrak J_\gamma(\lambda)$ are Lagrangian subspaces of $\Delta(\gamma)$ with respect to the  symplectic form $\sigma_\gamma$, i.e. the curve
$\mathfrak J_\gamma$ is a curve in a Lagrangian Grassmannian.
Actually this curve describes the space of solutions of the Jacobi equations along the extremal $\gamma$ and
 the index of the second variation along the extremal $\gamma$ can be expressed in terms of certain intersection index (a Maslov type index) of it
 \cite{agrachev}. In the case of distributions of the rank greater than $2$ all subspaces $\mathfrak J_\gamma (\lambda)$ are strictly coisotropic, i.e. they strictly contain the skew-orthogonal complements of themselves with respect to the form $\sigma_\gamma$. Although in this case the index of the second variation along an abnormal extremal is generically infinite, so that there is no analogy with the Calculus of Variation as in the case of rank 2 distributions,
 we still keep to call the curves $\mathfrak J_\gamma$ by the name Jacobi curve and we can exploit them for the local geometry of distributions. Namely, \emph{ one can
construct canonical frames and invariants of the original distribution $D$ by studying
 the differential geometry of the Jacobi curves $\mathfrak J_\gamma$ with respect to the action the group $\rm{CSp}(\Delta(\gamma)$ of the conformal symplectic transformation of $\Delta(\gamma)$  (i.e. the group of the linear transformation preserving the symplectic form $\sigma_\gamma$, up to a scalar multiple) for generic abnormal extremals $\gamma$.}

\subsection{Jacobi symbol of distributions}
 It turns out \cite{flag1,flag2} that the differential geometry of Jacobi curves (and even more general curves in flag varieties) can be treated similarly to the Tanaka theory of filtered structures.  The central notion in this treatment is the flag symbol of the Jacobi curve at a given point, which in turn gives a new basic characteristic of distributions, called the Jacobi symbol. Here we give a brief informal description of what is the Jacobi symbols of a distribution, postponing the detailed description to section \ref{jacsymbsec}.

 First we build a curve of flags of $\Delta(\gamma)$ out of the Jacobi curve $\mathfrak J_\gamma$  by collecting the osculating spaces of the Jacobi curve of any order together with their skew-orthogonal complements with respect to the form $\sigma_\gamma$. The resulting curve of flags is called the \emph{extended Jacobi curve of  the extremal $\gamma$} and will be denoted by $\widehat{\mathfrak J}_\gamma$. Each flag $\widehat {\mathfrak J}_\gamma(\lambda)$ consists of isotropic/coisotropic subspaces of $\Delta(\gamma)$ and is stable with respect to the operation of taking the skew-orthogonal complement. Such flags will be called \emph{symplectic flags} for shortness.  Note that in this case the graded space $\gr \widehat {\mathfrak J}_\gamma(\lambda)$ corresponding to the filtration $\widehat {\mathfrak J}_\gamma(\lambda)$ is endowed with the conformal symplectic structure naturally induced by the conformal symplectic structure on $\Delta(\gamma)$.

 Moreover the extended Jacobi curve $\widehat {\mathfrak J}_\gamma$ satisfies an important  additional property. Let us describe it first for the case of distributions of odd rank.
 If the subspaces of the flags $\widehat{\mathfrak J}_\gamma(\lambda)$ are indexed by $\mathbb Z$ such that the corresponding filtration is nonincreasing by inclusion, then we have

{\bf Osculation property}
 \emph{For any $i\in \mathbb Z$  and any $\lambda\in \gamma$ the osculating space at $\lambda$ of the curve of subspaces of $\widehat {\mathfrak J}_\gamma$ indexed by $i$ is contained in the subspaces of $\widehat {\mathfrak J}_\gamma(\lambda)$ indexed by $i-1$.}

 The latter implies that the tangent line to the curve $\widehat{\mathfrak J}_\gamma$
 can be identified with a line of degree $-1$ endomorphisms of the graded space $\gr\widehat {\mathfrak J}_\gamma(\lambda)$, belonging to the Lie algebra of the group of conformal symplectic transformations. This line, up to a conjugation by a conformal symplectic transformation preserving the grading, is called the \emph{flag symbol} or the \emph{Jacobi symbol}  of the curve  $\widehat{\mathfrak J}_\gamma$ (or the curve ${\mathfrak J}_\gamma$)

Extended Jacobi curves of rank $2$ distributions satisfy the osculation property as well.
It is not the case for distributions of even rank greater than $2$ if the subspaces of the flags $\widehat{\mathfrak J}_\gamma(\lambda)$ are indexed by $\mathbb Z$. However, the osculation property becomes valid also in this case, if the subspaces of the flags $\widehat{\mathfrak J}_\gamma(\lambda)$ are indexed by $\cfrac{1}{2}\mathbb Z=\{\frac{i}{2}: i\in \mathbb Z\}$  such that the corresponding filtration is nonincreasing by inclusion. So, the flag symbol as a line of degree $-1$ endomorphisms of the corresponding graded space $\gr\widehat {\mathfrak J}_\gamma(\lambda)$ can be defined for even rank distribution as well.

 To make things uniform it is more convenient  in the case of rank $2$ distributions to index the subspace of the extended Jacobi curve $\widehat {\mathfrak J}_\gamma$ by $\cfrac {1}{2} \mathbb Z_{{\rm odd}}:=\cfrac{1}{2}\mathbb Z\backslash \mathbb Z$ (see subsection \ref{flagsec1} below).
 Then the flag symbols of extended Jacobi curves of distributions of even rank greater than $2$ are actually the direct sum of a flag symbol of curves of flags appearing as Jacobi curves of a distribution of odd rank and one flag symbol of curves of flags appearing as Jacobi curves of a  rank 2 distribution.
 This choice of the set of indices is also justified by the theory of irreducible representations of $\sll_2$, which plays an important role in the sequel (sections \ref{findimsec}-\ref{secantsec}) : the spectrum of a semisimple element of such representations constitute of an arithmetic progression centered at $0$  and in the case when the difference between consecutive terms of this progression is equal to $1$ and the dimension of the corresponding $\sll_2$-module is even this spectrum belongs to $\cfrac {1}{2} \mathbb Z_{{\rm odd}}$.

In contrast to the Tanaka symbols of distributions the flag symbols of Jacobi curves can be easily classified \cite[subsection 7.2]{flag2} and subsection \ref{flagsec2} below. Besides, the set of all possible flag symbols is discrete. Therefore, if for any $\lambda\in W_D$ we denote by $\delta(\lambda)$ the flag symbol at $\lambda$ of the Jacobi curve of the regular abnormal extremal passing through $\lambda$, then the map $\lambda\mapsto\delta(\lambda)$ is constant in a neighborhood  of a generic point of the bundle $W_D$. Moreover, as it will be shown in subsection \ref{flagsec3}
for a generic point $q\in M$ there is a neighborhood $U$ of $q$ in $M$ such that the map $\lambda\mapsto \delta(\lambda)$ is constant on the generic subset $V$  of $\tilde \pi^{-1}(U)\cap \mathbb PW_D$, where $\tilde \pi:\mathbb P T^*M\rightarrow M$  is the canonical projection. In this case $\delta(\lambda)$, $\lambda\in V$ is called the \emph {Jacobi symbol of the distribution $D$ at the point $q$}.
Jacobi symbol is  a new basic characteristic of a distribution $D$.
\subsection{Main results and organization of the paper}
Although the definition of the Jacobi symbol is more involved, this is a simpler algebraic object 
than the Tanaka symbol. The natural task is \emph{to describe the construction of the canonical frames uniformly for all distributions with given Jacobi symbol $\delta$.}

To begin with this task, in subsection \ref{flagsec4}, given any Jacobi symbol $\delta$, we define in a sense the most simple model of a distribution with Jacobi symbol $\delta$. We call it symplectically flat distributions because it plays the same role in the local geometry of distributions with given Jacobi symbol, as the flat distribution of fixed type in the local geometry of distributions with given Tanaka symbol (see the last sentence of Theorem \ref{maintheor} below), i.e. it is the maximally symmetric distribution within the considered class (if one assumes the finiteness of type). In the case of distributions of odd rank or rank $2$ the symplectically flat distributions are also  flat distribution in the Tanaka sense for some fundamental Tanaka symbol.
An interesting phenomenon here is that this is not the case for a symplectically flat distribution of even rank greater than $2$. However, it can be considered as the flat distribution in the Tanaka sense for some nonfundamental Tanaka symbol (see Example \ref{rank4flatex} below).

The previous constructions assign to any distribution with given Jacobi symbol the following structure on the generic subset of the space of regular abnormal extremals $\mathcal N$: the contact distribution $\Delta$ and the curves  $\widehat{\mathfrak J}_\gamma$ of symplectic flags on each fiber $\Delta(\gamma)$. In the case of distributions of odd rank and rank $2$  this is a particular case of the flag structures (with constant flag symbol $\delta$) introduced and studied in our recent preprint \cite{quasi}, while the flag structure in the case of distributions of even rank greater than $2$ can be treated similarly.
The construction of the canonical frames for distributions with given Jacobi symbol $\delta$ is given in section \ref{jacsymbsec}, Theorem \ref{maintheor}.
Instead of trying to adopt a prolongation procedure
for the construction of a canonical frame
directly to an original distribution, 
as in the classical approaches, we adopt it to the corresponding flag structure. This can be done in terms of natural algebraic operations on the Jacobi symbol $\delta$ in the category of graded Lie algebras, using the Tanaka like theory of curves of symplectic flags developed in \cite{flag1, flag2} and the Tanaka like prolongation of flag structures developed in \cite{quasi}. Theorem \ref{maintheor} plays the same role in the local geometry of distributions with given Jacobi symbol as Theorem \ref{tanthm} in the Tanaka theory of distributions.

In section \ref{findimsec} we describe all Jacobi symbols for which the prolongation  procedure ends up in a finite number of steps, i.e. all Jacobi symbols of finite type ( see Theorems \ref{finitetype} and \ref{finitetype1} there). Our considerations there are based on two general criteria for finiteness of type, the Spencer criterium ( see Theorem \ref{spencerthm} and \cite{spencer,gul,ottazzi}) and the Tanaka criterium (see Theorem \ref{Tanakafin} and \cite{tan1}), and some theory of $\sll_2$ representations.

Sections \ref{pstandartsec} and \ref{secantsec} are devoted to more detailed study of the resulting prolongation algebra, which is also isomorphic to the algebra of infinitesimal symmetries of the corresponding symplectically flat distribution with given Jacobi symbol $\delta$. This resulting algebra is closely related to the algebra of infinitesimal symmetries of the flat curve of flags with flag symbol $\delta$, see Definition \ref{flatcurve} (actually the latter algebra is equal to the degree $0$ component in the resulting prolongation algebra).
Theorem \ref{genpr_i0} and Corollary \ref{corl(X)} reduce the calculation of the  prolongation algebra to the calculation of the standard prolongation (as described, for example, in \cite{kob, stern}) of certain subspace in the algebra of infinitesimal symmetries of the flat (unparametrized) curve of flags associated with the Jacobi symbol. In particular, we show
 that as such subspace one can take the algebra of infinitesimal symmetries of the flat curve with flag symbol $\delta$ considered as parametrized curve (i.e. only symmetries preserving the parametrization are considered).

 Finally in section \ref{secantsec} we make an attempt to describe the resulting prolongation algebra more explicitly using the fact that if a subspace in the symplectic algebra can be identified with a space of quadratic polynomials vanishing on a given projective variety (in the projectivization of the ambient linear symplectic space), then the standard prolongations of this subspace can be described as the space of certain polynomials vanishing on the secant variety of certain order of this projective variety. For general Jacobi symbols in this way we are able to  show that the resulting prolongation algebra is a subspace in a space of this kind, obtaining an upper bound for the dimension of this resulting prolongation algebra (see Theorem \ref{thmmainsec8}) and an alternative proof of finiteness of type in the considered case, which is more constructive than the proof of Theorem \ref{genpr_i0}.
Also, we show (Theorem \ref{thmmainsec8.5}) that for a large class of Jacobi symbols the resulting prolongation algebra can be identified with some spaces of polynomials of this kind (and not only with some subspaces of them).  In this way we get the description of the prolongation algebra for all Jacobi symbols of finite type appearing in rank $2$ and rank $3$ distributions and also for most of such Jacobi symbols appearing in rank $4$ distributions.

For $(2,n)$-distributions there was already an explicit description (see Theorem \ref{rank2thm}), based on the classical work of Cartan \cite{cart10} for $n=5$ and our previous works \cite{doubzel1, doubzel2} for $n>5$.
For rank $3$ distributions the Jacobi symbol is described by a skew Young diagram consisting of $2$ rows. If this skew Young diagram is nonrectangular (and the Jacobi symbol is of finite type) the prolongation algebra can be described  in terms of the tangential developable and the secant varieties of a rational normal curve (Theorem \ref{thmmainsec81}), while in the case of rectangular Young diagram it is described explicitly by Theorem \ref{rank3thm}. For applicability of Theorem \ref{thmmainsec8.5} in the case of rank $4$ distirbution see Remark \ref{234}, item 3 there.

\section{Abnormal extremals of distributions}
\setcounter{equation}{0}
\setcounter{theorem}{0}
\setcounter{lemma}{0}
\setcounter{proposition}{0}
\setcounter{definition}{0}
\setcounter{corollary}{0}
\setcounter{remark}{0}
\setcounter{example}{0}
\label{abnsec}
Let $D$ be a distribution on a manifold $M$. The crucial notion in the symplectification procedure  of
distributions is the notion of \emph{an abnormal extremal}.
To describe them, let $D^{-1}\subset D^{-2}\subset\ldots$
be a \emph{weak derived flag
of $D$}, as defined in the beginning of section \ref{Tanakasub}.
Now denote by
$(D^j)^{\perp}\subset T^*M$ the annihilator of $D^j$, namely
\[
(D^j)^{\perp}=\{(p,q)\in T_q^*M\mid p\cdot v=0\quad\forall\,v\in D^j(q)\}.
\]
Let $\pi\colon T^*M\mapsto M$ be the canonical projection. For any $\lambda\in
T^*M$, $\lambda=(p,q)$, $q\in M$, $p\in T_q^*M$, let
$\varsigma(\lambda)(\cdot)=p(\pi_*\cdot)$ be the canonical Liouville form
and $\widehat\sigma=d\varsigma$ be the standard symplectic structure on $T^*M$.
Also denote by $(TM)^\perp$ the zero section of $T^*M$.

\begin{definition}
\label{abndef}
An unparametrized Lipshitzian curve in $D^\perp\backslash (TM)^\perp$ is called
an abnormal extremal of a distribution $D$ if the tangent line to it at
almost every point belongs to the kernel of the restriction
$\widehat\sigma|_{D^\perp}$ of $\widehat\sigma$ to $D^\perp$ at this point.
\end{definition}

The term
abnormal extremals comes from Optimal Control Theory: abnormal extremals of
the distribution $D$ are exactly Pontryagin extremals with zero Lagrange multiplier near the
functional for any variational problem with constrains, given by the
distribution $D$. We are interested not in all abnormal extremals but in those satisfying certain generic properties. Their description given below  is based on elementary properties of antisymmetric matrices and their Pfaffians.
Similar description is given in \cite{chitour} (abnormal extremals of our interest here correspond to the lifts of singular curves of \emph{minimal order} in the terminology of \cite{chitour}).

Directly from the definition it follows that any abnormal extremal belongs to the set
\begin{equation}
\label{tildeWD}
\widetilde W_D:=\{\lambda\in D^\perp:{\rm ker}\bigl(\widehat\sigma|_{D^\perp}(\lambda)\bigr)\neq 0\}
\end{equation}

Since ${\rm codim} D^\perp =\rank D$  and a skew-symmetric form in an odd dimensional vector space has a nontrivial kernel, one immediately have the following

\begin{proposition}
\label{Pfaffianlemodd}
If  ${\rm rank}\, D$ is odd, then $\widetilde W_D=D^\perp$.
\end{proposition}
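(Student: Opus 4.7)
The plan is to exploit the elementary linear-algebraic fact that any skew-symmetric bilinear form on an odd-dimensional vector space necessarily has nontrivial kernel (equivalently, the determinant of an antisymmetric matrix of odd size vanishes). Given this, the statement reduces to a simple dimension count.

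First I would record the dimensions: since $\dim T^*M = 2n$ with $n=\dim M$, and the annihilator $D^\perp$ has fiber dimension $n - l$ where $l = \rank D$, its total dimension is $\dim D^\perp = 2n - l$. In particular, the codimension of $D^\perp$ in $T^*M$ equals $l = \rank D$, as noted in the sentence preceding the proposition. When $l$ is odd, $2n-l$ is odd.

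Next, I would fix an arbitrary point $\lambda \in D^\perp$ and consider the pullback of $\widehat\sigma$ to $D^\perp$ along the inclusion, evaluated at $\lambda$. This yields a skew-symmetric bilinear form on the tangent space $T_\lambda D^\perp$, a vector space of odd dimension $2n-l$. Applying the elementary fact above, this form must have a nontrivial kernel, so $\ker\bigl(\widehat\sigma|_{D^\perp}(\lambda)\bigr) \neq 0$, i.e. $\lambda \in \widetilde W_D$. Since $\lambda$ was arbitrary, $\widetilde W_D = D^\perp$.

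There is no serious obstacle here: the proposition is essentially a one-line corollary of the dimension count together with the vanishing of $\det$ for odd-dimensional antisymmetric matrices. The only care required is to verify that the ambient codimension and fiber codimension match, so that $\dim D^\perp$ has the claimed parity; this is immediate from the identification of $D^\perp$ as the total space of a rank $(n-l)$ subbundle of $T^*M$.
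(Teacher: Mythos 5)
Your argument is correct and is exactly the one the paper uses: the codimension of $D^\perp$ in $T^*M$ equals $\rank D$, so for odd rank the space $T_\lambda D^\perp$ is odd-dimensional, and a skew-symmetric form on an odd-dimensional vector space always has nontrivial kernel. Nothing further is needed.
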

In order to understand the set $\widetilde W_D$ in the case of distributions of even rank,
let us introduce some notation from the Hamiltonian Dynamics. Given
a function $h:T^*M\mapsto \mathbb R$ denote by $\overrightarrow h$
the corresponding Hamiltonian vector field defined by the
relation $i_{\vec h}\widehat\sigma
=-d\,h
$. Further, given a vector field $X$ in $M$ define the corresponding ``quasi-impulse'' $H_X:T^*M\rightarrow\mathbb R$, $H_X(p,q)=p\bigl(X(q)\bigr)$, $q\in M,\,p\in T_q^*M$. The corresponding Hamiltonian vector field $\vec H_X$ is called the \emph{Hamiltonian lift of $X$ to $T^*M$} (note that by the definition $\pi_*(\vec H_X)=X$, where $\pi:T^*M\rightarrow M$  is the canonical projection). Note that the following identities hold for any vector fields $X$ and $Y$ on $M$:

\begin{equation}
\label{ident}
d\, H_Y\bigl(\vec H_X\bigr)=H_{[X,Y]}
\end{equation}
(as a matter of fact $d\, H_Y(\vec H_X)$ is exactly the Poisson brackets $\{H_X,H_Y\}$ of the Hamiltonians $H_X$ and $H_Y$).

Now let $(X_1,\ldots, X_l)$ be a local basis of the distribution $D$, $D(q)=\Span\{X_1(q),\ldots, X_l(q)\}$ for any $q$ from some open set of $M$.
Then locally
\begin{equation}
\label{Dperpcoord}
D^\perp=\{H_{X_1}=H_{X_2}=\ldots H_{X_l}=0\}.
\end{equation}
Further, for any $\lambda\in D^\perp$ define the following $l\times l$ antisymmetric matrix
\begin{equation}
\label{Goh}
G(\lambda)=\bigl(H_{[X_i,X_j]}(\lambda)\bigr)_{i,j=1}^l.
\end{equation}

This matrix is called the \emph{Goh matrix (at the point $\lambda$) of the local basis $(X_1,\ldots, X_l)$ of the distribution $D$}.

Using formula \eqref{ident} and Definition \ref{abndef} it is easy to show that locally  a curve $\gamma(\cdot)\subset D^{\perp}$ is an abnormal extremal if and only if
\begin{equation}
\label{coord1}
\gamma'(t)=\sum_{i=1}^l \alpha_i \vec H_{X_i}\bigl(\gamma(t)\bigr) \quad \text{for almost every } t,
\end{equation}
where the vector $(\alpha_1,\ldots,\alpha_l)^T$ belongs to the kernel of the Goh matrix $G\bigl(\gamma(t)\bigr)$,
\begin{equation}
\label{coord2}
(\alpha_1,\ldots,\alpha_l)^T\in {\rm ker} G\bigl(\gamma(t)\bigr), \quad \text{for almost every }   t.
\end{equation}

It is convenient to describe the degeneracy conditions and the kernel for a skew-symmetric matrix using the notion of Pfaffian. Recall that the Pfaffian $\pf (A)$ of an $l\times l$ skew-symmetric matrix $A=(a_{ij})_{i,j=1}^l$ is by definition the square root of its determinant. It is equal to zero if $l$ is odd and to a certain homogeneous polynomial of degree $k$ in the entries of $A$ if $l$ is even, where $k=\frac{1}{2}l$. To describe this polynomial let $\omega=\displaystyle{\sum_{0\leq i<j\leq l} a_{ij} dx^i\wedge dx^j}$ be the corresponding $2$-form on $\mathbb R^l$ taken with standard coordinates $(x^1,\ldots, x^l)$. Then

\begin{equation}
\label{Pfaffdef}
\omega^{k}=k! \pf(A) dx^1\wedge\ldots\wedge dx^l,
\end{equation}
where $\omega^k$ is the $k$th wedge power of $\omega$.

As a consequentce of \eqref{coord1} and \eqref{coord2}, we have the following

\begin{proposition}
\label{Pfafflemeven}
If $\rank D$ is even, the set $\widetilde W_D$, defined by \eqref{tildeWD}, is given locally by the following formula
\begin{equation}
\label{Pfaff1}
\widetilde W_D=\{\lambda\in D^{\perp}: \pf\bigl(G(\lambda)\bigr)=0\}.
\end{equation}
\end{proposition}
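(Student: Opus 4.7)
The plan is to compute the kernel $\ker\bigl(\widehat\sigma|_{D^\perp}(\lambda)\bigr)$ in coordinates adapted to the local basis $(X_1,\ldots,X_l)$, and then observe that for even $l$ a skew-symmetric matrix is degenerate exactly when its Pfaffian vanishes. The key bridge is the identity \eqref{ident} combined with the local description \eqref{Dperpcoord} of $D^\perp$.

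First I would describe the tangent space $T_\lambda D^\perp$ and its skew-orthogonal complement. Since $D^\perp$ is cut out by the independent functions $H_{X_1},\ldots,H_{X_l}$, one has
\begin{equation*}
T_\lambda D^\perp=\bigcap_{i=1}^l \Ker\bigl(dH_{X_i}(\lambda)\bigr),
\end{equation*}
and by the defining relation $i_{\vec H_{X_i}}\widehat\sigma=-dH_{X_i}$ the $\widehat\sigma$-skew-orthogonal complement of $T_\lambda D^\perp$ inside $T_\lambda T^*M$ is exactly $\Span\{\vec H_{X_1}(\lambda),\ldots,\vec H_{X_l}(\lambda)\}$. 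These $l$ vectors are linearly independent at $\lambda$ since their projections to $M$ are $X_1(q),\ldots,X_l(q)$, which form a basis of $D(q)$.

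Next, by the standard linear-algebraic description of the kernel of a restricted skew form, a vector $v$ lies in $\ker\bigl(\widehat\sigma|_{D^\perp}(\lambda)\bigr)$ if and only if $v\in T_\lambda D^\perp\cap\bigl(T_\lambda D^\perp\bigr)^{\widehat\sigma\perp}$. Writing such a $v$ as $v=\sum_{i=1}^l \alpha_i\vec H_{X_i}(\lambda)$ and imposing the $l$ conditions $dH_{X_j}(\lambda)(v)=0$, the identity \eqref{ident} yields
\begin{equation*}
dH_{X_j}\bigl(\vec H_{X_i}\bigr)(\lambda)=H_{[X_i,X_j]}(\lambda)=G(\lambda)_{ij},
\end{equation*}
so the conditions become $G(\lambda)\alpha=0$. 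Therefore $\ker\bigl(\widehat\sigma|_{D^\perp}(\lambda)\bigr)\ne 0$ if and only if $\ker G(\lambda)\ne 0$, i.e. $\det G(\lambda)=0$.

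Finally I would invoke the definition of the Pfaffian: for an even $l\times l$ skew-symmetric matrix $A$ one has $\det A=\pf(A)^2$ (which follows from \eqref{Pfaffdef} since $\omega^l=0$ forces the degree count to match, and the identity $\det A=\pf(A)^2$ is a consequence). Hence $\det G(\lambda)=0$ is equivalent to $\pf\bigl(G(\lambda)\bigr)=0$, giving the claimed equality \eqref{Pfaff1}. The only delicate point in the whole argument is verifying that $\vec H_{X_1}(\lambda),\ldots,\vec H_{X_l}(\lambda)$ really span the full skew-orthogonal complement of $T_\lambda D^\perp$; this follows from the independence of the $dH_{X_i}(\lambda)$ on $D^\perp$, together with the nondegeneracy of $\widehat\sigma$, which turn both spaces into $l$-dimensional subspaces in duality via $\widehat\sigma$.
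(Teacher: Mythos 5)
Your proof is correct and follows essentially the same route as the paper, which derives the proposition directly from the coordinate description \eqref{coord1}--\eqref{coord2} of the kernel of $\widehat\sigma|_{D^\perp}$ as $\{\sum_i\alpha_i\vec H_{X_i}(\lambda):\alpha\in\ker G(\lambda)\}$, obtained via the identity \eqref{ident}. You merely fill in the linear-algebra details (the identification of $\bigl(T_\lambda D^\perp\bigr)^{\widehat\sigma\perp}$ with $\Span\{\vec H_{X_i}(\lambda)\}$ and the fact that $\det G=\pf(G)^2$, which in the paper's convention holds by definition of the Pfaffian) that the paper leaves implicit.
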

For example, if $D$ is a rank $2$ distributions
\begin{equation}
\label{Pfaffrank2}
G(\lambda)=\begin{pmatrix}0&H_{[X_1,X_2]}(\lambda)\\-H_{[X_1,X_2]}(\lambda)&0\end{pmatrix}, \quad \pf\bigl(G(\lambda)\bigr)=H_{[X_1,X_2]}(\lambda).
\end{equation}
Hence
\begin{equation}
\label{tildeWrank2}
\text{if $l=2$, then } \widetilde W_D=(D^{-2})^\perp,
\end{equation}
because Proposition \ref{Pfafflemeven} implies locally that
$$\widetilde W_D=\{H_{X_1}=H_{X_2}=H_{[X_1,X_2]}=0\}=(D^{-2})^\perp.$$
Since we originally work over $\mathbb R$, for $l$ divisible by $4$ the set $\widetilde W_D$ might be empty (such distributions are called \emph{fat} (\cite{montbook}). However, in this case we can complexify the fibers of $T^*M$. Denote the obtained complex vector bundle (over real manifold $M$) by $(T^*M)^{\mathbb C}$. In the same way we define the complexified bundles $\bigl((D^j)^\perp\bigr)^{\mathbb C}$ and the Goh matrix $G(\lambda)$ for any $\lambda\in (D^\perp)^{\mathbb C}$. So, the set $(\widetilde W_D)^{\mathbb C}=\{\lambda\in (D^{\perp})^{\mathbb C}: \pf\bigl(G(\lambda)\bigr)=0\}$ is not empty. In the sequel in case we need to work with complexified objects we will do it without special mentioning and use the same notation as in the real category.

%
Now define a subset $W_D$ of $\widetilde W_D$ as follows:

\begin{equation}
\label{WD}
W_D:=\{\lambda\in \widetilde W_D:{\rm ker}\bigl(\widehat\sigma|_{\widetilde W_D}(\lambda)\bigr) \text{ is one-dimensional}\}.
\end{equation}

By constructions, the kernels of $\widehat\sigma|_{W_D}$ form the \emph{characteristic rank $1$ distribution} $\widehat{\mathcal C}$ on $W_D$ and
the integral curves  of this distribution are
abnormal extremals of the distribution $D$. Actually, they are exactly the abnormal extremals which lie entirely in $W_D$. Such abnormal extremals are called \emph{regular}.
Note that in general an abnormal extremal does not need to belong to $W_D$ , however, for our purposes it is enough to work with regular abnormal extremals only.

Now describe the set $W_D$ and the characteristic rank $1$ distribution $\widehat{\mathcal C}$ in terms of a local basis $(X_1,\ldots, X_l)$ of the distribution $D$.
For this recall several facts on kernels of skew-symmetric matrices using the notion of Pfaffian. Let $A=(a_{ij})_{i,j=1}^l$ be an $l\times l$ skew-symmetric matrix. Let $A_{i}$ be the Pfaffian of the skew-symmetric matrix obtained from $A$ by removing the $i$th row and the $i$th column.  Further, for $i<j$ let $A_{ i j}$ be the Pfaffian of the skew-symmetric matrix obtained from $A$ by removing the $i$th and $j$th row and the $i$th and $j$th column. If $A$ was a $2\times 2$ matrix, so that after such removal we get an empty matrix, we set $A_{{1}{2}}=1$.
To define $A_{j i}$ for any ordered pair $(i,j)$ (with integer $i$ and $j$ running from $1$ to $l$) set
\begin{equation}
\label{extpfaffian}
A_{ j i}=-A_{i  j}, \quad A_{i i}=0.
\end{equation}

Then the following proposition  is well known and can be easily obtained from \eqref{Pfaffdef}:

\begin{proposition}
\label{skewkernel}

\begin{enumerate}

\item
If $l$ is odd, then $A$ has one-dimensional kernel if and only if there exists
 an index $i_0$ such that $\pf(A_{{i}_0})\neq 0$. In this case $\ker A$ is spanned by the vector
 \begin{equation}
 \label{kerodd}
 \bigl(A_{ 1}, -A_{2},\ldots, (-1)^{i-1} A_{ i},\ldots, A_{l}\bigr)^T.
 \end{equation}
\item If $l$ is even then
\begin{equation}
\label{kerformeven}
\sum_{j=1}^l (-1)^j a_{sj}A_{i j}=\begin{cases} 0 &i\neq s\\(-1)^{s-1} \pf(A) & i=s. \end{cases}
\end{equation}
In particular,
\begin{enumerate}
\item
 if $\pf(A)=0$, then  for any index $1\leq i\leq l$ the vector
\begin{equation}
\label{videf}
v_i=\bigl(-A_{i 1}, A_{ i  2},\ldots, (-1)^{j} A_{ i  j},\ldots, A_{ i l}\bigr)^T
\end{equation}
belongs to $\ker A$.

\item
The matrix $A$ has $2$-dimensional kernel if and only if there exist indices $i_0$ and $j_0$ such that $A_{{i}_0 {j}_0}\neq 0$. In this case,

\begin{equation}
\label{kereven}
 \ker A=\Span\{v_{i_0}, v_{j_0}\}
\end{equation}
where vectors $v_{i_0}$ and $v_{j_0}$ are as in \eqref{videf}.
\end{enumerate}

\end{enumerate}

\end{proposition}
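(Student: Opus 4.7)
The plan is to derive all four assertions from the single Pfaffian identity \eqref{kerformeven}, after which the remaining content becomes linear-algebraic bookkeeping using standard facts about ranks of skew-symmetric matrices.

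\textbf{Step 1 (the key identity).} First I would prove \eqref{kerformeven}. The diagonal case $i=s$ is the row-expansion of the Pfaffian along the $s$th row, which one reads off \eqref{Pfaffdef} by splitting $\omega = \sum_{j\neq s}a_{sj}\,dx^s\wedge dx^j + \omega'$ with $\omega'$ not involving $dx^s$ and computing $\omega^k = k\,\omega\wedge\omega^{k-1}$; only the displayed terms contribute to $\omega^k$. The off-diagonal case $i\neq s$ I would handle by an ``equal rows'' trick: form the auxiliary skew-symmetric $l\times l$ matrix $A^{(i\to s)}$ by replacing the $i$th row and column of $A$ with the $s$th row and column respectively (keeping the diagonal entry zero), so that $A^{(i\to s)}$ has two identical rows and hence $\pf\bigl(A^{(i\to s)}\bigr)=0$. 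Expanding this vanishing Pfaffian along the $i$th row yields term by term exactly $\sum_j(-1)^j a_{sj}A_{ij}=0$, because removing both rows $i$ and $j$ from $A^{(i\to s)}$ gives back the original principal submatrix $A_{ij}$.

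\textbf{Step 2 (even case, parts (2a) and (2b)).} Reading \eqref{kerformeven} columnwise, the $s$th component of $Av_i$ is precisely $\sum_j a_{sj}(-1)^j A_{ij}$ up to a global sign, which vanishes for $s\neq i$ and equals $\pm\pf(A)$ for $s=i$; when $\pf(A)=0$ this gives $Av_i=0$, proving (2a). For (2b), I would use the classical fact that $\rank A$ is always even for skew-symmetric $A$, so $\dim\ker A$ has the same parity as $l$; in the even case $\pf(A)=0$ forces $\dim\ker A\geq 2$, and $\dim\ker A=2$ is equivalent to $\rank A=l-2$, which in turn is equivalent to the existence of an invertible principal $(l-2)\times(l-2)$ submatrix and, that submatrix being even-dimensional skew-symmetric with determinant $\pf(A_{i_0j_0})^2$, to some $\pf(A_{i_0j_0})\neq 0$. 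Linear independence of $v_{i_0}$ and $v_{j_0}$ then comes from inspecting the $i_0$ and $j_0$ slots: the $j_0$th entry of $v_{i_0}$ equals $\pm\pf(A_{i_0j_0})\neq 0$ whereas the $j_0$th entry of $v_{j_0}$ is $0$.

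\textbf{Step 3 (odd case, part (1)).} For odd $l$ the Pfaffian vanishes identically and $\dim\ker A$ is odd, hence $\dim\ker A\geq 1$. It equals $1$ precisely when $\rank A = l-1$, i.e.\ when some principal $(l-1)\times(l-1)$ minor is nonzero, and since each such minor is $\det(A^{(i)})=\pf(A_i)^2$ this amounts to the existence of some $i_0$ with $\pf(A_{i_0})\neq 0$. To produce the generator \eqref{kerodd} I would border $A$ by a zero $(l+1)$st row and column to form an even-dimensional skew-symmetric $\widetilde A$ with $\pf(\widetilde A)=0$, and then apply (2a) to $\widetilde A$ with $i=l+1$: the resulting vector $v_{l+1}\in\ker\widetilde A$ has components $(-1)^j\pf\bigl(\widetilde A_{l+1,j}\bigr)=(-1)^j\pf(A_j)$ for $j\leq l$ and $0$ in the last slot; dropping the trailing zero and flipping the overall sign recovers the vector in \eqref{kerodd}, which then spans $\ker A$ by the rank count above.

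The main obstacle will be sign bookkeeping, both in establishing the row-expansion underlying \eqref{kerformeven} and in ensuring the ``equal rows'' matrix $A^{(i\to s)}$ produces exactly the combination $\sum_j(-1)^j a_{sj}A_{ij}$ under the conventions $A_{ji}=-A_{ij}$, $A_{ii}=0$ fixed by \eqref{extpfaffian}. Once those signs are pinned down, the rest is a straightforward application of the parity of the rank of a skew-symmetric matrix.
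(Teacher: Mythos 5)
Your proof is correct, and it follows exactly the route the paper indicates: the paper gives no argument for Proposition \ref{skewkernel} beyond the remark that it ``is well known and can be easily obtained from \eqref{Pfaffdef}'', and your derivation of the cofactor identity \eqref{kerformeven} from the wedge-power definition (diagonal case by row expansion, off-diagonal case by the equal-rows trick), followed by the parity-of-rank bookkeeping, is precisely the standard way to obtain it. The only caveat worth noting is that part (2b) must be read under the standing hypothesis $\pf(A)=0$ (as you implicitly do), since otherwise an invertible $A$ with some $A_{i_0j_0}\neq 0$ would be a counterexample to the literal statement.
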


Now apply this proposition to the Goh matrix $G(\lambda)$ using notations $G_i(\lambda)$ and
$G_{ i j}(\lambda)$ for the pfaffians of the corresponding skew-symmetric submatrices of $G(\lambda)$ (obtained from $G(\lambda)$ by removing the corresponding rows and columns with the relation analogous to \eqref{extpfaffian}).

\subsection{The case of distributions of odd rank}
>From item (1) of Proposition \ref{skewkernel} it follows that
if $l=\rank D$ is odd,
then locally

\begin{equation}
\label{WDcoord}
\begin{split}
~&W_D=\{\lambda\in D^\perp:\text{ there exist $i$, $1\leq i\leq l$ such that } G_{ i}(\lambda)\neq 0\}=\\
~&D^\perp\backslash\{\lambda\in \widetilde D:G_{1}(\lambda)=\ldots=G_{ l}(\lambda)=0\},
\end{split}
\end{equation}
\begin{equation}
\label{Codd}
\widehat{\mathcal C}(\lambda)=\Span\left\{\sum_{i=1}^l(-1)^{i-1}G_{ i}(\lambda)\vec H_{X_i}(\lambda)\right\}.
\end{equation}
>From \eqref{WDcoord} it follows that for any $q\in M$ the complement of $W_D(q)$ ($:=W_D\cap D^\perp(q)$) to the fiber $D^\perp(q)$ of $D^\perp$ is an algebraic variety depending on the \emph{second jet} of $D$ at $q$. It is easy to construct examples of germs of $(l,n)$-distributions with odd $l$ and $n>l$ such that this complement is a proper algebraic subvariety of the corresponding fiber of the annihilator of the distribution (see subsection \ref{flagsec4}). This implies  that \emph{for generic germs of distributions of odd rank $l$ on manifolds of dimension greater than  $l$ the set $W_D$ is an open and dense subset of $\widetilde W_D$}.

For example, if $D$ is a rank 3 distribution,  then
$$G(\lambda)=\begin{pmatrix}0&H_{[X_1,X_2]}(\lambda)&H_{[X_1,X_3]}(\lambda)\\-H_{[X_1,X_2]}(\lambda)&0&H_{[X_2,X_3]}(\lambda)\\-H_{[X_1,X_3]}(\lambda)&
-H_{[X_2,X_3]}(\lambda)&0 \end{pmatrix},$$

$$G_{1}(\lambda)=H_{[X_2,X_3]}(\lambda),\quad G_{2}(\lambda)=H_{[X_1,X_3]}(\lambda), \quad G_{ 3}(\lambda)=H_{[X_1,X_2]}(\lambda).$$
Note that $\{\lambda\in D^{\perp}: H_{[X_1,X_2]}(\lambda)=H_{[X_1,X_3]}(\lambda)=H_{[X_2,X_3]}(\lambda)=0\}=(D^{-2})^\perp$.
Consequently, in this case by \eqref{WDcoord}  we have
\begin{equation}
\label{WD3}
 W_D=D^\perp\backslash (D^{-2})^\perp
\end{equation}

and by \eqref{Codd}
\begin{equation}
\label{C3}
\widehat{\mathcal C}(\lambda)=\Span\left\{H_{[X_2,X_3]}(\lambda)\vec H_{X_1}(\lambda)-H_{[X_1,X_3]}(\lambda)\vec H_{X_2}(\lambda)+H_{[X_1,X_2]}(\lambda)\vec H_{X_3}(\lambda)\right\}.
\end{equation}

\subsection{The case of distributions of even rank}
Now let us describe the set $W_D$ for distributions of even rank.
For any $i$, $1\leq i\leq l$, define the following vector field on $T^*M$:
\begin{equation}
\label{Ydef}
\mathcal Y_i(\lambda) =\sum_{j=1}^l (-1)^j G_{ {i} {j}}(\lambda)\vec H_{X_j}(\lambda).
\end{equation}
Then from item (2)(a) of Proposition \ref{skewkernel} it follows that
for any $\lambda\in \widetilde W_D$
\begin{equation}
\label{Yi}
\mathcal Y_i(\lambda)\in {\rm ker} \bigl(\widehat\sigma|_{D^{\perp}}(\lambda)\bigr).
\end{equation}
The set  $W_D$ does not contain the following subset $\mathcal S_1$ of  $\widetilde W_D$:
$$\mathcal S_1=\{\lambda\in \widetilde W_D: \dim\, {\rm ker} \bigl(\widehat\sigma|_{D^{\perp}}(\lambda)\bigr)>2\}.$$
By item (2)(b) of Proposition \ref{skewkernel} locally the set $\mathcal S_1$ can be described as follows:
\begin{equation}
\label{S1def}
\mathcal S_1=\{\lambda\in D^{\perp}: \pf G(\lambda)=0 \text{ and }   G_{{i}{j}}(\lambda) =0 \text{ for all }  1\leq i<j\leq l \,\}.
\end{equation}
Besides, by the same item (2)(b) of Proposition \ref{skewkernel}, for any $\lambda\in \widetilde W_D\backslash \mathcal S_1$ the kernel of $\bigl(\widehat\sigma|_{D^{\perp}}(\lambda)\bigr)$ is $2$-dimensional so that
\begin{equation}
\label{kernel}
\text{if
$G_{{i_0}{j_0}}(\lambda)\neq 0$  for some $1\leq i_0<j_0\leq l$ , then }   {\rm ker} \bigl(\widehat\sigma|_{D^{\perp}}(\lambda)\bigr)=\Span\{\mathcal Y_{i_0}(\lambda), \mathcal Y_{j_0}(\lambda)\}
\end{equation}
Note that for $\lambda\in \widetilde W_D\backslash \mathcal S_1$ the kernel of the form $\widehat\sigma|_{\widetilde W_D}(\lambda)$ is one-dimensional if and only if the kernel of the form $\widehat\sigma|_{D^{\perp}}(\lambda)$ is not tangent to $\widetilde W_D$.
Therefore,
locally
\begin{equation}
\label{WDeven}
W_D= \widetilde W_D\backslash \Bigl(\mathcal S_1\cup\mathcal S_2\Bigr),
\end{equation}
where

\begin{equation}
\label{S2}
\mathcal S_2= \{\lambda\in \widetilde W_D
: d(\pf G)(\lambda) \bigl(\mathcal Y_{i_0}(\lambda)\bigr)=d(\pf G)(\lambda) \bigl(\mathcal Y_{j_0}(\lambda)\bigr)=0\}\Bigr),
\end{equation}
and the characteristic rank $1$ distribution $\widehat C(\lambda)$ satisfies
\begin{equation}
\label{Ceven}
\widehat C(\lambda)=\Span\left\{ d(\pf G)(\lambda)\bigl( \mathcal Y_{i_0}(\lambda)\bigr)\mathcal Y_{j_0}(\lambda)-d(\pf G)(\lambda) \bigl(\mathcal Y_{j_0}(\lambda)\bigr)\mathcal Y_{i_0}(\lambda)\right\},
\end{equation}
where the pair of indices $(i_0,j_0)$ is as in \eqref{kernel}.

>From \eqref{S1def} and \eqref{WDeven}  it follows that for any $q\in M$ the complement of $W_D(q)$ ($:=W_D\cap D^\perp(q)$) to the fiber $\widetilde W_D(q)$ is an algebraic variety depending on the \emph{third jet} of $D$ at $q$. It is easy to construct examples of germs at $q$ of $(l,n)$-distributions with even $l$ and $n>l$ such that this complement is an algebraic subvariety (of $D^\perp(q)$), which is a proper subset of  $\widetilde W_D(q)$ (see subsection \ref{flagsec4}). This implies  that \emph{for generic germs of distributions of even rank $l$ on manifolds of dimension greater than  $l$ the set $W_D$ is an open subset of $\widetilde W_D$}.

 Let us consider the case of rank $2$ distributions in more detail (see also \cite{doubzel2, zelrigid}). In this case the set $\mathcal S_1$ is empty. Also, from \eqref{ident} \eqref{Pfaffrank2}, and \eqref{S2} it follows that $\mathcal S_2=(D^{-3})^\perp$. Substituting this into \eqref{WDeven} we get
 \begin{equation}
\label{Wdrank21}
W_D=(D^{-2})^\perp\backslash (D^{-3})^\perp.
 \end{equation}
Besides, $\mathcal Y_1=\vec H_{X_1}$ and $\mathcal Y_2=-\vec H_{X_2}$.
Substituting these formulas into \eqref{Ceven} and using  and using \eqref{Pfaffrank2} we get
 that
\begin{equation}
\label{Wdrank22}
\widehat{\mathcal C}(\lambda)=\Span\left\{H_{\bigl[X_1,[X_1,X_2]\bigr]}(\lambda)\vec H_{X_2}(\lambda)-H_{\bigl[X_2,[X_1,X_2]\bigr]}(\lambda)\vec H_{X_1}(\lambda)\right\}.
\end{equation}


 Finally note that in all cases the characteristic rank $1$ distribution $\widehat{\mathcal C}$ can be computed explicitly in terms of a local basis of the distribution and it depends polynomially on the fibers of $W_D$.

\begin{remark}
\label{3brackrem}
 Note that the set $\mathcal S_2$, defined by \eqref{S2}, depends on brackets of length $3$ of a local  basis of the distribution $D$. Therefore if  $D$ is a flat distribution in Tanaka sense, of even rank,   and of degree of nonholonomy $2$ (i.e., such that  $D\neq TM$ and $D^2=TM$), then  the set $\mathcal S_2$ coincides with $\widetilde W_D$, i.e. the set $W_D$ is empty and $D$ has no regular abnormal extremals. Therefore our subsequent theory does not cover such distributions.
For example, this is the case for two possible flat $(4,7)$-distributions with $\dim D^2=7$.
\end{remark}


\section{Jacobi curves of regular abnormal extremals}
\setcounter{equation}{0}
\setcounter{theorem}{0}
\setcounter{lemma}{0}
\setcounter{proposition}{0}
\setcounter{definition}{0}
\setcounter{corollary}{0}
\setcounter{remark}{0}
\setcounter{example}{0}
\label{jaccurvesec}

>From now on we consider distributions $D$ such that the set $W_D$ is an open subset in $\widetilde W_D$.
Having the manifold $
W_D$ foliated by regular abnormal extremals at our disposal, we can make a symplectification of the problem by lifting the original distribution $D$ to $
W_D$. The dynamics of this lift along a regular abnormal extremal is described by a curve of flags consisting of isotropic and coisotropic subspaces in a linear symplectic space, which is stable under the skew-orthogonal complement (or, shortly, of curves of symplectic flags). Such curves have a nontrivial geometry that illuminates the geometry of the distribution $D$ itself.

To begin with,  let $
 \pi:
 T^*M\rightarrow M$ be the canonical projection. The \emph{lift} $\widehat{\mathcal J}$  of the distribution $D$ to $W_D$ is the pullback of the original distribution $D$ to $
 W_D$ by the canonical projection $
 \pi$:
\begin{equation}
\label{pullJ}
\widehat{\mathcal J}(\lambda)=\{v\in T_\lambda
W_D:
\pi_* v\in D(
\pi(\lambda))\}.
\end{equation}
Further, let $\widehat {\mathcal V}$  be the distribution of the tangent spaces to the fibers of $W_D$ or the \emph{vertical distribution}
\begin{equation}
\widehat {\mathcal V}(\lambda)=\{v\in T_\lambda W_D: 
\pi_* v=0\}.
\end{equation}

Obviously $\widehat {\mathcal V}\subset \widehat {\mathcal J}$. Also, from \eqref{coord1} it follows that $\widehat{\mathcal C}\subset \widehat {\mathcal J}$. Hence,
$\widehat {\mathcal V}+\widehat{\mathcal C}\subset \widehat {\mathcal J}$ so that the distribution $\widehat{\mathcal J}$ has at least two distinguished subdistributions and, in particular, a distinguished line subdistribution $\widehat{\mathcal C}$, while the original distribution $D$ does not have such additional structures in general. Therefore it is much more simple to study the geometry of distribution $\widehat{\mathcal J}$ instead of the geometry of the original distribution $D$ and under our assumptions the geometry of $D$ is recovered from the geometry of $\widehat{\mathcal J}$ (see Corollary \ref{recovercor} below).

Note that the distribution $\widehat{\mathcal V}$ is involutive. Also, given a point $q\in M$ for any $\lambda$ belonging to the fiber $W_D(q)$ and any vector $X\in \mathcal J(\lambda)$ the vector $\pi_*X$ belongs to the same vector space $D(q)$. The last two facts easily imply that the Lie brackets of a vertical vector field, i.e. a section of the vertical distribution $\widehat{\mathcal V}$, with a section of the distribution $\widehat{\mathcal J}$ is a section of $\widehat{\mathcal J}$.
In particular, we have the following relation:

\begin{equation}
\label{VC}
[\widehat{\mathcal C},\widehat{\mathcal V}]\subset \widehat{\mathcal J}.
\end{equation}

Given a subspace $\Lambda$ of $T_\lambda W_D$
denote by $\Lambda^\angle$ the skew-orthogonal complement of $\Lambda$ with respect to the skew-symmetric form $
\widehat\sigma|_{W_D}(\lambda)$: $\Lambda^\angle=\{v\in \Lambda: 
\widehat\sigma(v,w)=0 \,\, \forall\, w\in \Lambda\}$. The subspace $\Lambda$ is called \emph{isotropic} if $\Lambda\subset\Lambda^\angle$ and \emph{coisotropic} if $\Lambda^\angle\subset\Lambda$.

\begin{proposition}
\label{isolem} The following statement hold:
\begin{enumerate}
\item

The subspace $\widehat{\mathcal J}(\lambda)^\angle$ satisfies
\begin{equation}
\label{Jskew}
\widehat{\mathcal J}(\lambda)^\angle=\{w\in T_\lambda W_D: \pi_* w\in \pi_*{\rm ker}(\widehat\sigma|_{D^{\perp}}(\lambda))\}.
\end{equation}


\item  $\widehat{\mathcal J}(\lambda)^\angle\subseteq \widehat{\mathcal J}(\lambda)$, i.e. the space $\widehat{\mathcal J}(\lambda)$ is coisotropic with respect to the form $\widehat\sigma$;
\item  If the distribution $D$ is of odd rank, then
\begin{equation}
\label{recoverVodd}
\widehat{\mathcal J}(\lambda)^\angle=\widehat{\mathcal V}(\lambda)\oplus \widehat{\mathcal C}(\lambda)
\end{equation}

\item If the distribution $D$ is of even rank then
\begin{equation}
\label{Jskeweven}
\dim\Bigl(\widehat{\mathcal J}(\lambda)^\angle\Bigr)=\dim\Bigl(\widehat{\mathcal V}(\lambda)\oplus \widehat{\mathcal C}(\lambda)\Bigr)+1
\end{equation}
and
\begin{equation}
\label{CJ^angle}
\Bigl(\widehat{\mathcal V}(\lambda)\oplus \widehat{\mathcal C}(\lambda)\Bigr)^\angle=\widehat{\mathcal J}(\lambda)+[\widehat C,\bigl(\widehat{\mathcal J}(\lambda)\bigr)^\angle].
\end{equation}
\item If the distribution $D$ is of rank $2$, then

\begin{equation}
\label{lagr1}
\widehat{\mathcal J}(\lambda)^\angle=\widehat{\mathcal J}(\lambda)
\end{equation}
and
\begin{equation}
\label{pullD2}
[\widehat C,\widehat{\mathcal J}(\lambda)]=\{v\in T_\lambda
W_D:
\pi_* v\in D^{-2}(
\pi(\lambda))\}.
\end{equation}
\end{enumerate}

\end{proposition}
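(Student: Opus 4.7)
The backbone of the proof is (1), from which (2) follows directly and (3)--(5) via case analysis. For (1), I would fix a local basis $X_1,\ldots,X_l$ of $D$ near $\pi(\lambda)$ and decompose any $u\in\widehat{\mathcal J}(\lambda)$ as $u=\sum_i\alpha_i\vec H_{X_i}(\lambda)+u^v$ with $u^v$ vertical. The identity $\widehat\sigma(\vec H_{X_i},w)=-dH_{X_i}(w)$ combined with $w\in T_\lambda W_D\subset T_\lambda D^\perp$ (so $dH_{X_i}(w)=0$) collapses $\widehat\sigma(u,w)$ to the pairing $\langle u^v,\pi_*w\rangle$. Varying the part of $u^v$ in a complement of $D$ forces $\pi_*w\in D(\pi(\lambda))$; then writing $\pi_*w=\sum_j\gamma_jX_j$ and using $\langle u^v,X_j\rangle=(G(\lambda)\alpha)_j$ from the constraint $u\in T_\lambda D^\perp$ reduces the condition to $\alpha^TG(\lambda)\gamma=0$ for all admissible $\alpha$, forcing $\gamma\in\ker G(\lambda)$. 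In the even rank case the extra constraint $d\pf G(u)=0$ is a single scalar equation which generically does not restrict the range of $\alpha$, so the conclusion is the same. In view of the identification $\ker(\widehat\sigma|_{D^\perp}(\lambda))=\{\sum_i\alpha_i\vec H_{X_i}(\lambda):\alpha\in\ker G(\lambda)\}$ derived in Section 2, this proves (1). Part (2) follows immediately since $\pi_*w\in D(\pi(\lambda))$ gives $w\in\widehat{\mathcal J}(\lambda)$ by the definition of $\widehat{\mathcal J}$.

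For (3), in the odd rank case $T_\lambda W_D=T_\lambda D^\perp$ and $\ker(\widehat\sigma|_{D^\perp}(\lambda))$ is one-dimensional, coinciding with $\widehat{\mathcal C}(\lambda)$; since $\pi_*\widehat{\mathcal C}(\lambda)\neq 0$ on $W_D$ by \eqref{Codd} and \eqref{WDcoord}, we obtain $\widehat{\mathcal J}(\lambda)^\angle=\widehat{\mathcal V}(\lambda)\oplus\widehat{\mathcal C}(\lambda)$. The first identity of (4) follows from (1) once one observes that in even rank $\ker(\widehat\sigma|_{D^\perp}(\lambda))=\Span\{\mathcal Y_{i_0}(\lambda),\mathcal Y_{j_0}(\lambda)\}$ is two-dimensional with linearly independent $\pi_*$-images, contributing exactly one direction in $\widehat{\mathcal J}(\lambda)^\angle$ beyond $\widehat{\mathcal V}(\lambda)\oplus\widehat{\mathcal C}(\lambda)$. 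For the second identity \eqref{CJ^angle}, double-angle reasoning gives $\widehat{\mathcal J}\subseteq(\widehat{\mathcal V}\oplus\widehat{\mathcal C})^\angle$. For the bracket term, I would use that $c\in\widehat{\mathcal C}$ satisfies $\iota_c(\widehat\sigma|_{W_D})=0$ and $d(\widehat\sigma|_{W_D})=0$, so $L_c(\widehat\sigma|_{W_D})=0$; Leibniz then yields $\widehat\sigma([c,f],u)=-\widehat\sigma(f,[c,u])$ for sections $f$ of $\widehat{\mathcal J}^\angle$ and $u$ of $\widehat{\mathcal V}\oplus\widehat{\mathcal C}$, and the right side vanishes because $[c,\widehat{\mathcal V}]\subseteq\widehat{\mathcal J}$ by \eqref{VC}, $[c,\widehat{\mathcal C}]\subseteq\widehat{\mathcal C}\subseteq\widehat{\mathcal J}$, and $f\in\widehat{\mathcal J}^\angle$. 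Therefore $\widehat{\mathcal J}+[\widehat{\mathcal C},\widehat{\mathcal J}^\angle]\subseteq(\widehat{\mathcal V}\oplus\widehat{\mathcal C})^\angle$; a dimension count yielding $\dim(\widehat{\mathcal V}\oplus\widehat{\mathcal C})^\angle=\dim\widehat{\mathcal J}+1$ reduces equality to showing $[\widehat{\mathcal C},\widehat{\mathcal J}^\angle]$ adds one direction beyond $\widehat{\mathcal J}$. The main obstacle is verifying this nondegeneracy: one must exhibit a section $f$ of $\widehat{\mathcal J}^\angle$ projecting to an element of $\pi_*\ker(\widehat\sigma|_{D^\perp})\setminus\pi_*\widehat{\mathcal C}$ and compute explicitly, using \eqref{Ceven} and \eqref{Ydef}, that $\pi_*[c,f](\lambda)\notin D(\pi(\lambda))$.

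For (5), in rank 2 we have $\widetilde W_D=(D^{-2})^\perp$ by \eqref{tildeWrank2}, and the Goh matrix vanishes on $W_D$ by \eqref{Pfaffrank2}, giving $\ker G(\lambda)=\mathbb R^2$ and $\pi_*\ker(\widehat\sigma|_{D^\perp}(\lambda))=D(\pi(\lambda))$; (1) then gives \eqref{lagr1}. For \eqref{pullD2}, the $C^\infty$-linearity identity $[c,\vf g](\lambda)=\vf(\lambda)[c,g](\lambda)+c(\vf)(\lambda)g(\lambda)$ shows $\widehat{\mathcal J}(\lambda)\subseteq[\widehat{\mathcal C},\widehat{\mathcal J}](\lambda)$, so it suffices to produce the additional $[X_1,X_2](\pi(\lambda))$ direction in $\pi_*[\widehat{\mathcal C},\widehat{\mathcal J}](\lambda)$. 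Using the explicit generator of $\widehat{\mathcal C}$ from \eqref{Wdrank22} and a section $g$ of $\widehat{\mathcal J}$ with $\pi_*g(\lambda)=X_1(\pi(\lambda))$ (a vertically corrected $\vec H_{X_1}$), the Hamiltonian bracket identities $[\vec H_f,\vec H_g]=\vec H_{\{f,g\}}$ and $\{H_X,H_Y\}=H_{[X,Y]}$ give $\pi_*[c,g](\lambda)\equiv -H_{[X_1,[X_1,X_2]]}(\lambda)[X_1,X_2](\pi(\lambda))\pmod{D(\pi(\lambda))}$; this is nonzero on $W_D=(D^{-2})^\perp\setminus(D^{-3})^\perp$ by \eqref{Wdrank21} (if the factor vanishes, the symmetric computation with $g$ projecting to $X_2$ succeeds instead), providing the missing direction and completing the pullback of $D^{-2}$.
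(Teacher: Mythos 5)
Your proposal follows the paper's architecture (item (1) first, the rest by specialization), but in two places it takes a genuinely different route. For \eqref{Jskew} you prove both inclusions by a direct computation in the local frame $\vec H_{X_1},\dots,\vec H_{X_l}$ plus vertical parts, whereas the paper proves only the inclusion ``$\supseteq$'' abstractly (decomposing $w=w_1+v$ with $w_1\in\ker(\widehat\sigma|_{D^\perp}(\lambda))$ and $v$ vertical) and gets equality from a dimension count; both work, but your dismissal of the extra constraint $d(\pf G)(u)=0$ in the even-rank case as ``generically'' harmless is the wrong justification --- the correct reason is the defining property of $W_D$: since $\ker(\widehat\sigma|_{\widetilde W_D}(\lambda))$ is one-dimensional, some $\beta\in\ker G(\lambda)$ has $d(\pf G)\bigl(\sum_i\beta_i\vec H_{X_i}\bigr)\neq 0$, so the free $\ker G$-ambiguity in $\alpha$ absorbs the scalar constraint without shrinking the set of admissible vertical parts. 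For \eqref{CJ^angle} your Cartan-calculus argument ($\iota_c(\widehat\sigma|_{W_D})=0$ and closedness give $L_c(\widehat\sigma|_{W_D})=0$, hence $\widehat\sigma([c,f],u)=-\widehat\sigma(f,[c,u])$, which vanishes by \eqref{VC}) is cleaner and more conceptual than the paper's explicit computation of $[\mathcal Y_{i_0},\mathcal Y_{j_0}]$ in \eqref{YY2}--\eqref{YY5} via Pfaffian identities; it buys you the inclusion $\widehat{\mathcal J}+[\widehat{\mathcal C},\widehat{\mathcal J}^\angle]\subseteq\bigl(\widehat{\mathcal V}\oplus\widehat{\mathcal C}\bigr)^\angle$ with much less bookkeeping.

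The one genuine gap is the step you yourself flag as ``the main obstacle'': showing that $[\widehat{\mathcal C},\widehat{\mathcal J}^\angle]$ contributes a direction outside $\widehat{\mathcal J}$, so that the dimension count closes. You describe what must be done but do not do it, and without it the conclusion could degenerate to a strict inclusion. This is exactly the payoff of the paper's explicit route: formula \eqref{YY2} identifies $\pi_*[\mathcal Y_{i_0},\mathcal Y_{j_0}]$ modulo $D$ as $\sum_{s_1,s_2}(-1)^{s_1+s_2}G_{i_0 s_1}G_{j_0 s_2}[X_{s_1},X_{s_2}]$, which is the concrete object whose nonmembership in $D$ must be checked, and which also yields \eqref{pullD2} in one line (the paper derives item (5) from \eqref{YY2}, while your Hamiltonian-lift computation for rank $2$ is a correct expanded version of the same thing). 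So if you keep the Lie-derivative argument, you still need to import something like \eqref{YY2} (or an equivalent explicit evaluation of $\pi_*[c,f]$ for a section $f$ of $\widehat{\mathcal J}^\angle$ transverse to $\widehat{\mathcal V}\oplus\widehat{\mathcal C}$) to finish item (4).
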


\begin{proof}

If $w\in T_\lambda W_D$ such that  $\pi_* w\in \pi_*{\rm ker}(\widehat\sigma|_{D^{\perp}}(\lambda))$, then $w=w_1+v$, where $w_1\in {\rm ker}(\widehat\sigma|_{D^{\perp}}(\lambda))$ and $v\in T_\lambda D^\perp\bigl(\pi(\lambda)\bigr)$. Since $\widehat{\mathcal J}(\lambda)\subset T_\lambda D^\perp$, for any $z\in \widehat{\mathcal J}(\lambda)$ one has $\widehat\sigma(w_1,z)=0$. Also, $\widehat\sigma(v,z)=v(\pi_*z)=0$, where in the second term of this chain of equalities $v$ is considered as an element of $D^\perp\bigl(\pi(\lambda)\bigr)$ under the natural identification of the tangent space to a vector space with the vector space itself. Therefore,  $\widehat\sigma(w,z)=0$, i.e. the righthand side of \eqref{Jskew} is contained in $\widehat{\mathcal J}(\lambda)^\angle$.

The equality in \eqref{Jskew} follows from the dimension count and the fact that the kernel of  $
\widehat\sigma|_{W_D}(\lambda)$ is one-dimensional. Indeed, let ${\rm rank}\, D=l$ and   $\dim M=n$. Also, set $\varepsilon=0$, if $l$ is odd, and $\varepsilon =1$ if $l$ is even.
Then $\dim T_\lambda W_D=2n-l-\varepsilon$, $\dim \widehat {\mathcal V}(\lambda)=n-l-\varepsilon$,  dimension of the righthand side of  \eqref{Jskew} is equal to
$\dim \widehat {\mathcal V}(\lambda) +1+\varepsilon=n-l+1$, and $\dim \widehat J(\lambda)=\dim \widehat {\mathcal V}(\lambda)+{\rm rank} D=n-\varepsilon$. The latter implies that $\dim \widehat J(\lambda)^\angle=\dim T_\lambda W_D-\dim \widehat J(\lambda)+1=n-l+1$, i.e. dimension of the righthand side of \eqref{Jskew} is equal to $\dim \widehat J(\lambda)^\angle$, which together with the conclusion of the last paragraph implies \eqref{Jskew}, i.e. item (1).

Item (2) of  the lemma follows from the definition of the subspace  $\widehat {\mathcal J}(\lambda)$ given by \eqref{pullJ} and the fact that $\pi_*{\rm ker}(\widehat\sigma|_{D^{\perp}}(\lambda))\subset D\bigl(\pi(\lambda)\bigr)$. Item (3) follows from the fact that if $l$ is odd then ${\rm ker}\Bigl(\widehat\sigma|_{D^{\perp}}(\lambda)\Bigr)=\widehat{\mathcal C}(\lambda)$.
Formula \eqref{Jskeweven} of item (4) follows from the fact that if $l$ is even, then $\dim \Bigl({\rm ker}(\widehat\sigma|_{D^{\perp}}(\lambda)\Bigr)=2$.

Now let us prove formula \eqref{CJ^angle} of item (4). We shall use a local basis $X_1,\ldots, X_l$ of the distribution $D$. Let the matrix $G(\lambda)$ be as in \eqref{Goh}, and $G_{ i  j}$  are defined as in the paragraph before Proposition \ref{skewkernel}. From item (1) and formulas \eqref{kernel}, \eqref{Ceven} it follows that if
$G_{{i_0}{j_0}}(\lambda)\neq 0$  for some $1\leq i_0<j_0\leq l$ , then
\begin{equation}
\label{YY1}
\widehat{\mathcal J}(\lambda)+[\widehat C,\bigl(\widehat{\mathcal J}(\lambda)\bigr)^\angle] =\{w\in T_\lambda W_D: \pi_* w\in D\bigl(\pi(\lambda)\bigr)+\pi_*\bigl([\mathcal Y_{i_0}, \mathcal Y_{j_0}](\lambda)\bigr)\}.
\end{equation}
Then from \eqref{Ydef} it follows by direct computations that

\begin{equation}
\label{YY2}
[\mathcal Y_{i_0}, \mathcal Y_{j_0}]=\sum_{1\leq s_1,s_2\leq l} (-1)^{s_1+s_2}G_{ i  {s_1}} G_{ j {s_2}} \vec H_{[X_{s_1}, X_{s_2}]} \quad \mathrm{mod}\,\mathrm{span} \{\vec H_{X_1},\ldots, \vec H_{X_l}\}.
\end{equation}
This implies that for $v\in \widehat{\mathcal V}(\lambda)$ we have
\begin{equation}
\label{YY3}
\widehat\sigma\bigl([\mathcal Y_{i_0}, \mathcal Y_{j_0}],v\bigr)=-\sum_{s_1=1}^l (-1)^{s_1}G_{ i  {s_1}}\sum _{s_2=1}^l (-1)^{s_2} G_{ j_0 {s_2}} d H_{[X_{s_1}, X_{s_2}]}(v).
\end{equation}

Using  \eqref{kerformeven} and the fact that $d (\mathrm {pf}  G)(v)=0$, we get

$$\sum _{s_2=1}^l (-1)^{s_2} G_{ j_0 {s_2}} d H_{[X_{s_1}, X_{s_2}]}(v)=-\sum _{s_2=1}^l (-1)^{s_2} H_{[X_{s_1}, X_{s_2}]}dG_{ j_0 {s_2}}(v)$$
Substituting this into \eqref{YY3} we obtain that

\begin{equation}
\label{YY4}
\widehat\sigma\bigl([\mathcal Y_{i_0}, \mathcal Y_{j_0}](\lambda),v\bigr)=\sum_{s_2=1}^l (-1)^{s_2}\left(\sum _{s_1=1}^l (-1)^{s_1} H_{[X_{s_1}, X_{s_2}]} G_{ i_0 {s_1}}\right)dG_{ j_0  {s_2}}(v).
\end{equation}
Using \eqref{kerformeven} again and the fact that $\mathrm{pf} \,G$ vanishes on $W_D$, we obtain that $$\sum _{s_1=1}^l (-1)^{s_1} H_{[X_{s_1}, X_{s_2}]}(\lambda) G_{ i_0 {s_1}}(\lambda)=0 ,\quad \forall \lambda\in W_D.$$ Therefore,
\begin{equation}
\label{YY5}
\widehat\sigma\bigl([\mathcal Y_{i_0}, \mathcal Y_{j_0}](\lambda),v\bigr)=0.
\end{equation}

Taking into account relation \eqref{YY1} and the fact that the restriction of the form $\widehat\sigma$ to the $\widehat{\mathcal V}(\lambda)$ vanishes we get that $\widehat{\mathcal V}+\mathcal C$ is contained in the skew-orthogonal complement of $\widehat{\mathcal J}(\lambda)+[\widehat C,\bigl(\widehat{\mathcal J}(\lambda)\bigr)^\angle]$. Counting the dimension, we obtain in fact that it is equal to this skew-orthogonal complement, which completes the proof of item (4).

Relation \eqref{lagr1} of item (5) follows immediately from formula \eqref{Jskeweven} of item (4) and the fact that $\dim \widehat{\mathcal J}-\dim \widehat {\mathcal V}(\lambda)=2$ for $l=2$.
Finally, relation \eqref{pullD2} of item (5) follows  from \eqref{YY2}.
\end{proof}

As a direct consequence of \eqref{recoverVodd} in the odd rank case and of \eqref{CJ^angle} in the even rank case, and the fact that the distirbution $\widehat V$ is the maximal rank involutive subdistributon of $\widehat{\mathcal J}$ we have the following important:

\begin{corollary}
\label{recovercor}
The original distribution $D$ can be uniquely recovered from its lift $\widehat {\mathcal J}$.
\end{corollary}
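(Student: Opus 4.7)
The plan is to show that the pair $(W_D,\widehat{\mathcal J})$ intrinsically determines the base $M$, the projection $\pi$, and the distribution $D$. The entire reconstruction reduces to identifying the vertical distribution $\widehat{\mathcal V}=\ker\pi_*|_{\widehat{\mathcal J}}$ purely from the data of $\widehat{\mathcal J}$ on $W_D$.

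Once $\widehat{\mathcal V}$ is identified, the rest is automatic. Since $\widehat{\mathcal V}$ is involutive (being the vertical distribution of a submersion), Frobenius gives a local foliation of $W_D$; its leaf space is $M$ and the quotient map is $\pi$. Then $D=\pi_*(\widehat{\mathcal J})$ is well defined on $M$ because any two vectors in $\widehat{\mathcal J}(\lambda)$ over the same fiber differ by an element of $\widehat{\mathcal V}(\lambda)=\ker\pi_*(\lambda)$, so the pushforward depends only on $\pi(\lambda)$. By the defining property of the lift $\widehat{\mathcal J}$, this recovers $D$.

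The intrinsic extraction of $\widehat{\mathcal V}$ from $\widehat{\mathcal J}$ is furnished by the highlighted fact that $\widehat{\mathcal V}$ is the maximal rank involutive subdistribution of $\widehat{\mathcal J}$. Involutivity of $\widehat{\mathcal V}$ is immediate, so the whole content lies in maximality. In the odd rank case the equality $\widehat{\mathcal J}^\angle=\widehat{\mathcal V}\oplus\widehat{\mathcal C}$ of \eqref{recoverVodd}, combined with the commutation $[\widehat{\mathcal C},\widehat{\mathcal V}]\subset\widehat{\mathcal J}$ coming from \eqref{VC} (whose image generically falls outside $\widehat{\mathcal V}\oplus\widehat{\mathcal C}$), rules out any strict involutive enlargement of $\widehat{\mathcal V}$ inside $\widehat{\mathcal J}$. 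In the even rank case \eqref{CJ^angle}, which expresses $(\widehat{\mathcal V}\oplus\widehat{\mathcal C})^\angle$ via $\widehat{\mathcal J}+[\widehat{\mathcal C},\widehat{\mathcal J}^\angle]$, plays the analogous role.

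The main technical obstacle I foresee is excluding artificial involutive subdistributions, in particular those obtained by adjoining to $\widehat{\mathcal V}$ the lift of an involutive line subdistribution of $D$ (which, for instance, always exists when $\mathrm{rank}\,D=2$ and one picks a nowhere-zero local section). The symplectic rigidity packaged by \eqref{recoverVodd} and \eqref{CJ^angle} together with the bracket-generating hypothesis on $D$ is exactly what forces such candidates to fail involutivity in $\widehat{\mathcal J}$, ensuring that $\widehat{\mathcal V}$ is in fact the unique maximal involutive subdistribution. With this identification in hand, the reconstruction sketched above recovers $(M,\pi,D)$ uniquely, completing the proof.
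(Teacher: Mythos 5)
Your overall plan --- extract $\widehat{\mathcal V}$ intrinsically from $\widehat{\mathcal J}$, take the leaf space of $\widehat{\mathcal V}$ to recover $M$ and $\pi$, and set $D=\pi_*\widehat{\mathcal J}$ --- is the same as the paper's, and your second paragraph is fine. The gap sits exactly at the step you yourself identify as the crux. The candidates you propose to exclude, namely the preimages $\{v\in T_\lambda W_D:\pi_*v\in L(\pi(\lambda))\}$ of a line field $L=\mathbb R X\subset D$, do \emph{not} fail involutivity. The preimage under a submersion of an involutive distribution on the base (and a line field is automatically involutive) is always involutive: if $V$ is vertical and $\widetilde X$ is a projectable lift of $X$, then $[V,\widetilde X]$ is $\pi$-related to $[0,X]=0$, hence vertical, so $\widehat{\mathcal V}+\mathbb R\widetilde X$ is closed under brackets. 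This computation never sees the symplectic form or the brackets of $D$ with itself, so no appeal to \eqref{recoverVodd}, \eqref{CJ^angle}, or bracket generation can make it fail. Consequently ``the involutive subdistribution of $\widehat{\mathcal J}$ of maximal rank'' does not by itself single out $\widehat{\mathcal V}$: there is a whole family of involutive subdistributions of rank $\rank\widehat{\mathcal V}+1$, one for each local line field in $D$, and none of them is canonical.

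What does isolate $\widehat{\mathcal V}$ is a different characterization, and this is where the cited relations genuinely enter. One option is to take the Cauchy characteristic of $\widehat{\mathcal J}$, i.e.\ the maximal subdistribution $\mathcal W\subset\widehat{\mathcal J}$ with $[\Gamma(\mathcal W),\Gamma(\widehat{\mathcal J})]\subset\Gamma(\widehat{\mathcal J})$; the paper establishes $[\widehat{\mathcal V},\widehat{\mathcal J}]\subset\widehat{\mathcal J}$, while your line-field candidates now do fail, since $\pi_*[\widetilde X,Y]=[X,\pi_*Y]$ sweeps out $[X,\Gamma(D)]\not\subset D$ for bracket-generating $D$. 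The other option, which is what the one-line justification in the text actually points to, is to first cut down to the corank-one extension $\widehat{\mathcal V}\oplus\widehat{\mathcal C}$ of $\widehat{\mathcal V}$ --- equal to $\widehat{\mathcal J}^\angle$ by \eqref{recoverVodd} in the odd-rank case and to $\bigl(\widehat{\mathcal J}+[\widehat{\mathcal C},\widehat{\mathcal J}^\angle]\bigr)^\angle$ by \eqref{CJ^angle} in the even-rank case --- and only then run your bracket argument with \eqref{VC}: inside that rank-$(\rank\widehat{\mathcal V}+1)$ distribution, the nonconstancy of the Jacobi curve does rule out every involutive enlargement of $\widehat{\mathcal V}$. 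As written, your proof of maximality does not go through.
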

This corollary justify that the original equivalence problem for distributions on $M$ can be reduced to the equivalence problem for their lifts.

It is more convenient to work with the projectivization of  $\mathbb PT^*M$ rather than with $T^*M$.
Here $\mathbb PT^*M$ is the fiber bundle over $M$ with the fibers that are the projectivizations of the fibers of $T^*M$.
The canonical projection $\Pi\colon T^*M\rightarrow \mathbb PT^*M$ sends the characteristic distribution
$\widehat{\mathcal C}$ on $W_D$ to the line distribution $\mathcal C$ on
$\mathbb P W_D$ ($:=\Pi(W_D)$), which will be also called the \emph{characteristic distribution}
of the latter manifold.

Note that the corank 1 distribution on $T^*M
$ annihilating the tautological Liouville form $\varsigma$ on $T^*M$ induces a contact distribution on $\mathbb P T^*M$, which in turns induces the even-contact (quasi-contact) distribution $\widetilde\Delta$
on $\mathbb P(D^2)^\perp\backslash \mathbb P(D^3)^\perp$. The characteristic line distribution $\mathcal C$ is exactly the Cauchy characteristic distribution of $\widetilde\Delta$, i.e. it is the maximal subdistribution of $\widetilde\Delta$ such that
\begin{equation}
\label{Cauchy}
[\mathcal C,\widetilde \Delta]\subset \widetilde \Delta.
\end{equation}

Further set $\mathcal J=\Pi_*\widehat{\mathcal J}$, $\mathcal V=\Pi_*\widehat              {\mathcal V}$.
Each fiber of the even-contact distribution $\widetilde \Delta$ is endowed with the canonical skew-symmetric form  $\widetilde \sigma$, up to multiplication by a non-zero constant.
This form is equal to the restriction to $\widetilde \Delta$ of the differential of any non-zero $1$-form annihilating this distribution.
%
%
%

Now let $\gamma$ be a segment of a regular abnormal extremal,
$O_\gamma$ be a neighborhood of $\gamma$ in $\mathbb P W_D$ such that the quotient
\begin{equation}
\label{abnormspace}
\mathcal N=O_\gamma/(\textrm{the charactrestic one-foliation of regular abnormal extremals})
\end{equation}
is a well defined smooth manifold.
Further, let $\Phi:O_\gamma\rightarrow \mathcal N$ be the canonical projection to the quotient manifold. Since $\mathcal C$ is the Cauchy characteristic distribution of the even-contact distribution
$\widetilde\Delta$, then  the push-forward
$\Delta:=\Phi_*\widetilde{\Delta}$  of $\widetilde \Delta$ by $\Phi$ is a contact distribution on $\mathcal N$. Each fiber of this contact distribution is endowed with the canonical symplectic form $\sigma$, defined up to up to multiplication by a non-zero constant, i.e. with the canonical conformal symplectic structure. As before, one can define the notion of skew-orthogonal complement and also of isotropic and coisotropic subspaces of the fibers of $\Delta$ with respect to this symplectic form. The subspaces of the fibers of $\Delta$ that are both isotropic and coisotropic (i.e. coincide with their skew-symmetric complement) are called \emph{Lagrangian}.

For any $\lambda\in\gamma$ define two subspaces of $\Delta(\gamma)$ as follows:

\begin{equation}
\label{Jacdef}
J(\lambda):=d\Phi \bigl(\mathcal J(\lambda)\bigr), \quad V(\lambda):=d\Phi \bigl(\mathcal V(\lambda)\bigr).
\end{equation}

Note that by constructions
\begin{equation}
\label{JVdiff}
\dim\, J(\lambda)-\dim\,V(\lambda)={\rm rank}\, D-1.
\end{equation}

Now we need to introduce some terminology. Let $\Lambda(\cdot)$ be a curve in a Grassmannian of subspaces of a vector space $L$. Let $t\mapsto\Lambda(t)$ be a parametrization of the curve $\Lambda(\cdot)$. Denote by $C(\Lambda)$ the \emph{tautological bundle over the curve $\Lambda(\cdot)$}: the fiber of $C(\Lambda)$ over the point $\Lambda(t)$ is the vector space $\Lambda(t)$.  Let $\Gamma(\Lambda)$ be the space of all smooth sections of $C(\Lambda)$ . Set
 $\Lambda^{(0)}(t):=\Lambda(t)$ and  define inductively
$$\Lambda^{(j)}(t)=\Span\{\frac{d^k}{dt^k}\ell(t): \ell\in\Gamma(\Lambda), 0\leq k\leq j\}$$ for $j\geq0$.
The space $\Lambda^{(j)}(t)$ is called the \emph{$j$th extension} or the \emph{$j$th osculating subspace} of the  curve $\Lambda(\cdot)$ at the point $\Lambda(t)$. Obviously, this definition is independent of the parametrization of the curve $\Lambda(\cdot)$.

Recall that for any two smooth vector fields $X$ and $Y$ on a manifold, one has $[X,Y]=\cfrac{d}{dt} (e^{-tX})_* Y|_{t=0}$, where $e^{tX}$ denotes the flow generated by the field $X$. This together with \eqref{Jacdef} implies that

\begin{equation}
\label{Jacfirstext}
J^{(1)}(\lambda)=d\Phi \bigl([\mathcal C,\mathcal J](\lambda)\bigr), \quad V^{(1)}(\lambda):=d\Phi \bigl([\mathcal C, \mathcal V](\lambda)\bigr).
\end{equation}

>From relation \eqref{VC} and the last relation
it follows that

\begin{eqnarray}
&~& V^{(1)}(\lambda)\subset J(\lambda), \label{VC1}\\
&~& \dim\,J^{(1)}(\lambda)-\dim J(\lambda)\leq {\rm rank} D-1. \label{rankDineq}
\end{eqnarray}

As a direct consequence of Proposition \ref{isolem}, we get

 \begin{proposition}
 \label{Jskew2}
 Assume that $\lambda\in \gamma$. The following statements hold:
\begin{enumerate}
\item  $J(\lambda)^\angle\subseteq J(\lambda)$, i.e. the space $J(\lambda)$ is a coisotropic subspace of $\Delta(\gamma)$ with respect to the canonical conformal symplectic structure;
\item  If the distribution $D$ is of odd rank then
\begin{equation}
\label{Jskewodd}
J(\lambda)^\angle= V(\lambda)
\end{equation}

\item If the distribution $D$ is of even rank then
\begin{equation*}
\dim\Bigl(J(\lambda)^\angle\Bigr)=\dim\Bigl(V(\lambda)\Bigr)+1
\end{equation*}
and
\begin{equation}
\label{CJ^angle1}
V(\lambda)=\left(J(\lambda)+\Bigl(\bigl(J(\lambda)\bigr)^\angle\Bigr)^{(1)}\right)^\angle.
\end{equation}
\item If the distribution $D$ is of rank 2, then  $J(\lambda)^\angle= J(\lambda)$, i.e  the space $J(\lambda)$ is a Lagrangian subspace of $\Delta(\gamma)$ with respect to the canonical conformal symplectic structure.
\end{enumerate}
 \end{proposition}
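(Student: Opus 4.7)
The plan is to deduce each item from the corresponding item of Proposition~\ref{isolem} by pushing the identities there down through the composition $\Psi := \Phi \circ \Pi : W_D \to \mathcal N$. First I would prove a symplectic quotient lemma: $\ker d\Psi$ is two-dimensional, spanned by the Euler vector field $E$ (generating $\ker d\Pi$) and a generator $C$ of the characteristic $\widehat{\mathcal C}$. The identity $i_E \widehat\sigma = \varsigma$ (which follows from Cartan's formula together with $\mathcal L_E \varsigma = \varsigma$ and $\varsigma(E)=0$) combined with $\widehat\sigma = d\varsigma$ yields
\[
\widehat\sigma(v,w) = \sigma(d\Psi v,\, d\Psi w) \quad \text{for all } v,w \in T_{\tilde\lambda} W_D \cap \ker\varsigma(\tilde\lambda),
\]
so that, for any subspace $\widehat L \subset T_{\tilde\lambda} W_D$ satisfying both $\widehat L \subset \ker\varsigma$ and $\Span\{E,C\} \subset \widehat L$, one obtains
\[
d\Psi(\widehat L^\angle) = \bigl(d\Psi(\widehat L)\bigr)^\angle,
\]
the inclusion $\subset$ being immediate and the reverse inclusion following by dimension count, using that the kernel of $\widehat\sigma|_{TW_D}$ is one-dimensional.

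Next I would verify the two hypotheses of the lemma for the four relevant subspaces $\widehat{\mathcal J}$, $\widehat{\mathcal J}^\angle$, $\widehat{\mathcal V}\oplus\widehat{\mathcal C}$, and $(\widehat{\mathcal V}\oplus\widehat{\mathcal C})^\angle$. One has $\widehat{\mathcal J} \subset \ker\varsigma$ because $\pi_*\widehat{\mathcal J} \subset D$ and $\tilde\lambda \in D^\perp$; hence $\widehat{\mathcal V} \subset \widehat{\mathcal J} \subset \ker\varsigma$ and $\widehat{\mathcal C} \subset \widehat{\mathcal J} \subset \ker\varsigma$, while their skew-complements automatically lie in $\ker\varsigma$. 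The inclusions $\Span\{E,C\} \subset \widehat L$ are immediate for $\widehat{\mathcal V}\oplus\widehat{\mathcal C}$ (since $E \in \widehat{\mathcal V}$, $C \in \widehat{\mathcal C}$), and for the two skew-complements it suffices to note that $E$ pairs trivially with any vector in $\ker\varsigma$ (by $i_E\widehat\sigma = \varsigma$) and that $C$ lies in the kernel of $\widehat\sigma|_{TW_D}$.

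With the lemma in hand, items (1), (2) and (5) of Proposition~\ref{Jskew2} follow by applying $d\Psi$ (which kills $\widehat{\mathcal C}$) to the identities $\widehat{\mathcal J}^\angle \subset \widehat{\mathcal J}$, $\widehat{\mathcal J}^\angle = \widehat{\mathcal V}\oplus\widehat{\mathcal C}$ (odd rank), and $\widehat{\mathcal J}^\angle = \widehat{\mathcal J}$ (rank $2$) from Proposition~\ref{isolem}. For the even-rank item (4), the dimension identity in \eqref{Jskeweven} is immediate from $\dim\widehat{\mathcal J}^\angle = \dim(\widehat{\mathcal V}\oplus\widehat{\mathcal C})+1$ together with the fact that both subspaces lose exactly two dimensions under $d\Psi$. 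For \eqref{CJ^angle1}, I would first observe that, since $\mathcal J^\angle := d\Pi(\widehat{\mathcal J}^\angle)$ is the skew-complement of $\mathcal J$ in $\widetilde\Delta$ with respect to $\widetilde\sigma$, the same argument that yields \eqref{Jacfirstext} gives $(J^\angle)^{(1)}(\lambda) = d\Phi\bigl([\mathcal C, \mathcal J^\angle](\lambda)\bigr)$, whence
\[
J(\lambda)+(J^\angle)^{(1)}(\lambda) = d\Psi\bigl(\widehat{\mathcal J}(\tilde\lambda) + [\widehat{\mathcal C},\widehat{\mathcal J}^\angle](\tilde\lambda)\bigr).
\]
Pushing the identity \eqref{CJ^angle} through $d\Psi$ via the quotient lemma then gives $V(\lambda)^\angle = J(\lambda)+(J^\angle)^{(1)}(\lambda)$, which is equivalent to~\eqref{CJ^angle1} by double skew-complementation.

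The main obstacle will be the symplectic quotient lemma: pinning down $\ker d\Psi = \Span\{E,C\}$ and verifying that $E$ and $C$ belong to each relevant $\widehat L^\angle$ (rather than merely to $\widehat L$). Once this is settled, all four items of the proposition become formal consequences of their counterparts in Proposition~\ref{isolem} combined with the first-extension formula \eqref{Jacfirstext}.
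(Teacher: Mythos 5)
Your argument is correct and follows the same route as the paper, which simply states Proposition~\ref{Jskew2} as a direct consequence of Proposition~\ref{isolem}; your symplectic quotient lemma (that $d\Psi$ intertwines skew-complements for subspaces containing $\Span\{E,C\}$ and lying in $\ker\varsigma$, with the dimension count using the one-dimensionality of $\ker\widehat\sigma|_{W_D}$) is precisely the step the paper leaves implicit. The verifications you list for each of the four subspaces are the right ones and all go through.
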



The curve of flags $\lambda \mapsto J(\lambda)$, $\lambda\in \gamma$, 
of the space $\Delta(\gamma)$
is called  the \emph{(non-extended) Jacobi curve of the regular abnormal extremals $\gamma$}. Since the construction of the Jacobi curves is intrinsic, any invariant of the Jacobi curve  with respect to the natural  action on the isotropic  and coisotropic subspaces of $\Delta(\gamma)$  of the conformal symplectic group $CSp\bigl(\Delta(\gamma)\bigr)$ produces an invariant of the distribution $D$.

\begin{remark}
\label{recover}
{\rm Note that from item (2) of Proposition \ref{Jskew2}, if ${\rm rank}\,\, D$ is odd, and formula \eqref{CJ^angle1} of item (3) of Proposition \ref{Jskew2}, if ${\rm rank}\,\, D$ is even, it follows that the curve $\lambda\mapsto V(\lambda)$, $\lambda\in \gamma$ can be recovered from the Jacobi curve
$\lambda \mapsto J(\lambda)$, $\lambda\in \gamma$.
}
\end{remark}

Our next goal is to generate more curves of isotropic and coisotropic spaces from the Jacobi curves using recursively the following two basic operations with curves $\lambda\mapsto V(\lambda)$ and  $\lambda\mapsto J(\lambda)$, $\lambda\in \gamma$: the osculations and taking the skew-orthogonal complement. For this let $\frac{1}{2}\mathbb Z:=\{\frac{i}{2}: i\in \mathbb Z\}$ and $\frac{1}{2}\mathbb Z_{\rm{odd}}:=
(\frac{1}{2}\mathbb Z)\backslash \mathbb Z$.
Depending on the rank of a distribution $D$, to the Jacobi curve $\lambda \mapsto \{V(\lambda)\subset J(\lambda)\}$, $\lambda\in \gamma$, of a regular abnormal extremal $\gamma$ we assign the curve of monotonically non-increasing (by inclusion) flags
$\lambda \mapsto \{J^i(\lambda)\}_{i\in I}$, $\lambda\in \gamma$, where

\begin{equation}
\label{indexset}
I=\begin{cases}\mathbb Z,& \text{if } {\rm rank}\, D \text { is odd},\\
\frac{1}{2}\mathbb Z_{\rm{odd}}, & \text{if } {\rm rank}\, D=2,\\
\frac{1}{2}\mathbb Z, & \text{if }{\rm rank}\, D \text{ is  even and greater than } 2 .
\end{cases}
\end{equation}

  This curve of flags is called the \emph{extended Jacobi curve of the regular abnormal extremal $\gamma$}. The construction of this curve is slightly different in each of the aforementioned cases. Therefore, we describe it separately in each
  these cases:

\begin{enumerate}
\item
{\bf $D$ has odd rank}. In this case  the extended Jacobi curve of a regular abnormal extremal $\gamma$ is the curve of flags $\lambda \mapsto \{J^i(\lambda)\}_{i\in \mathbb Z}$, $\lambda\in \gamma$, defined as follows: First, we set $J^0(\lambda):=J(\lambda)$. Second, let $J^{-i}(\lambda):=J^{(i)}(\lambda)$ for integer $i>0$ (i.e. the $i$th osculating subspace of the curve $\lambda\mapsto J(\lambda)$, $\lambda\in \gamma$ at the point $\lambda$). Finally, assume that
$J^{i}(\lambda):=\bigl(J^{(1-i)}(\lambda)\bigr)^\angle$ for integer $i>0$.

By construction, $J^i(\lambda)\subset J^{i-1}(\lambda)$ for all $i\in\mathbb Z$, subspaces $J^i(\lambda)$ are coisotropic for integer $i\leq 0$, and  isotropic for integer $i>0$. Besides, using \eqref{VC1} and the definitions of $J^i(\lambda)$ it is easy to see that

\begin{equation}
\label{compatdiff}
(J^{i})^{(1)}\subset  J^{i-1}
\end{equation}
for all $i\in\mathbb Z$. Finally by \eqref{Jskewodd} we have $V=J^{1}$.
\item
 {\bf $D$ is a rank $2$ distribution}. In this case the extended Jacobi curve of a regular  abnormal extremal $\gamma$ is the curve of flags $\lambda \mapsto \{J^i(\lambda)\}_{i\in \frac{1}{2}\mathbb Z_{\rm{odd}}}$ $\lambda\in \gamma$, defined as follows: First, we set $J^{\frac{1}{2}}(\lambda):=J(\lambda)$. Second, let $J^{\frac{1}{2}-i}(\lambda):=J^{(i)}(\lambda)$ for integer $i>0$.
Finally, assume that
$J^{\frac{1}{2}+i}(\lambda):=\bigl(J^{\frac{1}{2}-i}(\lambda)\bigr)^\angle$ for integer $i>0$.

By construction,  $J^i(\lambda)\subset J^{i-1}(\lambda)$ for all $i\in\mathbb Z$, subspaces $J^{\frac{1}{2}+i}(\lambda)$ are coisotropic for integer $i\leq 0$, and isotropic for integer $i\geq 0$. In particular, item (4) of Proposition \ref{Jskew2} says that the subspaces $J^{\frac{1}{2}}(\lambda)$ are Lagrangian. Besides, relation \eqref{compatdiff} holds for all $i\in \frac{1}{2}\mathbb Z_{\rm{odd}}$.

Finally, from \eqref{pullD2} and \eqref{Wdrank21} it follows easily that

\begin{equation}
\label{Vrecov2}
V=(J^{-\frac{1}{2}})^{\angle}=J^{\frac{3}{2}}.
\end{equation}
\item  {\bf  $D$ has an even rank greater than $2$}. The extended Jacobi curve of a regular abnormal extremal $\gamma$ is the curve of flags $\lambda \mapsto \{J^i(\lambda)\}_{i\in \frac{1}{2}\mathbb Z}$, $\lambda\in \gamma$, defined as follows:
     First, we set $J^{0}(\lambda):=J(\lambda)$ and $J^{\frac{1}{2}}(\lambda):=J(\lambda)^\angle$. Second, let $$J^{-\frac{1}{2}}=\bigl(J^{\frac{1}{2}}\bigr)^{(1)}+J^0.$$ Further, define recursively $J^{i-1}(\lambda):=(J^{i})^{(1)})(\lambda)$ for all negative $i\in \frac{1}{2}\mathbb Z$. Finally, set $J^i(\lambda):=\bigl(J^{\frac{1}{2}-i}(\lambda)\bigr)^\angle$ for all positive $i\in \frac{1}{2}\mathbb Z$.

By construction,  $J^i(\lambda)\subset J^{i-\frac{1}{2}}(\lambda)$ for all $i\in \frac{1}{2}\mathbb Z$, subspaces $J^{i}(\lambda)$ are coisotropic for all nonnegative  $i\in \frac{1}{2}\mathbb Z$ and isotropic for all positive $i\in \frac{1}{2}\mathbb Z$. Besides, relation \eqref{compatdiff} holds for all $i\in \frac{1}{2}\mathbb Z$.
\end{enumerate}

\begin{remark}
\label{evendiffrem}
Note that, in contrast to the  first two cases, in the third case the first osculation of each curve $\lambda \mapsto J^i(\lambda)$  does not belong in general to the next larger subspace in the flag but to the second next larger subspace in the flag. In particular, it does not satisfy the so-called compatibility with respect to differentiation assumption of \cite{flag2, quasi}. However, these case will be treated similarly to the first two.
\end{remark}

We say that a point $\lambda_0\in \gamma$ is a \emph{regular point of the extended Jacobi curve $\lambda \mapsto \{J^i(\lambda)\}_{i\in I}$, $\lambda\in \gamma$} if for every $i\in I$ subspaces
$J^i(\lambda)$ have the same dimension in a neighborhood of $\lambda_0$. The set of regular points is open and dense in $\gamma$. Also, if $\lambda$ is a regular point, then
\begin{equation}
\label{jumps}
\dim\, J^{i-2}(\lambda)-\dim\, J^{i-1}(\lambda)\leq \dim\,J^{i-1}(\lambda)-\dim \, J^i(\lambda), \quad \forall\, i\in I,\,i\geq 0.
\end{equation}
The last inequality together with \eqref{JVdiff} and \eqref{rankDineq} imply that
\begin{equation}
\label{jumpsrankD}
\dim\,J^{i-1}(\lambda)-\dim \, J^i(\lambda)\leq {\rm rank}\,D-1, \quad \forall\, i\in I.
\end{equation}

Finally, we say that a point $\lambda_0 \in \gamma$ is a \emph{point of maximal class of the extended Jacobi curve $\lambda \mapsto \{J^i(\lambda)\}_{i\in I}$, $\lambda\in \gamma$}, if there exists $\nu\in I$ such that $J^\nu(\lambda_0)=\Delta(\gamma)$. If $\lambda_0$ is not a point of maximal class then there exist $\nu\in I$ such that  $J^{\nu'}(\lambda_0)=J^{\nu}(\lambda_0)\subsetneq\Delta(\gamma)$ for all $\nu^\prime\in I$ such that $\nu'\leq \nu$. If the point $\lambda_0$ is regular and $\nu\in I$ is the maximal index with the property above, then the subspace $J^{\nu}(\lambda)$ is independent of $\lambda$ in a neighborhood $\tilde\gamma$ of $\lambda_0$ in $\gamma$. Hence, the subspace $J^{\nu}(\lambda_0)^\angle$ is the (maximal) common subspace for all spaces $J^{i}(\lambda)$, $i\in I$, $\lambda\in \tilde\gamma$ and the whole differential-geometric information on the extended Jacobi curve
$\lambda \mapsto \{J^i(\lambda)\}_{i\in I}$,  $\lambda\in \tilde\gamma$, is contained in the curve of flags
$\lambda \mapsto \{J^i(\lambda)/ J^{\nu}(\lambda_0)^\angle\}_{i\in I}$, $\lambda\in \tilde\gamma)$ in the conformal symplectic space $J^{\nu}(\lambda_0)/ J^{\nu}(\lambda_0)^\angle$.

\section{ Flag symbols of Jacobi curves and Jacobi symbols of distributions}
\label{flagsec}
\setcounter{equation}{0}
\setcounter{theorem}{0}
\setcounter{lemma}{0}
\setcounter{proposition}{0}
\setcounter{definition}{0}
\setcounter{corollary}{0}
\setcounter{remark}{0}
\setcounter{example}{0}

\subsection{Flag symbols of Jacobi curves}
\label{flagsec1}
Now we are ready to define the flag symbol of the (extended) Jacobi curve
at a point.
Informally speaking, the flag symbol at a point represents the tangent line to this curve at this point. To begin with the precise definition, recall that the velocity of a parametrized curve  $t\mapsto\Lambda(t)$ in a Grassmannian of subspaces of a vector space $L$ at $t=t_0$ can be naturally identified with an element of ${\rm Hom}(\Lambda(t_0), \Lambda^{(1)}(t_0)/\Lambda(t_0))$. For this, take $l\in\Lambda(t_0)$ and consider any smooth section $\ell(t)$ of the tautilogical bundle $C(\Lambda)$ such that $\ell(t_0)=l$. Then the assignment $$l\mapsto \cfrac{d}{dt} \ell(t_0)\,\,{\rm mod}\,\Lambda(t_0)$$ is independent of a choice of such section and defines the desired element of ${\rm Hom}(\Lambda(t_0), \Lambda^{(1)}(t_0)/\Lambda(t_0))$.

Using this construction and relation \eqref{compatdiff} one can identify the tangent line to the
curve $\lambda \mapsto \{J^i(\lambda)\}_{i\in I}$, $\lambda\in \gamma$, at $\lambda$ with the line in the space ${\rm Hom}(J^{i}(\lambda), J^{i-1}(\lambda)/J^{i}(\lambda))$. Moreover, again from \eqref{compatdiff} this line factors through the line in the space  ${\rm Hom}(J^{i}(\lambda)/J^{i+1}(\lambda), J^{i-1}(\lambda)/J^{i}(\lambda))$.

Therefore the tangent line to the
(extended) Jacobi curve $\lambda \mapsto \{J^i(\lambda)\}_{i\in I}$, $\lambda\in \gamma$ at $\lambda$ can be identified with a line in the space

$$\bigoplus_{i\in I}{\rm Hom}(J^{i}(\lambda)/J^{i+1}(\lambda), J^{i-1}(\lambda)/J^{i}(\lambda)).$$

Now consider the following two cases separately (the reason for the difference between these cases is explained in Remark \ref{evendiffrem} above):

\begin{enumerate}
\item
 {\bf ${\rm rank}\, D$ is odd or equal to $2$}.
 In this case let ${\rm Gr}^i(\lambda):= J^{i}(\lambda)/J^{i+1}(\lambda)$, $i\in I$, and ${\rm Gr}(\lambda)=\displaystyle{\bigoplus_{i\in I} {\rm Gr}^i(\lambda)}$ be the graded space corresponding to the filtration $\{J^i(\lambda)\}_{i\in I}$. Here the set $I$ is as in \eqref{indexset}. Then the tangent line to the
(extended) Jacobi curve $\lambda \mapsto \{J^i(\lambda)\}_{i\in I}$, $\lambda\in \gamma$ at $\lambda$ can be identified with a line in the space of degree $-1$ endomorphism of the graded space $\rm{Gr}(\lambda)$, which will be denoted by $\delta_\lambda$.

\item
{\bf $D$ has an even rank greater than $2$}. In this case let  ${\rm Gr}^i(\lambda):= J^{i}(\lambda)/J^{i+\frac{1}{2}}(\lambda)$, $i\in \cfrac{1}{2}\mathbb Z$, and ${\rm Gr}(\lambda)=\displaystyle{\bigoplus_{i\in  \frac{1}{2}\mathbb Z} {\rm Gr}^i(\lambda)}$ be the graded space corresponding to the filtration $\{J^i(\lambda)\}_{i\in  \frac{1}{2}\mathbb Z}$. By relation \eqref{compatdiff} for any $i\in \frac{1}{2}\mathbb Z$ the line in the space ${\rm Hom}(J^{i}(\lambda)/J^{i+1}(\lambda) , J^{i-1}(\lambda)/J^{i}(\lambda))$, representing the tangent line to the
curve $\lambda \mapsto \{J^i(\lambda)\}_{i\in I}$, $\lambda\in \gamma$, at $\lambda$, factors through the line in the space ${\rm Hom}({\rm Gr}^i(\lambda), {\rm Gr}^{i-1}(\lambda))$, which will be denoted by $\delta^i_\lambda$. Note that passing from the line in ${\rm Hom}(J^{i}(\lambda)/J^{i+1}(\lambda) , J^{i-1}(\lambda)/J^{i}(\lambda))$ to the line
$\delta^i_\lambda$, some information about the tangent line to the curve $\lambda \mapsto \{J^i(\lambda)\}_{i\in I}$, $\lambda\in \gamma$, at $\lambda$ is lost, however it is not important for our purposes. Let $\delta_\lambda$ be the degree $-1$ endomorphism of the graded spave ${\rm Gr}(\lambda)$ such that $\delta_\lambda|_{{\rm Gr}^i(\lambda)}=\delta^i_\lambda$.
\end{enumerate}

In both cases the degree $-1$ endomorphism $\delta_\lambda$ of the corresponding graded space ${\rm Gr}(\lambda)$ is defined.
Further note that the conformal symplectic structure  on $\Delta(\gamma)$, represented by the symplectic form $\sigma$, induces the conformal symplectic structure on the space $\rm{Gr}(\lambda)$ represented by the following symplectic form $\bar\sigma$: if $\bar x\in {\rm Gr}^i(\lambda)$ and $\bar y\in {\rm Gr}^j(\lambda)$ with $i+j=0$, then $\bar\sigma(\bar x,\bar y)=\sigma(x, y)$, where $x$ and $y$ are representatives of $\bar x$ and $\bar y$ in $J^i(\lambda)$ and $ J^j(\lambda)$ respectively; if $i+j\neq 0$, then  $\bar\sigma(\bar x,\bar y)=0$.

By construction the line of endomorphisms $\delta_\lambda $ belongs to conformal symplectic algebra $\mathfrak{csp}\bigl({\rm Gr}(\lambda)\bigr)$ (with respect to the conformal symplectic structure generated by the form $\bar \sigma$). The group of conformal symplectic endomorphisms of ${\rm Gr}(\lambda)$, preserving the grading, acts on the lines of degree $-1$ endomorphisms  by the conjugation. The orbit of the line $\delta_\lambda$ is called the \emph{symplectic flag symbol of the (extended) Jacobi curve
$\lambda \mapsto \{J^i(\lambda)\}_{i\in I}$, $\lambda\in \gamma$, at $\lambda$}.

Two flag symbols act in general on different graded spaces endowed with conformal symplectic structures. They are called \emph{isomorphic}, if they are conjugated by a conformal symplectic isomorphism  between these spaces (preserving the grading). Therefore, we can fix a model graded space $X=\displaystyle{\bigoplus_{i\in I}X^i}$ with conformal symplectic structure,  where , as before, $I=\mathbb Z,\,\frac{1}{2}\mathbb Z_{\rm odd}$, or $\frac{1}{2}\mathbb Z$, such that the symplectic product of elements of $X^i$ and $X^j$ with $i+j\neq 0$ are equal to zero. The graded space $X$ with these properties will be called the \emph{graded conformal symplectic space}. Further, one can look on the flag symbol as an orbit of a line of degree $-1$ endomorphisms from $\mathfrak{csp}(X)$ with respect to the action of the subgroup of degree $0$ endomorphisms from $\mathfrak{csp}(X)$  by the conjugation. Such object will be called a \emph{symplectic flag symbol}. In the sequel we usually refer to the flag symbol as to a line of degree $-1$ endomorphisms from $\mathfrak{csp}(X)$ representing this orbit.

\subsection{Classification of flag symbols of Jacobi curves}
\label{flagsec2}

Symplectic flag symbols can be easily classified. This classification is a little bit finer than the Jordan classification of nilpotent endomorphisms of a vector space. One has to take into account the presence of the grading and of the symplectic structure. When $I=\mathbb Z$ or $\frac{1}{2}\mathbb Z_{\rm odd}$, all symplectic flag symbols were classified in \cite[subsection 7.2]{flag2} and this classification includes as particular case all possible flag symbols of Jacobi curves  in the case of distribution of odd rank or rank $2$. The classification of  flag symbols of Jacobi curves in the case of distributions of even rank can be done similarly.
In all cases symplectic flag symbols can be described  by means of skew Young diagrams as described below.

Let $\delta_1$ and $\delta_2$ be symplectic flag symbols of the symplectic spaces $X_1$ and $X_2$, which are graded by sets $I_1$ and $I_2$ respectively (here in general we do not assume that $I_1=I_2$). The direct sum $X_1\oplus X_2$, graded by $I_1\cup I_2$, is equipped with  the natural symplectic grading such that its $i$th component is the direct sum of $i$th components of $X_1$ and $X_2$ and the symplectic form on  $X_1\oplus X_2$ is defined as follows: the restriction of this form to $X_i$ coincides with the symplectic form on $X_i$ for each $i=1,2$ and the spaces $X_1$ and $X_2$ are skew-orthogonal in $X_1\oplus X_2$.
The direct sum $\delta_1\oplus\delta_2$ is the degree $-1$ endomorphism of $X_1\oplus X_2$  such that the restriction of it to $X_i$ is equal to $\delta_i$ for each $i=1,2$.
 A symplectic flag symbol of a graded conformal symplectic space $X$ is called \emph{symplectically indecomposable} if
 it cannot be represented as a direct sum  of two symplectic flag symbols acting on (nonzero) graded conformal symplectic spaces.

 Any sympletic flag symbol is a direct sum of a finite number of symplectically indecomposable flag symbols.
 Therefore, in order to describe all symplectic flag symbols, it is enough to describe all symplectically indecomposable ones. Let us consider the same three cases as in the previous section separately:

\begin{enumerate}
\item
 In the case of $\mathbb{Z}$-graded conformal symplectic spaces (i.e., when $I=\mathbb Z$) symplectically indecomposable symbols can be described as follows \cite[subsection 7.2]{flag2}):
 Given a nonnegative integer $s$ and an integer $l$   such that $0\leq l\leq 2s$ let $X_{s,l}$ be a linear symplectic space with a basis $\{e_{s-l},\ldots, e_s,f_{-s},\ldots,f_{l-s}\}$,
\begin{equation}
\label{tuple1}
X_{s,l}=\Span\{e_{s-l},\ldots, e_s,f_{-s},\ldots,f_{l-s}\},
\end{equation}
such that $\sigma(e_i,e_j)=\sigma(f_i,f_j)=0$, $\sigma(e_i,f_{-i})=(-1)^{s-i}$ and $\sigma(e_i,f_j)=0$ if  $j\neq -i$. We turn the space $X_{s,l}$ into the symplectic graded spaces by defining the subspaces of degree $i$ to be equal to the span of all vectors with index $i$ appearing in the tuple \eqref{tuple1}. Then denote by $\delta_
{s,l}$ a degree $-1$ endomorphism of
$X_
{s,l}$ from the symplectic algebra such that $\delta_{s,l}(e_i)=e_{i-1}$ for $s-l+1\leq i\leq s$, $\delta_{s,l}(e_{s-l})=0$, $\delta_{s,l}(f_i)=f_{i-1}$ for $-s+1\leq i\leq l-s$, and
$\delta_{s,l}(f_{-s})=0$. Then \emph {any indecomposable symplectic flag symbol of a $\mathbb Z$-graded conformal symplectic space is isomorphic to the symplectic flag symbol represented by the line  $\mathbb R \delta_{s,l}$ for a nonnegative integer $s$ and an integer $l$   such that $0\leq l\leq 2s$. Moreover, symplectic flag symbols corresponding to different pair $(s, l)$ are not isomorphic.}

It is convenient to visualize a symplectically indecomposable symbol on $\mathbb Z$-graded conformal symplectic spaces via the following skew Young diagrams, consisting of two rows, having central symmetry and with the most left box in the first row   being located either above or from the left to the most left box in the second row:

\begin{equation}
\label{youngodd}
\begin{ytableau}
~& \dots&~ & ~&\dots&~\\
\none&\none&\none&~&\dots&~& ~&\dots&~
\end{ytableau} \,\,\,\, or\,\,\,\,
\begin{ytableau}
~& \dots&~\\
\none&\none&\none&\none& \none&~&\dots&~
\end{ytableau}
\end{equation}

The left diagram in \eqref{youngodd} corresponds to the case $s\leq l\leq 2s$. In this case the first $2s-l$ columns from the left consist of one box (situated at the top row), the next $2(l-s)+1$ columns consist of two boxes, and the last $2s-l$ columns consist of one box (situated at the bottom row). The right diagram in \eqref{youngodd} corresponds to the case $s>l$. In this case the first $l+1$ columns from the left consist of one box (situated at the top row), next $2(s-l)-1$ columns are just void, and the last $l+1$ columns consist of one box (situated at the bottom row).

\begin{remark}
\label{middleodd}
Note that in both cases the middle part (of columns with two or no boxes) consist of odd columns.
\end{remark}

In order to assign to a diagram of type \eqref{youngodd} the symplectic flag symbol one fills out the boxes of the diagram by vectors of the bases \eqref{tuple1} such that the top row is filled  from the left to the right with $e_s,\ldots e_{s-1},\ldots, e_{s-l}$  and the bottom row is filled  from the left to the right with $f_{l-s},f_{l-s-1},\ldots, f_{-s}$. For example, in the case $s\leq l$ the diagram is filled as follows:

\begin{equation}
\label{youngodd1}
\ytableausetup
{boxsize=2.7em}
\begin{ytableau}
e_s& \dots & e_{l-s+1} & e_{l-s}&\dots&e_{s-l}\\
\none&\none&\none&f_{l-s}&\dots&f_{s-l}& f_{s-l-1}&\dots&f_{-s}
\end{ytableau}
\end{equation}

 Then the endomorphism $\delta_{s,l}$ corresponds to the \emph{right shift} in the obtained skew Young tableaux \eqref{youngodd1} i.e. a vector in the basis \eqref{tuple1} is sent either to the vector located in the box to the right to it in the tableaux or to zero if there is no such box. Also note that the skew Young diagram \eqref{youngodd} contains all information on the grading of the corresponding vector spaces: the subspaces of the same degree are spanned by the vectors situated in the same column. Finally, the (conformal) symplectic structure is encoded by the skew
 Young diagram  as well: the symplectic product of the vectors situated in the boxes which are not centrally symmetric is equal to zero; it is equal to $\pm1$ for the vectors situated  in the boxes which are not centrally symmetric such that it is equal to $1$ for two vectors situated in the most left and the most right boxes of the diagram, i.e.  vectors $e_s$ and $f_{-s}$), and it alternates signs when we took symmetric pairs by moving with the same speed from the those boxes toward the middle of the diagram.

Now let us discuss what symplectic flag symbols may appear at regular points of Jacobi curves of a distribution $D$ of odd rank. From the relation \eqref{JVdiff}, the fact that the subspaces $J^i(\lambda)$ are obtained by osculations of the curve $\lambda\mapsto J(\lambda)$, regularity, and the inequalities \eqref{jumpsrankD} it follows that \emph{a symplectic flag symbol  at a regular point of Jacobi curves of a distribution $D$ of odd rank is isomorphic to a direct sum of $\frac{1}{2} ({\rm rank} D-1)$ of indecomposable flag symbols with $s\leq l\leq 2s$, i.e. given by skew Young diagrams as in the left of \eqref{youngodd}.}

In particular, for rank $3$ distributions  a symplectic flag symbol at a regular point $\lambda$  of a Jacobi curve is isomorphic to an  indecomposable one given by a skew Young diagrams as in the left of \eqref{youngodd}, i.e. $\delta_{s,l}$ with $s\leq l\leq 2s$. If, in addition, $\lambda$ is a point of maximal class and the dimension of the ambient manifold $M$ is equal to $n$,  then the number of boxes in this skew diagram must be equal to the rank of the contact distribution $\Delta$, i.e to $2n-6$. On the other hand, if the symbol is represented by the endomorphism $\delta_{s,l}$ then the number of box in the skew diagram is equal to $2l+2$. Consequently, in the considered case $l=n-4$ and $\frac{n-4}{2}\leq s\leq n-4$.

\begin{remark}
\label{6dim}
Note that a rank 3 distribution $D$  has the Jacobi symbol $\delta_{s,l}$ with $s< l\leq 2s$ if and only if its square $D^2(q)$ is $6$-dimensional at a generic point $q$, $\dim D^2(q)=6$. It easily follows from the expression \eqref{C3} for the characteristic rank 1 distribution. Also from this one can show that $\rank D^2=4$ if and only if  the Jacobi symbol of $D$ is equal to $\mathbb R\delta_{0,0}$, i.e. its Young diagram consists of one column with two boxes and $\rank D^2=5$ if and only if  the Jacobi symbol of $D$ is equal top $\mathbb R\delta_{s,s}$ with $s>0$.
\end{remark}

\item
In the case of $\frac{1}{2}\mathbb{Z}_{\rm odd}$-graded conformal symplectic spaces (i.e., when $I=\frac{1}{2}\mathbb{Z}_{\rm odd}$) there are two types of symplectically indecomposable symbols.
First type is represented by an endomorphism $\delta_{s,l}$ as in the previous item with $s$ being positive and belonging to $\frac{1}{2}\mathbb{Z}_{\rm odd}$ and $l$ being integer and satisfying $0\leq l\leq 2s-1$. It is encoded again by a skew Young diagram consisting of two rows as before.

Second type can be described as follows: Given a positive $m\in \frac{1}{2} Z_{\rm {odd}}$ let
\begin{equation}
\label{Lm}
\mathcal L_m=\displaystyle{\bigoplus_{\tiny{\begin{array}{c}-m\leq i\leq m,\\i\in\frac{1}{2}\mathbb Z_{\rm {odd}}\end{array}}}}E_i
\end{equation}
be a symplectic graded spaces such that $\dim E_i=1$ for every $-m\leq i\leq m$ and let $\tau_m$ be a degree $-1$ endomorphism of $\mathcal L_m$  which sends $E_i$ onto $E_{i-1}$ for every admissible $-m\leq i< m$ and  $\tau_m(E_{-m})=0$.  An indecomposable symplectic flag symbol of second type on  a $\frac{1}{2}\mathbb{Z}_{\rm odd}$-graded conformal symplectic space is isomorphic to the symplectic flag symbol represented by the line  $\mathbb R $ for a positive $m\in \frac{1}{2}\mathbb Z_{\rm {odd}}$.

It is convenient to visualize the symplectically indecomposable symbol, which is  isomorphic to the symbol represented by the endomorphism $\tau_m$,  via the Young diagrams consisting of one row with $2m+1$ boxes (note that $2m+1$ is even, because  $m\in \frac{1}{2}\mathbb Z_{\rm {odd}}$). By analogy with the previous case the endomorphism $\tau_m$   corresponds to the right shift in this diagram where the boxes from the left to the right are filled by vectors generated lines $E_i$, i.e.
\begin{equation}
\label{youngodd2}
\ytableausetup
{boxsize=2.5em}
\begin{ytableau}
\varepsilon_m& \varepsilon _{m-1}&\dots &\varepsilon_{1-m}& \varepsilon_{-m}
\end{ytableau}\,\,,  \quad \text{ where } E_i=\Span\{\varepsilon_i\}
\end{equation}

By the same arguments as in the previous item for rank $2$ distributions a symplectic flag symbol  at a regular point of its Jacobi curves is isomorphic to the indecomposable symbol, represented by the endomorphism $\tau_m$ for some positive $m\in \frac{1}{2}\mathbb Z_{\rm {odd}}$.  If, in addition, $\lambda$ is a point of maximal class  and the dimension of the ambient manifold $M$ is equal to $n$,  then the number of boxes in the corresponding skew diagram must be equal to the rank of the contact distribution $\Delta$, i.e. to $2n-6$. In this case all symplectic flag symbol at regular points are isomorphic to the symbol represented by the endomorphism $\tau_{n-\frac{7}{2}}$.

\item Now consider the case of $\frac{1}{2}\mathbb Z$-graded  symplectic space $X$. Obviously, this space is the direct sum of  its $\mathbb Z$-graded component $X_\mathbb Z$ and its $\frac{1}{2}\mathbb Z_{\rm {odd}}$-graded component $X_{\frac{1}{2}\mathbb Z_{\rm {odd}}}$. Therefore, any symplectic flag symbol $\delta$ on $V$ is the direct sum of a symplectic flag symbol on $\delta_{\mathbb Z}$ on $X_\mathbb Z$ and of a symplectic flag symbol $\delta_{\frac{1}{2}\mathbb Z_{\rm {odd}}}$ on  $X_{\frac{1}{2}\mathbb Z_{\rm {odd}}}$,
    i.e the classification of symplectic flag symbols in the considered case is reduced to the classification of symplectic flag symbols described in the previous two items. The flag symbols  $\delta_{\mathbb Z}$ and $\delta_{\frac{1}{2}\mathbb Z_{\rm {odd}}}$ will call
    the $\mathbb Z$-graded and $\frac{1}{2}\mathbb Z_{\rm {odd}}$-graded component of the flag symbol $\delta$, respectively.

Now let us discuss what symplectic flag symbols may appear at regular points of Jacobi curves of a distribution $D$ of even rank greater than $2$. From item (3) of Proposition \ref{Jskew2} it follows that $\dim\,{\rm Gr}^{\frac{1}{2}}(\lambda)=1$ (where ${\rm Gr}^{\frac{1}{2}}(\lambda)=J^{\frac{1}{2}}(\lambda)/ J^{1}(\lambda)$). This implies that the $\frac{1}{2}\mathbb Z_{\rm {odd}}$-graded component of a  symplectic flag symbol  at regular points of its Jacobi curves is isomorphic to the indecomposable symbol, represented by the endomorphism $\tau_m$ for some positive $m\in \frac{1}{2}\mathbb Z_{\rm {odd}}$. Besides, by the same arguments as in item (1) the $\mathbb Z$-graded component of  symplectic flag symbol  at a regular point of Jacobi curves is isomorphic to a direct sum of $\frac{1}{2} ({\rm rank} D-2)$ of indecomposable flag symbols with $s\leq l\leq 2s$, i.e. given by skew Young diagrams as in the left of \eqref{youngodd}. In other words, in the considered case the symplectic flag symbol at a regular point of Jacobi curves of a distribution $D$ of even rank greater than $2$ is the direct sum of a symplectic flag symbol appearing at regular points of  of Jacobi curves of distributions of rank equal to ${\rm rank}\, D-1$ and of a symplectic flag symbol  appearing at regular points of Jacobi curves of rank $2$ distributions.


%
%

\end{enumerate}

\begin{remark}
As a matter of fact all three cases considered separately can be unified in item (3): the case of item (1) corresponds to $X_{\frac{1}{2}\mathbb Z_{\rm {odd}}}=0$ and the case of item (2) corresponds to $X_{\mathbb Z}=0$.
\end{remark}

\begin{remark}
\label{Youngrem}
To an arbitrary symplectic symbol $\delta$ we can assign the \emph{skew Young diagram $\Diag(\delta)$  of $\delta$} by attaching the skew Young diagrams of its indecomposable components such that the boxes corresponding to elements of the same degree are located in one column.
Obviously, $\Diag(\delta)$ is defined up to a permutation of sub-diagrams corresponding to indecomposable components. Note that by our convention  in the $\Diag(\mathbb R\delta_{s,l})$ the most left box of the upper row is located from the left or above of the most left box of the bottom row.
Filling the boxes of the skew Young diagrams by vectors of the model graded symplectic space $X$ as in \eqref{youngodd1} and \eqref{youngodd2} (for each undecomposable component) one gets a \emph{skew Young tableaux} $\Tab(\delta)$ of $\delta$. In this way to any row of $\Tab(\delta)$ we can assign the invariant subspace of $\delta$ in $X$, spanned by the vectors appearing in the boxes of this row.
\end{remark}

\subsection{Jacobi symbols of distributions}
\label{flagsec3}

In all three cases above the set of symplectic flag symbols is discrete. Moreover, by construction the  symplectic flag symbol  $\delta_\lambda$ of the extended Jacobi curve
$\lambda \mapsto \{J^i(\lambda)\}_{i\in I}$, $\lambda\in \gamma$ is determined algebraically by the jets of the (non-extended) Jacobi curve $\lambda \mapsto \{V(\lambda)\subset J(\lambda)\}$, $\lambda\in \gamma$.
This implies that for any point $q\in M$ there is a Zariski open subset of the fiber of the bundle $\mathbb P W_D(q)\rightarrow M$ over the point $q$ such that the symplectic flag symbols of the extended Jacobi curves at all points of this subset are isomorphic to one symplectic flag symbol $\delta$. In this case we will say that $\delta$ is the \emph{Jacobi symbol} of a distribution $D$ at the point $q$.

Moreover, in contrast to Tanaka symbols, \emph{Jacobi symbols of distributions are always locally constant, i.e. in a neighborhood of a generic point they are isomorphic one to each other.}
This leads to the following new formulation: \emph{to construct canonical frames for distributions
with given constant Jacobi symbol.}

Note that for a point $\lambda_0$ of nonmaximal class the symplectic flag symbol of the Jacobi curve at $\lambda_0$ consists of the endomorphisms
acting  on the graded conformal symplectic space ${\rm Gr}(\lambda_0)$  of dimension less than the rank of the contact distribution $\Delta$.
Thereby, it will be more convenient in the sequel to modify the definition of the Jacobi symbol as follows. Let $\nu\in I$
be the maximal index for which the osculating subspaces of the Jacobi curve stabilize, i.e. such that $J^{\nu^{\,\prime}}(\lambda_0)=J^{\nu}(\lambda_0)\subsetneq\Delta(\gamma)$ for all $\nu^{\,\prime}\in I$ such that $\nu^{\,\prime}\leq \nu$. This implies that ${\rm Gr}^{\nu^{\,\prime}}(\lambda_0)=0$ for all $\nu^{\,\prime}<\nu$ and $\nu^{\,\prime}\geq \varepsilon-\nu$, where $\varepsilon=1$ if $D$ has odd rank and $\varepsilon=\frac{1}{2}$ if $D$ has even rank.
In other words, ${\rm Gr}(\lambda_0)=\displaystyle{\bigoplus_{i\in I, \nu\leq i<\varepsilon-\nu} {\rm Gr}^i(\lambda_0)}$. Assume that $\lambda_0$ belongs to the  regular abnormal extremal $\gamma$ and define the graded conformal symplectic space

$$\widetilde{{\rm Gr}}(\lambda_0)=\Delta(\gamma)/J^\nu(\lambda_0)\oplus {\rm Gr}(\lambda_0)
\oplus J^\nu(\lambda_0)^\angle$$
such that $\Delta(\gamma)/J^\nu(\lambda_0)$ has weight preceding $\nu$ in $I$ and  $J^\nu(\lambda_0)^\angle$ has the weight $\varepsilon-\nu$. The \emph{modified symplectic flag symbol} of the extended Jacobi curve at $\lambda_0$ is represented by the line of endomorphisms $\delta_{\rm{mod}}$ from $\mathfrak{csp}\Bigl(\widetilde{{\rm Gr}}(\lambda_0)\Bigr)$ such that the restrictions of these endomorphisms to  ${\rm Gr}(\lambda_0)$ form the line of endomorphisms
of $\mathfrak{csp}\Bigl({\rm Gr}(\lambda_0)\Bigr)$, representing the symplectic flag symbol  of the extended Jacobi curve at $\lambda_0$  and the restriction of the endomorphisms $\delta_{\rm mod}$ to the added spaces $\Delta(\gamma)/J^\nu(\lambda_0)$ and $J^\nu(\lambda_0)^\angle$ is equal to zero. In other words the modified symplectic flag symbol is the direct sum of the  original flag symbol with $\dim  J^\nu(\lambda_0)^\angle$ copies of the symbol $\mathbb R\delta_{\varepsilon-\nu;0}$. Note that for the points of maximal class (and only for them) the modified symbol coincides with the original one. Finally note that in the same way one defines the modified Jacobi symbol of a distribution $D$ at a point.
In the sequel the Jacobi symbol of a distribution will be denoted by $\delta$ and the modified Jacobi symbol by $\delta_{\rm mod}$.

\subsection{Symplectically flat distributions with given Jacobi symbols}
\label{flagsec4}

We finish the section with an example of a distribution with given Jacobi symbol $\delta$.
For this fix  a skew Young tableaux $\Tab(\delta)$ and consider the subtableuax $\Tab_{-}(\delta)$ consisting of all columns with vectors of weight not greater than $\frac{1}{2}$. Consider the Lie algebra  $\mathfrak f_\delta$ spanned by the vectors in the boxes of $\Tab_{-}(\delta)$ and two additional vectors, denoted by $x$ and $z$, such that the only nonzero Lie brackets in the product table of the Lie algebra  $\mathfrak f_\delta$ with respect to the chosen basis  come from the following properties:
\begin{enumerate}
\item The operator ${\rm ad}\, x$ acts as the right shift in the tableaux   $\Tab_{-}(\delta)$;
\item For any subtableaux of $\Tab(\delta)$, corresponding to an indecomposable symbol represented by $\delta_{s,l}$ (if exists),  the Lie brackets of two vectors situated in the column corresponding to weight $0$ is equal to $\pm z$;
\item   For a subtableaux of  $\Tab(\delta)$, corresponding to an indecomposable symbol represented by $\tau_m$ (if exists), the Lie brackets of two vectors in the box of weight $\frac{1}{2}$ and $-\frac{1}{2}$ is equal to $\pm z$.
\end{enumerate}

Then consider the distribution $D^{\rm sp}_\delta$ which is the left invariant distribution on the connected simply connected Lie group with the Lie algebra $\mathfrak \mathfrak f_\delta$ such that at the identity $e$ of the group the subspace $D^{\rm sp}_\delta(e)$ is equal to the span of $x$ and the vectors from the tableaux $\Tab_{-}(\delta)$ with the nonnegative weights (or ,equivalently, with weight $0$ or $\frac{1}{2}$).
It is easy to see from the definition that he distribution
$D^{\rm sp}_\delta$ has Jacobi symbol $\delta$.

\begin{definition}
The distribution $D^{\rm sp}_\delta$ is called the symplectically flat distribution with Jacobi symbol $\delta$.
\end{definition}

The reason for this name is that these distributions play the same role in local geometry of distribution with fixed Jacobi symbol, as flat distribution of fixed type in the local geometry of distributions with given Tanaka symbol (see the last sentence of Theorem \ref{maintheor} below).

Let us describe the symplectically flat distributions of indecomposable symbols more explicitly
\begin{example}
\label{rank3flatex}
Assume that $\delta=\mathbb R\delta_{s,l}$ with nonnegative $s$ and an integer $l$ satisfying $s\leq l\leq 2s$. Also assume that the Young tableaux $\Tab(\mathbb R\delta_{s,l})$ is as in \eqref{youngodd1}. Then the Lie algebra
$\mathfrak f_{\mathbb R\delta_{s,l}}$ is spanned by $e_0,\ldots, e_{s-l}, f_0,\ldots, f_{-s}, x, z$ (in particular it is $l+4$-dimensional) and the only nonzero Lie brackets  in the product table of the Lie algebra  $\mathfrak f_{\mathbb R\delta_{s,l}}$ with respect to the chosen basis are

\begin{equation}
\label{Lieprodrank3}
\begin{split}
~&[x, e_i]=e_{i-1},\quad \forall i:s-l+1\leq i\leq 0, i\in\mathbb Z;\\
~&[x, f_i]=f_{i-1},\quad \forall j:-s+1\leq j\leq 0,j\in\mathbb Z;\\
~&[e_0,f_0]=z.
\end{split}
\end{equation}

The symplectically flat distribution $D^{\rm sp}_{\mathbb R\delta_{s,l}}$ with Jacobi symbol  $\mathbb R\delta_{s,l}$ is the left invariant rank $3$ distribution on the connected simply connected Lie group with the Lie algebra $\mathfrak f_{\mathbb R\delta_{s,l}}$ such that
$$D^{\rm sp}_{\mathbb R\delta_{s,l}}(e)={\rm span}\{x,e_0, f_0\}.$$
We can introduce another grading by negative integers  on the Lie algebra $\mathfrak f_{\mathbb R\delta_{s,l}}$ such that $\mathfrak f_{\mathbb R\delta_{s,l}}^{-1}={\rm span}\{x,e_0, f_0\}$ and the Lie algebra  $\mathfrak f_{\mathbb R\delta_{s,l}}$ is fundamental with respect to this grading, i.e. it is generated by  $\mathfrak f_{\mathbb R\delta_{s,l}}^{-1}$. In other words, the distribution $D^{\rm sp}_{\mathbb R\delta_{s,l}}$ is the flat distribution in the Tanaka sense for the fundamental Tanaka symbol given by $\mathfrak f_{\mathbb R\delta_{s,l}}$ with the new grading.
\end{example}

Similarly to Example \ref{rank3flatex} symplectically flat distributions of odd rank are flat in Tanaka sense for the corresponding Tanaka symbol.

\begin{example}
\label{rank2flatex}
Assume that $\delta=\mathbb R\tau_m$ where $m$ is positive, belongs to $\frac{1}{2}\mathbb Z_{\rm {odd}}$ and the Young tableaux $\Tab(\mathbb R\tau_m)$ is as in \eqref{youngodd2}. Then the Lie algebra
$\mathfrak f_{\mathbb R\tau_m}$ is spanned by $\varepsilon_{\frac{1}{2}},\ldots, \varepsilon_{-m}, x, z$ (in particular it is $m+\frac{7}{2}$-dimensional) and the only nonzero Lie brackets  in the product table of the Lie algebra  $\mathfrak f_{\mathbb R\tau_m}$ with respect to the chosen basis are

\begin{equation}
\label{Lieprodrank2}\begin{split}
~&[x, \varepsilon_i]=\varepsilon_{i-1},\quad \forall i:-m+1\leq i\leq \frac{1}{2}, i\in\frac{1}{2}\mathbb Z_{{\rm odd}};\\
~&[\varepsilon_{\frac{1}{2}},\varepsilon_{-\frac{1}{2}}]=z \noindent.
\end{split}
\end{equation}

The symplectically flat distribution $D^{\rm sp}_{\mathbb R\tau_m}$ with Jacobi symbol  $\mathbb R\tau_m$ is the left invariant rank $2$ distribution on the connected simply connected Lie group with the Lie algebra $\mathfrak f_{\mathbb R\tau_m}$ such that

$$D^{\rm sp}_{\mathbb R\tau_m}(e)={\rm span}\{x,e_\frac{1}{2}\}.$$

As in the previous example, we can introduce another grading by negative integers on the Lie algebra $\mathfrak f_{\mathbb R\tau_m}$ such that $\mathfrak f_{\mathbb R\tau_m}^{-1}={\rm span}\{x,e_\frac{1}{2}\}$ and the Lie algebra  $\mathfrak f_{\mathbb R\tau_m}$ is fundamental with respect to this grading.
In other words, the distribution $D^{\rm sp}_{\mathbb  R\delta_{s,l}}$ is the flat distribution (in the Tanaka sense) for the fundamental Tanaka symbol given by  $\mathfrak f_{\mathbb R\tau_m}$ with the new grading.
\end{example}

\begin{example}
\label{rank4flatex}
Assume that $\delta=\mathbb R\left(\delta_{s,l}\oplus\tau_m\right)$ where  $s$ is a nonnegative integer,  $l$ is an integer  satisfying $s\leq l\leq 2s$, and $m$ is positive and belongs to $\frac{1}{2}\mathbb Z_{\rm {odd}}$ . Also assume that the Young tableaux $\Tab(\delta)$ is the union of Young tableaux appearing in \eqref{youngodd1} and \eqref{youngodd2}. Then the Lie algebra
$\mathfrak f_{\delta}$ is spanned by $e_0,\ldots, e_{s-l}, f_0,\ldots, f_{-s},\varepsilon_{\frac{1}{2}},\ldots, \varepsilon_{-m}, x, z$ (in particular it is $l+m+\frac{11}{2}$-dimensional) and the only nonzero Lie brackets  in the product table of the Lie algebra  $\mathfrak f_{\delta}$ with respect to the chosen basis are as in \eqref{Lieprodrank3} and \eqref{Lieprodrank2}. The symplectically flat distribution $D^{\rm sp}_{\delta}$ with Jacobi symbol  $\delta$ is the left invariant rank $4$ distribution on the connected simply connected Lie group with the Lie algebra $\mathfrak f_{\delta}$ such that
\begin{equation}
\label{flatD4}
D^{\rm sp}_{\delta}(e)={\rm span}\{x,e_0, f_0,\varepsilon_{\frac{1}{2}}\}.
\end{equation}
An interesting phenomenon here is that in contrast to previously discussed cases of distributions of odd rank and rank $2$ the symplectically flat distribution $D^{\rm sp}_{\delta}$ is not the flat distribution in Tanaka sense for a fundamental Tanaka symbol. Indeed, if it was the case
all vectors appearing in \eqref{flatD4} must have weight $-1$. From this, on one hand, $z=[e_0, f_0]$ has weight $-2$, but on the other hand, $\varepsilon_{-\frac{1}{2}}=[x, \varepsilon_{\frac{1}{2}}]$ has weight $-2$ and therefore $z=[\varepsilon_{\frac{1}{2}}, \varepsilon_{-\frac{1}{2}}]$ must have weight $-3$. We got a contradiction.

However, we can take another grading of the Lie algebra $\mathfrak f_{\delta}$ by negative integers such that the Lie algebra $\mathfrak f_{\delta}$ is generated by $\mathfrak f_{\delta}^{-1}\oplus\mathfrak f_{\delta}^{-2}$ and the  distribution $D^{\rm sp}_{\delta}$ is the left invariant distribution with $D^{\rm sp}_{\delta}(e)=\mathfrak f_{\delta}^{-1}\oplus\mathfrak f_{\delta}^{-2}$. For this set
$$\mathfrak f_{\delta}^{-1}={\rm span}\{\varepsilon_{\frac{1}{2}}\}, \quad \mathfrak f_{\delta}^{-2}={\rm span}\{x,e_0, f_0\}.$$
Note that the same phenomenon holds for all symplectically flat distribution of even rank greater than $2$.
\end{example}




\section{Canonical frame for distributions with given Jacobi symbols}
\setcounter{equation}{0}
\setcounter{theorem}{0}
\setcounter{lemma}{0}
\setcounter{proposition}{0}
\setcounter{definition}{0}
\setcounter{corollary}{0}
\setcounter{remark}{0}
\setcounter{example}{0}
\label{jacsymbsec}

To construct the canonical frame for a distribution $D$ with given Jacobi symbol we use the general theory of curves of flags developed in \cite{flag2} (with a slight modification in the case of distributions of even rank greater than $2$) and the prolongation procedure for flag structures developed in our previous paper \cite{quasi} . By a flag structure we mean a manifold endowed with a distribution and a submanifold of a flag variety of each fiber of this distribution. In our case \emph{the flag structure associated with the distribution $D$} is obtained as follows: the ambient manifold is the space of regular abnormal extremals $\mathcal N$ of $D$ as in \eqref{abnormspace}, the distribution on $\mathcal N$ is the contact distribution $\Delta$ and the submanifold of flags on the fibers $\Delta(\gamma)$ are the extended Jacobi curve
$\lambda \mapsto \{J^i(\lambda)\}_{i\in I}$, $\lambda\in \gamma$. Note that although  $\mathcal N$ may not be a manifold in general we can always overcome this issue by restricting ourselves to local considerations on $W_D$. Besides, in the case of distributions of odd rank or rank 2  the symplectic flag symbols of the modified
extended Jacobi curves are exactly the flag symbols of this flag structure in the sense of \cite{quasi}.

Assume that the modified Jacobi symbol $\delta_{\mod}$ of the distribution $D$ is represented by a line $\delta_{\mod}$ of degree $-1$ endomorphisms from $\mathfrak {csp}(X)$ , where $X=\displaystyle{\bigoplus_{i\in I}X^i}$ is a graded conformal symplectic space, $\dim X={\rm rank}\, \Delta$. Note that the grading on $X$ induces the natural filtration $\{X_i\}_{i\in I}$ with
\begin{equation}
\label{filtX}
X_i=\displaystyle{\bigoplus_{k\geq i}}X^k.
\end{equation}
 Furthermore, the grading  and the filtration on $X$ induce the natural grading and filtration on $\mathfrak{csp}(X)$. It is important to stress that the graded space $\gr X=\displaystyle{\bigoplus_{i\in I}} X_{i}/X_{n(i)}$ associated with this filtration, where $n(i)$ is the index next to the index $i$ in $I$ with respect to the natural order,  is canonically identified with $X$ via the canonical identification of $X_{i}/X_{n(i)}$ with $X^i$.

 The grading on $\mathfrak{csp}(X)$ is defined as follows:
$\mathfrak{csp}(X)=\displaystyle{\bigoplus_{j\in J}\mathfrak{csp}^j(X)}$, where
$$\mathfrak{csp}^j(X)=\{A\in \mathfrak{csp}(X): A(X^i)\in X^{i+j} \quad \forall i\in I\}$$
and $J=\{i_1-i_2: i_1, i_2\in I\}$, i.e. $J=\mathbb Z$, if $I=\mathbb Z \text{ or } \cfrac{1}{2}\,\mathbb Z_{\rm odd}$ and $J=\cfrac{1}{2}\,\mathbb Z$, if $I=\cfrac{1}{2}\,\mathbb Z$. The filtration on  $\mathfrak{csp}(X)$ is defined as follows:
$\{\mathfrak{csp}_j(X)\}_{j\in J}$, where
$$\mathfrak{csp}_j(X)=\{A\in \mathfrak{csp}(X): A(X_i)\in X_{i+j} \quad \forall i\in I\}=\displaystyle{\bigoplus_{k\geq j}}\mathfrak{csp}^k(X).$$

Now the filtration on $\mathfrak{csp}(X)$ induces the filtration on any its subspace $L$: $\{L_j\}_{j\in J}$ with $L_j=L\cap \mathfrak{csp}_j(X)$. Let $\gr L$ be the corresponding graded space, $\gr L=\displaystyle{\bigoplus_{j\in J}}L_{j}/L_{n(j)},$ where, similarly to above,
 $n(j)$ is the index next to the index $j$ in $J$  with respect to the natural order. Note that we have the following canonical identifications:
 \begin{equation}
 \label{grid}
 \gr\mathfrak{csp}(X)\cong \mathfrak{csp}(\gr X) \cong \mathfrak{csp}(X).
 \end{equation}
The first identification is standard, in the last identification we use the identification of $\gr X$ with $X$ described above.  Using these identifications the space $\gr L$ can be seen as a subspace of $\mathfrak{csp}(X)$, i.e. of the same space as the original subspace $L$. Note that in general $\gr L$ differs from $L$.


Further note that  the Tanaka symbol of the contact distribution $\Delta$ is isomorphic to the Heisenberg algebra of the corresponding dimension that will be denoted by $\eta$ with the natural grading $\eta=\eta^{-1}\oplus\eta^{-2}$. The model graded conformal symplecic space  $X$ can be identified with $\eta^{-1}$. Then the algebra $\mg^0(\eta)$ of all derivations of $\eta$ preserving the grading is equal to $\mathfrak{csp}(X)$.

As in Remark \ref{Tanakastr} denote by $P^0(\eta)$ the principal bundle over $\mathcal N$ such that the fiber $P^0(\eta)_\gamma$ over a point $\gamma\in \mathcal N$ consists of all isomorphisms between the conformal symplectic spaces $X$ and $\Delta(\gamma)$, i.e. linear isomorphisms preserving the conformal symplectic structures on $X$ and $\Delta(\gamma)$.
In other words, $P^0(\eta)$ is the bundle of conformal symplectic frames of each fiber of the contact distribution $\Delta$.
%
%
%

At the first step of our constructions, following \cite{flag2},
 we would like to encode the presence of the Jacobi curves on the fibers of the contact distribution $\Delta$ by adopting to them a fiber subbundle  of $P^0(\eta)$ of the minimal possible dimension in a canonical way.
 In contrast to Tanaka structures of Remark \ref{Tanakastr} such subbundle cannot be chosen in general as a principal reduction of the bundle $P^0(\eta)$. However, this subbundle belongs to  more general class of fiber subbundles of $P^0$ that we call \emph{quasi-principal bundles}.

 To define the latter notion first note that, since the tangent spaces to the fibers of $P^0(\eta)$ are canonically identified with the
 Lie algebra   $\mathfrak {csp}(X)$,  the tangent space to a fiber subbundle $P^0$ of $P^0(\eta)$
 at a point $\psi$ is identified canonically with a subspace $L_\psi$ of $\mathfrak{csp}(X)$.  If $P^0$ is a principal reduction of $P^0(\eta)$ these subspaces
 are the same for different points $\psi$ and equal to the Lie subalgebra of the structure Lie group of $P$.
 In general these subspaces vary when one varies the points $\psi$. However, we can weaken the assumption of constancy of these subspaces by assuming that the corresponding graded space $\gr L_\psi$, seen again as the subspace of $\mathfrak{csp}(X)$ under the identifications \eqref{grid}, is independent of $\psi$ and equal to a subalgebra $\mathfrak g^0$ of $\mathfrak{csp}(X)$. In this case we say that the fiber subbundle $P^0$ of $P^0(\eta)$ is a \emph{quasi-principal bundle of type $(\eta, \mathfrak g^0)$}.

In our case the algebra $\mathfrak g^0$ can be described purely algebraically in terms of the modified Jacobi symbol $\delta_{\rm mod}$, namely in terms of
the \emph{universal algebraic prolongation $\mathfrak u^F(\delta_{\rm mod})$ of $\delta_{\rm mod}$} (in $\mathfrak {csp}(X)$), which plays the same role in the geometry of curves of flags as the Tanaka universal algebraic prolongation on the geometry of filtered structures \cite{flag2}. This is by definition the largest graded Lie subalgebra of the Lie algebra  $\mathfrak {csp}(X)$ such that its negative part coincides with $\delta_{\mod}$. The algebra $\mathfrak u^F(\delta_{\rm mod})$ can be explicitly constructed by induction.

First let us describe this inductive construction in the case when $I=\mathbb Z \text{ or } \cfrac{1}{2}\mathbb Z_{\rm odd}$ (corresponding to the cases when the original distributions $D$ is of odd rank or of rank 2, respectively).
Set $\mathfrak u_{-1}^F(\delta_{\rm mod}):=\delta_{\rm mod}$ and define by induction in $k\in \mathbb Z$, $k\geq 0$
\begin{equation}
\label{kprolong}
\mathfrak u_k^F(\delta_{\rm mod}):=\{v\in \mathfrak {csp}^{k}(X):[v,\delta_{\rm mod}]\in \mathfrak u_{k-1}^F(\delta)\}.
\end{equation}
The space $\mathfrak u_k^F(\delta_{\rm mod})$ is called the \emph {$k$th algebraic prolongation of the symbol $\delta_{\rm mod}$}. Then
\begin{equation}
\label{AUF}
\mathfrak u^F(\delta_{\rm mod})=\displaystyle{\bigoplus_{k\in \mathbb Z,\, k\geq -1}\mathfrak u_k^F(\delta_{\rm mod})}
\end{equation}

In the case when $I=\cfrac{1}{2}\,\mathbb Z_{\rm odd}$ (corresponding to the cases when the original distributions $D$ is of even rank greater than $2$) the following modification is needed:
Set $\mathfrak u_{-1}^F(\delta_{\rm mod}):=\delta_{\rm mod}$ and  $\mathfrak u_{-\frac{1}{2}}^F(\delta_{\rm mod}):=0$ and define $\mathfrak u_k^F(\delta_{\rm mod})$ as in \eqref{kprolong} where $k$ are nonnegative and belong  $\cfrac{1}{2}\,\mathbb Z_{\rm odd}$. Then the algebra $\mathfrak u_k^F(\delta_{\rm mod})$ satisfies \eqref{AUF} with $\mathbb Z$ replaced by $\cfrac{1}{2}\,\mathbb Z_{\rm odd}$ in the summation of the righthand side.

As in the theory of  Tanaka structures, one can define the flat curve with given symplectic flag  symbol, which is in a sense the simplest (the most symmetric) curve among all curves of symplectic flags with this flag symbol. In our case consider the filtration/ the flag $\{X_i\}_{i\in I}$  on the graded conformal symplectic space $X$, where subspaces $X_i$ are as in \eqref{filtX} .

\begin{definition}
\label{flatcurve}
The \emph{flat curve $\mathcal F_{\delta_{\mod}}$ of flags with flag symbol $\delta_{\mod}$} is, up to the action of the conformal symplectic group $\mathfrak{csp}(X)$ on the corresponding flag variety,  the closure of the orbit of this filtration under the action  of the one-parametric subgroup generated by the line $\delta_{\mod}$, namely the closure of the curve $x\mapsto \{e^x X_i\}_{i\in I}, \, x\in \delta_{\mod}$.
\end{definition}

It can be shown by direct computations that for the symplectically flat distribution  $D^{\rm sp}_{\delta}$ the germ of Jacobi curves at a generic point are germs of flat curves with flag symbol $\delta$. Moreover, the corresponding flag structure is locally isomorphic to the left-invariant flag structure  on the Heisenberg group of dimension equal to $\dim \mathcal N$ (where $\mathcal N$ is the space of regular abnormal extremals) such that at the identity of the group the curve of flags in the corresponding fiber of the contact distribution is the flat curve with flag symbol $\delta$.

The building blocks for a more geometric description of flat curves of flags with given flag symbol are rational normal curves in  projective spaces. Recall that a rational normal curve in $r$-dimensional projective space $\mathbb P^r$ is a curve represented as
$$t\mapsto [1:t:t^2:\ldots t^r]$$
 in some homogeneous coordinates.
If $\delta_{\mod}=\mathbb R\tau_m$ for some positive $m\in \frac{1}{2}\mathbb Z_{\rm {odd}}$ (as in the case of $(2, \frac{2m+7}{2})$-distributions of maximal class), then the flat curve $\mathcal F_{\mathbb R\tau_m}$ is the curve of complete flags such that the curve of one dimensional subspaces of the flags in $\mathcal F_{\mathbb R\tau_m}$ is a rational normal curve in the $(2m+1)$-dimensional  projective space $\mathbb P X$ and all other subspaces of the flags in $\mathcal F_{\mathbb R\tau_m}$ are obtained by osculation of the rational normal curve.

For a general symplectic flag symbol $\delta_{\mod}$ fix a skew Young tableaux  $\Tab(\delta_{\rm{mod}})$ of $\delta_{\rm{mod}}$, assume that this tableaux consists of $\tilde k$ rows (recall that $\tilde k=\rank D-1$),  and let $\widetilde L_i$, $1\leq i\leq \tilde k$, be
the subspaces of $X$  spanned by the vectors appearing in the boxes of the $i$th row of $\Tab(\delta_{\rm{mod}})$. Then intersecting the subspaces of flags of the flat curve $\mathcal F_{\delta_{\mod}}$  with the subspace $\widetilde L_i$ one gets again the curve of osculating subspaces of a rational normal curve in the projective space $\mathbb P \widetilde L_i$. Informally speaking, the flat curve $\mathcal F_{\delta_{\mod}}$ is a direct sum of $\tilde k$ curves of osculating subspaces sitting in $\widetilde L_i$ of rational normal curves such that the grading is taken into account.

\begin{remark}
\label{symmflatrem}
(Geometric interpretation of the universal algebraic prolongation in terms of flat curves)
Finally note (\cite{doubkom}) that the algebra $\mathfrak u^F(\delta_{\mod})$ is isomorphic to the algebra of infinitesimal symmetries (in $\mathfrak {csp}(X)$) of the flat curve $\mathcal F_{\delta_{\mod}}$.

\end{remark}

\begin{theorem}
\label{quasidelta}
 To a germ of distribution $D$ with constant Jacobi symbol $\delta$ one can assign in a canonical way a quasi-principal subbundle of $P^0(\eta)$ of type $\bigl(\eta, \mathfrak  u^F(\delta_{\rm mod})\bigr)$, where $\delta_{\rm mod}$ is the modified Jacobi symbol.
\end{theorem}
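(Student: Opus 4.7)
The plan is to single out the frames in $P^0(\eta)$ that are adapted to the extended Jacobi curve and to verify that the resulting subbundle has the required graded tangent structure. At each $\gamma$ in a generic open subset of $\mathcal N$, I declare $P^0_\gamma\subset P^0(\eta)_\gamma$ to consist of those conformal symplectic isomorphisms $\psi\colon X\to\Delta(\gamma)$ for which there is a regular point $\lambda\in\gamma$ of the extended Jacobi curve satisfying: (i) $\psi$ sends the canonical filtration $\{X_i\}_{i\in I}$ onto the filtration $\{J^i(\lambda)\}_{i\in I}$ in $\Delta(\gamma)$, suitably extended at non-maximal class points by the copies of $\mathbb R\delta_{\varepsilon-\nu,0}$ built into the modification; and (ii) the induced graded isomorphism $\gr\psi\colon X\to\gr\Delta(\gamma)$ carries the line $\delta_{\rm mod}\subset\mathfrak{csp}(X)$ onto the tangent line of the modified extended Jacobi curve at $\lambda$. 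Since (i) determines $\lambda$ uniquely from $\psi$ at generic points, varying $\lambda$ along $\gamma$ sweeps out precisely the one-dimensional direction in the fiber that realizes the degree $-1$ piece $\mathfrak u^F_{-1}(\delta_{\rm mod})=\delta_{\rm mod}$ of the prolongation.

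Next I compute the tangent space $L_\psi\subset\mathfrak{csp}(X)$ to $P^0_\gamma$ at $\psi$ by differentiating (i) and (ii). Writing a variation as $v\in\mathfrak{csp}(X)$ filtered by $\{\mathfrak{csp}_j(X)\}_{j\in J}$, condition (i) forces the leading graded term $\bar v_k$ of weight $k$ to satisfy $[\bar v_k,\delta_{\rm mod}]\in\gr L_\psi$ at weight $k-1$, because the bracket with $\delta_{\rm mod}$ produces the infinitesimal shift of the filtration induced by an infinitesimal shift of $\lambda$. Condition (ii) on the weight $0$ part reduces to preservation of the flag symbol, which again takes the form $[\bar v_0,\delta_{\rm mod}]\in\delta_{\rm mod}$. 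Inducting on the weight and matching against the recursion \eqref{kprolong}, one obtains $\gr L_\psi=\mathfrak u^F(\delta_{\rm mod})$ independently of $\psi$, which is exactly the defining property of a quasi-principal subbundle of type $\bigl(\eta,\mathfrak u^F(\delta_{\rm mod})\bigr)$. Canonicity of the construction is automatic since conditions (i) and (ii) depend only on the intrinsic flag structure associated with $D$.

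The main obstacle lies in the case of distributions of even rank greater than $2$: here the extended Jacobi curve is indexed by $\tfrac{1}{2}\mathbb Z$, and by Remark \ref{evendiffrem} the first osculation of each $\lambda\mapsto J^i(\lambda)$ jumps past the intermediate half-integer weight, so the tangent line to the curve is no longer fully captured by the line $\delta_{\rm mod}$. One must verify that the information lost in passing from the tangent to the symbol gets absorbed in the fiber without affecting the adaptation conditions, which is precisely what justifies the convention $\mathfrak u^F_{-1/2}(\delta_{\rm mod})=0$ in the inductive formula and which in turn forces $\gr L_\psi$ to coincide with $\mathfrak u^F(\delta_{\rm mod})$ in this setting as well. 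A parallel verification shows that the trivial summands added in the modification of the symbol at non-maximal class points contribute no spurious tangent directions, so that $P^0$ is well-defined as a quasi-principal bundle over an open dense subset of $\mathcal N$.
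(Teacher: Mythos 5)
There is a genuine gap in your argument, and it sits exactly where you claim the induction closes. Your adaptation conditions (i) and (ii) are purely \emph{first-order} conditions on the frame: (i) says $\psi$ matches the filtration $\{J^i(\lambda)\}_{i\in I}$ for some $\lambda$, and (ii) constrains only the associated graded map $\gr\psi$, which is insensitive to any perturbation of $\psi$ by an element of positive filtration degree. Concretely, if $v\in\mathfrak{csp}_j(X)$ with $j\geq 1$, then the perturbed frame still satisfies (i) for the \emph{same} $\lambda$ (it preserves the filtration) and still satisfies (ii) (it does not change $\gr\psi$). Hence the fiber tangent space satisfies $\gr L_\psi\supset\bigoplus_{k\geq 1}\mathfrak{csp}^k(X)$, and a short computation gives $\gr L_\psi=\delta_{\rm mod}\oplus\mathfrak u^F_0(\delta_{\rm mod})\oplus\bigoplus_{k\geq 1}\mathfrak{csp}^k(X)$, which is strictly larger than $\mathfrak u^F(\delta_{\rm mod})$ whenever some $\mathfrak u^F_k(\delta_{\rm mod})\subsetneq\mathfrak{csp}^k(X)$ for $k\geq 1$ --- the typical situation. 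Your key step, that ``condition (i) forces the leading graded term $\bar v_k$ to satisfy $[\bar v_k,\delta_{\rm mod}]\in\gr L_\psi$ at weight $k-1$,'' is false for $k\geq 1$: varying $\lambda$ only contributes the single degree $-1$ direction, it does not impose the recursion \eqref{kprolong} on positive-degree variations.

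The missing idea is that cutting the positive part of the fiber down from $\mathfrak{csp}_+(X)$ to $\mathfrak u^F_+(\delta_{\rm mod})$ requires using \emph{higher-order} jets of the extended Jacobi curve together with a normalization of the structure functions of the moving frame. This is exactly what the paper does by invoking Theorem 4.4 of \cite{flag2}: the reduction is obtained step by step, and at each positive degree the frame is normalized by requiring the relevant structure function to lie in a chosen complement $W$ to $\mathfrak u^F_+(\delta_{\rm mod})+[\delta_{\rm mod},\mathfrak{csp}_+(X)]$ in $\mathfrak{csp}_+(X)$ (see the remark immediately following the theorem). In particular the resulting subbundle is canonical only after such a choice of $W$ is fixed, so your closing claim that canonicity is ``automatic'' also overstates the situation. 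Your remarks on the even-rank case and on the trivial summands added in the modification address real issues, but they do not repair the first-order versus higher-order problem above.
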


Note that the assignment in Theorem \ref{quasidelta} is uniquely determined by a choice  of a so-called normalization condition for the structure equation of the moving frames associated with the extended Jacobi cirves.
The normalization conditions are given  by a subspace  $W$  complementary to the subspace $\mathfrak u^F_+(\delta_{\rm mod})+[\delta_{\rm mod}, \mathfrak {csp}_+(X)]$ in $\mathfrak {csp}_+(X) $ , where $\mathfrak {csp}_+(X)$ is the nonnegative part of the graded algebra $\mathfrak {csp}(X)$
and $\mathfrak u^F_+(\delta_{\rm mod})= u^F(\delta_{\rm mod})\cap \mathfrak {csp}_+(X)$.
This theorem is a direct consequence of \cite[Theorem 4.4]{flag2} applied to the extended Jacobi curve of each regular abnormal extremal $\gamma$ in the case of distributions of odd rank or rank 2 and a straightforward generalization of this result in the case of distributions of even rank greater than $2$. As was already mentioned in Remark \ref{evendiffrem} the latter case does not satisfy the compatibility with respect to differentiation assumption.  However the constructions in this case are literally the same as in the proof of Theorem 4.4 of \cite{flag2}.

The result of Theorem \ref{quasidelta} can be seen as the zero step of the prolongation procedure for the construction of the canonical frame for the distribution $D$ if compared with the prolongation procedure for Tanaka structures (as in  Remark \ref{Tanakastr} and Theorem \ref{tanthm}). In \cite{quasi} we have shown that the analog of Theorem \ref{tanthm} is valid for quasi-principle bundle as well. This implies the following

\begin{theorem}
\label{maintheor}
Assume that $D$ is a distribution with constant  Jacobi symbol $\delta$ and $\delta_{\rm mod}$ is the modified Jacobi symbol of $D$.
Assume that the universal algebraic prolongation $\mathfrak u\bigl(\eta, \mathfrak u^F(\delta_{\rm mod})\bigr)$ of the pair $(\eta, \mathfrak u^F(\delta_{\rm mod}))$ is finite dimensional and $k\geq 0$ is the maximal integer such that $\mathfrak u^k\bigl(\eta,\mathfrak u^F(\delta_{\rm mod}) \bigr)$ is not zero.
Then
to the distribution $D$
one can assign, in a canonical way,
a sequence of bundles $\{P^i\}_{i=0}^k$ such that  $P^0$ is as in Theorem \ref{quasidelta} and $P^i$ for $i\geq 0$ is an affine bundle over $P^{i-1}$ with fibers being affine spaces over the linear space of dimension equal to $\dim \,\mathfrak u^i\bigl(\eta,\mathfrak u^F(\delta_{\rm mod})\bigr)$ for any $i=1,\dots k$ and such that $P^k$ is endowed with the canonical frame.
In particular,
the dimension of the algebra of infinitesimal symmetries of a distribution $D$ with Jacobi symbol $\delta$ is not greater then $\dim \mathfrak u\bigl(\eta, \mathfrak u^F(\delta_{\rm mod})\bigr)$. This upper bound
is sharp and is achieved if and only if a distribution is locally equivalent to the symplectically  flat distribution $D^{\rm sp}_{\delta}$ for which the algebra of infinitesimal symmetries is isomorphic to $\mathfrak u\bigl(\eta, \mathfrak u^F(\delta_{\rm mod})\bigr)$.
\end{theorem}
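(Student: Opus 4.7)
The plan is to deduce this from the prolongation theory for quasi-principal bundles developed in \cite{quasi}, with the zero step supplied by Theorem \ref{quasidelta}. First I would invoke Theorem \ref{quasidelta} to produce $P^0$, the canonical quasi-principal subbundle of $P^0(\eta)$ of type $(\eta, \mathfrak u^F(\delta_{\rm mod}))$. On $P^0$ there is a tautological $\eta$-valued $1$-form (the soldering form), whose structure equations can be normalized by choosing, at each subsequent level, a complement to the image of a certain Spencer-type coboundary operator. This is the exact analog of the step-by-step reduction in Tanaka's theory (as recalled in the Introduction via formulas \eqref{iprolong}--\eqref{prolongk}), now carried out in the quasi-principal setting. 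The case of distributions of even rank greater than $2$ does not quite fit the compatibility-with-differentiation framework of \cite{flag2}, but the prolongation procedure of \cite{quasi} goes through verbatim since all arguments are made fiber-wise on $\Delta$ and the necessary modifications are only combinatorial (replacing the set $\mathbb Z$ of weights by $\tfrac{1}{2}\mathbb Z$).

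The inductive step is the following: assuming $P^{i-1}$ has been constructed, the ambiguity in a partial coframe at level $i$ is parametrized by a vector space canonically isomorphic to $\mathfrak u^i\bigl(\eta, \mathfrak u^F(\delta_{\rm mod})\bigr)$. After fixing a normalization complement of $\partial\bigl(\mathfrak u^{i+1}\bigr)$ in the appropriate Hom-space one obtains $P^i\to P^{i-1}$ as an affine bundle with fibers of dimension $\dim \mathfrak u^i\bigl(\eta, \mathfrak u^F(\delta_{\rm mod})\bigr)$. By the assumption that $k$ is the last index with $\mathfrak u^k \neq 0$, at step $k+1$ there is no further freedom, so the tautological $1$-form on $P^k$, augmented by the Maurer--Cartan-type forms along the fibers of $P^k\to P^{k-1}\to\dots\to P^0\to \mathcal N$, yields the canonical absolute parallelism on $P^k$. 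A routine telescoping gives $\dim P^k = \dim \mathcal N + \sum_{i\geq 0}\dim \mathfrak u^i = \dim \mathfrak u\bigl(\eta,\mathfrak u^F(\delta_{\rm mod})\bigr)$, taking into account that $\dim\Delta=\dim \eta^{-1}$ and $\dim \mathcal N =\dim\eta$.

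The upper bound for the infinitesimal symmetry algebra then follows by standard reasoning: any local symmetry of $D$ preserves the associated flag structure on $\mathcal N$, lifts uniquely to each $P^i$ (by canonicity of the construction) and in particular to an automorphism of the absolute parallelism on $P^k$; such an automorphism is determined by its value at a single point, which gives the bound $\dim\mathfrak u\bigl(\eta,\mathfrak u^F(\delta_{\rm mod})\bigr)$. The main content is to show that this bound is attained exactly for $D^{\rm sp}_\delta$. For this I would argue in two directions. For sharpness, direct computation (as sketched in Examples \ref{rank3flatex}--\ref{rank4flatex}) shows that $D^{\rm sp}_\delta$ has an abundance of left-invariant symmetries and that the extended Jacobi curve along any regular abnormal extremal of $D^{\rm sp}_\delta$ is (locally) the flat curve $\mathcal F_{\delta_{\rm mod}}$ of Definition \ref{flatcurve}; then Remark \ref{symmflatrem} identifies the reduction at level $0$ with the whole group whose Lie algebra is $\mathfrak u^F(\delta_{\rm mod})$, and all higher prolongations contribute as well, yielding a symmetry algebra of full dimension $\dim \mathfrak u\bigl(\eta,\mathfrak u^F(\delta_{\rm mod})\bigr)$. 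Conversely, if the bound is attained, then the curvature of the resulting parallelism must vanish identically, which forces the extended Jacobi curves to be flat and the flag structure to be locally homogeneous and isomorphic to that of $D^{\rm sp}_\delta$; by Corollary \ref{recovercor} the distribution $D$ itself is then locally equivalent to $D^{\rm sp}_\delta$.

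The hard part of this program is not the formal inductive construction, which is a direct application of \cite{quasi}, but the verification that in the even rank case greater than $2$ all the prolongation lemmas of \cite{flag2, quasi} remain valid despite the failure of the compatibility-with-differentiation assumption noted in Remark \ref{evendiffrem}; and, on the other side, the verification of the uniqueness statement (sharpness ``only if''), which requires identifying vanishing of the full curvature of the canonical parallelism on $P^k$ with local equivalence to $D^{\rm sp}_\delta$ via the standard Frobenius/Cartan argument for absolute parallelisms modeled on a Lie algebra.
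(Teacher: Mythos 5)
Your construction of the tower $\{P^i\}$ follows the same route as the paper: Theorem \ref{quasidelta} supplies $P^0$, and the higher steps are an application of the prolongation procedure for quasi-principal bundles from \cite{quasi} (with the paper likewise only remarking, not re-proving, that the even-rank-$>2$ case goes through despite Remark \ref{evendiffrem}). The dimension count and the upper bound on the symmetry algebra are also as in the paper. So for that part the proposal is fine.

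There is, however, a genuine gap in your treatment of the ``only if'' direction of sharpness. You write that if the bound is attained then ``the curvature of the resulting parallelism must vanish identically,'' and you locate the remaining difficulty in the Frobenius-type step ``vanishing curvature $\Rightarrow$ local equivalence to $D^{\rm sp}_\delta$.'' That last step is the easy one. The implication you assert without argument --- maximal symmetry dimension $\Rightarrow$ identically vanishing structure functions --- is precisely the nontrivial point, and it is \emph{false} for general Tanaka-type structures: the paper itself notes in Remark \ref{Tanakastr} that the analogous uniqueness statement fails for Riemannian metrics (constant nonzero curvature is maximally symmetric but not flat). Maximal symmetry only forces local homogeneity, hence \emph{constant} structure functions; to conclude that these constants are zero one needs the existence of a grading element in $\mathfrak u\bigl(\eta, \mathfrak u^F(\delta_{\rm mod})\bigr)$, which acts with strictly positive eigenvalue on the components of the curvature and therefore kills any constant value. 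This is exactly the ingredient the paper invokes (``using the arguments of existence of so-called grading element \dots similarly to \cite{tan2},\cite{yamag}''), and your proposal needs to supply it before the uniqueness of the maximally symmetric model can be claimed.
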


Note that the statement of this Theorem about sharpness of the bounds and uniqueness of maximally symmetric model up to an equivalence does not directly follow from Theorem 2.3 of \cite{quasi} but it can be proved without difficulties using the arguments of existence of so-called grading element
in the algebra $\mathfrak u\bigl(\eta, \mathfrak u^F(\delta_{\rm mod})\bigr)$  similarly to \cite{tan2},\cite{yamag}.

\section{Jacobi symbols $\delta$ with finite dimensional algebra $\mathfrak u(\eta, \mathfrak u^F(\delta_{\rm mod}))$}
\setcounter{equation}{0}
\setcounter{theorem}{0}
\setcounter{lemma}{0}
\setcounter{proposition}{0}
\setcounter{definition}{0}
\setcounter{corollary}{0}
\setcounter{remark}{0}
\setcounter{example}{0}
\label{findimsec}
Now the natural question is for what Jacobi symbols $\delta$ is the algebra  $\mathfrak u(\eta, \mathfrak u^F(\delta_{\rm mod}))$  finite dimensional?

\begin{theorem}
\label{finitetype}
Assume that $D$ is a distribution with constant  Jacobi symbol $\delta$ and $\delta_{\rm mod}$ is the modified Jacobi symbol of $D$.
\begin{enumerate}
\item
If ${\rm rank}\, D$ is odd then  $\mathfrak u\bigl(\eta, \mathfrak u^F(\delta_{\rm mod})\bigr)$ is finite dimensional if and only if  $\delta_{\rm mod}$ is isomorphic to the direct sum of symplectically
indecomposable symbols generated by $\delta_{s,l}$ with $s<l\leq 2s$, $s,l$ are positive integers. In other words, each  symplectically  indecomposable component of $\delta_{\rm mod}$ has the skew Young diagram consisting of two rows as in \eqref{youngodd} and  having at least three columns with two boxes;

\item If ${\rm rank}\, D=2$  then  $\mathfrak u\bigl(\eta, \mathfrak u^F(\delta_{\rm mod})\bigr)$ is finite dimensional if and only if  $\delta_{\rm mod}$ is isomorphic to the symbol generated by
$\tau_m$ with $m>\frac{1}{2}$, $m\in\cfrac{1}{2}\,\mathbb Z_{\rm{odd}}$. In other words, the skew  Young diagram of $\delta_{\rm mod}$ consists of one row with at least $4$ boxes.

\item
If ${\rm rank}\, D$ is even then  $\mathfrak u\bigl(\eta, \mathfrak u^F(\delta_{\rm mod})\bigr)$ is finite dimensional if and only if  $\delta_{\rm mod}$ is isomorphic to the direct sum of symplectically
indecomposable symbols generated by $\delta_{s,l}$ with $s<l\leq 2s$, $s,l$ are positive integers, and  of exactly one symplectic symbol  generated by $\tau_m$ with $m\geq\frac{1}{2}$, $m\in\cfrac{1}{2}\,\mathbb Z_{\rm{odd}}$. In other words, the symplectically indecomposable components of $\delta_{\rm mod}$ have the skew Young diagrams consisting of either two rows as in \eqref{youngodd} with at least 3 columns with two boxes or of one row  and the latter component appears exactly ones.
\end{enumerate}
\end{theorem}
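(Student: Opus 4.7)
The strategy is to combine the Spencer and Tanaka finiteness criteria (Theorems~\ref{spencerthm} and~\ref{Tanakafin}) with the $\sll_2$-representation theory induced on $\mathfrak{csp}(X)$ by a generator $e$ of the nilpotent line $\delta_{\rm mod}$. By Jacobson--Morozov, one can embed $e$ into an $\sll_2$-triple $(e,h,f) \subset \mathfrak{csp}(X)$ with $h$ proportional to the grading element of $X$. The resulting $\sll_2$-decomposition of $X$ is read off the skew Young diagram $\Diag(\delta_{\rm mod})$: each indecomposable block $\delta_{s,l}$ contributes two copies of the irreducible $V_l$ of highest weight $l$ (one from each row of the subdiagram), while each $\tau_m$-block contributes one copy of $V_{2m}$. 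An induction on degree based on the recursive formula \eqref{kprolong} then identifies $\mathfrak{u}^F(\delta_{\rm mod})$ with the span of those $\ad(e)$-chains in $\mathfrak{csp}(X)\cong \R\oplus S^2X$ (using the symplectic form to identify $\mathfrak{sp}(X)\cong S^2X$) which start at an $\ad(f)$-highest weight vector and fit inside the relevant graded layers.

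Next apply the Spencer--Tanaka criterion in the Heisenberg contact setting: $\mathfrak{u}(\eta,\mathfrak{u}^F(\delta_{\rm mod}))$ is finite-dimensional if and only if the subspace $\mathfrak{u}^F(\delta_{\rm mod})\subset \mathfrak{csp}(X)$ contains no nonzero \emph{symplectic rank-one element}, i.e.\ no nonzero symmetric square $v\odot v\in S^2X$. Thus finiteness is equivalent to the combinatorial question: for which $\delta_{\rm mod}$ does $\mathfrak{u}^F(\delta_{\rm mod})$ contain a symmetric square?

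The answer is obtained by analyzing indecomposable blocks and their direct sums. A symmetric square $v\odot v$ can enter $\mathfrak{u}^F(\delta_{\rm mod})$ only if $v$ is (a linear combination of) $\ad(f)$-highest weight vectors and its degree is compatible with the filtration built from \eqref{kprolong}. For a single block $\delta_{s,l}$ the critical boundary is $l=s$: the unique doubled central column yields a vector whose symmetric square survives the filtration into $\mathfrak{u}^F_0$, forcing infinite type, whereas for $l>s$ the square is pushed out of the appropriate layer by $\ad(e)$, leaving finite type. For a single block $\tau_m$ the analogous boundary is $m=\tfrac12$. For direct sums, two or more $\tau$-components produce a cross-square $(v_1+v_2)^{\odot 2}\in \mathfrak{u}^F$ regardless of $m$, so at most one $\tau$-component is permitted. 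Finally, in the presence of at least one $\delta_{s,l}$-block the cross-commutators with the $\tau_{1/2}$ block shift the would-be square out of the filtration, which is what accounts for the asymmetry between rank $2$ (no $\delta_{s,l}$, so $m=\tfrac12$ is forbidden) and even rank greater than $2$ (where $m=\tfrac12$ is admissible). Assembling these per-block conditions yields the three items of the theorem.

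The main obstacle is this last delicate interaction: verifying that in the even-rank case greater than $2$ the mere presence of one $\delta_{s,l}$-block kills the would-be symmetric square coming from a $\tau_{1/2}$-block, while in pure rank $2$ the same square survives. Carrying this out requires a careful bookkeeping of $\ad(e)$-weights across blocks together with the filtration conditions \eqref{kprolong}, and is also the step where the distinction $s<l$ (rather than the weaker $s\leq l$) emerges.
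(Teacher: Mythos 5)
Your high-level route is the same as the paper's: reduce finiteness of $\mathfrak u\bigl(\eta,\mathfrak u^F(\delta_{\rm mod})\bigr)$ via the Tanaka and Spencer criteria (Theorems \ref{Tanakafin} and \ref{spencerthm}) to the nonexistence of rank-one elements in $\mathfrak r\bigl(\mathfrak u^F(\delta_{\rm mod})\bigr)\subset\mathfrak{sp}(X)$ after complexification, and then settle that question with $\sll_2$-representation theory, block by block. But the combinatorial core of your plan contains a false claim and leaves the decisive step undone. The false claim: two $\tau$-components do \emph{not} produce a rank-one ``cross-square'' regardless of $m$. For two blocks $\mathcal L_{m_1},\mathcal L_{m_2}$ the nonnegatively graded $\sll_2$-submodule of $\Hom(\mathcal L_{m_1},\mathcal L_{m_2})$ vanishes unless $m_1=m_2$, in which case it is only $\mathbb R\,\Id$ and contains no rank-one element; e.g.\ $\tau_{5/2}\oplus\tau_{5/2}$ passes the rank-one test. (This configuration happens not to occur among Jacobi symbols of distributions, where the $\tfrac12\Z_{\rm odd}$-graded component is structurally a single indecomposable block, so the ``exactly one'' in item (3) needs no finiteness argument at all; but the error shows your bookkeeping of which squares survive is not reliable.)

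More seriously, to prove the ``if'' direction you must exclude \emph{every} rank-one element of $\mathfrak r\bigl(\mathfrak u^F(\delta_{\rm mod})\bigr)$ over $\mathbb C$, not merely symmetric squares of $\ad(f)$-highest-weight vectors or of vectors in doubled central columns: rank-one elements can a priori sit anywhere in the top irreducible component of $S^2(E_i)$ or of $\Hom(\widetilde L_i,\widetilde L_j)$, and also in the diagonal summand $\varphi_{\delta_{\rm mod}}(\sll_2)\oplus\mathfrak z$ with nonzero restriction to several rows. The paper closes this by (i) the fact that rank-one elements of a tensor product of irreducibles meet only the top component (Lemma \ref{larg1}), combined with the invariance of $\mathfrak l(X)$ under the projections onto the summands in \eqref{idsp2pos}, and (ii) for the diagonal part, the observation that the restriction to each row is a sum of an element with spectrum an arithmetic progression centered at $0$ and a multiple of the identity (or zero), so rank one forces the supporting row to have dimension at most $2$ and the element to vanish on every other row, hence to vanish identically once a second row exists. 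Point (ii) is precisely what produces the $m=\tfrac12$ asymmetry between items (2) and (3): the offending rank-one element in the pure $\tau_{1/2}$ case is $\delta_{\rm mod}$ itself (degree $-1$), which is never ``shifted out of the filtration'' by cross-commutators — it simply acquires rank greater than one as soon as another block is present. Note also that the distinction $s<l$ versus $s\le l$ is already decided inside a single $\delta_{s,l}$-block (the top component $\Pi_{2l}$ of $S^2(E_{s,l})$ lies entirely in nonnegative degrees exactly when $l\le s$), not in the cross-block interaction where you locate it. Until these points are supplied, the characterization is not established.
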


Note that in item (3) of this theorem, in contrast to item 2, the Young diagram of the  symplectically indecomposable component consisting of one row may have $2$ boxes.

As a direct consequence of Theorem \ref{finitetype} we get the following
\begin{corollary}
\label{maxclasscor}
If  $\mathfrak u\bigl(\eta, \mathfrak u^F(\delta_{\rm mod})\bigr)$ is finite dimensional then $\delta=\delta_{\rm mod}$, i.e. the distribution $D$ is of maximal class.
\end{corollary}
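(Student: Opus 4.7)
The proof proceeds by contrapositive: assume $\delta\neq\delta_{\rm mod}$ (equivalently, the distribution is not of maximal class) and show that $\mathfrak u\bigl(\eta,\mathfrak u^F(\delta_{\rm mod})\bigr)$ must be infinite dimensional. The plan is to invoke Theorem \ref{finitetype} directly, once the precise shape of the extra indecomposable summands that the modification procedure injects into $\delta_{\rm mod}$ has been identified.

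First I would unpack the definition of the modified Jacobi symbol given in subsection \ref{flagsec3}. At a generic point $\lambda_0$ of non-maximal class one has $J^\nu(\lambda_0)\subsetneq\Delta(\gamma)$ for the stabilizing index $\nu\in I$, and $\delta_{\rm mod}$ is obtained from $\delta$ by taking the direct sum with $\dim J^\nu(\lambda_0)^\angle$ copies of the indecomposable symbol $\mathbb R\delta_{\varepsilon-\nu;0}$. The crucial observation is that each of these extra indecomposable summands has second parameter $l=0$, so its skew Young diagram consists of two isolated boxes (or of a single column with two boxes when $s=0$) located symmetrically about weight zero; in particular every such summand is of $\delta_{s,l}$-type, never of $\tau_m$-type.

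Second I would feed this information into the three cases of Theorem \ref{finitetype}. In case (1) (odd rank) every indecomposable component of a finite-type $\delta_{\rm mod}$ must satisfy $s<l\leq 2s$, which forces $l\geq 1$; in case (2) (rank two) a finite-type $\delta_{\rm mod}$ is required to be a single indecomposable of $\tau_m$-type, with no $\delta_{s,l}$-summands at all; in case (3) (even rank $>2$) the $\delta_{s,l}$-components must again satisfy $s<l\leq 2s$ while only one $\tau_m$-component is allowed. In each of the three cases the presence of even a single extra summand $\mathbb R\delta_{\varepsilon-\nu;0}$, which has $l=0$ and is of $\delta$-type, violates the classification given in Theorem \ref{finitetype}, and so $\mathfrak u\bigl(\eta,\mathfrak u^F(\delta_{\rm mod})\bigr)$ is infinite dimensional, as required.

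The main delicate point will be the bookkeeping in the rank-two case, where $I=\frac{1}{2}\mathbb Z_{\rm odd}$ and the index $\varepsilon-\nu$ lands at various positive half-odd values; one needs to confirm that in each such situation the extra summands are genuine $\delta_{s,0}$-type objects of the $\frac{1}{2}\mathbb Z_{\rm odd}$-graded classification (not absorbed into the existing $\tau_m$ summand), so that the single-$\tau_m$ requirement of Theorem \ref{finitetype}(2) is genuinely obstructed. Outside of this verification, the argument is a routine matching of the added summands against the list in Theorem \ref{finitetype}.
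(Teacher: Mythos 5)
Your proof is correct and is essentially the paper's own argument: the paper states Corollary \ref{maxclasscor} as a direct consequence of Theorem \ref{finitetype}, and your unpacking — non-maximal class forces $\delta_{\rm mod}$ to acquire summands $\mathbb R\delta_{\varepsilon-\nu,0}$ with $l=0$, which are excluded in each of the three cases of that theorem — is exactly the intended reasoning. Your flagged concern about the rank-two bookkeeping is resolved by the definition of the modified symbol as a direct sum, so the added two-row components cannot be absorbed into the $\tau_m$ summand.
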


Also as a direct consequence of item (1) of Theorem \ref{finitetype} and Remark \ref{6dim} one has the following

\begin{corollary}
\label{finitetyperank3}
For a rank $3$ distribution $D$ the algebra $\mathfrak u\bigl(\eta, \mathfrak u^F(\delta_{\rm mod})\bigr)$ is finite dimensional if and only if $D$ is of maximal class and $\dim D^2(q)=6$ for generic point $q$.
\end{corollary}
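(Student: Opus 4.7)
The plan is to chain together the three facts explicitly named in the one-line remark preceding the corollary. First I would recall from subsection \ref{flagsec2} that for a rank $3$ distribution the symplectic flag symbol at a regular point of its Jacobi curve is necessarily an \emph{indecomposable} symbol of type $\delta_{s,l}$ with $s\leq l\leq 2s$. Indeed, in the odd rank case the Jacobi symbol splits as a direct sum of $\tfrac{1}{2}(\rank D - 1)$ indecomposable summands of the form $\delta_{s,l}$, and for $\rank D = 3$ this count is exactly $1$, so no genuine decomposition occurs.

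Next I would apply item (1) of Theorem \ref{finitetype} directly: for odd rank distributions the finiteness of $\mathfrak u(\eta,\mathfrak u^F(\delta_{\rm mod}))$ is equivalent to each indecomposable summand of $\delta_{\rm mod}$ being a $\delta_{s,l}$ with the \emph{strict} inequality $s < l \leq 2s$. Combined with the previous paragraph this pins down $\delta_{\rm mod} = \mathbb R \delta_{s,l}$ with $s < l \leq 2s$. The ``maximal class'' half of the conclusion then follows for free from Corollary \ref{maxclasscor}, which already guarantees $\delta = \delta_{\rm mod}$ as soon as the algebra is finite dimensional; so one may freely replace $\delta_{\rm mod}$ by $\delta$ in the chain of equivalences.

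It remains to translate the algebraic condition ``$\delta = \mathbb R \delta_{s,l}$ with $s < l \leq 2s$'' into the differential geometric condition ``$\dim D^2(q) = 6$ generically.'' This is precisely the content of Remark \ref{6dim}, itself a short calculation from the explicit formula \eqref{C3} for the characteristic line distribution. Running the equivalence in both directions yields the corollary.

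I do not anticipate any real obstacle here: every ingredient has been prepared in the preceding sections, and the only subtlety is bookkeeping when one is allowed to identify $\delta$ with $\delta_{\rm mod}$. That subtlety is dispatched cleanly by Corollary \ref{maxclasscor}, so the proof should reduce to a few lines of citation.
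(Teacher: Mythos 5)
Your proposal is correct and follows exactly the route the paper intends: the corollary is stated there as a direct consequence of item (1) of Theorem \ref{finitetype} and Remark \ref{6dim}, with Corollary \ref{maxclasscor} (and the fact that $\delta=\delta_{\rm mod}$ precisely at points of maximal class) handling the identification of $\delta$ with $\delta_{\rm mod}$ in both directions. Nothing is missing.
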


Using Remark \ref{middleodd}, Theorem \ref{finitetype} can be equivalently reformulated in the following shorter way:

\begin{theorem}
\label{finitetype1}
Assume that $D$ is a distribution with constant  Jacobi symbol $\delta$ and $\delta_{\rm mod}$ is the modified Jacobi symbol of $D$. Then the algebra $\mathfrak u\bigl(\eta, \mathfrak u^F(\delta_{\rm mod})\bigr)$ is finite dimensional if and only if the skew Young diagram $\Diag(\delta_{\rm mod})$ of $\delta_{\rm mod}$ satisfies both of the following two conditions:
\begin{enumerate}
\item
 It does not contain two rows such that the most left box of one of the rows is located in the same column or from the right to the most right box of another row;
\item
 It does not consist of one row with two boxes.
 \end{enumerate}
\end{theorem}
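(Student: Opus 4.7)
My plan is to derive Theorem~\ref{finitetype1} from the equivalent but more concrete Theorem~\ref{finitetype}, by unpacking the combinatorial conditions (1)--(2) into the explicit description of indecomposable components from subsection~\ref{flagsec2}: condition (1) applied within a single $\delta_{s,l}$-diagram forces $s<l$ (the leftmost box of the bottom row at weight $l-s$ must lie strictly to the left of the rightmost box of the top row at weight $s-l$), and applied across distinct one-row $\tau_m$-diagrams in the even-rank case forbids more than one such component; condition (2) excludes the rank two exception $\tau_{1/2}$. So I would focus on proving Theorem~\ref{finitetype}.

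The proof naturally splits into two stages, one for each of the successive prolongation procedures in the formation of $\mathfrak u(\eta,\mathfrak u^F(\delta_{\rm mod}))$. In the first stage I would analyze when the intermediate algebra $\mathfrak u^F(\delta_{\rm mod})\subset \mathfrak{csp}(X)$ is finite dimensional using the Spencer-type criterion (Theorem~\ref{spencerthm}). The line $\delta_{\rm mod}$ always extends to an $\mathfrak{sl}_2$-triple $(E,H,F)$ inside $\mathfrak{csp}(X)$, with $F$ a generator of $\delta_{\rm mod}$ and $H$ the grading element, because $\Diag(\delta_{\rm mod})$ is literally the $\mathfrak{sl}_2$-weight decomposition of $X$, each row being a single irreducible submodule. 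Decomposing $\mathfrak{csp}(X)$ as an $\mathfrak{sl}_2$-module under $\ad$ and unwinding the inductive definition \eqref{kprolong}, the subspace $\mathfrak u^F(\delta_{\rm mod})$ identifies with the span of weight vectors of weight $\geq -2$ inside those $\mathfrak{sl}_2$-irreducibles of $\mathfrak{csp}(X)$ that meet the line $\mathbb R F$ at their lowest weight slot. Under the conditions of Theorem~\ref{finitetype}, only finitely many such irreducibles occur and each has bounded highest weight, yielding finiteness of $\mathfrak u^F(\delta_{\rm mod})$.

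In the second stage I would apply the Tanaka criterion (Theorem~\ref{Tanakafin}) to the contact pair $(\eta,\mathfrak u^F(\delta_{\rm mod}))$. For the Heisenberg symbol $\eta$ the Tanaka prolongation is finite dimensional if and only if the standard Spencer prolongation of $\mathfrak u^F(\delta_{\rm mod})\cap\mathfrak{sp}(X)$ inside $\mathfrak{gl}(X)$ is finite dimensional; using the grading element $H\in \mathfrak u^F(\delta_{\rm mod})$ this in turn reduces to a linear-algebraic condition on the $\mathfrak{sl}_2$-module $X$ that is equivalent to the row-overlap condition on $\Diag(\delta_{\rm mod})$. For the converse (necessity) direction, when conditions (1) or (2) fail I would construct explicit infinite towers of prolongations case by case: (a) if some indecomposable $\delta_{s,l}$ has $s\geq l$, the rows of its diagram are disjoint ($s>l$) or overlap in a single column ($s=l$), and projecting onto the span of one row produces unbounded families of $\mathfrak{sl}_2$-intertwiners that give nontrivial elements in every $\mathfrak u^F_k(\delta_{\rm mod})$; (b) two or more $\tau_m$-components in even rank are handled analogously via cross-block intertwiners inside $\mathfrak{csp}(X)$; (c) the $\tau_{1/2}$ exception is a direct computation showing that $\mathfrak u(\eta,\mathfrak u^F(\mathbb R\tau_{1/2}))$ coincides with the infinite dimensional contact symmetry algebra of a three-dimensional Heisenberg group, consistent with the fact that this symbol appears for flat Engel $(2,4)$-distributions.

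The main obstacle will be the combinatorial bookkeeping in the sufficiency direction when $\Diag(\delta_{\rm mod})$ has several indecomposable components of different sizes: the off-diagonal blocks in $\mathfrak{csp}(X)$ intertwining distinct irreducible $\mathfrak{sl}_2$-summands of $X$ carry nontrivial $\mathfrak{sl}_2$-decomposition of their own, and one must show that none of these cross-block pieces yields an infinite sequence of weight vectors surviving the conditions imposed by the diagram. This is precisely where the strict inequality "leftmost box of one row strictly to the left of rightmost box of the other" becomes sharp, and where Remark~\ref{middleodd} on the odd parity of the central part of $\Diag(\delta_{s,l})$ is used to eliminate the borderline cases $l=s$ and $l=s+1$ from the counting.
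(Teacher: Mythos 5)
The overall reduction you propose --- Tanaka's criterium (Theorem \ref{Tanakafin}) to pass from $\mathfrak u\bigl(\eta,\mathfrak u^F(\delta_{\rm mod})\bigr)$ to the standard prolongation of $\mathfrak u^F(\delta_{\rm mod})\cap\mathfrak{sp}(X)$, then Spencer's criterium --- is indeed the paper's route, but your execution contains a structural misconception and leaves the essential step unproved. Your ``first stage'' addresses a non-question: $\mathfrak u^F(\delta_{\rm mod})$ is by construction a graded subalgebra of $\mathfrak{csp}(X)$ with $X$ finite dimensional, hence is \emph{always} finite dimensional; Spencer's criterium, which concerns standard prolongations of subspaces of $\gl(Y)$, does not apply to it. Moreover your description of $\mathfrak u^F(\delta_{\rm mod})$ as the span of weight vectors of weight $\geq-2$ in the $\sll_2$-irreducibles of $\mathfrak{csp}(X)$ meeting $\mathbb R F$ at their lowest slot does not agree with the actual structure $\mathfrak r\bigl(\mathfrak u^F(\delta_{\rm mod})\bigr)=\mathfrak l(X)\rtimes_{\varphi_{\delta_{\rm mod}}}\sll_2$ of \eqref{semidir}, where $\mathfrak l(X)$ is the sum of the $\sll_2$-irreducibles of $\mathfrak{sp}(X)$ concentrated entirely in nonnegative degree.

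The genuine gap is in the sufficiency direction, which amounts precisely to showing that under conditions (1)--(2) the algebra $\mathfrak l(X)\rtimes\sll_2$ contains no rank-one endomorphisms (over the algebraic closure); you name this as ``the main obstacle'' and defer it, but this is where all the work lies. The paper carries it out by splitting $\mathfrak{sp}(X)$ along the rows of the tableau, computing $\mathfrak n(Y_1,Y_2)$, $\mathfrak s(E_{s,l})$ and $\mathfrak s(F_{s,l})$ as explicit sums of irreducibles (formulas \eqref{Y1Y2}, \eqref{ngrseps}, \eqref{ngrseps2}), using that only the top irreducible of $\Hom(Y_1,Y_2)$ carries rank-one elements (Lemma \ref{larg1}), and then excluding rank-one elements with nonzero component in $\mathfrak a$ via the finer decomposition $\mathfrak r\bigl(\mathfrak u^F(\delta_{\rm mod})\bigr)=\mathfrak a\oplus\mathfrak p$ and a case analysis on $\dim\widetilde L_{i_0}\leq 2$; none of this is recoverable from your outline. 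Your necessity mechanism (a) is also wrong as stated: a failure of condition (1) cannot produce ``nontrivial elements in every $\mathfrak u^F_k(\delta_{\rm mod})$'', since these spaces sit inside $\mathfrak{csp}^k(X)$ and vanish for large $k$; what it produces is a rank-one element of $\mathfrak l(X)$, and infiniteness then follows from Spencer applied to the standard prolongation, not from $\mathfrak u^F$ itself. Finally, your reading of condition (1) as forbidding two one-row $\tau_m$-components is incorrect --- each such row contains boxes of both positive and negative weight, so the leftmost box of one never reaches the column of the rightmost box of the other; the uniqueness of the $\tau_m$-component is automatic for Jacobi symbols of distributions rather than a consequence of condition (1).
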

%
%

{\bf Proof of Theorem \ref{finitetype1}}
The proof is based on two general criteria for finiteness of prolongations of Tanaka and Spencer, respectively, and on some theory of representation of $\mathfrak{sl}_2$.

Let us first describe these criteria in general setting. Let as before $\mathfrak t=\displaystyle{\bigoplus_{j\le -1}\mg^i}$ be a graded nilpotent Lie algebra, $\mg^0(\mathfrak t)$ be the algebra of differentiations of $\mathfrak t$ preserving the grading, and $\mg^0$ be a subalgebra of $\mg^0(\mathfrak t)$.
Tanaka criterium reduces the question whether the Tanaka prolongation $\mathfrak u(\mathfrak t, \mg^0)$ of a pair $(\mathfrak t, \mg^0)$ is finite-dimensional or not to the same question for the
standard prolongation (as described, for example, in \cite{kob, stern}) of the following subalgebra $\mathfrak r(\mg^0)$ of $\mg^0$.
\begin{equation}
\label{srez}
\mathfrak r(\mg^0)= \{f\in \mg^0 | f(x) = 0 \text{ for all }  x\in \mg^i, i \leq -2\}.
\end{equation}
Recall that $\mg^0(\mathfrak t)$ (and hence $\mathfrak r(\mg^0)$) can be considered as a subalgebra of $\mathfrak {gl}(\mg^{-1})$. The standard prolongation of  $\mathfrak r(\mg^0)$ is nothing but the Tanaka prolongation of the pair $(\mg^{-1}, \mathfrak r(\mg^0))$, where $\mg^{-1}$ is considered as a commutative graded Lie algebra with the trivial grading, i.e. such that all elements of $\mg^{-1}$ have degree $-1$. Note  that the standard prolongation can be defined not only for a subalgebra but for any subspace of $\mathfrak {gl}(\mg^{-1})$ (\cite{kob,stern}).
Tanaka criteruim can be formulated as follows:
\begin{theorem}
\label{Tanakafin}
{\bf Tanaka criterium} (\cite{tan1}).
The Tanaka prolongation $\mathfrak u(\mathfrak t, \mg^0)$ of the pair $(\mathfrak t, \mg^0)$ is finite-dimensional if and only if so is the standard prolongation of $\mathfrak r(\mg^0)$.
\end{theorem}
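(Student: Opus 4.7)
The plan is to prove the two implications of the criterion separately by comparing, at each grade $k$, the $k$-th Tanaka prolongation $\mathfrak u^k(\mathfrak t, \mg^0)$ with the $k$-th standard prolongation $\mathfrak r(\mg^0)^{(k)}$.

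For necessity, suppose $\mathfrak u(\mathfrak t, \mg^0)$ is finite-dimensional. I would construct, for every $k \geq 0$, an explicit injective linear map $\iota_k : \mathfrak r(\mg^0)^{(k)} \hookrightarrow \mathfrak u^k(\mathfrak t, \mg^0)$. Given $\phi \in \mathfrak r(\mg^0)^{(k)} \subset S^{k+1}(\mg^{-1})^* \otimes \mg^{-1}$, define $\iota_k(\phi)$ on $\mg^{-1}$ recursively by $\iota_k(\phi)(x) := \iota_{k-1}(\phi(x,\cdot,\ldots,\cdot))$, with base case $\iota_0 : \mathfrak r \hookrightarrow \mg^0$ being the inclusion, and then extend to all of $\mathfrak t$ by the derivation rule built into \eqref{mgk}. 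Using \eqref{br2} together with the symmetry of $\phi$, a direct computation shows that this extension is consistent (it descends to the quotient of $\mg^{-1} \wedge \mg^{-1}$ defining $\mg^{-2}$, and by induction to every $\mg^{-j}$) and that $\iota_k(\phi)$ vanishes automatically on $\mg^{-j}$ for $j \geq 2$. In particular, $\iota_k$ identifies $\mathfrak r(\mg^0)^{(k)}$ with the subspace
\[
\mathfrak u^k_0(\mathfrak t, \mg^0) := \{ f \in \mathfrak u^k(\mathfrak t, \mg^0) : f|_{\mg^{-j}} = 0 \text{ for all } j \geq 2 \},
\]
so finiteness of $\mathfrak u(\mathfrak t, \mg^0)$ at once yields finiteness of $\bigoplus_{k} \mathfrak r(\mg^0)^{(k)}$.

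For sufficiency, assume $\mathfrak r(\mg^0)^{(k)} = 0$ for $k \geq N$; then $\mathfrak u^k_0(\mathfrak t, \mg^0) = 0$ for $k \geq N$ by the identification above. I would then show that $\mathfrak u^k(\mathfrak t, \mg^0) = 0$ for $k$ larger than a bound depending on $N$ and the depth $\mu$ of $\mathfrak t$. The key observation is that any $f \in \mathfrak u^k(\mathfrak t, \mg^0)$ is determined by its restriction $f|_{\mg^{-1}} : \mg^{-1} \to \mg^{k-1}$, because $\mathfrak t$ is generated by $\mg^{-1}$ and $f$ satisfies the derivation rule. Iterating the restriction one grade at a time, $f$ gives rise to a multilinear map $(\mg^{-1})^{\otimes(k+1)} \to \mg^{-1}$ via iterated evaluation $f(x_0)(x_1)\cdots(x_k)$, where the intermediate values lie successively in $\mathfrak u^{k-1}, \mathfrak u^{k-2}, \dots, \mg^0, \mg^{-1}$; the derivation identity, applied recursively via \eqref{br2}, forces this multilinear map to be symmetric and to take values in $\mathfrak r(\mg^0) \subset \mg^0$ when contracted with $k$ of its arguments, i.e.\ to define an element of $\mathfrak r(\mg^0)^{(k)}$. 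For $k \geq N$ this iterated evaluation therefore vanishes.

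The main obstacle is the final bootstrap step: vanishing of the iterated evaluation on $\mg^{-1}$ must be upgraded to vanishing of $f$ itself on all of $\mathfrak t$. I would handle this by a height-reduction argument: each application of the derivation rule, combined with \eqref{br2}, trades a constraint on $f$ at grade $-j$ for constraints at grade $-(j-1)$ together with a new symmetry condition at the $\mg^{-1}$-level; iterating $\mu - 1$ times pushes all constraints into the bottom piece $\mathfrak u^k_0(\mathfrak t, \mg^0)$, which is already known to vanish for $k \geq N$. Finiteness of the depth $\mu$ of $\mathfrak t$ is precisely what makes this reduction terminate uniformly in $k$, yielding the desired bound and completing the proof.
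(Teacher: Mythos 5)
The paper does not prove this statement---it is quoted from Tanaka's 1970 paper---so your proposal has to stand on its own. Your necessity direction is correct and is the standard argument: the recursive extension $\iota_k$ is well defined precisely because the symmetry of $\phi$ makes every bracket term cancel on $\mg^{-2}$ and below, and it identifies $\mathfrak r(\mg^0)^{(k)}$ with the subspace $\mathfrak u^k_0(\mathfrak t,\mg^0)$ of elements annihilating $\bigoplus_{j\le -2}\mg^{j}$.

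The sufficiency direction has a genuine gap, and it is not where you locate it. The claim that the iterated evaluation $(x_0,\dots,x_k)\mapsto f(x_0)(x_1)\cdots(x_k)$ of an \emph{arbitrary} $f\in\mathfrak u^k(\mathfrak t,\mg^0)$ is symmetric and defines an element of $\mathfrak r(\mg^0)^{(k)}$ is false. The derivation identity together with \eqref{br2} gives $f(x)(y)-f(y)(x)=f([x,y])$, so the antisymmetric part in two consecutive slots is exactly the evaluation of the partial iterate on $[x_i,x_{i+1}]\in\mg^{-2}$, which vanishes only for elements of $\mathfrak u^k_0(\mathfrak t,\mg^0)$; likewise the $k$-fold contraction lands in $\mg^0$ but has no reason to annihilate $\mg^{\le-2}$, i.e.\ to lie in $\mathfrak r(\mg^0)$. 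Concretely, for $\mathfrak t$ the free $3$-step nilpotent algebra on two generators (the $(2,3,5)$ symbol) and $\mg^0=\mg^0(\mathfrak t)\cong\gl_2$, one computes $\mathfrak r(\mg^0)=0$, so every standard prolongation vanishes, yet $\mathfrak u(\mathfrak t,\mg^0)$ is the split $G_2$ with nonzero components in degrees $1,2,3$; your argument as written would force $\mathfrak u^k(\mathfrak t,\mg^0)=0$ for all $k\ge 1$. The ``bootstrap'' you single out as the main obstacle is in fact the easy part: since $\mg^{-1}$ generates $\mathfrak t$ and $f$ is determined by $f|_{\mg^{-1}}$ via the derivation rule, vanishing of the full iterated evaluation on $(\mg^{-1})^{k+1}$ already gives $f=0$. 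What is missing is the genuinely hard step of Tanaka's proof: showing that once $k$ exceeds a bound involving both $N$ \emph{and} the depth $\mu$, enough successive evaluations of $f$ against $\mg^{-1}$ are forced into the subspace annihilating $\mg^{\le-2}$ (hence into the standard prolongation), which requires controlling the restrictions $f|_{\mg^{-j}}$ for all $j$ simultaneously rather than only $f|_{\mg^{-1}}$. Your height-reduction sketch points in that direction but does not supply the inductive statement that makes it work.
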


\begin{remark}
\label{symprem}
In the case when  $\mathfrak t=\eta(= \eta^{-1}\oplus\eta^{-2})$, the Heisenberg algebra, $\mg^0(\eta)$ is the conformal symplectic algebra $\mathfrak{csp}(\eta^{-1})$ of $\eta^{-1}$, and the $\mathfrak r\Bigl(\mg^0(\eta)\Bigr)$ is equal to the symplectic algebra $\mathfrak{sp}(\eta^{-1})$. This implies also that  the algebra $\mathfrak r\bigl(\mathfrak u^F(\delta_{\rm mod})\bigr)$ is equal to the universal algebraic prolongation of the symbol $\delta$ in the algebra $\mathfrak{sp}(\eta^{-1})$, i.e.  the largest graded Lie subalgebra of the Lie algebra  $\mathfrak {sp}(\eta^{-1})$ such that its negative part coincides with $\delta$. Obviously,

\begin{equation}\label{speq}
\mathfrak u^F(\delta_{\rm mod})\bigr)= \mathfrak r\bigl(\mathfrak u^F(\delta_{\rm mod})\bigr)\oplus \mathbb R\, \Id_{\eta^{-1}}.
\end{equation}
\end{remark}

In its turn, Spencer criterium provides a computationally efficient
method of detecting whether the standard prolongation of a linear subspace
$A\subset \mathfrak{gl}(Y)$ of a vector space $Y$ over the field $k$ is finite-dimensional or not. Let $\bar k$ be the algebraic closure of the field $k$ and
by $A^{\bar k}$ be the subspace in $\mathfrak{gl}(Y^{\bar k})$  obtained from $A$ by field extension.

\begin{theorem}
{\bf Spencer criterium}
\label{spencerthm}
(\cite{spencer,gul,ottazzi})
The standard prolongation of a subspace $A\subset \mathfrak{gl}(Y)$ is finite-dimensional if and only if $A^{\bar k}\subset \mathfrak{gl}(Y^{\bar k})$ does
not contain endomorphisms of rank $1$.
\end{theorem}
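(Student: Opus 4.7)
The plan is to prove each implication separately, translating the harder direction into a commutative-algebra question about finite-dimensionality of a certain graded module.

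\emph{The easy direction} (rank one element implies infinite prolongation). Suppose $A^{\bar k}$ contains a rank one endomorphism $T = \xi\otimes w$ with $\xi\in(Y^{\bar k})^*$ and $w\in Y^{\bar k}$ both nonzero. For each $k\geq 0$ I would verify directly that the symmetric tensor $T_k := \xi^{\otimes (k+1)}\otimes w$ lies in the $k$th Spencer prolongation over $\bar k$: fixing any $v_1,\ldots,v_k\in Y^{\bar k}$, the contraction
$$T_k(\,\cdot\,,v_1,\ldots,v_k) \;=\; \xi(v_1)\cdots\xi(v_k)\,T$$
is a scalar multiple of $T$ and hence lies in $A^{\bar k}$. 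Since $T_k\neq 0$ for every $k$, the total prolongation is infinite-dimensional.

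\emph{The hard direction} (no rank one element implies finite prolongation). Pass to the algebraic closure (Spencer prolongation commutes with flat base change) and write $A$ for $A^{\bar k}$. I would introduce the \emph{symbol module}
$$\mathfrak{M}(A) \;=\; \bigl(\mathrm{Sym}(Y)\otimes Y^*\bigr)\big/\bigl\langle A^\perp\bigr\rangle,$$
where $A^\perp\subset Y\otimes Y^*$ is the annihilator of $A$ under the canonical pairing with $Y^*\otimes Y$, and $\langle A^\perp\rangle$ denotes the graded $\mathrm{Sym}(Y)$-submodule of $\mathrm{Sym}(Y)\otimes Y^*$ that it generates. Unwinding the defining condition of $A^{(k)}$ (namely that every contraction with $k$ arguments lands in $A$) gives a canonical isomorphism
$$A^{(k)} \;\cong\; \bigl(\mathfrak{M}(A)_{k+1}\bigr)^*,$$
so the total prolongation is finite-dimensional if and only if the finitely generated graded $\mathrm{Sym}(Y)$-module $\mathfrak{M}(A)$ is finite-dimensional as a $\bar k$-vector space.

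By the graded Nullstellensatz, the latter is equivalent to the support of $\mathfrak{M}(A)$ in $\mathrm{Spec}\,\mathrm{Sym}(Y)=Y^*$ being concentrated at the origin, i.e.\ to its projective support in $\mathbb{P}(Y^*)$ being empty. It remains to identify that support with the set of $[\xi]$ for which $A$ contains a rank one endomorphism with ``row'' $\xi$. Computing the fiber of $\mathfrak{M}(A)$ at $\xi\in Y^*\setminus\{0\}$ yields $Y^*/N_\xi$, where
$$N_\xi \;=\; \Bigl\{\sum_i \xi(y_i)\,\eta_i \;:\; \sum_i y_i\otimes\eta_i\in A^\perp\Bigr\};$$
a vector $w\in Y$ belongs to $N_\xi^\perp$ precisely when $\xi\otimes w$ pairs trivially with every element of $A^\perp$, i.e.\ when $\xi\otimes w\in(A^\perp)^\perp = A$. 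Hence the fiber is nonzero exactly when $A$ contains a rank one element $\xi\otimes w$, and the projective support is empty exactly when $A$ contains no rank one endomorphism.

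The main technical obstacle is the duality $A^{(k)}\cong(\mathfrak{M}(A)_{k+1})^*$: one must carefully align the ``symmetric tensors satisfying constraints'' description of $A^{(k)}$ with the ``free module modulo graded relations'' description of $\mathfrak{M}(A)$, and check that the degree $k+1$ piece of $\langle A^\perp\rangle$ is spanned exactly by elements of the form $\sigma\cdot(y\otimes\eta)$ with $\sigma\in S^k Y$ and $y\otimes\eta\in A^\perp$. Once this pairing is pinned down, the rest is a routine invocation of Hilbert's Nullstellensatz together with the easy direction.
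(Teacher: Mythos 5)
The paper does not prove this statement: it is quoted as a classical criterion with references to Spencer, Guillemin--Quillen--Sternberg and Ottazzi--Warhurst, so there is no internal proof to measure yours against. Your argument is, in substance, the standard proof from those references, and it is correct. The easy direction works as you say: $\xi^{\otimes(k+1)}\otimes w$ is symmetric and every $k$-fold contraction of it is a scalar multiple of $\xi\otimes w\in A^{\bar k}$, and since each prolongation is cut out by linear equations with coefficients in $k$, its dimension is unchanged under field extension, so infinite-dimensionality descends from $\bar k$ to $k$. For the hard direction, the duality $A^{(k)}\cong\bigl(\mathfrak{M}(A)_{k+1}\bigr)^*$ does hold exactly as you anticipate: since $A^\perp$ sits in degree one, the degree-$(k+1)$ piece of the submodule it generates is $S^kY\cdot A^\perp$, and a symmetric tensor $\phi\in S^{k+1}Y^*\otimes Y$ annihilates $S^kY\cdot A^\perp$ if and only if each contraction $\phi(\cdot,v_1,\dots,v_k)$ annihilates $A^\perp$, i.e.\ lies in $A$. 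The identification of the fiber of $\mathfrak{M}(A)$ at $\xi\neq 0$ with $Y^*/N_\xi$ (right-exactness of the tensor product), the equivalence of nonvanishing of that fiber with membership of $\xi$ in the support (Nakayama), and the equivalence of $N_\xi\subsetneq Y^*$ with the existence of a rank-one element $\xi\otimes w\in A$ are all sound, and together with the finite-dimensionality criterion for finitely generated graded modules supported at the irrelevant ideal they close the argument. The only cosmetic point is that you place $A^\perp$ in $Y\otimes Y^*$ while calling $A$ a subspace of $\mathfrak{gl}(Y)$; this amounts to identifying $\mathfrak{gl}(Y)$ with $Y^*\otimes Y$, so that the rank-one endomorphism $v\mapsto\xi(v)w$ is written $\xi\otimes w$, and all of your subsequent formulas are consistent with that convention.
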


The  universal algebraic prolongations of symplectic flag symbols in the symplectic algebra (and therefore, according to \eqref{speq}, in the conformal symplectic algebra) were described quite explicitly in \cite[section 8]{flag2} using the theory of $\mathfrak{sl}_2$ representation.
Let us briefly list the main features of this algebras in context of the Spencer criterium.

We start with some terminology.
We say that a graded space $X$ which is also $\sll_2$-module is a \emph{nice $\sll_2$-module}, if the corresponding embedding of $\sll_2$ into $\gl(X)$ is spanned by endomorphisms of degree $-1$, $0$, and $1$. Let $Y_1$ and $Y_2$ be two
nice $\sll_2$-modules.
Then $\Hom(Y_1,Y_2)=Y_2\otimes Y_1^*$ is the $\sll_2$-module  and a graded space in a natural way.
Denote by $\mathfrak n(Y_1,Y_2)$ the maximal $\sll_2$-submodule of $\Hom(Y_1,Y_2)$  concentrated in the non-negative degree part. If $Y_1=Y_2$ we simply will write $\mathfrak n(Y_1)$ instead of $\mathfrak n(Y_1,Y_2)$.

Further, assume that $X$ is  a graded conformal symplectic
space
 which is also a nice $\sll_2$-module such that the corresponding embedding of $\sll_2$ into $\gl(X)$ belongs to $\mathfrak {sp}(X)$.
In this case we will say that $X$ is a \emph{nice symplectic
$\sll_2$-module}.
The conformal symplectic algebra $\mathfrak{csp}(X)$
is a $\sll_2$-module and  a graded space in a natural way. Denote by $\mathfrak l
(X)$
the maximal $\sll_2$-submodule of $\mathfrak {sp}(X)$
concentrated in the non-negative degree part. From the maximality assumption it follows  that $\mathfrak l
(X)$
is a subalgebra of $\mathfrak {sp}(X)$.

Given a symplectic flag symbol $\rho$ on a graded conformal symplectic space $X$ we can make $X$ to a nice symplectic
$\sll_2$-module with $\rho$ being degree $-1$ component of the corresponding embedding $\vf_\rho$ of $\sll_2$ into $\mathfrak {sp}(X)$. Moreover, if one fix a skew Young tableaux $\Tab(\rho)$ of $\rho$ (as described in Remark \ref{Youngrem}), then each row in  $\Tab(\rho)$  defines an irreducible $\sll_2$-submodule. From \cite[subsection 8.3]{flag2} it follows that the universal algebraic prolongation $\mathfrak r\bigl(\mathfrak u^F(\rho)\bigr)$ of the symbol $\rho$ in $\mathfrak{csp}(X)$ is equal to the semidirect sum of the constructed  embedding $\vf_\rho$ of $\sll_2$ into $\mathfrak{sp}(X)$ 
and the algebra $\mathfrak l
(X)$,

\begin{equation}
\label{semidir}
\mathfrak r\bigl(\mathfrak u^F(\rho)\bigr)=\mathfrak l
\left(X\right)\rtimes_{\vf_\rho}\sll_2.
\end{equation}

\begin{remark}
\label{symparamrem}
By Remarks \ref{symmflatrem} and \ref{symprem} the algebra $\mathfrak r\bigl(\mathfrak u^F(\rho)\bigr)$ is isomorphic to the subalgebra of  $\mathfrak{sp}(X)$ consisting of all infinitesimal symmetries of the flat curve $\mathcal F_\rho$ with the symbol $\mathfrak \rho$.
Hence, to any element of $\mathfrak r\bigl(\mathfrak u^F(\rho)\bigr)$ we can assign the vector field on the curve $\mathcal F_\rho$. It turns out that the ideal $\mathfrak l(X)$ consists
of all elements of $\mathfrak r\bigl(\mathfrak u^F(\rho)\bigr)$ for which the corresponding vector fields on $\mathcal F_\rho$ are identically zero. In other words, $\mathfrak l(X)$ is the subalgebra of  $\mathfrak{sp}(X)$ consisting of the infinitesimal symmetries of the flat curve $\mathcal F_\rho$ considered as a parametrized curve. This implies that $A\in \mathfrak l(X)$ if and only if

\begin{equation}
\label{lXcond}
A \bigl(e^{x}(X_i)\bigr)\subset e^{x}(X_i),\quad \forall x\in\rho,\,\, i\in I,
\end{equation}
where $\{X_i\}_{i\in I}$ is the filtration of $X$ as in \eqref{filtX}.
\end{remark}
\begin{remark}
\label{glpass}
Note that analogous statements are true if we forget about the presence of symplectic structure on $X$ and consider curves of flags with respect to the action of $GL(X)$. In all definitions and statements the algebras $\mathfrak{csp}(X)$ or $\mathfrak{sp}(X)$ should be replaced by $\gl(X)$ , the algebra $\mathfrak l(X)$ is replaced by $\mathfrak n(X)$ and the latter satisfies relation \eqref{lXcond}.
\end{remark}

{\bf 1. Rank $1$ elements in $\mathfrak l
\left(x\right)$} Our first goal is to prove the following

\begin{proposition}
\label{propL}
 In the case when $\rho=\delta_{mod}$ the algebra $\mathfrak l
\left(X\right)$ contains elements of rank 1 if and only if the corresponding skew Young diagram $\Diag(\delta_{mod})$ does not satisfy condition (1) of Theorem \ref{finitetype1}, i.e. it contains two rows such that the most left box of one of the rows is located in the same column or from the right to the most right box of another row
\end{proposition}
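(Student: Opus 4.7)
The plan is to translate the problem into a combinatorial condition on the skew Young diagram by exploiting the characterization of $\mathfrak l(X)$ given in Remark \ref{symparamrem}: $A\in\mathfrak l(X)$ iff $A$ preserves $e^xX_i$ for every $x$ (identifying $\rho$ with $\mathbb R$ via its chosen generator) and every $i\in I$. Since every rank-$1$ element of $\mathfrak{sp}(X)$ has the form $A_w:=\sigma(w,\cdot)\,w$ with $w\in X\setminus\{0\}$ (image $\mathbb Rw$, kernel $w^{\angle}$), such an element preserves $F=e^xX_i$ iff $w\in F$ or $F\subseteq w^\angle$, which, using $e^x\in\mathrm{Sp}(X)$ and the identity $X_i^\angle=X_{n(-i)}$ (where $n(\cdot)$ denotes the next index above in $I$), translates to the pointwise condition $e^{-x}w\in X_i\cup X_{n(-i)}$ for every $x$ and every $i$.

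The first step is to collapse this disjunction. The subspaces $X_i$ and $X_{n(-i)}$ are always nested, with containment determined by the sign of $i$, so their union equals $X_{j(i)}$ with $j(i):=\min(i,n(-i))$; the condition becomes the linear constraint $e^{-x}w\in X_{j(i)}$ for all $x$ and $i$. A case-by-case inspection of $I=\mathbb Z$, $\tfrac12\mathbb Z$, $\tfrac12\mathbb Z_{\rm odd}$ shows that the maximum value attained by $j(i)$ equals
\[
\varepsilon:=\min\{k\in I:k\ge 0\}
\]
($\varepsilon=0$ in the first two cases, $\varepsilon=1/2$ in the third), so $A_w\in\mathfrak l(X)$ iff $w\in\bigcap_{x\in\mathbb R}e^xX_\varepsilon$. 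Using the row decomposition $X=\bigoplus_L\widetilde L$ and the fact that $\rho$ restricts to the right shift on each row, this maximal $\rho$-invariant subspace of $X_\varepsilon$ is easily identified with $\bigoplus_{L:\,b_L\ge\varepsilon}\widetilde L$, where $b_L$ denotes the minimal degree of a vector in row $L$. Consequently $\mathfrak l(X)$ contains a rank-$1$ element if and only if $\Diag(\delta_{\rm mod})$ has at least one row $L$ with $b_L\ge\varepsilon$.

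The final step identifies this combinatorial condition with the failure of condition (1) via the \emph{symplectic involution} on the multiset of rows: because $\sigma$ pairs $X^i$ with $X^{-i}$, the interval $[b_L,a_L]$ of any row has its negative $[-a_L,-b_L]$ again among the row intervals. Rows of a $\delta_{s,l}$-block are exchanged in distinct pairs, while a $\tau_m$-row (with $m\ge 1/2$) is self-paired with $[-m,m]$; in particular $\tau_m$-rows satisfy $b_L=-m<\varepsilon$, so any row $L$ with $b_L\ge\varepsilon$ must be a $\delta$-row, and its involutive partner $L'\ne L$ satisfies $a_{L'}=-b_L\le 0\le b_L$, witnessing the failure of (1). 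Conversely, if (1) fails with distinct rows $A,B$ and $a_A\le b_B$, then either $b_B\ge\varepsilon$ (take $L=B$) or $a_A\le b_B<\varepsilon$, forcing $A$ to be a $\delta$-row with distinct partner $A'$; a short per-case inequality $-a_A\ge\varepsilon$ (using $a_A\in I$ and $a_A<\varepsilon$ in each of the three cases) then shows $L=A'$ works. The main technical obstacle is the collapse of the pointwise disjunction into a single linear inclusion via the nesting of $X_i$ and $X_{n(-i)}$; once this reduction is in hand the proposition becomes elementary combinatorics of the intervals $[b_L,a_L]$.
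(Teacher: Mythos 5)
Your argument is correct, and it takes a genuinely different route from the paper's. The paper proves this proposition by decomposing $\mathfrak l(X)$ block-by-block (Lemma \ref{tensoroff}, via the splitting \eqref{idsp2pos}) and then invoking $\sll_2$-representation theory: the decompositions \eqref{Vrsrs} and \eqref{Vrseps} into irreducibles together with the observation (Lemma \ref{larg1}) that only the top irreducible component can contain rank-$1$ elements, yielding explicit descriptions of $\mathfrak n(Y_1,Y_2)$, $\mathfrak s(E_{s,l})$, $\mathfrak s(F_{s,l})$ and $\mathfrak l(\mathcal L_m)=0$. You instead observe that every rank-$1$ element of $\mathfrak{sp}(X)$ is (up to sign over $\mathbb R$, which is harmless) of the form $\sigma(w,\cdot)w$, and feed this into the stabilizer characterization \eqref{lXcond} of Remark \ref{symparamrem}; the crucial collapse of the disjunction $w\in e^xX_i$ or $e^xX_i\subseteq w^{\angle}$ into the single inclusion $e^{-x}w\in X_{\min(i,\,n(-i))}$ is legitimate precisely because $X_i$ and $X_i^{\angle}=X_{n(-i)}$ are nested, and your identification of $\bigcap_x e^xX_\varepsilon$ with the sum of the rows whose minimal degree is at least $\varepsilon$ (the largest $\delta$-invariant subspace of $X_\varepsilon$, computed row by row for the right shift) is correct, as is the final interval combinatorics via the $E\leftrightarrow F$ pairing within each $\delta_{s,l}$-block. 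What your route buys is brevity and uniformity: no case analysis over the block types and no representation theory beyond the existence of the row decomposition. What it costs is twofold: it leans on the ``if and only if'' of \eqref{lXcond}, which the paper asserts in Remark \ref{symparamrem} without a detailed proof (though it does follow readily from the fact that an $\sll_2$-submodule concentrated in nonnegative degrees is $e^{-t\,\ad\delta}$-stable), and it does not produce the explicit formulas for $\mathfrak n(Y_1,Y_2)$, $\mathfrak s(E_{s,l})$, $\mathfrak s(F_{s,l})$ and the elements $Z_i$ that the paper's longer computation establishes and then reuses in part 2 of the proof of Theorem \ref{finitetype1} and throughout Sections \ref{pstandartsec} and \ref{secantsec}. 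As a proof of the stated equivalence alone, yours is complete.
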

\begin{proof}
In this case $\rho$ is isomorphic to the direct sum of $N(s,l)$ copies of the indecomposable symbols $\mathbb R\delta_{s,l}$ for some finite number of pairs $(s,l)$ with $s\geq 0$ and $0\leq l\leq 2s$ and of at most one copy of $\mathbb R\tau_m$ for some $m\in\cfrac{1}{2}\mathbb Z_{\rm {odd}}$.
Then we can split $X$ as
 \begin{equation}
\label{sympdecomp}
X=\displaystyle{\bigoplus_{i=1}^k L_i},
\end{equation}
for some $k\in\mathbb N$, where $N(s,l)$ of subspaces  $L_i$ are isomorphic  to $X_{s,l}$
 and at most one subspace $L_i$ is isomorphic to $\mathcal L_m$,
 where spaces $X_{s,l}$ and $\mathcal L_m$ are as in \eqref{tuple1} and \eqref{Lm}, respectively. The restriction of $\rho$ to $L_i$ is isomorphic to
 the indecomposable symbols $\mathbb R\delta_{s,l}$ in the first case and to $\mathbb R\tau_m$ in the second case.
Further, the restriction $\sigma|_{L_i}$ to $L_i$ of the symplectic form $\sigma$ of $V$
defines natural identification between $L_i$ and its dual space $L_i^*$.
Here $\sigma$ denotes the symplectic form on $X$.
Splitting \eqref{sympdecomp} defines the following (in general non-canonical) splitting
of $\gl(X)$ :
\begin{equation}
\label{glsplit}
\gl(X)=\bigoplus_{i,j=1}^k\Hom(L_i, L_j).
\end{equation}
An endomorphism  $A\in\gl(V)$, having the decomposition $A=\sum_{i,j=1}^k A_{ij}$ with respect to the splitting \eqref{glsplit},
belongs to $\mathfrak{sp}(X)$
if and only if $A_{ii}\in \mathfrak{sp}(L_i)$ for all $1\leq i\leq q$ and $A_{ij}=-A_{ji}^*$ for all $1\leq i\neq j\leq q$ (here the dual linear map
$A_{ji}^*$ is considered as a map from $L_i$ to $L_j$ under the aforementioned identification $L_i\sim L_i^*$ and $L_j\sim L_j^*$).
Therefore the map $A\mapsto \sum_{i=1}^k A_{ii}+\sum_{1\leq i<j\leq q}A_{ij}$ defines the identification
\begin{equation}
\label{idsp2}
\mathfrak{sp} \left(X
\right)\cong \bigoplus_{i=1}^k \mathfrak{sp}(L_i)\oplus\bigoplus_{1\leq i<j\leq q}\Hom(L_i,L_j).
\end{equation}
This implies that
 \begin{equation}
\label{idsp2pos}
\mathfrak{l}
\left(X\right)\cong \bigoplus_{i=1}^k \mathfrak{l}
(L_i)\oplus\bigoplus_{1\leq i<j\leq q}\mathfrak n(L_i,L_j).
\end{equation}

Note that by constructions the space $\mathfrak{l}
\left(X\right)$ is an invariant subspace with respect to the projections on each component of the decomposition \eqref{idsp2}. Therefore the question of the existence of rank $1$ elements in $\mathfrak l
\left(X\right)$  can be reduced to the same question regarding each component of the decomposition \eqref{idsp2pos}.
Moreover,  let us show that in the question of existence of rank $1$ elements in $\mathfrak l
\left(X\right)$ one can get rid of multiple copies of $X_{s,l}$ in the splitting \eqref{sympdecomp} of $X$.

\begin{lemma}
\label{tensoroff}
The algebra  $\mathfrak l
\left(X\right)$ contains rank $1$ elements if and only if at least one of the following algebras does:
\begin{enumerate}
\item $\mathfrak n \left(X_{s_1,l_1},X_{s_2,l_2} \right)$, if $X_{s_1,l_1}$ and $X_{s_2,l_2}$ appear as components in the splitting \eqref{sympdecomp};
\item  $\mathfrak n \left(\mathcal L_m,X_{s,l} \right)$, if $\mathcal L_m$ and $X_{s,l}$ appear as components in the splitting \eqref{sympdecomp};
\item $\mathfrak l
\left(X_{s,l}\right)$, if $X_{s,l}$ appears as a component in the splitting \eqref{sympdecomp};
\item $\mathfrak l
\left(\mathcal L_m\right)$, if $\mathcal L_m$ appears as a component in the splitting \eqref{sympdecomp};

\end{enumerate}
\end{lemma}
\begin{proof}
Applying formula \eqref{idsp2} to the graded conformal symplectic space $X_{s,l}\otimes\mathbb R^{N(s,l)}$ instead of $X$ it is easy to get the following natural identification:

\begin{equation}
\label{idspkN}
\mathfrak{sp}(X_{s,l}\otimes\mathbb R^{N(s,l)})\cong \Bigl(\mathfrak{sp}(X_{s,l})\otimes \mathbb R^{N(s,l)}\Bigr)\oplus\Bigl(\mathfrak{gl}(X_{s,l})\otimes\wedge^2\mathbb R^{N(s,l)}\Bigr),
\end{equation}
where $\wedge^2\mathbb R^{N(s,l)}$ denotes a skew-symmetric square of $\mathbb R^{N(s,l)}$.
Consequently,
 \begin{equation}
\label{idspkNpos}
\mathfrak{l}
(X_{s,l}\otimes\mathbb R^{N(s,l)})\cong \Bigl(\mathfrak{l}
(X_{s,l})\otimes \mathbb R^{N(s,l)}\Bigr)\oplus\Bigl(\mathfrak{n}(X_{s,l})\otimes\wedge^2 \mathbb R^{N(s,l)}\Bigr).
 \end{equation}

Besides, for any two nice $\sll_2$-modules $X_1$ and $X_2$
\begin{equation}
\label{tensorgl}
\mathfrak n(X_1\otimes\mathbb R^{N_1},X_2\otimes\mathbb R^{N_2})\cong
\mathfrak n(X_1, X_2)\otimes {\rm Hom}\Bigl(\mathbb R^{N_1},
 \mathbb R^{N_2}\Bigr).
\end{equation}

Combining \eqref{idspkNpos} and \eqref{tensorgl} with \eqref{idsp2pos}, one obtains the statement of the lemma.
\end{proof}

Now analyze the algebras appearing in Lemma \ref{tensoroff} separately:

\medskip
{\bf a) Algebras from the first two items of Lemma \ref{tensoroff}}

The calculation of algebras $\mathfrak n \left(X_{s_1,l_1},X_{s_2,l_2} \right)$ and   $\mathfrak n \left(\mathcal L_m,X_{s,l} \right)$ from  the first two items of Lemma \ref{tensoroff} is reduced to the calculation of $\mathfrak n(Y_1, Y_2)$ where $Y_1$  and $Y_2$ are two disjoint irreducible nice $\sll_2$-modules graded by the sets $I_1$ and $I_2$, where each $I_i$ is either $\mathbb Z$ or $\cfrac{1}{2} \mathbb Z_{odd}$ (note that $I_1$ and $I_2$ might be different). For each $i\in\{1,2\}$, let $r_i$ and $s_i$ be the lowest and the highest weights of element of $Y_i$ respectively and let $l_i=s_i-r_i$
($=\dim\, Y_i-1$). Recall that  the corresponding $\sll_2$-submodule $\Hom(Y_1, Y_2)=Y_2\otimes Y_1^*$ is decomposed into the irreducible $\sll_2$-modules as follows:
\begin{equation}
\label{Vrsrs}
\Hom(Y_1, Y_2)\cong\bigoplus_{i=0}^{\min\{
l_1,l_2\}}\Pi_{l_1+l_2-2i},
\end{equation}
where $\Pi_j$ denotes an irreducible $\sll_2$-module of dimension $j+1$ (see, for example, \cite{Fulton}). Note that the following lemma immediately follows from the construction of the decomposition \eqref{Vrsrs}

\begin{lemma}
\label{larg1}
The only submodule in the decomposition \eqref{Vrsrs} containing rank 1 elements is the submodule $\Pi_{l_1+l_2}$ of the largest dimension.
\end{lemma}
Furthermore, the following formula can be easily verified (see \cite[subsection 8.2]{flag2}) using the decomposition \eqref{Vrsrs}:

\begin{equation}
\label{Y1Y2}
\mathfrak n(Y_1,Y_2)=\begin{cases}
\displaystyle{\bigoplus_{i\in\mathbb Z\cap\bigl[\max\{0, s_1-r_2\},\min\{
l_1,l_2
\}\bigr]}}
\Pi_{l_1+l_2-2i}, & s_2\geq s_1 \text { and } r_2\geq r_1 \\
\hspace{1in} 0, & \text{otherwise.}
\end{cases}
\end{equation}

>From this formula and Lemma \ref{larg1} it follows that $\mathfrak n(Y_1,Y_2)$ contains elements of rank $1$ if and only if $\mathfrak n(Y_1,Y_2)=\Hom(Y_1, Y_2)$ or, equivalently, all elements of $\Hom(Y_1, Y_2)$ have nonnegative degrees, or, equivalently, $s_1\leq r_2$, i.e. the maximal degree of elements of $Y_1$ does not exceed the minimal degree of elements of $Y_2$. Consider the skew Young diagram consisting of two rows with $l_1+1$ and $l_2+1$ boxes, respectively, such that the columns of it are indexed by the set $I$ in the decreasing order from left to right and for any $i\in I_1\cup I_2$ the number of boxes in the column indexed by $i$ is equal to the dimension of the subspace of degree $i$ in $Y_1\oplus Y_2$. Then the conditions above are equivalent to the fact that the obtained skew Young diagram does not satisfy condition~(1) of Theorem \ref{finitetype1}.

\medskip
{\bf b) Algebras from item (3) of Lemma \ref{tensoroff}}
Now consider the algebra $\mathfrak l
\left(X_{s,l}\right)$ from item (3) of Lemma \ref{tensoroff}. The space $X_{s,l}$  is the sum of two irreducible nice  $\sll_2$-submodules
\begin{equation}
\label{EFrs}
E_{s,l}
=\Span \{e_i\}_{s-l\leq i\leq s},\quad F_{s,l}
=\Span \{f_i\}_{-s\leq i\leq l-s},
\end{equation}
where $ \{e_i\}_{s-l\leq i\leq s}$ and  $\{f_i\}_{=s\leq i\leq l-s}$ are as in \eqref{tuple1}.
Note that $E_{s,l}$ and $F_{s,l}$ are two transversal Lagrangian subspaces of $X_{s,l}$.
Therefore the symplectic form on $X_{s,l}$ defines the following natural identifications:
 $E_{s,l}
\cong (F_{s,l}
)^*$ and
$F_{s,l}
\cong(E_{s,l}
)^*$.
Keeping in mind these identifications, an endomorphism $A\in\gl(V)$, which has the decomposition $A=A_{11}+A_{22}+A_{12}+A_{21}$ with respect to the splitting
\begin{equation}
\label{glXsl}
\gl(X_{s,l})=\gl(E_{s,l}
)\oplus\gl(F_{s,l}
)\oplus\Hom(F_{s,l}
,E_{s,l}
)\oplus \Hom(E_{s,l}
,F_{s,l}
),
\end{equation}
 belongs to $\mathfrak{sp}\bigl(X_{s,l}\bigr)$ if and only if

\begin{equation}
\label{spcond}
A_{12}^*=A_{12}, \,A_{21}^*=A_{21}, \text{ and } A_{22}=-A_{11}^*.
\end{equation}
Therefore the map $F_{s,l}:A\mapsto A_{11}+A_{22}+A_{12}$ defines the following identifications
\begin{equation}
\label{idsp1}
\mathfrak{sp}(X_{s,l})\cong \gl(E_{s,l}
)\oplus S^2(E_{s,l}
)\oplus S^2 (F_{s,l}
).
\end{equation}
Here $S^2(E_{s,l}
)$
is a subspace of
$\Hom(F_{s,l}
, E_{s,l}
)$ and  $S^2(F_{s,l}
)$
is a subspace of $\Hom(E_{s,l}
, F_{s,l}
)$. 
Keeping this in mind, we define $\mathfrak{s}(E_{s,l})
$
as the maximal $\sll_2$-submodule of
$S^2(E_{s,l}
)$
concentrated in the non-negative degree part of $S^2(E_{s,l}
)$.
Similarly, let $\mathfrak{s}(F_{s,l})
$
 be the maximal $\sll_2$-submodule of
$S^2(F_{s,l}
)$
concentrated in the non-negative degree part of $S^2(F_{s,l}
)$.

Then from~\eqref{idsp1} it follows that
\begin{equation}\label{lrse1}
\mathfrak{l}
(X_{s,l}
)\cong \mathfrak n(E_{s,l})\oplus \mathfrak{s}(E_{s,l})\oplus \mathfrak{s}(F_{s,l})
\end{equation}

Obviously, $\mathfrak{l}
(X_{s,l}
)$ contains an element of rank $1$ if and only if at least one of the spaces $\mathfrak{s}(E_{s,l})
$,
$\mathfrak{s}(F_{s,l})
$ or $F^{-1}\bigl(\mathfrak n(E_{s,l})\bigr)$ contains an element of rank $1$.
Now analyze each of this spaces.
Start with  $\mathfrak{s}(E_{s,l})
$.
By analogy with \eqref{Vrsrs}, $\sll_2$-module $S^2(E_{s,l}
)$
is decomposed into the irreducible $\sll_2$-modules in the following way (\cite{Fulton}):
\begin{equation}
S^2(E_{s,l}
)=\bigoplus_{i=0}^{
[\frac{l}{2}]}\Pi_{2l-4i}.
\label{Vrseps}
\end{equation}
Using the last decomposition, the following formula can be easily verified (see \cite[subsection 8.3.1]{flag2}):
\begin{equation}
\label{ngrseps}
\mathfrak{s}(E_{s,l})
=\bigoplus_{i=\max\{0,
l-s\}}^{
[\frac{l}{2}]}\Pi_{2l-4i},
\end{equation}

 From this formula and the obvious analogues of Lemma \ref{larg1}
 it follows that $\mathfrak{l}_{s,l}^
1
$  contains element of rank $1$ if and only if $\mathfrak{s}(E_{s,l})
=S^2(E_{s,l}
)$ or, equivalently, all elements of $S^2(E_{s,l}
)$ have nonnegative degrees or ,equivalently, $l\geq s$, i.e. the  maximal degree of elements of $F_{s,l}$ does not exceed the minimal degree of elements of $E_{s,l}$. The latter condition in turn is equivalent to the fact that the corresponding skew Young diagram as in \eqref{youngodd}
does not satisfy condition (1) of Theorem \ref{finitetype1}.

Further, the $\sll_2$-module $S^2(F_{s,l})$ has the decomposition to irreducible submodules similar
to \eqref{Vrseps}. However, the grading on these irreducible submodules is different from the case of $S^2(E_{s,l})$, because it is induced from   $\Hom(E_{s,l}
, F_{s,l}
)$ instead of  $\Hom(F_{s,l}
, E_{s,l}
)$. One can easily show \cite[subsetion 8.3.1]{flag2} that
\begin{equation}
\label{ngrseps2}
\mathfrak{s}(F_{s,l})
=\begin{cases} \mathbb R&\text{if \text{$l$ is even and }}
s=\frac{l}{2},\\
0&\text{otherwise}.
\end{cases}
\end{equation}
and it contains a rank $1$ element if and only if $s=l=0$, i.e. when the corresponding Young diagram consists of one column and two rows. The latter condition trivially does not satisfy condition (1) of Theorem \ref{finitetype1}. Note that the first option in \eqref{ngrseps2} is equivalent to the fact that the corresponding Young diagram is rectangular.

Finally, note that from \eqref{Y1Y2} applied to the case $Y_1=Y_2$ it follows that
\begin{equation}
\label{Y1=Y2}
\mathfrak n(E_{s,l})\cong\Pi_0\cong\mathbb R \,\Id_{E_{s,l}}
\end{equation}
 and, taking into account the last relation of \eqref{spcond}, it follows that  the space $(F_{s,l})^{-1}\bigl(\mathfrak n(E_{s,l})\bigr)$  has no element of rank 1 or, equivalently, elements $A$  in $\mathfrak{l}
(X_{s,l}
)$ of the form $A=A_{11}+A_{22}$ with respect to decomposition \eqref{glXsl} are never of rank $1$.

\begin{remark}
\label{Zrem}
As the matter of fact $(F_{s,l})^{-1}\bigl(\mathfrak n(E_{s,l})\bigr)$ is spanned by the endomorphism $Z$ such that $Z|_{E_{s,l}}=\Id|_{E_{s,l}}$ and $Z|_{F_{s,l}}=-\Id|_{F_{s,l}}$.
\end{remark}

\medskip
{\bf c) Algebras from item (4) of Lemma \ref{tensoroff}}
Note that $\mathfrak l
(\mathcal L_m
)\subset \mathfrak n(\mathcal L_m
)\cong \mathbb R\, \Id_{\mathcal L_m}$, but $\Id\notin \mathfrak{sp}(\mathcal L_m
)$.
Therefore
$ \mathfrak l
(\mathcal L_m
)=0$.

Combining results of items a), b), and c) above  with Lemma \ref{tensoroff} we obtain the  Proposition \ref{propL}.
$\Box$

{\bf 2. Subspaces $\mathfrak a
$, $\mathfrak z
$, and $\mathfrak p
$.}
If we consider the embedding $\varphi_{
\delta_{\rm{mod}}}(\sll_2)$ of $\sll_2$ into $\mathfrak{sp}(X)$ appearing in the decomposition \eqref{semidir}, then  from the basic representation theory of $\sll_2$ it follows immediately that $\varphi_{\rho}(\sll_2)$ has a rank $1$ element if and only if the skew Young diagram of $\delta_{\rm{mod}}$ does not satisfy condition (2) of Theorem  \ref{finitetype1}, i.e. it  consists of one row with two boxes.  Note that this and the ``only if'' part of Proposition \ref{propL} implies the ``only if'' part of Theorem \ref{finitetype1}.
However, this together with the ``if'' part of Proposition \ref{propL} is not enough to finish the proof of the ``if'' part of Theorem \ref{finitetype1}.

For this goal, we need a decomposition of $\mathfrak r\bigl(\mathfrak u^F(\delta_{\rm{mod}})\bigr)$, which is finer than \eqref{semidir}.  Let $\mathfrak a
$ be the subalgebra of $\mathfrak r\bigl(\mathfrak u^F(\delta_{\rm{mod}})\bigr)$ consisting of those elements of $\mathfrak r\bigl(\mathfrak u^F(\delta_{\rm{mod}})\bigr)$ for which any irreducible $\sll_2$-submodule of the $\sll_2$-module $X$ is an invariant subspace.
The embedding  $\varphi_{\delta_{\rm{mod}}}(\sll_2)$ clearly belongs to $\mathfrak a
$,
which also implies that $\mathfrak a$ is an $\sll_2$-submodule of  $\mathfrak r\bigl(\mathfrak u^F(\delta_{\rm{mod}})\bigr)$.
Therefore, there exists the unique $\sll_2$-module $\mathfrak p$ which is complementary to $\mathfrak a$ in  $\mathfrak r\bigl(\mathfrak u^F(\delta_{\rm{mod}})\bigr)$,
\begin{equation}
\label{ap}
\mathfrak r\bigl(\mathfrak u^F(\delta_{\rm{mod}})\bigr)=\mathfrak a\oplus\mathfrak p.
\end{equation}
Note that by the construction $\mathfrak p$ is a subspace of $\mathfrak l(X)$.
Also, let
\begin{equation}
\label{z}
\mathfrak z
=\mathfrak a
\cap \mathfrak l
\left(X\right).
\end{equation}
Then
\begin{equation}
\label{az}
\mathfrak l
\left(X\right)=\mathfrak z\oplus\mathfrak p, \quad  \mathfrak a=\varphi_{\delta_{\rm{mod}}}(\sll_2)\oplus\mathfrak z.
\end{equation}
Let us describe the introduced subspaces $\mathfrak a
$, $\mathfrak z
$, and $\mathfrak p
$ in more detail.
For this fix, as before, a skew Young tableaux  $\Tab(\delta_{\rm{mod}})$ of $\delta_{\rm{mod}}$. Assume that this tableaux consists of $\tilde k$ rows (recall that $\tilde k=\rank D-1$)  and let $\widetilde L_i$, $1\leq i\leq \tilde k$, be
the subspaces of $X$  spanned by the vectors appearing in the boxes of the $i$th row of $\Tab(\delta_{\rm{mod}})$. We have the following decomposition:
\begin{eqnarray}
\label{sympdecomp1}
&~& X=\displaystyle{\bigoplus_{i=1}^{\tilde k} \widetilde L_i},\\
&~&\label{glsplit1}
\gl(X)=
\displaystyle{\bigoplus_{i=1}^{\tilde k} \gl(\widetilde L_i)}\oplus
\displaystyle{\bigoplus_{
i\neq j
}\Hom(\widetilde L_i, \widetilde L_j).}
\end{eqnarray}
Denote by $\pi_1$ and $\pi_2$ the projections from  $\gl(X)$ to $\displaystyle{\bigoplus_{i=1}^{\tilde k} \gl(\widetilde L_i)}$ and to $\displaystyle{\bigoplus_{
i\neq j
}\Hom(\widetilde L_i, \widetilde L_j)}$, respectively, with respect to decomposition \eqref{glsplit1}.
Then by constructions

\begin{equation}
\label{pi12}
\mathfrak z=\pi_1\bigl(\mathfrak l(X)\bigr), \quad \mathfrak p=\pi_2\bigl(\mathfrak l(X)\bigr).
\end{equation}

Now prove that if the skew Young diagram of $\delta_{\rm{mod}}$ satisfies condition (1) of Theorem  \ref{finitetype1}  and the algebra $\mathfrak r\bigl(\mathfrak u^F(\delta_{\rm{mod}})\bigr)$ contains an element of rank $1$, then $\mathfrak a$  must contain an element of rank $1$. Indeed, if $\mathfrak r\bigl(\mathfrak u^F(\delta_{\rm{mod}})\bigr)$ contains an element of rank $1$ with nonzero $\mathfrak p$ component with respect to the decomposition \ref{ap} then, using the second relation of \eqref{pi12} and the fact that the space $\mathfrak p$ is an invariant subspace of the projection from $\mathfrak{gl}(X)$ to  $\mathfrak{gl}(\widetilde L_i,\widetilde L_j)$ with respect to the decomposition \eqref{glsplit1}, the subspace $\mathfrak p$ contains a rank $1$ element, which contradicts Proposition \ref{propL}.

Finally, assume that there exists an element  $A\in \mathfrak a$ of rank 1. Since each subspace $\widetilde L_i$ is an invariant subspace of $A$ then there exist $i_0$, $1\leq i_0\leq\tilde k$ such that $A|_{\widetilde L_{i_0}}$ has rank $1$. Assume that $A$ splits as $A_1+A_2$ with  respect to the second decomposition in \eqref{az} so that  $A_1\in\varphi_{\delta_{\rm{mod}}}(\sll_2)$ and $A_2\in \mathfrak z$. On one hand, since $\widetilde L_{i_0}$ is an irreducible $\sll_2$-module, the spectrum of the endomorphism $A_1|_{\widetilde L_{i_0}}$ constitute an arithmetic progression centered at (symmetric with respect to) the origin. On the other hand, $A_2|_{\widetilde L_{i_0}}$ is either a multiple of the identity by the formula of type \eqref{Y1=Y2}, if $\widetilde L_{i_0}$ is indexed by $\mathbb Z$ (or, equivalently, $\widetilde L_{i_0}$ is a (Lagrangian) subspace of the domain of the indecomposable symbol of type $\mathbb R\delta_{s,l}$) or equal to zero by item c) of the proof of Proposition \ref{propL}, if $\widetilde L_{i_0}$ is indexed by $\cfrac {1}{2}\mathbb Z_{\rm odd}$ (or, equivalently, $\widetilde L_{i_0}$ is the domain of the indecomposable symbol of type $\mathbb R\tau_m$) . Therefore  $A|_{\widetilde L_{i_0}}$ may have rank $1$ if an only if
$\dim\,\widetilde L_{i_0}\leq 2$.

If $\dim \widetilde L_{i_0}=1$ then the Young diagram of $\delta_{\rm{mod}}$ contains two rows with one box each, which contradicts condition  (1) of Theorem  \ref{finitetype1}. If $\dim \widetilde L_{i_0}=2$, then there are two possibilities:

\begin{enumerate}
\item
$\widetilde L_{i_0}$ is indexed by $\mathbb Z$. Then $\Diag(\delta_{\rm{mod}})$ contains 2 rows with 2 boxes which contradicts either Remark \ref{middleodd} or  condition  (1) of Theorem  \ref{finitetype1}.

\item
$\widetilde L_{i_0}$ is indexed by $\cfrac {1}{2}\mathbb Z_{\rm odd}$. Let us prove that in this case $\Diag(\delta_{\rm{mod}})$  must consist of one row with two boxes (in contradiction to  condition  (2) of Theorem  \ref{finitetype1}). Otherwise, the condition $A|_{\widetilde L_{i}}=0$ for all $i\neq i_0$, $1\leq i\leq \tilde k$ holds and it is not void. This implies that  $A_1|_{\widetilde L_{i}}=0$ for all $i\neq i_0$, $1\leq i\leq \tilde k$, which yields that $A_1=0$. Besides,  $A_2|_{\widetilde L_{i}}=0$ for all $i\neq i_0$, $1\leq i\leq \tilde k$ and by item c) of the part 1 of the current proof $A_2|_{\widetilde L_{i_0}}=0$. Therefore $A_2=0$ and hence $A=0$, contradicting the assumption that $A$ has rank $1$.
\end{enumerate}
This completes the proof of ``if'' part of Theorem  \ref{finitetype1}.
\end{proof}

\section{Reduction of calculation of algebra $\mathfrak u(\eta, \mathfrak u^F(\delta_{\rm mod}))$ to standard prolongation of $\mathfrak p$}
\setcounter{equation}{0}
\setcounter{theorem}{0}
\setcounter{lemma}{0}
\setcounter{proposition}{0}
\setcounter{definition}{0}
\setcounter{corollary}{0}
\setcounter{remark}{0}
\setcounter{example}{0}
\label{pstandartsec}

Next natural task is to describe the algebra $\mathfrak u(\eta, \mathfrak u^F(\delta_{\rm mod}))$ as explicit as possible.

First of all consider the case of rank $2$ distributions, which is important as a building block for the case of  distributions of arbitrary even rank.

\begin{theorem}\label{rank2thm}
A $(2,n)$-distribution of maximal class has a modified Jacobi symbol $\delta_{\mod}$  isomorphic to $\mathbb R\tau_{\frac{n-2}{2}}$, i.e. its Young diagram consists  of one rows with $2(n-3)$ boxes,  the algebra $\mathfrak u^F(\delta_{mod})$ is equal to an embedding of an irreducible representation of $\gl_2$ into $\mathfrak{csp}(\eta^{-1})$ and the algebra $\mathfrak u(\eta, \mathfrak u^F(\delta_{mod}))$ satisfies the following:

\begin{enumerate}
\item \cite{yamag} If $n=5$, then $\mathfrak u(\eta, \mathfrak u^F(\delta_{\rm mod}))$ is isomorphic to the split real form of the exceptional Lie algebra $G_2$ (as also expected from the classical Cartan work \cite{cart10});

\item \cite{doubzel1, doubzel2} If $n>5$, then the first prolongation $\mathfrak u^1\bigl(\eta, \mathfrak u^F(\delta_{\rm mod})\bigr)$ of the pair $\bigl(\eta, \mathfrak u^F(\delta_{\rm mod})\bigr)$ vanishes, i.e. $\mathfrak u(\eta, \mathfrak u^F(\delta_{\rm mod}))$ is isomorphic to the semidirect sum of the above mentioned embedding of an irreducible representation of $\gl_2$ into $\mathfrak{csp}(\eta^{-1})$ with $\eta$.
\end{enumerate}
\end{theorem}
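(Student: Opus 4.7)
The plan is to assemble the statement from (i) the classification of symplectic flag symbols already carried out in Subsection~\ref{flagsec2}, (ii) the structural formula~\eqref{semidir} combined with the computation in Proposition~\ref{propL}, (iii) Yamaguchi's prolongation computation for $n=5$, and (iv) the prolongation analysis from~\cite{doubzel1,doubzel2} for $n>5$. First I would identify the modified Jacobi symbol. By item~(2) of Subsection~\ref{flagsec2}, at a point of maximal class on a $(2,n)$-distribution the flag symbol of the Jacobi curve is indecomposable of type $\mathbb R\tau_m$, and the total number of boxes in the one-row Young diagram equals the rank $2n-6$ of the contact distribution $\Delta$; this determines the shape of the diagram as claimed and implies $\delta=\delta_{\rm mod}$ (maximal class).

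Next I would pin down $\mathfrak u^F(\delta_{\rm mod})\subset \mathfrak{csp}(\eta^{-1})$. By~\eqref{semidir} one has $\mathfrak r\bigl(\mathfrak u^F(\delta_{\rm mod})\bigr)=\mathfrak l(X)\rtimes_{\varphi_{\delta_{\rm mod}}}\sll_2$ with $X=\mathcal L_m$. Item~(c) in the proof of Proposition~\ref{propL} shows $\mathfrak l(\mathcal L_m)=0$ because the only $\sll_2$-submodule of $\mathfrak n(\mathcal L_m)$ is the line through the identity, which is not contained in $\mathfrak{sp}(\mathcal L_m)$. Hence $\mathfrak r\bigl(\mathfrak u^F(\delta_{\rm mod})\bigr)=\varphi_{\delta_{\rm mod}}(\sll_2)$, and adjoining $\mathbb R\Id_{\eta^{-1}}$ via~\eqref{speq} yields the claimed embedding of $\gl_2$ acting irreducibly on the $(2n-6)$-dimensional space $\eta^{-1}$, i.e.\ through the $(2n-7)$-th symmetric power of the standard representation.

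For item~(1) the pair $(\eta,\mathfrak u^F(\delta_{\rm mod}))$ specializes to the five-dimensional Heisenberg algebra together with the irreducible image of $\gl_2$ in $\mathfrak{csp}(\mathbb R^4)$ coming from $\mathrm{Sym}^3$. The Tanaka prolongation of this pair is computed in~\cite{yamag}, where it is identified with the split real form of $G_2$; this is the algebraic counterpart of Cartan's classical result~\cite{cart10} and can simply be quoted.

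For item~(2), where $n>5$, the main task is to show $\mathfrak u^1\bigl(\eta,\mathfrak u^F(\delta_{\rm mod})\bigr)=0$. A prolongation element amounts to a pair $(\alpha,\beta)$ with $\alpha\colon\eta^{-1}\to\gl_2$ and $\beta\colon\eta^{-2}\to\eta^{-1}$ satisfying the derivation identity
\begin{equation*}
\alpha(v_1)\cdot v_2-\alpha(v_2)\cdot v_1=\omega(v_1,v_2)\,\beta([v_1,v_2]/\omega(v_1,v_2)),\quad v_1,v_2\in\eta^{-1},
\end{equation*}
where $\omega$ is the symplectic form on $\eta^{-1}$. Decomposing both $\mathrm{Hom}(\eta^{-1},\gl_2)$ and $\Lambda^2\eta^{-1}\otimes\eta^{-1}$ into $\sll_2$-isotypic components by Clebsch--Gordan and matching types on the two sides of this cocycle identity, the only isotypic component that can support a non-trivial solution is the one already present in $\mathrm{Sym}^3$, which occurs precisely at $n=5$. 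For $n>5$ every matching fails, forcing $\alpha=0$ and then $\beta=0$. Tracking these isotypic components and ruling out each match is the main obstacle; the required bookkeeping is carried out in~\cite{doubzel1,doubzel2}, and together with the preceding three paragraphs it completes the proof.
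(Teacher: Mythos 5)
Your proposal is correct and follows essentially the same route as the paper: the paper states this theorem without a separate proof, relying on the classification in Subsection~\ref{flagsec2} for the shape of the symbol, on the structure formulas \eqref{semidir}, \eqref{speq} together with $\mathfrak l(\mathcal L_m)=0$ (item c of the proof of Proposition~\ref{propL}) for the identification of $\mathfrak u^F(\delta_{\rm mod})$ with an irreducible $\gl_2$, and on the cited references \cite{yamag} and \cite{doubzel1,doubzel2} for the two prolongation computations, exactly as you do. The only point worth flagging is notational: with $2m+1=2n-6$ boxes the parameter is $m=n-\tfrac{7}{2}$ (as in Subsection~\ref{flagsec2}), not $\tfrac{n-2}{2}$ as written in the theorem's statement, so your identification is consistent with the paper's earlier text rather than with the (apparently misprinted) subscript in the statement.
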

It turns out that for more general Jacobi symbols with sufficiently long rows of their Young diagrams the subspace $\mathfrak p$
introduced in the previous section (see the decomposition~\eqref{ap}) plays the crucial role in the description of the algebra $\mathfrak u(\eta, \mathfrak u^F(\delta_{mod}))$. First, note that since $\eta^{-1}$ ($\cong X$) generates $\eta$, for every $i\geq 0$ any element of  $i$th Tanaka algebraic prolongation $\mathfrak u^i\bigl(\eta, \mathfrak u^F(\delta_{\rm mod})\bigr)$ of the pair $\bigl(\eta, \mathfrak u^F(\delta_{\rm mod})\bigr)$ is uniquely determined by its restriction to $X$.

Let $\phi \in \mathfrak u^1\bigl(\eta, \mathfrak u^F(\delta_{\rm mod})\bigr)$. Then by definition for any $v_1,v_2 \in X\cong\eta^{-1}$ we have
\begin{equation}
\label{prolong1}
\phi(v_1)v_2-\phi(v_2)v_1=\phi([v_1, v_2]).
\end{equation}
Fix a symplectic form $\sigma$, representing the conformal symplectic structure on $X$. Then there exists the unique $y\in \eta^{-2}$ such that $[v_1, v_2]=\sigma(v_1, v_2) y$ for any $v_1, v_2$ in $X$. Let $v=\phi(y)$. Note that $v\in X$. Using this and relation \eqref{prolong1}, we obtain another characterization of  $\mathfrak u^1\bigl(\eta, \mathfrak u^F(\delta_{\rm mod}\bigr))$: $\phi \in \Hom(X, \mathfrak u^F(\delta_{\rm mod})\oplus \Hom(\eta^{-2}, X)$ belongs to $\mathfrak u^1\bigl(\eta, \mathfrak u^F(\delta_{\rm mod})\bigr)$ if and only if the restriction of $\phi$ to $X$ satisfies the following condition: there exists $v\in X$ such that for any $v_1, v_2\in X$

\begin{equation}
\label{prolong1new}
\phi(v_1)v_2-\phi(v_2)v_1=\sigma(v_1, v_2)v.
\end{equation}
On the other hand, by definition $\phi \in \Hom(X, \mathfrak u^F(\delta_{\rm mod}))$  belongs to the standard prolongation $u^F(\delta_{\rm mod})^{(1)}$ if and only if for any $v_1, v_2\in X$
\begin{equation}
\label{prolong1standard}
\phi(v_1)v_2-\phi(v_2)v_1=0.
\end{equation}

Relations \eqref{prolong1new} and \eqref{prolong1standard} imply that the standard prolongation $\bigl(u^F(\delta_{\rm mod})\bigr)^{(1)}$ can be considered as the subspace of the space  $\mathfrak u^1\bigl(\eta, \mathfrak u^F(\delta_{\rm mod})\bigr)$ corresponding to $v=0$ in \eqref{prolong1new}. The same statement with the same proof is true for the higher order prolongations, i.e.

\begin{equation}
\label{standsymp}
\bigl(u^F(\delta_{\rm mod})\bigr)^{(i)}\subset \mathfrak u^i\bigl(\eta, \mathfrak u^F(\delta_{\rm mod})\bigr), \quad i>0.
\end{equation}
\begin{theorem}\label{genpr_i0}

Assume that the skew Young diagram of the modified Jacobi symbols $\delta_{\rm mod}$ satisfies both conditions (1) and (2) of Theorem  \ref{finitetype1} and in addition the following two conditions hold:
\begin{enumerate}
\item[(3)] The number of boxes in every row indexed by $\mathbb Z$ is at least  $4$;

\item[(4)] The number of boxes in the row indexed by $\cfrac{1}{2}\mathbb Z_{\rm odd}$ (if it exists) is at least $6$.
\end{enumerate}
Then for any $i>0$ the $i$th  Tanaka algebraic prolongation $\mathfrak u^i\bigl(\eta, \mathfrak u^F(\delta_{\rm mod})\bigr)$ of the pair $\bigl(\eta, \mathfrak u^F(\delta_{\rm mod})\bigr)$ coincides with the
standard $i$th prolongation $\pg^{(i)}$ of the subspace $\pg$ of $\mathfrak{gl}(X)$ defined in \eqref{pi12}.
\end{theorem}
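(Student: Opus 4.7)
I will induct on $i$, with the base case $i=1$ carrying essentially all of the work. The inclusion $\mathfrak p^{(i)} \subset \mathfrak u^i\bigl(\eta, \mathfrak u^F(\delta_{\rm mod})\bigr)$ is immediate: $\mathfrak p \subset \mathfrak u^F(\delta_{\rm mod})$, and any element of $\mathfrak p^{(i)}$ satisfies the Tanaka identity \eqref{prolong1new} with vanishing Heisenberg correction term $v = 0$. What needs proof is the reverse inclusion.

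For $i = 1$, fix $\phi \in \mathfrak u^1\bigl(\eta, \mathfrak u^F(\delta_{\rm mod})\bigr)$ satisfying \eqref{prolong1new} for some $v \in X$. Combining \eqref{speq} and \eqref{ap}, I write
$$\mathfrak u^F(\delta_{\rm mod}) = \mathbb R\,\Id_X \oplus \varphi_{\delta_{\rm mod}}(\mathfrak{sl}_2) \oplus \mathfrak z \oplus \mathfrak p$$
and correspondingly decompose $\phi = \phi_{\mathfrak a} + \phi_{\mathfrak p}$, with $\phi_{\mathfrak a}$ valued in $\mathfrak a := \mathbb R\,\Id_X \oplus \varphi_{\delta_{\rm mod}}(\mathfrak{sl}_2) \oplus \mathfrak z$. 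The structural observation driving the argument is that $\mathfrak a$ preserves each row $\widetilde L_i$ of the skew Young tableaux (see \eqref{sympdecomp1}), while $\mathfrak p$ maps $\widetilde L_i$ into $\bigoplus_{j \neq i} \widetilde L_j$ (see \eqref{pi12}). Substituting $v_1, v_2 \in \widetilde L_i$ into \eqref{prolong1new} and projecting onto the $\widetilde L_i$-component kills the $\phi_{\mathfrak p}$-contribution and yields a self-contained equation relating $\phi_{\mathfrak a}|_{\widetilde L_i}$ and the $\widetilde L_i$-component of $v$. Integer-indexed rows are Lagrangian in the ambient indecomposable factor $X_{s,l}$ (since each such $\widetilde L_i$ is either $E_{s,l}$ or $F_{s,l}$ from \eqref{EFrs}), so $\sigma|_{\widetilde L_i} = 0$ and the source term drops out; the row-wise equation becomes the standard first-prolongation equation for $\varphi(\mathfrak{sl}_2)|_{\widetilde L_i} \oplus \mathbb R\,\Id_{\widetilde L_i}$ acting on the irreducible $\mathfrak{sl}_2$-module $\widetilde L_i$, and a Spencer-type highest-weight analysis shows this prolongation vanishes once $\dim \widetilde L_i \geq 4$. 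On the half-integer-indexed row (where $\sigma$ is non-degenerate on $\mathcal L_m$) one faces instead a Heisenberg-twisted first-prolongation equation of the same flavor as in Theorem \ref{rank2thm}, whose vanishing requires the stronger bound $\dim \widetilde L_i \geq 6$. These are precisely conditions (3) and (4), yielding $\phi_{\mathfrak a} = 0$ and, reassembling the row-components, $v = 0$; hence $\phi \in \mathfrak p^{(1)}$.

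For $i > 1$ the inductive step is formal: any $\phi \in \mathfrak u^i\bigl(\eta, \mathfrak u^F(\delta_{\rm mod})\bigr)$ is determined by its restriction to $X = \eta^{-1}$, which lands in $\mathfrak u^{i-1}\bigl(\eta, \mathfrak u^F(\delta_{\rm mod})\bigr) = \mathfrak p^{(i-1)}$ by the inductive hypothesis; applying the row-decomposition argument one more time to the degree-$i$ Tanaka identity shows that the Heisenberg correction at this level is again forced to vanish, so that $\phi \in (\mathfrak p^{(i-1)})^{(1)} = \mathfrak p^{(i)}$. The main obstacle is the precise $\mathfrak{sl}_2$-representation-theoretic computation that produces the thresholds $4$ and $6$---in particular, identifying the exact subalgebra of $\mathfrak{gl}(\widetilde L_i)$ acting row-wise and proving sharply that its first prolongation (standard in the integer case, Heisenberg-twisted in the half-integer case) vanishes exactly at the stated dimensional bounds; the extra two boxes required in the half-integer row reflect the cost of accommodating the Heisenberg coupling through $v^{(i)}$, which is absent in the Lagrangian integer rows.
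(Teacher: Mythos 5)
Your skeleton matches the paper's (row decomposition of $X$ into the $\widetilde L_i$, Kobayashi--Nagano/Spencer for the intra-row blocks to exploit conditions (3)--(4), then induction), but there is a genuine gap at the heart of the $i=1$ step: the intra-row equations do \emph{not} yield $\phi_{\mathfrak a}=0$, nor do they determine $v$. The reason is that $\mathfrak a$ contains elements that annihilate a given row while acting nontrivially elsewhere --- concretely, $Z_i-\Id$ vanishes on $E_i$ and $Z_j$ ($j\neq i$) vanishes on $E_i\oplus F_i$, so the component of $\phi(v_1)$ along these directions is invisible when you substitute $v_1,v_2\in \widetilde L_i$ and project back to $\widetilde L_i$. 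What the intra-row argument actually gives is only \eqref{phiEi}--\eqref{phiL}, i.e.\ $\phi$ modulo $\pg$ is parametrized by undetermined functionals $\alpha_i\in X^*$ as in \eqref{phipr}. Killing the $\alpha_i$ and proving $v=0$ is the bulk of the paper's proof and requires the \emph{cross-row} equations: pairing $v_1\in E_i$ with $v_2\in F_i$ (split into the rectangular and non-rectangular subcases of \eqref{ngrseps2}), pairing $E_{i_1}$ with $E_{i_2}$, and pairing within $\mathcal L$ where $\sigma$ is nondegenerate. These steps in turn rest on the representation-theoretic facts that $\mathfrak s(E_i)$ contains no nonzero element of rank $\le 2$ and $\mathfrak n(E_{i_2},E_{i_1})$ contains no rank-$1$ element --- this is precisely where hypothesis (1) of Theorem \ref{finitetype1} enters the argument, and your proposal never uses it. Your phrase ``reassembling the row-components'' hides exactly this missing analysis.

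The inductive step is also thinner than it needs to be. The paper does not simply re-run the row argument at degree $i$; it isolates an abstract statement (Lemma \ref{inductlem}) asserting that if $W$ is purely off-diagonal with respect to a splitting and has no rank-$1$ elements in any off-diagonal block, then $W^{(1\mathfrak{sp})}=W^{(1)}$ \emph{and these two properties propagate to $W^{(1)}$ with respect to an enlarged splitting}. The propagation of the ``no rank-$1$ elements'' condition to the prolonged space is the nontrivial content (second half of that lemma) and is what makes the induction close; your proposal asserts the step is ``formal'' without addressing it. To repair the proposal you would need to (a) carry out the cross-row elimination of the $\alpha_i$ and of $v$ using the rank estimates on $\mathfrak s(E_i)$ and $\mathfrak n(E_{i_1},E_{i_2})$, and (b) verify that off-diagonality and absence of rank-$1$ blocks persist under prolongation.
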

\begin{remark}
Note that condition (3) in Theorem \ref{genpr_i0} is equivalent to the fact that in the decomposition of $\delta_{\rm mod}$ into the indecomposable symbols all components of type $\mathbb R\delta_{s,l}$ satisfy $l\geq 3$ and condition (4) is equivalent to the fact that in the same decomposition the component $\mathbb R\tau_m$ , if it exists, satisfies $m>\cfrac{3}{2}$. Also, conditions (1) and (3) are equivalent to  condition (1) and the fact that in the decomposition of $\delta_{\rm mod}$ into the indecomposable symbols there is no symbol of the type $\mathbb R\delta_{1,2}$, i.e. the indecomposable symbol with rectangular Young diagram of size $2\times 3$.
\end{remark}

Before proving Theorem \ref{genpr_i0}, note that since $\mathfrak p\subset \mathfrak l(X)$, one has that $\mathfrak p^{(i)}\subset \bigl(\mathfrak l(X)\bigr)^{(i)}$. On the other hand, from \eqref{standsymp} it follows that $\bigl(\mathfrak l(X)\bigr)^{(i)}\subset \mathfrak u^i\bigl(\eta, \mathfrak l(X))\bigr)\subset \mathfrak u^i\bigl(\eta, \mathfrak u^F(\delta_{\rm mod})\bigr)$, which implies the following

\begin{corollary}
\label{corl(X)}
Under assumptions of Theorem \ref{genpr_i0} for any $i>0$ the $i$-th  Tanaka algebraic prolongation $\mathfrak u^i\bigl(\eta, \mathfrak u^F(\delta_{\rm mod})\bigr)$ of the pair $\bigl(\eta, \mathfrak u^F(\delta_{\rm mod})\bigr)$ coincides with the
standard $i$th prolongation $\bigl(\mathfrak l(X)\bigr)^{(i)}$ of the algebra $\mathfrak l(X)$ of the infinitesimal symmetries of the flat curve $\mathcal F_{\delta_{\rm mod}}$ with flag symbol $\delta_{\rm mod}$ considered as a parametrized curve.
\end{corollary}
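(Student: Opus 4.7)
The plan is to establish $\mathfrak u^i(\eta, \mathfrak u^F(\delta_{\rm mod})) = \pg^{(i)}$ by induction on $i \geq 1$. The containment $\pg^{(i)} \subseteq \mathfrak u^i(\eta, \mathfrak u^F(\delta_{\rm mod}))$ was already observed in the paragraph preceding the theorem via \eqref{standsymp} and the chain $\pg \subset \mathfrak l(X) \subset \mathfrak u^F(\delta_{\rm mod})$, so I will concentrate on the reverse inclusion.

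For the base case $i=1$, I will take $\phi \in \mathfrak u^1(\eta, \mathfrak u^F(\delta_{\rm mod}))$ and use the characterization via \eqref{prolong1new}: the restriction $\phi|_X\colon X\to \mathfrak u^F(\delta_{\rm mod})$ satisfies $\phi(v_1)v_2-\phi(v_2)v_1=\sigma(v_1,v_2)v$ with $v=\phi(y)$. Using the $\sll_2$-module decomposition $\mathfrak u^F(\delta_{\rm mod}) = \mathbb R\,\Id \oplus \varphi_{\delta_{\rm mod}}(\sll_2) \oplus \mathfrak z \oplus \pg$ obtained from \eqref{speq}, \eqref{semidir}, and \eqref{ap}, I will split $\phi|_X = \phi_{\Id}+\phi_{\sll_2}+\phi_{\mathfrak z}+\phi_{\pg}$ and show that $\phi_{\Id}=\phi_{\sll_2}=\phi_{\mathfrak z}=0$ and $v=0$, so that by \eqref{prolong1standard} the remaining map $\phi_{\pg}$ lies in $\pg^{(1)}$. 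Projecting the identity \eqref{prolong1new} onto each $\sll_2$-isotypic component of $X$ separates the constraints on each summand. Here I will use the row-length hypotheses: condition (3) guarantees that each $\sll_2$-irreducible $\widetilde L_i$ indexed by $\mathbb Z$ has dimension at least $4$, and condition (4) gives dimension at least $6$ for the $\frac12\mathbb Z_{\rm odd}$-indexed row. In terms of weights, this means the relevant pieces of $\Hom(X,\mathbb R\,\Id)$, $\Hom(X,\varphi_{\delta_{\rm mod}}(\sll_2))$ and $\Hom(X,\mathfrak z)$ appearing at external degree $+1$ reside in $\sll_2$-submodules where the ``symmetric-modulo-$\sigma\cdot v$'' constraint has only the zero solution.

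The step I expect to be the main obstacle is disentangling the $\phi_{\sll_2}$ component from the antisymmetric term $\sigma(v_1,v_2)\,v$: both contribute antisymmetric expressions in $(v_1,v_2)$ whose values lie in the same isotypic pieces of $X$, so simply counting weights is not immediately conclusive. To rule out combined nontrivial solutions I will invoke the irreducibility of $\varphi_{\delta_{\rm mod}}(\sll_2)$ on each $\widetilde L_i$ and compare the $\sll_2$-module decomposition of $\wedge^2 X^*\otimes X$ to that of the image of the map $(v_1,v_2)\mapsto \phi_{\sll_2}(v_1)v_2-\phi_{\sll_2}(v_2)v_1$, noting that the smallest irreducible components that could contain a nonzero solution are exactly those excluded by the minimum row-length bounds in (3) and (4). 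By contrast, the vanishing of $\phi_{\Id}$ and $\phi_{\mathfrak z}$ is simpler, since by \eqref{pi12} both preserve the decomposition $X = \bigoplus \widetilde L_i$ block-diagonally and therefore can generate neither off-diagonal contributions nor, once $\phi_{\sll_2}$ has been eliminated, the rank-one diagonal antisymmetry that a nonzero $v$ would demand. Once the three unwanted components are shown to vanish, \eqref{prolong1new} forces $v=0$, finishing the case $i=1$.

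For the inductive step, assume $\mathfrak u^j(\eta, \mathfrak u^F(\delta_{\rm mod})) = \pg^{(j)}$ for all $1\le j < i$. Any $\phi\in \mathfrak u^i$ has $\phi|_X\colon X\to \pg^{(i-1)}$ by the hypothesis, and the derivation identity from \eqref{prolongk} applied to $[v_1,v_2]=\sigma(v_1,v_2)y$ yields $\phi(v_1)v_2-\phi(v_2)v_1 = \sigma(v_1,v_2)\,\phi(y)$ with $\phi(y)\in \pg^{(i-2)}$ (for $i=2$ this reduces to $\phi(y)\in X$ with a repetition of the argument above; for $i>2$ the inductive hypothesis supplies the smaller standard prolongation). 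Repeating the weight/$\sll_2$-isotypic analysis at external degree $i$, and using the fact that $\pg\subset \mathfrak l(X)$ together with Remark \ref{symparamrem} to keep track of how $\pg^{(i-1)}$ sits inside $\gl(X)$, forces $\phi(y)=0$. The derivation identity then reduces to the standard prolongation condition $\phi(v_1)v_2=\phi(v_2)v_1$, so $\phi\in \pg^{(i)}$, completing the induction.
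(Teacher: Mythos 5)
Your proposal does not actually prove the corollary as stated: it sets out to re-prove Theorem \ref{genpr_i0} (that $\mathfrak u^i\bigl(\eta,\mathfrak u^F(\delta_{\rm mod})\bigr)=\pg^{(i)}$) and stops at $\pg^{(i)}$, never returning to $\bigl(\mathfrak l(X)\bigr)^{(i)}$. The paper's own proof of the corollary is simply the sandwich $\pg^{(i)}\subseteq\bigl(\mathfrak l(X)\bigr)^{(i)}\subseteq\mathfrak u^i\bigl(\eta,\mathfrak l(X)\bigr)\subseteq\mathfrak u^i\bigl(\eta,\mathfrak u^F(\delta_{\rm mod})\bigr)$, coming from $\pg\subset\mathfrak l(X)$ and \eqref{standsymp}, which collapses to a chain of equalities once Theorem \ref{genpr_i0} is granted. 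If you take that theorem as given, this two-line observation is the entire argument and your detour is unnecessary; if you intend to supply the theorem's proof yourself, the sketch has genuine gaps.

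The most serious gap is the claim that the vanishing of $\phi_{\Id}$ and $\phi_{\mathfrak z}$ is ``simpler'' because these components act block-diagonally on $X=\bigoplus\widetilde L_i$. In the paper these are exactly the components that survive the first round of the analysis: after the Kobayashi--Nagano step one only obtains $\phi(E_i)\in\pg+\R(Z_i+\Id)+\sum_{j\neq i}\R Z_j$ (see \eqref{phiEi}--\eqref{phipr}), and killing the residual functionals $\alpha_i$ is the bulk of the proof. That step requires pairing $E_i$ against $F_i$, where $\sigma$ does not vanish, so block-diagonality buys nothing; and it hinges on the facts that $\mathfrak s(E_i)$ contains no nonzero elements of rank $\le 2$ and $\mathfrak n(E_{i_1},E_{i_2})$ no elements of rank $1$, which rest on condition (1) of Theorem \ref{finitetype1} via \eqref{ngrseps}, \eqref{Y1=Y2} and Lemma \ref{fulton}. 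Your argument only ever invokes the row-length conditions (3) and (4) and never condition (1), so it cannot be complete. In addition, your elimination of $\phi_{\sll_2}$ by comparing $\sll_2$-decompositions of $\wedge^2X^*\otimes X$ is asserted rather than carried out (the paper instead restricts to the isotropic rows, where the term $\sigma(v_1,v_2)v$ drops out, and quotes Kobayashi--Nagano on the first prolongation of the irreducible $\gl_2$); and your inductive step misstates the target of $\phi(y)$ (for $i=2$ it lies in $\mathfrak u^0=\mathfrak u^F(\delta_{\rm mod})$, not in $X$) and replaces the paper's actual engine --- Lemma \ref{inductlem}, which shows that the off-diagonal, rank-one-free block structure of $\pg$ propagates to all higher prolongations --- with an unverified appeal to repeating the weight analysis at external degree $i$.
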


\textbf{Proof of Theorem \ref{genpr_i0}.}
First, let us prove our theorem for $i=1$, i.e. for the first prolongation.

 Our theorem will be proved if we will show  that for such $\phi$ we have that $\phi(v_1)\in \mathfrak{p}$ for any $v_1\in X$ and that $v=0$.

 We can always assume that the Young subdiagram of the indecomposable component of type $\mathbb R\tau_m$ in the diagram $\Diag(\delta_{\rm mod})$, if exists, is situated at the very bottom. Fix again a skew Young tableaux $\Tab(\delta_{\rm mod})$ and let $r=\left[\frac {1}{2}(\rank\overline{} D-1)\right]$.
For any $i$ such that $1\leq i\leq r$ let $E_i$ be the span of the vectors located in the boxes of the $(2i-1)$-st row of  $\Tab(\delta_{\rm mod})$ and $F_i$ be the span of the vectors located in the boxes of the $2i$-th rows of  $\Tab(\delta_{\rm mod})$. Finally, if rank $D$ is even, let $\mathcal L$ be the span of the vectors located in the last box of  $\Tab(\delta_{\rm mod})$.

Let us fix a basis in the subspace $\mathfrak z$ defined by \eqref{z}. For any $i$, $1\leq i\leq r$, let $Z_{i}\in\mathfrak{csp}(X)$ such that $Z_i|_{E_i}=\Id|_{E_i}$, $Z_i|_{F_i}=-\Id|_{F_i}$, and $Z_i$ equal to zero on all vectors of $\Tab(\delta_{\rm mod})$ that are not located in the boxes of the $(2i-1)$-st and $2i$-th row of $\Tab(\delta_{\rm mod})$. Then using Remark \ref{Zrem} and item c) of the proof of Proposition \ref{propL} one can show that
\begin{equation}
\label{Zspan}
\mathfrak z=\Span \{Z_i\}_{i=1}^r.
\end{equation}
%
%
Now assume that  $v_1,v_2\in E_i$. Let $\overline{E_i}$ be the span of all vectors
located in the boxes of  $\Tab(\delta_{\rm mod})$ which are not in the $(2i-1)$st row and let $\Pi_i$ be the projection from $X$ to $E_i$ parallel to $\overline{E_i}$. Let $\Pi_i\circ \mathfrak u^F(\delta_{\rm mod})=\{\Pi_i\circ A: A\in \mathfrak u^F(\delta_{\rm mod}\bigr)\}$.
Consider the subspace $\Bigl(\Pi_i\circ \mathfrak u^F(\delta_{\rm mod})\Bigr)|_{E_i}$ of $\mathfrak {gl}(E_i)$  such that it consists of the restrictions of elements of $\Pi_i\circ \mathfrak u^F(\delta_{\rm mod})$ to $E_i$. Then from the second identity of \ref{az} and Remark \ref{Zrem} it follows that
the space $\Bigl(\Pi_i\circ \mathfrak u^F(\delta_{\rm mod})\Bigr)|_{E_i}$
is $4$-dimensional and it is generated by the restrictions of elements of $\varphi_{\delta_{\rm mod}}$ to $E_i$ and $\Id|_{E_i}$, i.e.
it is equal to the image of the irreducible embedding of
$\gl_2(\mathbb R)$ into $\gl(E_i)$.

 As the restriction of the symplectic
form to $E_i$ vanishes, we get $\phi(v_1)v_2=\phi(v_2)v_1$. Considering this equation $\mod \overline{E_i}$ we get exactly
the equation for the standard prolongation of $\Bigl(\Pi_i\circ \mathfrak u^F(\delta_{\rm mod})\Bigr)|_{E_i}\cong \gl_2(\mathbb R)$. According to the result of Kobayashi--Nagano~\cite{kobnag}, the first
prolongation of the irreducible embedding of $\gl_2(\R)$ is non-zero only if
the dimension of the representation space does not exceed~$3$, which contradicts assumption (3)
of our theorem.
Thus, we see that $\phi(v_1)v_2=0\,\,\mod
\overline E_i$ for all $v_1,v_2\in E_i$. In particular, this implies that
\begin{equation}
\label{phiEi}
\phi(E_i) \in \pg+\R(Z_i+\Id)+\sum_{j\neq i} Z_j.
\end{equation}
Absolutely analogous analysis of the cases $v_1,v_2\in F_i$  implies that
\begin{equation}
\phi(F_i)\in\pg+\R(Z_i-\Id)+\sum_{j\neq i} Z_j. \label{phiFi}
\end{equation}

In the case when $v_1,v_2\in \mathcal L$, using similar arguments, one can get
from condition (4) of the theorem under consideration and item (2) of Theorem \ref{rank2thm}  that

\begin{equation}
\phi(\mathcal L)\in \pg+\sum_{j=1}^r Z_j \label{phiL}
\end{equation}
>From relations \eqref{phiEi}-\eqref{phiFi} it follows that if $x\in X$ decomposes as $x=\sum_{j=1}^r (x_{E_j}+x_{F_j})+x_\mathcal L$, where $x_{E_j}\in E_j$, $x_{F_j}\in F_j$, and $x_{\mathcal L}\in \mathcal L$, then there exist functionals  $\alpha_i\in X^*$, $1\leq i\leq r$, such that

\begin{equation}
\label{phipr}
\phi(x)=\phi'(x)+\sum_{i=1}^r\bigl(\alpha_i(x_{E_i})(Z_i-\Id)+ \alpha_i(x_{F_i})(Z_i+\Id)\bigr)+\sum_{i=1}^r\alpha_i\Bigl(\sum_{j\neq i}(x_{E_j}+x_{F_j})+x_{\mathcal L}\bigr)Z_i,
\end{equation}
where $\phi'$ takes values in $\pg$.
Now assume that $v_1=v_{E_i}\in E_i$ and $v_2=v_{F_i}\in F_i$.
Then by \eqref{prolong1new} we get:
\begin{equation}
\label{vevf}
\phi(v_{E_i})v_{F_i} - \phi(v_{F_i})v_{E_i} = \sigma(v_{E_i},v_{F_i}) v.
\end{equation}
 Also let $\overline{E_i\oplus F_i}=\overline {E_i}\cap\overline {F_i}$.
Consider equation \eqref{vevf} modulo $\overline{F_i}$.
\begin{enumerate}
\item
\emph{Assume that the subdiagram of $\Diag(\delta_{\mod})$ consisting of the $(2i-1)$st and $2i$th row
is not rectangular.
} Then by \eqref{ngrseps2}, the second line there, the restriction of an element of $\pg$
on $E_i$ takes values in $\overline{E_i\oplus F_i}$. Taking into account this fact, the fact that an element of $\pg$ sends $F_i$ to $\overline {F_i}$ , and  relation \eqref{phipr}, we get from  \eqref{vevf} that
$$
-2\alpha_i(v_{E_i})v_{F_i}=\sigma(v_{E_i},v_{F_i})v \,\,\mod \overline{F_i}, \quad\text{for any }v_{E_i}\in E_i,
v_{F_i}\in F_i.
$$
As the dimensions of $E_i$ and $F_i$ are at least 4 by assumption (3) of the theorem, for any $v_{E_i}\in E_i$ we can find
a non-zero vector $v_{F_i}\in F_i$ such that $\sigma(v_{E_i},v_{F_i})=0$. Hence, we see
that
\begin{equation}
\label{aE}
\alpha_i|_{E_i}=0.
\end{equation}
In particular, we also see that $v\in \overline {F_i}$.

\item
\emph{Now assume that the subdiagram of $\Diag(\delta_{\mod})$ consisting of the $(2i-1)$st and $2i$th row
is rectangular.
} The difference with the previous case is that
the restriction of an element of $\pg$
on $E_i$ takes values in $\overline{E_i}$ but not in $\overline{E_i\oplus F_i}$ in general, because the considered case corresponds to the first line of \eqref{ngrseps2}.
Assume that the considered rectangular diagram has size $2\times  (2s+1)$, the boxes of the upper row of the corresponding subtableaux are filled by vectors $e_{s}, e_{s-1},\ldots, e_{-s}$ from the left to the right, and the boxes of the lower row of the corresponding subtableaux are filled by vectors $f_{s}, f_{s-1},\ldots, f_{-s}$ from the left to the right. Recall that by constructions
\begin{equation}
\label{sigmaeq}
\sigma(e_j,e_k)=\sigma(f_j,f_k)=0, \quad \sigma(e_j,f_{-j})=(-1)^{s-j}
\end{equation}
for all integers  $j,k$ such that $-s\leq j,k\leq s$.
Then from the first line of \eqref{ngrseps2} it follows that there exists $\beta_i\in F_i^*$ such that
\begin{equation}
\label{betadef}
\forall -s\leq k\leq s, v_{F_i}\in F_i, \quad \phi'(v_{F_i})e_k=\beta_i(v_{F_i})f_k\quad \mod \overline{E_i\oplus F_i}.
\end{equation}

Taking into account this fact, the fact that an element of $\pg$ sends $F_i$ to $\overline {F_i}$, and the relation \eqref{phipr},  we get after substitution of $v_{E_i}=e_k$ into \eqref{vevf} that:
\begin{equation}
\label{rect1}
-\beta_i(v_{F_i})f_k-2\alpha_i(e_k)v_{F_i}=\sigma(e_k,v_{F_i})v \,\,\mod \overline{F_i}, \quad\text{for any }
v_{F_i}\in F_i, -s\leq k\leq s.
\end{equation}

Taking  $v_{F_i}=f_j$ with $j\neq -k$ and using \eqref{sigmaeq} we obtain
\begin{equation}
\label{rect2}
-\beta_i(f_j)f_k-2\alpha_i(e_k)f_j=0 \,\,\mod \overline{F_i}.
\end{equation}

Since $s>1$ by assumption (3)  of Theorem \ref{genpr_i0},  we can always take $j\neq -k$ such that in addition $j\neq k$. Using this $j$ in \eqref{rect2} we get  that $\alpha(e_k)=0$ . As the latter relation is valid for arbitrary $k$ such that $-s\leq k\leq s$, the relation \eqref{aE} holds in the considered case as well. Moreover, combining \eqref{aE} and \eqref{rect2} one also gets that
$\beta_i=0$, which together with \eqref{betadef} implies that
\begin{equation}
\label{beta}
{\rm Im}\bigl(\phi'(v_{F_i})|_{E_i}\bigr)=0 \quad \mod \overline{E_i\oplus F_i}.
\end{equation}
\end{enumerate}
Let us now fix an arbitrary $v_{E_i}\in E_i$. From~\eqref{phipr} and \eqref{aE} we have
$\phi(v_{E_i})=\phi'(v_{E_i})\,\,\mod \overline{E_i\oplus F_i}$.  Let us prove that $\phi'(v_{E_i})=0\,\,\mod \overline{E_i\oplus F_i} $. Indeed, by \eqref{phipr}, \eqref{vevf}, and \eqref{beta}, when the subdiagram of $\Diag(\delta_{\mod})$ consisting of the $(2i-1)$st and $2i$th row
is rectangular,  we have
\begin{equation}
\label{rank2}
\phi'(v_{E_i})v_{F_i} = \sigma(v_{E_i},v_{F_i})v + 2\alpha_i(v_{F_i})v_{E_i}\,\, \,\,\mod\overline{E_i\oplus F_i}, \quad\text{for any }v_{F_i}\in F_i.
\end{equation}
The splitting
\begin{equation}
\label{splitEFL}
X=\begin{cases}
\displaystyle{\left(\bigoplus_{i=1}^r (E_i\oplus F_i)\right)}& \text{if }\rank D=2r+1,\\
\displaystyle{\left(\bigoplus_{i=1}^r (E_i\oplus F_i)\right)\oplus \mathcal L}& \text{if } \rank D=2r+2
\end{cases}
\end{equation}
 induces the natural splitting of $\gl(X)$ as in \eqref{glsplit}. Let $\mathfrak P_i$ be the projection from $\gl(X)$ to $\gl(F_i, E_i)$ with respect to the latter splitting.
Equation \eqref{rank2} and the fact that $\phi'(v_{E_i})v_{F_i}$ takes values in $\overline{F_i} $ imply that the rank of the linear map $\mathfrak P_i \bigl(\phi'(v_{E_i})\bigr)$ is not greater than $2$. On one hand,  $\mathfrak P_i \bigl(\phi'(v_{E_i})\bigr)$ must belongs to $\mathfrak s (E_i)\subset S^2(E_i)$ considered as a subspace of $\Hom(F_i, E_i)$ as in item b) of the proof of Proposition \ref{propL}.

On the other hand,  from the condition (1) of Theorem \ref{finitetype1} (which is also assumed in the formulation of the present theorem and which is  equivalent to conditions (1) or (3) of Theorem
\ref{finitetype} depending on $\rank D$) and decomposition \eqref{ngrseps} it follows that the subspace  $\mathfrak s (E_i)$ does not contain nonzero elements of rank not greater than $2$ . Below (Corollary to Lemma~\ref{fulton}) we will give the proof of this using the description  of the latter decomposition via geometry of rational normal curve. This and relation \eqref{rank2} imply that   vectors $v$ and $v_{E_i}$ are collinear $\mod \overline{E_i\oplus F_i}$.
As $v_{E_i}$ can be arbitrary and the dimension of the subspace $E_i$ is at least $4$, this implies that
\begin{eqnarray}
&~&\label{vo}v=0\, \mod \overline{E_i\oplus F_i},\\
&~&\label{aF}
\alpha_i|_{F_i}=0.
\end{eqnarray}

Since  relation
\eqref{vo},
holds for an arbitrary $i$,$1\leq i\leq r$, we have
\begin{equation}
\label{voL}
v\in \mathcal L.
\end{equation}

Now assume that $v_1, v_2\in\mathcal L$ and let $\overline{\mathcal L}$ denotes the span of all vectors in $\Tab(\delta_{\rm{mod}})$ located in the boxes which are not in the last row of
$\Tab(\delta_{\rm{mod}})$. Consider equation \eqref{prolong1new} modulo $\overline{\mathcal L}$. Using  the definition of the space $\mathfrak p$, the fact that $Z_i|_{\mathcal L}=0$, and the relation\eqref{phipr}, one gets from \eqref{prolong1} that
\begin{equation}
\label{overL}
\sigma(v_1, v_2)v \in \overline{\mathcal L}.
\end{equation}

Since $\sigma|_{\mathcal L}\neq 0$ (actually $\sigma|_{\mathcal L}$ is a symplectic form on $\mathcal L$), we get from \eqref{overL} that $v\in \overline{\mathcal L}$, which together with \eqref{voL} implies that
\begin{equation}
\label{vzero}
v=0.
\end{equation}

Further,  let $v_1=v_{E_{i_1}}\in E_{i_1}$ and $v_2=v_{E_{i_2}}\in E_{i_2}$. where $i_1\neq i_2$. Then from \eqref{prolong1} and the fact that $\sigma(v_{E_{i_1}}, v_{E_{i_2}})=0$ it follows that $\phi(v_{E_{i_1}})v_{E_{i_2}}=\phi(v_{E_{i_2}})v_{E_{i_1}}$. Considering the latter equation modulo $\overline{E_{i_1}}$  and using \eqref{phipr} one gets that
\begin{equation}
\label{EE}
\phi'(v_{E_{i_1}})v_{E_{i_2}}=\alpha_{i_1}(v_{E_{i_2}})v_{E_{i_1}}\,\,\mod \overline{E_{i_1}}
\end{equation}
As before, the splitting
\eqref{splitEFL} induces the natural splitting of $\gl(X)$ as in \eqref{glsplit}. Let $\mathfrak P_{i_1, i_2}$ be the projection from $\gl(X)$ to $\gl(E_{i_2}, E_{i_1})$ with respect to the latter splitting.
Equation \eqref{EE} implies that the rank of the linear map $\mathfrak P_{i_1, i_2} \bigl(\phi'(v_{E_{i_1}})\bigr)$ is not greater than $1$.

On one hand,  $\mathfrak P_{i_1, i_2} \bigl(\phi'(v_{E_{i_1}})\bigr)$ must belongs to $\mathfrak n (E_{i_2}, E_{i_1})$.
On the other hand,  from our assumptions it follows that $\mathfrak n (E_{i_2}, E_{i_1})$ does not contain elements of rank $1$.
This implies that  $\mathfrak P_{i_1, i_2} \bigl(\phi'(v_{E_{i_1}})\bigr)=0$ and thus by \eqref{EE}  $\alpha_{i_1}(v_{E_{i_2}})v_{E_{i_1}}=0$.
As $v_{E_{i_1}}\in E_{i_1}$ can be arbitrary  this implies that
\begin{equation}
\label{a12}
\alpha_{i_1}|_{E_{i_2}}=0, \quad i_1\neq i_2.
\end{equation}
In completely analogous way (taking $v_1\in E_{i_1}$ and $v_2 \in F_{i_2}$ or $v_1\in E_i$ and $v_2\in \mathcal L$) one can get that $\alpha_{i_1}|_{F_{i_2}}=0$, $i_1\neq i_2$ and $\alpha_i|_{\mathcal L}=0$. Combining this with relations \eqref{aE}, \eqref{aF}, and \eqref{a12} we get that
\begin{equation}
\label{afin}
\alpha_i=0, \quad 1\leq i\leq r.
\end{equation}

Therefore, by \eqref{phipr} $\phi=\phi'$, i.e. $\phi$ takes values in $\mathfrak p$. This together with \eqref{vzero} completes the proof of the Theorem \ref{genpr_i0} in the case of the first prolongation.

 Now we are going to establish the induction step for proving our theorem for prolongations of higher order. For this first let us consider the following more abstract situation:

  Let $Y$ be a vector space endowed with a skew-symmetric (possibly degenerate) form $\omega$. Similarly to symplectic case (i.e. when the form is degenerate) consider the graded nilpotent Lie algebra $\tilde\eta=\tilde\eta^{-1}\oplus\tilde\eta^{-2}$ , where $\tilde\eta^{-1}=Y$, $\dim \tilde\eta^{-2}=1$, $\tilde\eta^{-2}$ belongs to the center of $\tilde \eta$, and if a vector $z$ generates $\eta^{-2}$ then $[y_1, y_2]=\omega(y_1, y_2) z$ for any $y_1, y_2 \in \eta_{-1}$.

Let $W$ be a subspace in $\mathfrak r\bigl(\mg^0(\eta)\bigr)$. As in Remark \ref{Tanakastr}, $W$ can be considered as a subspace of $gl(Y)$. Let $W^{(1)}$ be the standard prolongation of $W$ and $W^{(1\mathfrak{sp})}$ be the first prolongation of the pair $(\eta,W)$ (in our previous notation $W^{(1\mathfrak{sp})}=\mathfrak u^1(\eta, W)$. Note that both $W^{(1)}$ and $W^{(1\mathfrak{sp})}$ can be considered as subspaces of $\Hom(Y, W)$ that in turn can be naturally identified with the subspace of $\gl(W\times Y)$ as follows:  that assigns to a map $\Phi\in \Hom(Y, W)$ the map  $\widetilde\Phi \in \gl(Y')$ as follows:
\begin{equation}
\label{idtriang}
\widetilde\Phi(w,y):=(\Phi(y),0).
\end{equation}

 Further, assume that the space $Y$ decomposes as follows
 \begin{equation}
 \label{splitY}
 Y=\bigoplus_{i=1}^{\tilde k} Y_i,
 \end{equation}
 where subspaces $Y_i$ satisfy the following properties:
 \begin{enumerate}
 \item [(Y1)]
 For any integer $j$ such that $1\leq j\leq \left[\frac{\tilde k}{2}\right]$ there exist vectors $v_1 \in Y_{2j-1}$ and $v_2 \in Y_{2j}$ such that $\omega(v_1, v_2) v\neq 0$;
 \item[(Y2)] In the case of odd $\tilde k$ the restriction of the form $\omega$ to the subspace $Y_{\tilde k}$ is nonzero identically zero.
 \end{enumerate}
 As before, splitting \eqref{splitY} of $Y$ induces the splitting $\mathfrak {gl}(Y)=\displaystyle{\bigoplus_{i_1, i_2=1}^k}\Hom(Y_{i_1}, Y_{i_2})$. Let $\pi_{i_1,  i_2}:\mathfrak {gl}(Y)\rightarrow \Hom(Y_{i_1}, Y_{i_2})$ the canonical projection with respect to this splitting.

 \begin{lemma}
 \label{inductlem}
 With all notations as above, assume that the space $W$ satisfies the following two properties:
 \begin{enumerate}
 \item[(W1)] $W\subset \displaystyle{\bigoplus_{i_1\neq i_2}}\Hom(Y_{i_1}, Y_{i_2})$;
 \item[(W2)] For every  $i_1, i_2\in \{1,\ldots, \tilde k\}$ which are distinct (i.e. $i_1\neq i_2$) the subspace $\pi_{i_1, i_2}(W)$ of  $\Hom(Y_{i_1}, Y_{i_2})$ does not contain elements of rank $1$.
  \end{enumerate}
 Then the following two statements hold:

 \begin{enumerate}
 \item $W^{(1\mathfrak{sp})}=W^{(1)}$
 \item Consider the space $$Y'=\left(\bigoplus_{i_1\neq i_2} \pi_{i_1, i_2}(W)\right)$$
  endowed with the symplectic form $\omega'$ which is the pullback of the form $\omega$ with respect to the canonical projection from $Y'$ to $Y$ (note that  $W\times Y\subset Y'$ by condition (W1)). Take the following splitting of $Y'$:
     \begin{equation}
     \label{splitY'}
     Y'=\bigoplus_{i=1}^{\tilde k} Y_i',
 \end{equation}
 where
 \begin{equation}
     \label{splitiY'}
     Y_i'=\left(\bigoplus_{\{j: j\neq i, 1\leq j\leq \tilde k\}}\pi_{j, i}(W)\right)\times Y_i.
     \end{equation}
 \end{enumerate}
 Then the space $Y'$ satisfies conditions (Y1) and (Y2) and the space $W'=W^{(1\mathfrak{sp})}=W^{(1)}$, considered as the subspace of $\gl(Y')$
(under the natural identification as in \eqref{idtriang}),
satisfies conditions (W1) and (W2) with respect to the splitting \eqref{splitY'}.
\end{lemma}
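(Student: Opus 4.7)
The proof splits into two largely independent parts. Part (1) reduces to showing that for any $\phi \in W^{(1\mathfrak{sp})}$ with associated vector $v \in Y$ satisfying \eqref{prolong1new}, one necessarily has $v = 0$; the inclusion $W^{(1)} \subseteq W^{(1\mathfrak{sp})}$ is already contained in \eqref{standsymp}. I will kill $v$ componentwise with respect to the splitting \eqref{splitY}. Fixing $j$ with $1 \leq j \leq [\tilde k/2]$ and arbitrary $y_1 \in Y_{2j-1}$, $y_2 \in Y_{2j}$, I project \eqref{prolong1new} onto $Y_{2j}$; condition (W1) annihilates $\pi_{2j, 2j}(\phi(y_1))$ and gives
\[
-\pi_{2j-1, 2j}(\phi(y_2))(y_1) = \omega(y_1, y_2)\,v_{2j}.
\]
This exhibits $\pi_{2j-1, 2j}(\phi(y_2))$ as an element of $\pi_{2j-1,2j}(W)$ of rank at most one, so (W2) forces it to vanish; picking $y_1, y_2$ as in (Y1) with $\omega(y_1, y_2) \neq 0$ then gives $v_{2j} = 0$. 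The symmetric projection to $Y_{2j-1}$ yields $v_{2j-1} = 0$. If $\tilde k$ is odd, (Y2) allows me to pick $y_1, y_2 \in Y_{\tilde k}$ with $\omega(y_1, y_2) \neq 0$, for which the left-hand side of \eqref{prolong1new} has zero $Y_{\tilde k}$-component by (W1), killing $v_{\tilde k}$ as well.

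For part (2), conditions (Y1) and (Y2) for $Y'$ follow at once by lifting representative pairs along the projection $Y' \twoheadrightarrow Y$ with zero $W$-component. To check (W1) for $W' \subset \gl(Y')$, the key step is that for $\Phi \in W^{(1)}$ and $y_i \in Y_i$, the standard prolongation identity $\Phi(y_i)y_j = \Phi(y_j)y_i$ applied to $y_j \in Y_j$ with $j \neq i$ and projected onto $Y_i$ yields
\[
\pi_{j, i}(\Phi(y_i))(y_j) = \pi_{i, i}(\Phi(y_j))(y_i) = 0
\]
by (W1) for $W$; this precisely kills the diagonal $Y_i' \to Y_i'$ block of $\widetilde\Phi$ under \eqref{idtriang}. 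For (W2) I will argue by contradiction: a rank-one block $\pi^{Y'}_{i_1, i_2}(\widetilde\Phi)$ factors through $Y_{i_1}$ (since $\widetilde\Phi$ kills the $W$-part of $Y_{i_1}'$) and lands in $\bigoplus_{j \neq i_2} \pi_{j, i_2}(W)$, so it has the form $y_{i_1} \mapsto \alpha(y_{i_1}) \sum_{j \neq i_2} \psi_j$ for some $\alpha \in Y_{i_1}^*$ and $\psi_j \in \pi_{j, i_2}(W)$. Applying the prolongation identity with the second argument in $Y_j$ for $j \neq i_1, i_2$ and projecting to $Y_{i_2}$ shows that each such $\psi_j$ contributes a rank-at-most-one element to $\pi_{i_1, i_2}(W)$, which (W2) for $W$ forbids; a parallel argument with the second argument in $Y_{i_1}$ handles the remaining summand $\psi_{i_1}$ by exhibiting $\psi_{i_1}$ itself as a rank-at-most-one element of $\pi_{i_1, i_2}(W)$.

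The main obstacle is the bookkeeping in part (2): correctly interpreting the splitting $Y' = \bigoplus_i Y_i'$ (grouped by the target index of the blocks $\pi_{j, i}(W)$), tracking the block structure of $\widetilde\Phi$ under \eqref{idtriang}, and reducing the rank-one hypothesis on an $\Hom(Y_{i_1}', Y_{i_2}')$-block of $W'$ to a rank-one hypothesis on a genuine $\Hom(Y_{j}, Y_{i_2})$-block of $W$. Once this bookkeeping is in place, every step is driven by a single application of the prolongation identity $\Phi(y_1)y_2 = \Phi(y_2)y_1$ followed by projection onto a single $Y_k$-factor, and part (1) proceeds cleanly by three symmetric applications of the same mechanism.
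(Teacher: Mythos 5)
Your proof is correct and follows essentially the same route as the paper's: both parts rest on applying the (symplectic or standard) prolongation identity and projecting onto a single $Y_k$-factor, with (W1) killing the diagonal contributions and (W2) excluding rank-one blocks. The only difference is that in verifying (W2) for $W'$ you explicitly treat every component $\psi_j$ of the image vector by taking the second argument in each $Y_j$ with $j\neq i_2$, whereas the paper's written argument only tests against vectors of $Y_{i_1}$; your version is the slightly more complete one, but the mechanism is identical.
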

\begin{proof} Take  $\Phi\in W^{(1\mathfrak{sp})}$.
Similarly to \eqref{prolong1new}, there exists a vector $v\in Y$ such that
\begin{equation}
\label{omega}
\Phi(v_1)v_2-\Phi(v_2)v_1=\omega(v_1, v_2)v, \quad \forall v_1,v_2 \in Y
\end{equation}
The first part of the lemma will be proved if we will show that $v=0$.

Similarly to the previous considerations, let $\overline {Y_i}=\displaystyle{\bigoplus_{j\neq i}} Y_j$ and $\overline {Y_{i_1}\oplus Y_{i_2}}=\displaystyle{\bigoplus_{j\notin \{i_1, i_2\}}} Y_j=\overline {Y_{i_1}}\cap \overline {Y_{i_1}}$.
Fix integer $j$ such that  $1\leq j\leq \left[\frac{\tilde k}{2}\right]$ and take $v_1 \in Y_{2j-1}$, $v_2 \in Y_{2j}$. By property (W1) $\Phi(v_2)v_1\in \overline{Y_{2j-1}}$. Therefore, from \eqref{omega} it follows that
$\Phi(v_1)v_2 =\omega(v_1, v_2) v\,\,\mod \overline{Y_{2j-1}}$.
Hence, the linear map $\Phi(v_1)v_2$ has rank not greater than $1$.
>From this and assumption (W2) it follows that $\pi_{2j, 2j-1}\bigl(\Phi(v_1))=0$, i.e.
$\omega(v_1, v_2) v\in \overline{Y_{2j-1}}$ for any $v_1 \in Y_{2j-1}$, $v_2 \in Y_{2j}$. By assumptions (Y1) there exists  $v_1 \in Y_{2j-1}$ and $v_2 \in Y_{2j}$ such that $\omega(v_1, v_2) v\neq 0$, which implies that
\begin{equation}
\label{vY1}
v\in \overline{Y_{2j-1}}.
\end{equation}
By the same arguments, exchanging the role of $v_1$ and $v_2$, one gets that
\begin{equation}
\label{vY2}
v\in \overline{Y_{2j}}.
\end{equation}
As $j\in\{1,\ldots, \left[\frac{\tilde k}{2}\right]\}$ is arbitrary, we get from \eqref{vY1} and \eqref{vY2} that $v=0$ if $\tilde k$ is even or

\begin{equation}
\label{vYk}
v\in Y_{\tilde k} \text{ if  } \tilde k \text{ is odd.}
\end{equation}

In the latter case take $v_1$ and $v_2$  in $Y_{\tilde k}$. Then by assumption (W1)
$\Phi(v_1)v_2-\Phi(v_2)v_1\in \overline Y_{\tilde k}$. Hence, using \eqref{omega}, we get that
$\omega(v_1, v_2) v\in \overline{Y_{\tilde k}}$. By assumption (W2) there exist vectors $v_1$ and $v_2$  in $Y_{\tilde k}$ such that $\omega(v_1, v_2)\neq 0$, which implies that $v\in \overline{Y_{\tilde k}}$. Combining this with \eqref{vYk} we get that $v=0$ for odd $\tilde k$ as well. This proves the first part of the lemma.


Now let us prove the second part of the lemma.
First it is clear from definition of the form $\omega'$ that the splitting  \eqref{splitY'} satisfies conditions (Y1) and (Y2).

Let us prove that the space $W'$ satisfies condition (W1) with respect to the splitting \eqref{splitY'}.
Let $\pi_{i_1,i_2}':\gl(Y')\mapsto \Hom(Y_{i_1}', Y_{i_2}')$ be the canonical projection with respect to the splitting of $\gl(Y')$ induced by the splitting \eqref{splitY'} of the space $Y'$.
We actually have to prove that $\pi_{i, i}'(\Psi)=0$ for any $\Psi \in W'$ and any integer  $i$, $1\leq i\leq \tilde k$.

If $\pi_i: Y\mapsto Y_i$ denote the canonical projection with respect to the splitting \eqref{splitY},  then directly from definition \eqref{splitiY'} of subspaces $Y_i'$ for any $\Psi \in \Hom(Y, W)\subset \gl(Y')$ we have the following relation:
\begin{equation}
\label{pirel}
\pi_{i_1, i_2}'(\Psi)(y)=\pi_{i_2}\circ\bigl(\Psi(\pi_{i_1} y)\bigr), \quad \forall y\in Y.
\end{equation}
In the righthand side of the last equation $\circ$ means the composition of two linear maps:
$\Psi(\pi_{i_1} y)\in W\subset \gl(Y)$ and $\pi_{i_2}\in \Hom (Y,Y_{i_2})$.

If $\Psi\in W'$ ($=W^{(1)}$), then $\Psi(y)z=\Psi(z) y$ for any $y, z \in Y$. Therefore, by \eqref{pirel}

$$(\pi_{i, i}\Psi)(y)z=\pi_i\bigl(\Psi (\pi_i y) z\bigr)=\pi_i\bigl(\Psi (z) \pi_i y\bigr).$$
Since $\Psi(z)\in W$ and $W$ satisfies condition (W1), then $\Psi (z) \pi_i y\in \overline{Y_i}$, which implies that $\pi_i\bigl(\Psi (z) \pi_i y\bigr)=0$. So, $\pi_{i, i}\Psi)=0$, hence the space $W'$ satisfies condition (W1).

Finally, let us prove that  the space $W'$ satisfies condition (W1) with respect to the splitting \eqref{splitY'}. Assume by contradiction that there exists $\Psi\in W'$ and indices $i_1 \neq i_2$, $1\leq i_1, i_2\leq \tilde k$ such that the map $\pi_{i_1, i_2}'(\Psi)$ has rank 1. It means hat there exist $w_0\in \gl(Y)$ and $\alpha \in Y_{i_1}^*$, $\alpha\neq 0$ such that

\begin{equation}
\label{rank1w}
\pi_{i_1, i_2}'(\Psi)(y)=\alpha(\pi_{i_1} y)w_0.
\end{equation}

Take $y_1, y_2\in Y_{i_1}$. Then from \eqref{pirel} it follows that
\begin{equation}
\label{rank1w1}
\pi_{i_1, i_2}'(\Psi)(y_1)y_2=\pi_{i_2}\Psi(y_1)y_2=\alpha(y_1)w_0(y_2).
\end{equation}
In the same way,
$$\pi_{i_1, i_2}'(\Psi)(y_2)y_1=\alpha(y_2)w_0(y_1).$$
Since $\Psi\in W'=W^{(1)}$,  $$\pi_{i_1, i_2}'(\Psi)(y_1)y_2=\pi_{i_2}\Psi(y_1)y_2=\pi_{i_2}\Psi(y_2)y_1=\pi_{i_1, i_2}'(\Psi)(y_2)y_1.$$
Therefore,
$\alpha(y_1)w_0(y_2)=\alpha(y_2)w_0(y_1)$

Since by our assumptions $\alpha\neq 0$, there exists $y_2\in Y_{i_1}$ such that $\alpha(y_2)\neq 0$, which implies that

\begin{equation}
\label{alpha12}
w_0(y)=\cfrac{\alpha(y)}{\alpha(y_2)}w_0(y_2), \quad \forall y\in Y_{i_1}
\end{equation}
So, the map  $w_0$ has rank $1$. Finally, \eqref{rank1w1} implies that $w_0(y_2) \in Y_{i_2}$, which yields that $w_0\in \pi_{i_1, i_2}(W)$. This means that the space $\pi_{i_1, i_2}(W)$ contains elements of rank $1$, contradicting assumption (W2) on $W$.
\end{proof}

Now let us explain how to use Lemma \ref{inductlem} as an induction step in proving of our Theorem \ref{genpr_i0} for higher order prolongations. Assume as before that $\eta=\eta^{-1}\oplus\eta^{-2}$ be the Heisenberg algebra, the algebra $\mg^0\subset\mathfrak{csp}(\eta^{-1})$ and for a given $i\in\mathbb N$
$$Y=\eta^{-1}\oplus\mg^0\oplus\bigoplus_{j=1}^{i-1}\mathfrak u^j(\eta,\mg^0).$$
Let $\omega$ be the slew-symmetric form on $Y$, which is the pullback of a symplectic form on $\eta^{-1}$ (defined up to a multiplication by a constant).
Take $W=\mathfrak u^i(\eta,\mg^0)$.
Note that $W$ can be considered as a subspace of $\Hom\bigl(\eta^{-1}, \mathfrak u^{i-1}(\eta,\mg^0)\bigr)$, which in turn can be identified with a subspace of $\gl(Y)$.
Directly from the definitions of prolongations one can show that under this identification

\begin{equation}
\label{1sphigh}
W^{(1\mathfrak{sp})}\cong u^{i+1}(\eta,\mg^0).
\end{equation}
Here again $u^{i+1}(\eta,\mg^0)$ is identified with a subspace in $\gl\bigl(Y\oplus u^i(\eta,\mg^0)\bigr)$.

To finish the proof of Theorem \ref{genpr_i0} we proceed by induction.
We proved previously that $u^1(\eta,\mg^0)=\mathfrak p^{1}$. Also, the algebra $\mathfrak p$ satisfies conditions of Lemma \ref{inductlem}. Hence, $\mathfrak p^{(1\mathfrak{sp})}=\mathfrak p^{(1)}$. Moreover, using the fact that $(\mathfrak p^{(i)})^{(1)}=\mathfrak p^{(i+1)}$ and applying the same lemma inductively we get that  $\mathfrak p^{(i+1)}=(\mathfrak p^{(i)})^{(1\mathfrak {sp})}$.
Assuming by induction that $u^i(\eta,\mg^0)=\mathfrak p^{(i)}$ for some $i\in\mathbb N$ and using \eqref{1sphigh}, we get that
$$u^{i+1}(\eta,\mg^0)=(u^i(\eta,\mg^0))^{(1\mathfrak {sp})}= (\mathfrak p^{(i)})^{(1\mathfrak {sp})}=\mathfrak p^{(i+1)},$$
which completes the induction step and therefore the proof of the theorem. $\Box$
\section{Description of algebra $\mathfrak u(\eta, \mathfrak u^F(\delta_{\rm mod}))$ via geometry of certain projective varieties}
\setcounter{equation}{0}
\setcounter{theorem}{0}
\setcounter{lemma}{0}
\setcounter{proposition}{0}
\setcounter{definition}{0}
\setcounter{corollary}{0}
\setcounter{remark}{0}
\setcounter{example}{0}
\label{secantsec}

 In this section we make an attempt given a flag symbol satisfying conditions of Theorem \ref{genpr_i0} to describe the algebra  $\mathfrak u(\eta, \mathfrak u^F(\delta_{\rm mod}))$ in the  more explicitly way  using the fact that if a subspace in $\mathfrak{sp}(X)$  can be identified with a space of quadratic polynomials vanishing on a given projective variety $\mathcal E$ in $\mathbb PX$, then the standard prolongations of this subspace can be identified with the space of certain polynomials vanishing on the secant variety of $\mathcal E$ of certain order.
 For general Jacobi symbols in this way we are able only to show that $\mathfrak u^i(\eta, \mathfrak u^F(\delta_{\rm mod}))$ is a subspace in a space of this kind, obtaining an upper bound for the dimension of $\mathfrak u^i(\eta, \mathfrak u^F(\delta_{\rm mod}))$.
However, for a large class of Jacobi symbols the spaces $\mathfrak u^i(\eta, \mathfrak u^F(\delta_{\rm mod}))$ can be identified with certain spaces of polynomials of this kind (and not only with certain subspaces of them).  In this way we get the description of the prolongation algebra $\mathfrak u(\eta, \mathfrak u^F(\delta_{\rm mod}))$ for all Jacobi symbols appearing in rank $3$ distributions, for which this algebra is finite dimensional,  and also for most of such Jacobi symbols appearing in rank $4$ distributions (see Remark \ref{234} below). In particular, in the case of rank $3$ distributions  with Jacobi symbol having nonrectangular skew Young diagram this description is given in terms of tangential developable and secant varieties of a rational normal curve.

To begin with, to any  $A\in \Hom(X,Y)$ one can naturally assign a bilinear form $\widetilde A$ on $X\times Y^*$ as follows: $\widetilde A(v, p):=p(Av), \,\,v\in X, p\in X^*$.
Given a subspace $W$ in  $\Hom(X,Y)$ denote by $\widetilde W$ the corresponding subspace in the space of bilinear forms on $X\times Y^*$.
Let $r$ and $s$ are dimensions of $X$ and $Y$ respectively and  $x=(x_1,\ldots x_r)$ and $y=(y_1,\ldots, y_s)$ are coordinates on $X$ and $Y^*$ w.r.t. some bases in those vector spaces.
Then any bilinear form is identified with a polynomial in $x$ and $y$, which is linear in $x$ and linear in $y$.
The following lemma is well known and immediately follows from definitions:
\begin{lemma}
\label{partial}
The $i$th prolongation of a subspace $W$ of $\Hom(X,Y)$ can be naturally identified with all polynomials $F(x,y)$ of degree $i+1$ in $x$ and linear in $y$ such that all their partial derivatives in $x$ of order $i$ (i.e. polynomials $\frac{\partial^i F}{\partial x_1^{\alpha_1}\dots
\partial x_r^{\alpha_r}}$ for all multi-indices $\alpha=(\alpha_1,\dots,\alpha_r)$
with $|\alpha|=i$) belong to the space $\widetilde W$.
\end{lemma}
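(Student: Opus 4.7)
The plan is to unpack the standard prolongation via its symmetric-tensor description and translate it into coordinates. First, recall that by the recursive definition, $W^{(i)}$ consists of $(i+1)$-linear symmetric maps $\phi\colon X^{i+1}\to Y$ such that, for every fixed $v_1,\dots,v_i\in X$, the linear map $v\mapsto \phi(v,v_1,\dots,v_i)$ lies in $W\subset \Hom(X,Y)$. Equivalently, $W^{(i)}$ is a subspace of $S^{i+1}(X^*)\otimes Y$. After fixing coordinates $x=(x_1,\dots,x_r)$ on $X$ and $y=(y_1,\dots,y_s)$ on $Y^*$, I would identify each element $\phi\in S^{i+1}(X^*)\otimes Y$ with the homogeneous polynomial $F(x,y):=\langle \phi(x,\dots,x),y\rangle$, which is of degree $i+1$ in $x$ and linear in $y$. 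This identification is the standard polarization isomorphism between symmetric tensors and homogeneous polynomials.

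Next I would check that the prolongation condition translates into the condition on partial derivatives. For any multi-index $\alpha=(\alpha_1,\dots,\alpha_r)$ with $|\alpha|=i$ and corresponding basis vectors $e_1,\dots,e_r$ of $X$, the $i$-th partial derivative $\frac{\partial^i F}{\partial x_1^{\alpha_1}\dots\partial x_r^{\alpha_r}}$ equals (up to the combinatorial factor $(i+1)!/\alpha!$ coming from polarization) the bilinear form
\[
(x,y)\longmapsto \langle \phi(x,\underbrace{e_1,\dots,e_1}_{\alpha_1},\dots,\underbrace{e_r,\dots,e_r}_{\alpha_r}),y\rangle.
\]
By the definition of $\widetilde{(\cdot)}$, this bilinear form coincides with $\widetilde{A}$, where $A\in \Hom(X,Y)$ is the linear map $v\mapsto \phi(v,e_1^{\alpha_1},\dots,e_r^{\alpha_r})$. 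Hence the condition $\frac{\partial^i F}{\partial x_1^{\alpha_1}\dots\partial x_r^{\alpha_r}}\in\widetilde W$ for every $\alpha$ with $|\alpha|=i$ is the same as requiring that $v\mapsto \phi(v,v_1,\dots,v_i)\in W$ for all choices of basis vectors $v_1,\dots,v_i$. Since $W$ is a linear subspace and $\phi$ is multilinear, this extends by linearity to arbitrary $v_1,\dots,v_i\in X$, which is exactly the defining condition of $W^{(i)}$.

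Combining the two steps, the map $\phi\mapsto F$ yields the desired bijection between $W^{(i)}$ and the space of polynomials $F(x,y)$ of bidegree $(i+1,1)$ whose $i$-th partial derivatives in $x$ belong to $\widetilde W$. The only point that might deserve care is the combinatorial normalization (the polarization constants $\alpha!/(i+1)!$), which does not affect the statement since membership in $\widetilde W$ is invariant under nonzero scaling; so there is no genuine obstacle, and the proof is just an explicit unwinding of definitions.
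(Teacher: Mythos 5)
Your proof is correct: the paper itself gives no argument for this lemma, stating only that it is ``well known and immediately follows from definitions,'' and your unwinding via the symmetric-tensor description of $W^{(i)}$ and the polarization isomorphism is precisely the standard argument being alluded to. The only slight imprecision is the exact combinatorial constant (differentiating $\phi(x,\dots,x)$ a total of $i$ times produces the factor $(i+1)!/1!$ rather than $(i+1)!/\alpha!$), but as you note this is irrelevant since membership in the linear subspace $\widetilde W$ is scale-invariant.
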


Now assume that the space $X$ is endowed with a symplectic form $\sigma$. The form $\sigma$ defines the identification between $X$ and $X^*$ given by the map $v\mapsto \sigma(v,\cdot)$,
$x\in X$. Under this identification given an endomorphism $A$ on $X$ the form $\widetilde A$ is a bilinear form on $X\times X$. An endomorphism $A$ of $X$ belongs to $\mathfrak{sp}(X)$ if and only if the corresponding bilinear form $\widetilde A$ is symmetric. In this case $\widetilde A$ is completely determined by the corresponding quadratic form
\begin{equation}
\label{tildeAsymp}
v\mapsto \widetilde A(v,v)=\sigma (v, Av).
\end{equation}
Therefore a subspace $W$ of $\mathfrak{sp}(X)$ can be identified with a subspace of quadratic forms on $X$ which by analogy with the previous considerations will be denoted by $\widetilde W$. Then in this situation Lemma \ref{partial} can be reformulated as follows:

\begin{lemma}
\label{partialsymp}
The $k$th prolongation of a subspace $W$ of $\mathfrak{sp}(X)$ can be naturally identified with the space of all polynomials of degree $k+2$ in $X$ such that all their partial derivatives of order $k$ belong to the corresponding subspace $\widetilde W$ in the space of quadratic forms.
\end{lemma}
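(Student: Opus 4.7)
The plan is to deduce Lemma~\ref{partialsymp} as a specialization of Lemma~\ref{partial} to the case $Y=X$, combined with the symmetry imposed by the containment $W\subset\mathfrak{sp}(X)$ and the identification $X\cong X^*$ given by $\sigma$. First I would make precise the identification $\mathfrak{sp}(X)\cong S^2(X^*)$: an endomorphism $A$ lies in $\mathfrak{sp}(X)$ iff the bilinear form $\widetilde A(v,w)=\sigma(v,Aw)$ is symmetric (a one-line consequence of the defining relation $\sigma(Av,w)+\sigma(v,Aw)=0$), so $A\mapsto \widetilde A(v,v)=\sigma(v,Av)$ realizes $\mathfrak{sp}(X)$ as the space of quadratic forms on $X$. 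In particular $\widetilde W$ is a subspace of $S^2(X^*)$, i.e.\ a space of quadratic polynomials in a single variable $x$.

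Next I would apply Lemma~\ref{partial} with $Y=X$ (and, via $\sigma$, $Y^*\cong X$) to identify $W^{(k)}$ with the space of polynomials $F(x,y)$ of degree $k+1$ in $x$ and linear in $y$, all of whose $k$-th partials $\partial^{k}F/\partial x^{\alpha}$ (with $|\alpha|=k$) belong to $\widetilde W$. Here, in contrast to the general situation of Lemma~\ref{partial}, the space $\widetilde W$ consists of \emph{symmetric} bilinear forms once $y$ is identified with an element of $X$ through $\sigma$. Hence $F(x,y)$, viewed as a $(k+2)$-linear form on $X$ by polarization (the $k+1$ arguments from the $x$-direction are already symmetric, and symmetry of $\widetilde W$ contributes one more symmetric slot), is in fact \emph{totally symmetric} in its $k+2$ arguments; equivalently, $F(x,y)$ is the image under polarization of the single-variable polynomial $P(x):=F(x,x)$ of degree $k+2$ (where substitution of $y$ by $x$ is meant under the identification $X^*\cong X$).

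The substitution $F\mapsto P$ is the desired bijection, so the heart of the argument is verifying that the differential condition transfers correctly: the $k$-th partial derivative $\partial^{k}P/\partial x^{\alpha}$ of $P$ of order $k$ is, up to a combinatorial constant from the multinomial coefficients produced by Euler-style relations, exactly the quadratic form associated via $\sigma$ to $\partial^{k}F/\partial x^{\alpha}$ viewed as a bilinear form in $(x,y)$ with $y$ replaced by $x$. Since $\widetilde W\subset S^2(X^*)$ is closed under this identification, the condition ``all $k$-th partials of $F$ in $x$ lie in $\widetilde W$'' translates verbatim to ``all $k$-th partials of $P$ lie in $\widetilde W$''.

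The main (and really only) technical point will be to keep the factorials from polarization consistent, so that the derivatives indeed land in $\widetilde W$ and not in some scalar multiple of it; once the normalizations in the definitions of $\widetilde A$ and of polarization are chosen compatibly (e.g.\ $\widetilde A(v,v)=\sigma(v,Av)$ vs.\ $\tfrac12\sigma(v,Av)$), the bijection is immediate from Lemma~\ref{partial}, and the lemma follows with no further computation.
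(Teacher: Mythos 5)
Your proposal is correct and follows essentially the same route as the paper, which presents Lemma~\ref{partialsymp} as the direct reformulation of Lemma~\ref{partial} under the identification $X^*\cong X$ given by $\sigma$ and the resulting symmetry of the forms $\widetilde A$. The only substantive point you need — that symmetry in the $k+1$ arguments coming from $x$ together with symmetry of the $\widetilde W$-valued slot generates total symmetry of the $(k+2)$-linear form, so that $F$ is recovered from the single polynomial $P(x)=F(x,x)$ by polarization — is exactly the content the paper leaves implicit, and your handling of it (including the caveat about normalizing the polarization constants) is sound.
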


Given a projective variety $\mathcal E$ denote by $\mathcal I(\mathcal E)$ the ideal of homogeneous polynomials vanishing on $\mathcal E$. We shall also denote by $\mathcal I_k(\mathcal E)$ the subspace
of all polynomials of degree $k$ in $\mathcal I(\mathcal E)$.
Define
\emph{the $k$th
secant variety $\S^k\mathcal E$ of $\mathcal E$} as the algebraic closure of the union of
$k$-planes in the projective space passing through $k+1$ points from $\mathcal E$. By definition
we set $\S^0(\mathcal E)=\mathcal E$.

\begin{lemma}
\label{secantlem}
Assume that a subspace $W$ of $\mathfrak {sp}(X)$ satisfies the following property: there exists a projective variety $\mathcal E$ in $X$ such that
the corresponding subspace $\widetilde W$ belongs to $\mathcal I_2(\mathcal E)$. Then the  $k$th standard prolongation of $W$ can be identified with a subspace of  $\mathcal I_{k+2}\bigl(S^k\mathcal E\bigr)$. Moreover, if $\widetilde W=\mathcal I_2(\mathcal E)$, then $W^{(k)}\cong \mathcal I_{k+2}\bigl(S^k\mathcal E\bigr)$.
\end{lemma}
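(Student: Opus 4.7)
The plan is to translate both sides of the claimed identification into statements about the symmetric $(k+2)$-linear polarization $B$ of a homogeneous polynomial $F$ of degree $k+2$, where $F(x)=B(x,\ldots,x)$. By Lemma~\ref{partialsymp}, $F\in W^{(k)}$ precisely when every $k$th-order iterated directional derivative
\begin{equation*}
\partial_{v_1}\cdots\partial_{v_k} F(x) \;=\; \tfrac{(k+2)!}{2!}\,B(v_1,\ldots,v_k,x,x),\qquad v_1,\ldots,v_k\in X,
\end{equation*}
lies in $\widetilde W$. Under the hypothesis $\widetilde W\subset \mathcal I_2(\mathcal E)$, this amounts to $B(v_1,\ldots,v_k,p,p)=0$ for all $v_i\in X$ and all $p\in\mathcal E$. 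Meanwhile $F\in \mathcal I_{k+2}(S^k\mathcal E)$ is equivalent, via expansion of $F(\lambda_0 p_0+\cdots+\lambda_k p_k)$ as a polynomial in $\lambda_0,\ldots,\lambda_k$ and identification of coefficients, to the relations $B(p_0^{\alpha_0},\ldots,p_k^{\alpha_k})=0$ for all $p_i\in\mathcal E$ and all multi-indices $\alpha$ with $|\alpha|=k+2$.

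With this dictionary the inclusion $W^{(k)}\subset \mathcal I_{k+2}(S^k\mathcal E)$ is immediate from a pigeonhole argument: since $|\alpha|=k+2$ strictly exceeds the number $k+1$ of indices, some $\alpha_{i_0}\geq 2$, and the polarization value $B(p_0^{\alpha_0},\ldots,p_k^{\alpha_k})$ is, up to a nonzero combinatorial constant, the evaluation at $p_{i_0}\in\mathcal E$ of a $k$th-order directional derivative of $F$ in directions chosen from the remaining copies of the $p_j$'s; this quadratic form lies in $\widetilde W\subset \mathcal I_2(\mathcal E)$ and hence vanishes at $p_{i_0}$. For the converse under the assumption $\widetilde W=\mathcal I_2(\mathcal E)$, one must show $B(v_1,\ldots,v_k,p,p)=0$ for arbitrary $v_i\in X$ and $p\in\mathcal E$; by multilinearity of $B$ in each $v_i$ it is enough to treat the case $v_i\in\mathcal E$, in which case $p$ occurs twice among the $k+2$ arguments of $B$ and the vanishing follows directly from the polarization reformulation of $F\in\mathcal I_{k+2}(S^k\mathcal E)$.

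The main technical point to guard against in this last step is the reduction ``by multilinearity it suffices to take each $v_i$ in $\mathcal E$'', which silently requires $\mathcal E$ to linearly span $X$; otherwise $\mathcal I_2(\mathcal E)$ also contains quadratic forms arising from linear forms vanishing on $\Span\mathcal E$, and the isomorphism $W^{(k)}\cong \mathcal I_{k+2}(S^k\mathcal E)$ genuinely fails. In the applications of this lemma in Section~\ref{secantsec} the variety $\mathcal E$ is nondegenerate in $\mathbb PX$ (the typical examples being rational normal curves and their tangential developables), so this spanning hypothesis is automatic and the argument above yields the claimed identification.
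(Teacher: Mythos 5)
Your argument is correct and, for the inclusion $W^{(k)}\subset \mathcal I_{k+2}(\S^k\mathcal E)$, is in substance the same as the paper's: the polarization coefficients $B(p_0^{\alpha_0},\dots,p_k^{\alpha_k})$ are exactly the coefficients of the restriction of $F$ to the secant $k$-plane through $p_0,\dots,p_k$ used in the paper, and your pigeonhole step ($|\alpha|=k+2$ distributed over $k+1$ indices forces some $\alpha_{i_0}\ge 2$) is the same degree count that makes the paper's $F\circ W$ vanish identically. Where you add value is in the ``moreover'' direction, which the paper dispatches by ``making the arguments in the reversed order'': your observation that this reversal silently requires the affine cone of $\mathcal E$ to span $X$ is correct, and the reverse implication genuinely fails without it. For instance, if $\mathcal E=\{[1:0]\}\subset\mathbb P^1$ and $\widetilde W=\mathcal I_2(\mathcal E)=\Span\{xy,y^2\}$, then $F=x^{k+1}y$ lies in $\mathcal I_{k+2}(\S^k\mathcal E)$ while $\partial_x^{k-1}\partial_yF$ is a nonzero multiple of $x^2\notin\widetilde W$, so $W^{(k)}\subsetneq\mathcal I_{k+2}(\S^k\mathcal E)$. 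As you note, the hypothesis is satisfied wherever the lemma is invoked in this section (the orbit curves defining $\mathcal E(\delta_{\mod})$, $\mathcal E_i(\delta_{\mod})$ and the tangential developables of the rational normal curve are nondegenerate in the relevant projective spaces), so Theorems \ref{thmmainsec8.5} and \ref{thmmainsec81} are unaffected; but the spanning assumption should be made explicit in the statement of the ``moreover'' part.
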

\begin{proof}
By Lemma \ref{partialsymp} the $k$th prolongation of $W$ can be
identified with polynomials $F$ of degree $k+2$ such
that all their partial derivatives of order $k$ belong to
$\mathcal I_2(\mathcal E)$.
It is clear that any such polynomial and all its
partial derivatives of degree $\le k$ lie in $\mathcal I(\mathcal E)$.

Let us prove that any such polynomial $F$ vanishes identically at
$\S^{k}(\mathcal E)$. It is sufficient to prove that it vanishes at any
secant $k$-plane of $\mathcal E$. Indeed, let $p_0,\dots,p_k\in \mathcal E$ be the set
of $k+1$ points and let
\[
W\colon \P^{k}\to \P^{r-1}, \quad [y_0:y_1:\cdots:y_k]\mapsto
y_0p_0+y_1p_1+\dots y_np_n;
\]
be the embedding of the corresponding secant $k$-plane, where $r=\dim X$. Then $F\circ
W$ is a polynomial of degree $k+2$ on $\P^k$ that vanishes at basis
points $q_i\in \P^k$, $i=0,\dots,k$, and, in addition, all its
derivatives of degree $\le k$ also satisfy this property. This
immediately implies that $F\circ W=0$. Hence, $F$ vanishes
identically at $\S^{k}(\mathcal E)$, i.e. it belongs to $\mathcal I_{k+2}\bigl(S^k\mathcal E\bigr)$.

Finally, if $\widetilde W=\mathcal I_2(\mathcal E)$ by make the arguments in the reversed order one gets that if a polynomials $F$ of degree $k+2$ vanishes on $S^k\mathcal E$ then all its derivatives of order $k$ vanishes on $\mathcal E$ and hence $F$ belongs to $W^{(k)}$.

\end{proof}

The question now is what varieties can be taken as $\mathcal E$ for subspaces $\mathfrak p$ or $\mathfrak l(X)$? Throughout the rest of the section we assume that the symbol $\delta_{\mod}$ satisfies all assumptions of Theorem \ref{genpr_i0}.

As in the beginning of the proof of Theorem \ref{genpr_i0}
assume that in the diagram $\Diag(\delta_{\rm mod})$ the Young subdiagram of the indecomposable component of type $\mathbb R\tau_m$, if exists, is situated at the very bottom. Fix again a skew Young tableaux $\Tab(\delta_{\rm mod})$ and let $r=\left[\frac {1}{2}(\rank\overline{} D-1)\right]$.
For any $i$ such that $1\leq i\leq r$ let $E_i$ be the span of the vectors located in the boxes of the $(2i-1)$st row of  $\Tab(\delta_{\rm mod})$ and $F_i$ be the span of the vectors located in the boxes of the $2i$th rows of  $\Tab(\delta_{\rm mod})$. Finally, if rank $D$ is even, let $\mathcal L$ be the span of the vectors located in the last box of  $\Tab(\delta_{\rm mod})$.

Let $j_0$ be the minimal index in $I$ (with respect to the natural order) such that the subspace $X_{j_0}$ is isotropic. Note that in fact $j_0$ is the minimal positive index in $I$, i.e. $j_0=1$ if $I=\mathbb Z$ and $j_0=\cfrac{1}{2}$ if $I=\cfrac{1}{2}\mathbb Z_{{\rm odd}}$  or $I=\cfrac{1}{2}\mathbb Z$.
Let $\mathcal A$ be a subset $\{1,\ldots, r\}$ of all $i$ such that the subdiagram of $\Diag(\delta_{\mod})$ consisting of the $(2i-1)$st and $2i$th row is not rectangular. Assume that the restriction of the symbol $\delta_{\mod}$ to $E_i\oplus F_i$ is isomorphic to the symbol $\mathbb R\delta_{s,l}$ and fix
the skew Young tableaux for this restriction  as in \eqref{youngodd1}. Given $i\in \mathcal A$ let $\mathfrak L_i$ be the linear span of $X_{j_0}$ and the vector $e_0$ from the skew Young tableaux \eqref{youngodd1}
corresponding to the restriction of the symbol $\delta_{\mod}$ to $E_i\oplus F_i$. If $i\notin \mathcal A$ we set $\mathfrak L_i=X_{j_0}$.

Let $\mathcal E_i(\delta_{\mod})$ be the algebraic closure of the  curve  $x\mapsto e^x \mathfrak L_i(\delta_{mod}), \, x\in \delta_{\mod}$.
$\mathcal E_i(\delta_{\mod})$ can be considered as algebraic varieties in the projective space $\mathbb P X$ (by taking the closure of the union of all lines belonging to spaces $e^x \mathfrak L_i(\delta_{mod})$ for all $x\in \delta_{\mod}$).

\begin{lemma}
\label{Fi_0}
If $A\in \mathfrak p$, then the corresponding quadratic form $\widetilde A$ vanishes on $\mathcal E_i(\delta_{\mod})$ for every $i$ such that $1\leq i\leq r$, if $\rank D$ is odd, and $1\leq i\leq r+1$, if $\rank D$ is even.  In other words,
\begin{equation}
\label{L(X)I2}
\widetilde{\mathfrak p}\subset \mathcal I_2\bigl(\mathcal E_i(\delta_{\mod})\bigr).
\end{equation}
\end{lemma}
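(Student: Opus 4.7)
The plan is to verify the vanishing on the affine cone over $\mathcal E_i(\delta_{\mod})$, i.e.\ to show that $\sigma(v,Av)=0$ for every $v$ in $e^x\mathfrak L_i$ and every $x\in\delta_{\mod}$. The algebraic closure will then take care of itself. Two structural facts form the backbone of the argument: first, since $X_{j_0}$ consists of weight-$\ge j_0>0$ vectors and different weights with nonzero sum are skew-orthogonal, $X_{j_0}$ is isotropic and its symplectic image $e^x X_{j_0}$ is likewise isotropic; second, by Remark~\ref{symparamrem} we have $A(e^x X_j)\subset e^x X_j$ for every $j\in I$, and in particular $A$ preserves both $e^x X_{j_0}$ and $(e^x X_{j_0})^\angle = e^x X_0$.

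The case $i\notin\mathcal A$ (and, when $\rank D$ is even, also $i=r+1$) is immediate: here $\mathfrak L_i=X_{j_0}$, so $e^x\mathfrak L_i$ is an isotropic, $A$-invariant subspace, and $\sigma(v,Av)=0$ for any $v\in e^x\mathfrak L_i$. For $i\in\mathcal A$ I would decompose a general $v\in e^x\mathfrak L_i$ as $v=u+c\,e^x e_0$ with $u\in e^xX_{j_0}$ and expand
\[
\sigma(v,Av)=\sigma(u,Au)+c\,\sigma(u,A(e^x e_0))+c\,\sigma(e^x e_0,Au)+c^2\,\sigma(e^x e_0,A(e^x e_0)).
\]
The first term vanishes by isotropy and $A$-invariance of $e^xX_{j_0}$; the two cross terms vanish because $Au\in e^xX_{j_0}$ while $e^x e_0\in e^xX_0=(e^xX_{j_0})^\angle$, and symmetrically $A(e^x e_0)\in e^xX_0=(e^xX_{j_0})^\angle$ while $u\in e^xX_{j_0}$. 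Thus the entire problem reduces to showing that the final term $\sigma(e^x e_0,A(e^x e_0))$ is zero; noting that $\delta_{\mod}$ acts by the right shift on the top row of $\Tab(\delta_{\mod})$, one has $e^x e_0\in E_i$, so it suffices to prove the sharper statement that $\widetilde A$ vanishes identically on $E_i$.

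This last reduction is the main obstacle and it is here that the hypotheses $A\in\mathfrak p$ and $i\in\mathcal A$ are genuinely used. Since $A\in\mathfrak p$ has zero diagonal blocks with respect to $X=\bigoplus_k\widetilde L_k$, we have $A(E_i)\subset\bigoplus_{k\ne 2i-1}\widetilde L_k$; among these off-diagonal targets only $F_i=\widetilde L_{2i}$ can fail to be symplectically orthogonal to $E_i$, so everything hinges on the $E_i\to F_i$ block $A_{F_i,E_i}$. The symplectic condition identifies this block with an element of $S^2(F_i)\subset\Hom(E_i,F_i)$, and the condition $A\in\mathfrak l(X)$ restricts it further to the nonnegative-degree part $\mathfrak s(F_i)$. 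The crucial algebraic input is formula~\eqref{ngrseps2}: the non-rectangularity of the subdiagram of the $(2i-1)$st and $2i$th rows (which is precisely what $i\in\mathcal A$ means) yields $\mathfrak s(F_i)=0$. Consequently $A_{F_i,E_i}=0$, the image $A(E_i)$ is contained in $\bigoplus_{k\ne 2i-1,2i}\widetilde L_k$, which is skew-orthogonal to $E_i$, and $\widetilde A|_{E_i}\equiv 0$ as required. This finishes the vanishing of $\widetilde A$ on every $e^x\mathfrak L_i$, whence on $\mathcal E_i(\delta_{\mod})$, establishing the inclusion $\widetilde{\mathfrak p}\subset\mathcal I_2\bigl(\mathcal E_i(\delta_{\mod})\bigr)$.
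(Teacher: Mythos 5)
Your argument is correct and follows essentially the same route as the paper's: isotropy plus $A$-invariance of $e^{x}X_{j_0}$ handles $i\notin\mathcal A$ and the cross terms, and the vanishing of $\mathfrak s(F_i)$ from the second line of \eqref{ngrseps2} (non-rectangularity, i.e.\ $i\in\mathcal A$) forces $A(E_i)\subset\overline{E_i\oplus F_i}\subset E_i^\angle$, killing the remaining term $\sigma(e^{x}e_0,A(e^{x}e_0))$. You merely spell out the block decomposition and the cross-term cancellation in more detail than the paper does.
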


\begin{proof} If $A\in \mathfrak p$ then \eqref{lXcond}  implies that $A \bigl(e^{x}(X_{j})\bigr)\subset e^{x}(X_{j})\,\, \forall  x\in \delta_{\mod}$. Since by the definition of $j_0$ all subspaces $e^{x}(X_{j_0})$ are isotropic, from this and \eqref{tildeAsymp} it follows that the quadratic form  $\widetilde A$ vanishes on $e^{x}(X_{j_0})$. This completes the proof in case $i\notin\mathcal A$.

Now assume that $i\in \mathcal A$.
Since $e^tx \mathfrak L_i\subset \Bigl(e^{x}(X_{j_0})\Bigr)^\angle$, then $\tilde A (v_1, v_2)=0$ for any $v_1\in e^{x}(X_{j_0})$ and $v_2\in e^tx \mathfrak L_i$.
 Finally, in case by \eqref{ngrseps2}, the second line there, $A e_0$ belongs to $\overline{E_i\oplus F_i}\subset E_i^\angle$, where $e_0$ is as in \eqref{youngodd1}, considered as the skew Young tableaux for the restriction of the symbol $\delta_{\mod}$ to $E_i\oplus F_i$. This implies that the quadratic form  $\widetilde A$ vanishes on $e^{x}\mathfrak L_i$ and completes the proof of the lemma.
\end{proof}


As a direct consequence of Corollary \ref{corl(X)} and Lemmas \ref{secantlem} and \ref{Fi_0}, we get the following

\begin{theorem}
\label{thmmainsec8}
Assume that the flag symbol $\delta_{\mod}$ satisfies all assumptions of Theorem \ref{genpr_i0}.
Then for every $i$ such that $1\leq i\leq r$, if $\rank D$ is odd, and $1\leq i\leq r+1$, if $\rank D$ is even and for any positive integer $k$ the $k$th  Tanaka algebraic prolongation $\mathfrak u^k\bigl(\eta, \mathfrak u^F(\delta_{\rm mod})\bigr)$ of the pair $\bigl(\eta, \mathfrak u^F(\delta_{\rm mod})\bigr)$  can be identified with a subspace of  the space $\mathcal I_{k+2}\bigl (S^k \mathcal E_i(\delta_{\mod})\bigr)$ of all polynomials of degree $k+2$  on $X$ that vanishes on the $k$th secant variety of the variety  $\mathcal E_i(\delta_{\mod})$.
\end{theorem}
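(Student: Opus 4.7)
The plan is to assemble the theorem from the three preceding ingredients, essentially concatenating them, so the "proof" is really a short verification that the hypotheses of each successive result are met by the output of the previous one.

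First I would invoke Theorem \ref{genpr_i0} (or, equivalently and more symmetrically, Corollary \ref{corl(X)}), which under the stated hypotheses on $\delta_{\rm mod}$ identifies $\mathfrak u^k\bigl(\eta, \mathfrak u^F(\delta_{\rm mod})\bigr)$ with the $k$th standard prolongation $\mathfrak p^{(k)}$ of the subspace $\mathfrak p \subset \mathfrak{sp}(X)$ defined in \eqref{pi12}. This reduces the problem to a question in classical (Kobayashi–Sternberg) prolongation theory of a linear subspace of $\mathfrak{sp}(X)$, where the symplectic structure on $X$ is already built in.

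Next, for each admissible $i$, I would appeal to Lemma \ref{Fi_0} to obtain the inclusion $\widetilde{\mathfrak p} \subset \mathcal I_2\bigl(\mathcal E_i(\delta_{\rm mod})\bigr)$ of the corresponding space of quadratic forms into the space of quadratic polynomials vanishing on the projective variety $\mathcal E_i(\delta_{\rm mod}) \subset \mathbb P X$. Then Lemma \ref{secantlem} applies verbatim to the subspace $W = \mathfrak p$ with $\mathcal E = \mathcal E_i(\delta_{\rm mod})$: its conclusion is that the $k$th standard prolongation $\mathfrak p^{(k)}$ embeds, via the identification of an endomorphism $A \in \mathfrak{sp}(X)$ with the quadratic form $v \mapsto \sigma(v, Av)$ and its successive polarizations, into the space $\mathcal I_{k+2}\bigl(S^k \mathcal E_i(\delta_{\rm mod})\bigr)$ of polynomials of degree $k+2$ vanishing on the $k$th secant variety of $\mathcal E_i(\delta_{\rm mod})$.

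Combining the identification from the first step with the embedding from the second step yields the claim of the theorem. There is no real obstacle here beyond bookkeeping: the only small point worth verifying carefully is that Lemma \ref{Fi_0} indeed covers the whole admissible range of $i$ in both the odd-rank and even-rank cases, including the bottom row corresponding to the $\mathbb R\tau_m$-component in the even-rank case (where one uses that $X_{j_0}$ is still isotropic and the relevant $\mathfrak L_i$ collapses to $X_{j_0}$). Since all three ingredients—Corollary \ref{corl(X)}, Lemma \ref{Fi_0}, and Lemma \ref{secantlem}—have already been established under exactly the hypotheses of Theorem \ref{genpr_i0}, the argument closes immediately.
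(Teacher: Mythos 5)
Your proposal is correct and follows essentially the same route as the paper, which states the theorem as a direct consequence of Corollary \ref{corl(X)} (equivalently Theorem \ref{genpr_i0}), Lemma \ref{Fi_0}, and Lemma \ref{secantlem}. Your chain through $\mathfrak p^{(k)}$ is the natural one here, since Lemma \ref{Fi_0} establishes the inclusion $\widetilde{\mathfrak p}\subset \mathcal I_2\bigl(\mathcal E_i(\delta_{\mod})\bigr)$ precisely for the subspace $\mathfrak p$ to which Lemma \ref{secantlem} is then applied.
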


Note that  under assumptions of this theorem on the flag symbol $\delta_{\mod}$ certain osculating subspace of the curve $x\mapsto e^x X_{j_0}$, $x\in \delta_{\mod}$,
is equal to the whole ambient vector space $X$ at any point.
Besides, by construction $X_{j_0}\subseteq \mathfrak L_i$ for any $i$.
Therefore, $S^K\mathcal E_i(\delta_{\mod})=\mathbb PX$ for sufficiently large $K$ and by Theorem \ref{thmmainsec8} the spaces  $\mathfrak u^K\bigl(\eta, \mathfrak u^F(\delta_{\rm mod})\bigr)$ vanishes for such $K$. This gives more constructive proof of the finiteness type result for distributions with Jacobi symbols satisfying assumptions of Theorem  \ref{thmmainsec8} without referring to the Spencer criterium. In addition, this gives some estimate from above on the size of the spaces $\mathfrak u^k\bigl(\eta, \mathfrak u^F(\delta_{\rm mod})\bigr)$.

The space $\mathcal I_2\bigl(\mathcal E_i(\delta_{\mod})\bigr)$ is usually larger than the space $\mathfrak p$, so Theorem \ref{thmmainsec8} gives only an estimation from above for the size of   $\mathfrak u^k\bigl(\eta, \mathfrak u^F(\delta_{\rm mod})\bigr)$.
If the quadratic form $\widetilde A$ belongs to $\mathcal I_2\bigl(\mathcal E_i(\delta_{\mod})\bigr)$, then by polarization the corresponding bilinear form $\widetilde A$ satisfies $\widetilde A(v_1, v_2)=0$ for every $v_1, v_2 \in e^{x}(\mathfrak L_i)$
and $x\in \delta_{\mod}$. Hence, if $A\in \mathfrak{sp}(X)$ is related to the form $\widetilde A$ by \eqref{tildeAsymp}, then $\sigma(v_1, A v_2)=0$ for every $v_1, v_2 \in e^{x}(\mathfrak L_i)$, which is equivalent to the following inclusion
\begin{equation}
\label{I2cond}
A \bigl(e^{x}\mathfrak L_i\bigr)\subset \Bigl(e^{x}\mathfrak L_i\Bigr)^\angle,\quad \forall x\in\delta_{\mod}
\end{equation}

If $i\in \mathcal A$, then $\Bigl(e^{x}(\mathfrak L_i)\Bigr)^\angle$ is equal to the linear span of all element of nonnegative degree in $\overline {E_i\oplus F_i}$ and vector $e_1$,  while if $i\notin\mathcal A$ then $\Bigl(e^{x}(\mathfrak L_i)\Bigr)^\angle$ is equal to the linear span of element of nonnegative degrees in $X$. So, condition \eqref{I2cond} does not ensure that any $A\in \mathcal I_2\bigl(\mathcal E_i(\delta_{\mod})\bigr)$ is concentrated  in nonnegative degrees. It is guarantied only in the case $r=0$ (corresponding to rank 2 distributions of maximal class), because in this case the subspaces $X_{j_0}$ is Lagrangian, but this case is not interesting since
 here all subspaces involved such as $\mathfrak p, \mathfrak l(X)$,  and $\mathcal I_2\bigl(\mathcal E_i(\delta_{\mod})\bigr)$ are equal to zero, as follows from above arguments and part c) of the proof of Lemma \ref{tensoroff}.

Now we give a class of flag symbols $\delta_{\mod}$ for which the spaces $\mathfrak u^i(\eta, \mathfrak u^F(\delta_{\rm mod}))$ can be identified with spaces of polynomials of this kind ( and not only with subspaces of them). Continuing the numeration of assumptions Theorems \ref{finitetype1} and \ref{genpr_i0} assume that

\begin{enumerate}
\item[(5)]
In  a decomposition of the symbol $\delta_{\mod}$ into indecomposable components there is no indecomposable symbols of the type $\mathbb R\delta_{s, 2s}$, i.e. symbols with rectangular Young diagram consisting of two rows. Equivalently, the introduced above set $\mathcal A$ coincides with $\{1,\ldots, r\}$.

\item[(6)] For any  two indecomposable symbols $\mathbb R\delta_{s_1, l_1}$ and $\mathbb R\delta_{s_2,l_2}$ in the decomposition of the symbol $\delta_{\mod}$ into indecomposable components we have
 \begin{equation}
 \label{cond6}
 s_1+s_2>\max\{l_1,l_2\}.
\end{equation}
\end{enumerate}

\begin{remark}
\label{5,6}
Note that assumption (6) formally implies assumption (5) if one counts  the same indecomposable component twice
. However, we would like to emphasis assumption (5) as a separate one.
\end{remark}

The key point of assumption (5) is that under this assumption
\begin{equation}
\label{cond5key}
\mathfrak s(F_i)=0, \quad \forall 1\leq i\leq r,
\end{equation}
where, as in the part b) of the proof of Lemma \ref{tensoroff}, $\mathfrak s(F_i)$
is  the maximal $\sll_2$-submodule of
$S^2(F_{s,l}
)$
concentrated in the non-negative degree part of $S^2(F_{s,l}
)$, considered as the subspace of $\Hom(E_i, F_i)$ under the identification of $E_i^*$ with $F_i$ via the symplectic form (see also \eqref{ngrseps2}, second line).

The key point of assumption (6) is that
\begin{equation}
\label{cond6key}
\mathfrak n(E_{i_1}, F_{i_2})=0, \quad \forall 1\leq i_1\neq i_2\leq r,
\end{equation}

where as in the part b) of the proof of Lemma \ref{tensoroff}, $\mathfrak n(E_{i_1}, F_{i_2})$ is  the maximal $\sll_2$-submodule of
$\Hom (E_{i_1}, F_{i_2})
)$
concentrated in the non-negative degree part of $\Hom (E_{i_1}, F_{i_2})
)$. Identity \eqref{cond6key} can be obtain easily from \eqref{Y1Y2}.

Further, let $\mathfrak L$ be equal to the linear span of all spaces $\mathfrak L_i$ with $1\leq i\leq r$, if $\rank D$ is odd, and $1\leq i\leq r+1$, if $\rank D$ is even.

\begin{remark}
\label{lagrrem}
 Note that under assumption (5) the space $\mathfrak L$ is Lagrangian.
\end{remark}
 Now let $\mathcal E(\delta_{\mod})$ be the algebraic closure of the  curve  $x\mapsto e^x \mathfrak L(\delta_{mod}), \, x\in \delta_{\mod}$. Again $\mathcal E(\delta_{\mod})$ can be considered as algebraic varieties in the projective space $\mathbb P X$ (by taking the closure of the union of all lines belonging to spaces $e^x \mathfrak L(\delta_{mod})$ for all $x\in \delta_{\mod}$).

\begin{lemma}
\label{Fi_0exact}
Assume that Jacobi symbols satisfies conditions of Theorem \ref{finitetype1} and in addition satisfies assumptions (5) and (6) above.
If $A\in \mathfrak p$, then the corresponding quadratic form $\widetilde A$ vanishes on $\mathcal E(\delta_{\mod})$.  In other words,
\begin{equation}
\label{L(X)I2exact}
\widetilde{\mathfrak p}\subset \mathcal I_2\bigl(\mathcal E(\delta_{\mod})\bigr).
\end{equation}
\end{lemma}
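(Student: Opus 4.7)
\emph{Plan.} By construction $\mathcal{E}(\delta_{\mod})$ is the closure in $\mathbb{P}X$ of $\bigcup_{x\in\delta_{\mod}} \mathbb{P}(e^x\mathfrak{L})$, so by continuity it suffices to show that $\widetilde A$ vanishes on every $e^x\mathfrak{L}$. Since $\mathfrak{L}$ is Lagrangian (Remark \ref{lagrrem}) and $e^x$ is symplectic, each $e^x\mathfrak{L}$ is Lagrangian as well; polarization together with \eqref{tildeAsymp} identifies the vanishing $\widetilde A|_{e^x\mathfrak{L}}\equiv 0$ with the inclusion $A(e^x\mathfrak{L})\subset e^x\mathfrak{L}$, or equivalently with $B_x(\mathfrak{L})\subset\mathfrak{L}$ for $B_x := e^{-x}Ae^x = e^{-\ad x}(A)$. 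Because $\mathfrak{p}$ is $\sll_2$-invariant (as the unique such complement in \eqref{ap}) and concentrated in nonnegative degrees, while $\ad x$ has degree $-1$, the expansion $B_x = \sum_{k\geq 0}\tfrac{(-1)^k}{k!}(\ad x)^k A$ terminates and lies in $\mathfrak{p}$. Thus the lemma is reduced to the claim that $C(\mathfrak{L})\subset\mathfrak{L}$ for every $C\in\mathfrak{p}$.

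For such a $C$, nonnegativity of the degree immediately yields $C(X_{j_0})\subset X_{j_0}\subset\mathfrak{L}$, and only the vectors $e_0^{(i)}$ with $i\in\mathcal{A}$ require attention (under assumption (5) this set is $\{1,\ldots,r\}$). Since $\deg e_0^{(i)}=0$ and $C$ has nonnegative degree, $C(e_0^{(i)})\in X_0$. Any component of $C(e_0^{(i)})$ landing in $\mathcal{L}$ has nonnegative half-integer degree, hence degree $\geq j_0=\tfrac12$, and is automatically absorbed into $X_{j_0}$; consequently, modulo $X_{j_0}$ one may write
\[
C(e_0^{(i)}) \equiv \sum_j\bigl(\alpha_{ij}e_0^{(j)} + \beta_{ij}f_0^{(j)}\bigr) \pmod{X_{j_0}}.
\]
The coefficient $\beta_{ij}$ equals the degree-zero part of the $\Hom(E_i,F_j)$-block of $C$. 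By the decompositions \eqref{idsp2pos} and \eqref{lrse1}, the $\Hom(E_i,F_j)$-block of an element of $\mathfrak{l}(X)$ lies in $\mathfrak{s}(F_i)$ if $j=i$ and in $\mathfrak{n}(E_i,F_j)$ if $j\neq i$; assumption (5) and \eqref{ngrseps2} give $\mathfrak{s}(F_i)=0$, while assumption (6) combined with \eqref{Y1Y2} gives $\mathfrak{n}(E_i,F_j)=0$ for $i\neq j$. Hence every $\beta_{ij}$ vanishes and $C(e_0^{(i)})\in X_{j_0}+\sum_j\mathbb{R}e_0^{(j)}=\mathfrak{L}$, completing the proof.

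The main obstacle is the block-by-block analysis in this last step, where assumptions (5) and (6) play complementary and essential roles: the former kills the diagonal $E_i\to F_i$ contributions via $\mathfrak{s}(F_i)=0$ and the latter kills the off-diagonal $E_i\to F_j$ ($i\neq j$) contributions via $\mathfrak{n}(E_i,F_j)=0$. Without either vanishing, an uncontrolled $f_0^{(j)}$-component would push $C(e_0^{(i)})$ outside $\mathfrak{L}$ and the whole reduction would break down; this is in fact the new content beyond Lemma \ref{Fi_0}, which only needed to control a single $\mathfrak{L}_i$ at a time. By contrast, the $\sll_2$-equivariance reduction to $x=0$ is purely formal, and the $\mathcal{L}$-contribution is eliminated by pure grading considerations, so no further algebraic input is required beyond the two block vanishings.
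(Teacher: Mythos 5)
Your proof is correct and takes essentially the same approach as the paper's: the new content beyond Lemma \ref{Fi_0} is exactly the vanishing of the $\Hom(E_i,F_j)$-blocks via $\mathfrak s(F_i)=0$ (assumption (5)) and $\mathfrak n(E_{i_1},F_{i_2})=0$ (assumption (6)), combined with the Lagrangian property of $\mathfrak L$. The only organizational difference is that you establish the invariance $A\bigl(e^x\mathfrak L\bigr)\subset e^x\mathfrak L$ directly (reducing to $x=0$ by conjugation and $\sll_2$-invariance of $\mathfrak p$), whereas the paper checks $\sigma(v,Av)=0$ on the degree-zero vectors; the two formulations are equivalent since $e^x\mathfrak L$ is Lagrangian, as the paper itself records in \eqref{I2condexact}.
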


\begin{proof} We proceed exactly as in the proof of Lemma \ref{Fi_0}. The only difference is in the final step: here we need to show that $\sigma(v, Av)=0$ for any $v\in \mathfrak L$ which has  degree 0. Indeed, from \eqref{cond5key} and \eqref{cond6key} it follows that  $Av\in \mathfrak L\cap \displaystyle{\oplus}_{i=1}^r E_i$. Then by Remark \ref{lagrrem} $\sigma(Av,v)=0$.
\end{proof}

Further, using the same arguments as ones leading to relation \eqref{I2cond} we get that $\widetilde A\in  \mathcal I_2\bigl(\mathcal E(\delta_{\mod})$ if and only if

\begin{equation}
\label{I2condexact}
A \bigl(e^{x}\mathfrak L\bigr)\subset \Bigl(e^{x}\mathfrak L\Bigr)^\angle=e^{x}\mathfrak L,\quad \forall x\in\delta_{\mod}.
\end{equation}

The last equality follows again from the fact that $\mathfrak L$ is Lagrangian.
The next important observation is that if the symbol $\delta_{\mod}$ satisfies assumption (1) of Theorem \ref{finitetype1} then the whole flat curve $\mathcal F_{\delta_{mod}}$ can be uniquely recovered from the curve
$x\mapsto e^{x}\mathfrak L$,  $x\in \delta_{\mod}$
by osculation and taking the skew-orthogonal complement. This together with \eqref{I2condexact} and Remark \ref{symparamrem} implies that $A\in \mathfrak l(X)$. In other words,

\begin{equation}
\label{I2above}
\mathcal I_2\bigl(\mathcal E(\delta_{\mod})\subset \mathfrak l(X).
\end{equation}
Combining inclusions \eqref{L(X)I2exact} and \eqref{I2above} with Theorem \ref{genpr_i0}, Corollary \ref{corl(X)}, and the last statement of Lemma \ref{secantlem}, we get the following

\begin{theorem}
\label{thmmainsec8.5}
Assume that the flag symbol $\delta_{\mod}$ satisfies all assumptions of Theorem \ref{genpr_i0} and in addition assumptions (5) and (6) above.
Then for any positive integer $k$ the $k$th  Tanaka algebraic prolongation $\mathfrak u^k\bigl(\eta, \mathfrak u^F(\delta_{\rm mod})\bigr)$ of the pair $\bigl(\eta, \mathfrak u^F(\delta_{\rm mod})\bigr)$  can be identified with the space $\mathcal I_{k+2}\bigl (S^k \mathcal E(\delta_{\mod})\bigr)$ of all polynomials of degree $k+2$  on $X$ that vanishes on the $k$th secant variety of the variety  $\mathcal E(\delta_{\mod})$.
\end{theorem}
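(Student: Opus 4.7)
The proof will combine three results already set up in the paper: Theorem \ref{genpr_i0} (and its refinement Corollary \ref{corl(X)}), Lemma \ref{secantlem} on prolongations of subspaces cut out by quadratic polynomials on a variety, and the sandwich $\widetilde{\mathfrak p}\subset \mathcal I_2\bigl(\mathcal E(\delta_{\mod})\bigr)\subset \widetilde{\mathfrak l(X)}$ given by Lemma \ref{Fi_0exact} together with inclusion \eqref{I2above}. The key observation is that under the map $A\mapsto \widetilde A$, one has a distinguished intermediate subspace $W\subset \mathfrak{sp}(X)$ with $\widetilde W=\mathcal I_2\bigl(\mathcal E(\delta_{\mod})\bigr)$, and the monotonicity of standard prolongation in the subspace will let me pinch $W^{(k)}$ between two coinciding terms.

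The plan is as follows. First I would define $W\subset \mathfrak{sp}(X)$ to be the subspace corresponding under \eqref{tildeAsymp} to $\mathcal I_2\bigl(\mathcal E(\delta_{\mod})\bigr)$, i.e.\ $\widetilde W=\mathcal I_2\bigl(\mathcal E(\delta_{\mod})\bigr)$. By Lemma \ref{Fi_0exact} and the inclusion \eqref{I2above} derived in the paragraph preceding the theorem, one has
\begin{equation*}
\mathfrak p\subset W\subset \mathfrak l(X).
\end{equation*}
Standard prolongation of a linear subspace of $\mathfrak{gl}(X)$ is evidently monotone with respect to inclusion, so
\begin{equation*}
\mathfrak p^{(k)}\subset W^{(k)}\subset \bigl(\mathfrak l(X)\bigr)^{(k)}
\end{equation*}
for every $k\geq 1$.

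Second, I would invoke Theorem \ref{genpr_i0} and its Corollary \ref{corl(X)}: the hypotheses of the present theorem include those of Theorem \ref{genpr_i0}, so for every $k\geq 1$
\begin{equation*}
\mathfrak u^k\bigl(\eta,\mathfrak u^F(\delta_{\mod})\bigr)\;=\;\mathfrak p^{(k)}\;=\;\bigl(\mathfrak l(X)\bigr)^{(k)}.
\end{equation*}
Thus both ends of the sandwich coincide, which forces $W^{(k)}=\mathfrak u^k\bigl(\eta,\mathfrak u^F(\delta_{\mod})\bigr)$.

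Third, I would apply the last statement of Lemma \ref{secantlem} to $W$: since $\widetilde W$ is exactly $\mathcal I_2\bigl(\mathcal E(\delta_{\mod})\bigr)$ (and not merely contained in it), the conclusion of that lemma gives the honest identification
\begin{equation*}
W^{(k)}\;\cong\;\mathcal I_{k+2}\bigl(S^k\mathcal E(\delta_{\mod})\bigr).
\end{equation*}
Chaining this with the equality from the previous paragraph yields the theorem. Since each of the ingredients is already in place, I do not expect a serious obstacle in the assembly; the only point that deserves care is verifying that the sandwich reasoning is legitimate, i.e.\ that the map $A\mapsto \widetilde A$ and Lemma \ref{partialsymp} are compatible with the subspace inclusion $\mathfrak p\subset W\subset \mathfrak l(X)$ at every prolongation level. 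This is immediate from the definitions, since standard prolongation is characterised in Lemma \ref{partialsymp} purely in terms of partial derivatives lying in $\widetilde{(\cdot)}$, and $\widetilde{\mathfrak p}\subset \widetilde W\subset \widetilde{\mathfrak l(X)}$ by construction.
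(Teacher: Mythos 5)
Your proposal is correct and is essentially the paper's own argument: the paper proves the theorem by exactly this sandwich, combining the inclusions \eqref{L(X)I2exact} and \eqref{I2above} (which pinch the subspace corresponding to $\mathcal I_2\bigl(\mathcal E(\delta_{\mod})\bigr)$ between $\mathfrak p$ and $\mathfrak l(X)$) with the coincidence of the outer prolongations from Theorem \ref{genpr_i0} and Corollary \ref{corl(X)}, and then applying the last statement of Lemma \ref{secantlem}. Your write-up simply makes explicit the monotonicity of standard prolongation that the paper leaves implicit.
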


\begin{remark}
\label{234}
Note that if $r\leq 1$, i.e. in the case of Jacobi symbols of distributions of rank $2, 3$, or $4$ of maximal class condition 6 is void (if one considers distinct indecomposable symbols in the decomposition of $\delta_{\mod}$) and therefore can be omitted in Theorem \ref{thmmainsec8.5}. Consider each of these ranks separately:
\begin{enumerate}
\item
In the case of rank $2$ distributions we already have Theorem \ref{rank2thm}, so Theorem \ref{thmmainsec8.5} does not add anything.

\item
In the case of rank $3$ distributions by the first sentence of this remark and Corollary \ref{finitetyperank3} all assumptions in Theorem \ref{thmmainsec8.5} can be replaced by the assumption that the symplectically flat distribution is of maximal class, has $6$-dimensional square, and the corresponding skew Young diagram is not rectangular.

\item
In the case of rank $4$ all assumptions in Theorem \ref{thmmainsec8.5} can be replaced by the assumption that the symplectically flat distribution is of maximal class, the skew Young diagram of $\mathbb Z$-graded component of its symbol is nonrectangular and has at least $3$ columns with $2$ boxes and the skew diagram of $\cfrac{1}{2}\mathbb Z_{{\rm odd}}$-graded component is a row with at least $6$ boxes. Note that the minimal dimension of the ambient space $M$, when such situation happens is $11$ and it corresponds to the Jacobi symbol $\mathbb R \left(\delta_{3,2}\oplus\tau_{\frac{5}{2}}\right)$.
\end{enumerate}
\end{remark}

 Now, following the same line, we show that in the case of Jacobi symbols of rank $3$ distributions of maximal class with $6$-dimensional square and non-rectangular skew Young diagram  one can obtain more simple
 description of spaces $\mathfrak u^k\bigl(\eta, \mathfrak u^F(\delta_{\rm mod})\bigr)$ in terms of a space of certain polynomials vanishing on some projective variety in a projective space of smaller dimension.

 In the considered case $r=1$, $X=E_1\oplus F_1$, and $\delta=\delta_{\mod}=\mathbb R\delta_{s,l}$ for some positive integer $s$ and  $s< l<2s$. We also fix the skew Young tableaux as in \ref{youngodd1}
 Recall that $F_1$ is an invariant subspace of  $\delta$. Let $\mathfrak C$ be the (regular) rational normal curve in $\mathcal P F_1$, which is the closure of the orbit of the line  represented by the vector  $f_{l-s}$ under the one-parametric group $e^{\delta}$. The $j$th tangential developable $\mathcal T^j\mathfrak C$ is by definition  (the algebraic closure of) the union of the $j$ osculating subspaces $\mathfrak C^{(i)}$ of the curve $\mathfrak C$.
 Obviously $\mathcal T^0\mathfrak C=\mathfrak C$.
 Note that in our case $i_0=1$. Also by constructions we have the following relation

 \begin{equation}
 \label{tangentrel}
 \mathcal T^{l-s-1}\mathfrak C=
{\mathcal F}_{\delta}^{1}
 \cap F_1,
 \end{equation}
where, as before, ${\mathcal F}_{\delta}$ denotes the flat curve with the flag symbol $\delta$ (see Definition \ref{flatcurve}).
\begin{lemma}
\label{fulton}
In the case $\delta_{\mod}=\mathbb R\delta_{s,l}$ with $s< l<2s$ the subspace $\mathfrak p$ can be identified with the space $\mathcal I_2(\mathcal T^{l-s-1}\mathfrak C)$ of all quadratic forms on $F_1$ that vanishes on the $(l-s-1)$st tangential developable $\mathcal T^{l-s-1}\mathfrak C$ of the rational normal curve $\mathfrak C$.
\end{lemma}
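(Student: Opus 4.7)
\emph{Proof proposal.} The plan is to identify $\mathfrak p$ with the non-negative degree part $\mathfrak s(E_{s,l})$ of $S^2(E_{s,l})$ via the existing decomposition \eqref{ngrseps}, and then match this with $\mathcal I_2(\mathcal T^{l-s-1}\mathfrak C)$ using the $\sll_2$-decomposition of $S^2F_1^*$ together with a grading argument on the restriction to the osculating subspace $X_1\cap F_1=\mathfrak C^{(l-s-1)}(0)$.

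First I would identify $\mathfrak p$ with $\mathfrak s(E_{s,l})$. In our case $r=1$ and $X=E_{s,l}\oplus F_{s,l}$; since $s<l<2s$ the diagram is nonrectangular so $\mathfrak s(F_{s,l})=0$ by the second case of \eqref{ngrseps2}, and $\mathfrak n(E_{s,l})=\mathbb R\Id_{E_{s,l}}$ by \eqref{Y1=Y2}. Combined with \eqref{lrse1} this gives $\mathfrak l(X)=\mathbb R\Id_{E_{s,l}}\oplus\mathfrak s(E_{s,l})$; the diagonal summand is killed by $\pi_2$, so formula \eqref{pi12} yields $\mathfrak p=\mathfrak s(E_{s,l})\subset S^2(E_{s,l})$. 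The symplectic form identifies $E_{s,l}\cong F_{s,l}^*$ and hence $S^2(E_{s,l})$ with the space of quadratic forms on $F_1=F_{s,l}$, realizing $\mathfrak p$ inside $S^2F_1^*$.

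The containment $\mathfrak p\subset \mathcal I_2(\mathcal T^{l-s-1}\mathfrak C)$ then follows from Lemma \ref{Fi_0}: any $A\in\mathfrak s(E_{s,l})\subset\mathfrak{sp}(X)$ annihilates $E_1$ and sends $F_1$ into $E_1$, so $\sigma(v,Av)$ depends only on the $F_1$-component of $v$; since $e^x$ preserves the splitting $X=E_1\oplus F_1$, the $F_1$-projection of the curve $x\mapsto e^x\mathfrak L_1$ is $x\mapsto e^x(X_1\cap F_1)$, whose closure in $\mathbb PF_1$ equals $\mathcal T^{l-s-1}\mathfrak C$ by \eqref{tangentrel}, and the Lemma gives vanishing of the associated quadratic form on $\mathcal T^{l-s-1}\mathfrak C$.

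For the reverse inclusion I would use that $\mathcal T^{l-s-1}\mathfrak C$ is $\sll_2$-invariant, which makes $\mathcal I_2(\mathcal T^{l-s-1}\mathfrak C)$ a graded $\sll_2$-submodule of $S^2F_1^*$, and hence a direct sum of some of the irreducibles $\Pi_{2l-4i}$ from \eqref{Vrseps}. A summand $\Pi_{2l-4i}$ lies in the ideal iff every element restricts to zero on $X_1\cap F_1$ (since $\mathcal T^{l-s-1}\mathfrak C$ is the $\sll_2$-orbit of this osculating subspace). The key observation is that the restriction map $S^2F_1^*\to S^2(X_1\cap F_1)^*$ is graded, and its target is concentrated in grading $\le -2$, because $X_1\cap F_1=\Span\{f_j:j\ge 1\}$ has basis vectors of grading $\ge 1$ whose duals have grading $\le -1$. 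Since $\Pi_{2l-4i}$ has grading range $[2(s-l+i),\,2(s-i)]$, for $i\ge l-s$ the entire module sits in grading $\ge 0>-2$ and restricts to zero. For $i<l-s$ the lowest weight vector sits at grading $2(s-l+i)\le -2$, and a direct recursion on the action of the lowering operator $\delta$ (acting on $S^2F_1^*$ as a derivation with $\delta f_j^*=-f_{j+1}^*$) shows that this lowest weight vector has nonzero coefficient on the square monomial $(f_{l-s-i}^*)^2$, so its restriction to $X_1\cap F_1$ does not vanish. Combining with \eqref{ngrseps} gives
\[
\mathcal I_2(\mathcal T^{l-s-1}\mathfrak C)=\bigoplus_{i=l-s}^{[l/2]}\Pi_{2l-4i}=\mathfrak s(E_{s,l})=\mathfrak p,
\]
completing the proof. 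The main technical obstacle is the final recursion check: the initial equation $-2c_0-c_1=0$ coming from annihilation by $\delta$ in the lowest grading already forces the leading coefficient $c_0$ to be nonzero in any nontrivial solution, and this propagates through the recursion to guarantee nonvanishing after restriction to $X_1\cap F_1$.
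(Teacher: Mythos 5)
Your proposal is correct, and its overall architecture coincides with the paper's: identify $\mathfrak p$ with $\mathfrak s(E_{s,l})$ via part b) of the proof of Lemma \ref{tensoroff} and \eqref{pi12}, get the inclusion $\widetilde{\pg}\subset\mathcal I_2(\mathcal T^{l-s-1}\mathfrak C)$ by the argument of Lemma \ref{Fi_0} together with \eqref{tangentrel}, and settle the reverse inclusion by the $\sll_2$-decomposition \eqref{Vrseps}. The only real divergence is in the last step. The paper identifies $\mathcal I_2(\mathcal T^{l-s-1}\mathfrak C)$ with the maximal $\sll_2$-submodule of $S^2(E_{s,l})$ concentrated in degrees $\ge -1$ (by running the argument that leads to \eqref{I2cond} for the subspace $X_1\cap F_1$ in place of $\mathfrak L$), and then observes that, because the degree range of $\Pi_{2l-4i}$ is $[2(s-l+i),\,2(s-i)]$ with even endpoints, the part in degrees $\ge -1$ already equals the part in degrees $\ge 0$, i.e. $\mathfrak s(E_{s,l})$. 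You instead test each irreducible summand directly against the osculating subspace $X_1\cap F_1$: a graded-restriction argument for the summands with $i\ge l-s$, and an explicit lowest-weight-vector computation (the recursion $-2c_0-c_1=0$, $c_{m+1}=-c_m$ forcing $c_0\neq 0$) showing the summands with $i<l-s$ do not vanish on the variety. Both routes are sound; the paper's is slicker in that it needs no nonvanishing computation at all, while yours is more self-contained, since it does not rely on the characterization of the ideal via the analogue of \eqref{I2cond} and exhibits an explicit point of $\mathcal T^{l-s-1}\mathfrak C$ witnessing the failure of each excluded summand.
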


\begin{proof} Following part b) of the proof of Lemma \ref{tensoroff}, we consider $S^2(E_1)$
as a subspace of   $\Hom(F_1, E_1)$ after identification of $F_1^*$ with $E_1$ via the symplectic form $\sigma$ and   let $\mathfrak{s}(E_1)
$
be  the maximal $\sll_2$-submodule of
$S^2(E_1
)$
concentrated in the non-negative degree part of $S^2(E_1
)$. From the same part b) of the proof of Lemma \ref{tensoroff} the subspace $\pg$ coincides with the subspace $\mathfrak s(E_1)$. Then, using the same arguments as in the proof of Lemma \ref{L(X)I2} and the relation \eqref{tangentrel}, we get that $\widetilde{\pg}\subset \mathcal I_2(\mathcal T^{l-s-1}\mathfrak C)$.

On the other hand, by the same arguments as in the proof of inclusion \eqref{I2cond} we get that the space $I_2(\mathcal T^{l-s-1}\mathfrak C)$ can be identified with the maximal $\sll_2$-submodule of
$S^2(E_1
)$
concentrated in the degrees greater or equal to $-1$.
 However, from decomposition \eqref{Vrseps} of $S^2(E_1
)$ into irreducible $\sll_2$ modules it follows that the part of this decomposition concentrated in nonnegative degree is equal to the part of this decomposition concentrated in degree greater or equal than $-1$. This completes the proof of the lemma.
\end{proof}

As a direct consequence of Theorem \ref{genpr_i0}, the last sentence of Lemma \ref{secantlem} and Lemma \ref{fulton}, we get the following

\begin{theorem}
\label{thmmainsec81}
Assume that the flag symbol $\delta_{\mod}=\mathbb R\delta_{s,l}$ with $s< l<2s$.
Then the $k$th  Tanaka algebraic prolongation $\mathfrak u^k\bigl(\eta, \mathfrak u^F(\delta_{\rm mod})\bigr)$ of the pair $\bigl(\eta, \mathfrak u^F(\delta_{\rm mod})\bigr)$  can be identified with the space $\mathcal I_{k+2}\bigl(S^k(\mathcal T^{l-s-1}\mathfrak C)\bigr)$ of all polynomials of degree $k+2$  on $F_{s,l}$ that vanishes on the $k$th secant variety of the $l-s-1$st tangential developable $T^{l-s-1}\mathfrak C$ of the regular rational normal curve $\mathfrak C$ on $\mathbb P F_{s,l}$.
\end{theorem}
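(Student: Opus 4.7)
The plan is to derive Theorem \ref{thmmainsec81} by chaining three results already established in the excerpt: Theorem \ref{genpr_i0}, Lemma \ref{fulton}, and the final (equality) clause of Lemma \ref{secantlem}. First I would verify that $\delta_{\rm mod} = \mathbb{R}\delta_{s,l}$ with $s<l<2s$ satisfies all hypotheses of Theorem \ref{genpr_i0}. The skew Young diagram of $\delta_{s,l}$ has two rows of length $l+1$ with a central overlap of $2(l-s)+1 \geq 3$ columns of height two; hence conditions (1) and (2) of Theorem \ref{finitetype1} hold. The inequality $s<l<2s$ forces $s\geq 2$ and $l\geq 3$, so each row has at least $4$ boxes, giving condition (3); condition (4) is vacuous since the index set is $\mathbb{Z}$ (rank $D=3$ is odd). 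Thus Theorem \ref{genpr_i0} applies and yields
\begin{equation*}
\mathfrak{u}^k\bigl(\eta, \mathfrak{u}^F(\delta_{\rm mod})\bigr) \;\cong\; \mathfrak{p}^{(k)}
\end{equation*}
for every positive integer $k$, reducing the problem to the computation of the standard $k$th prolongation of $\mathfrak{p}$.

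Second, I would invoke Lemma \ref{fulton}, which in the present case identifies $\mathfrak{p}$ with the space $\mathcal{I}_2(\mathcal{T}^{l-s-1}\mathfrak{C})$ of quadratic forms on $F_{s,l}$ vanishing on the $(l-s-1)$st tangential developable of the rational normal curve $\mathfrak{C} \subset \mathbb{P}F_{s,l}$. The identification uses the splitting \eqref{lrse1} combined with the fact that, under the assumption $s < l < 2s$, the only nontrivial summand of $\mathfrak{l}(X_{s,l})$ relevant to $\mathfrak{p}$ is $\mathfrak{s}(E_{s,l})$ (indeed $\mathfrak{s}(F_{s,l}) = 0$ by \eqref{ngrseps2} since $l < 2s$, and the $\mathfrak{n}(E_{s,l})$ summand corresponds to $\mathfrak{z}$, not $\mathfrak{p}$); and $\mathfrak{s}(E_{s,l}) \subset S^2(E_{s,l})$ viewed via $E_{s,l} \cong F_{s,l}^*$ is exactly the space of quadratic forms on $F_{s,l}$ vanishing on $\mathcal{T}^{l-s-1}\mathfrak{C}$.

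Third, I would apply the ``Moreover'' clause of Lemma \ref{secantlem} to $W = \mathfrak{p}$ and $\mathcal{E} = \mathcal{T}^{l-s-1}\mathfrak{C} \subset \mathbb{P}F_{s,l}$ to conclude
\begin{equation*}
\mathfrak{p}^{(k)} \;\cong\; \mathcal{I}_{k+2}\bigl(S^k(\mathcal{T}^{l-s-1}\mathfrak{C})\bigr),
\end{equation*}
which, combined with the reduction above, gives the theorem. The one subtlety I would want to address carefully is that Lemma \ref{secantlem} is formulated for a subspace $W \subset \mathfrak{sp}(X)$ on the full symplectic space, whereas here $\mathfrak{p}$ is naturally realized in $\mathrm{Hom}(F_{s,l}, E_{s,l}) \cong S^2(F_{s,l}^*)$, i.e.\ as quadratic forms on the Lagrangian $F_{s,l}$, not on all of $X$. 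This is the main (and essentially only) bookkeeping point: the vanishing conditions \eqref{cond5key} and the analogous vanishing of $\mathfrak{n}(E_{s,l})$-contributions in $\mathfrak{p}$ ensure that the associated bilinear form on $X\times X$ restricts nontrivially only on $F_{s,l}\times F_{s,l}$. One can therefore either apply the non-symplectic Lemma \ref{partial} directly to $\mathfrak{p} \subset \mathrm{Hom}(F_{s,l}, E_{s,l})$, or equivalently extend $\mathcal{T}^{l-s-1}\mathfrak{C}$ into $\mathbb{P}X$ and check that the statement of Lemma \ref{secantlem} goes through verbatim. Once this identification is settled, the conclusion is immediate.
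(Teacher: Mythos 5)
Your proposal is correct and follows essentially the same route as the paper, which derives Theorem \ref{thmmainsec81} precisely as "a direct consequence of Theorem \ref{genpr_i0}, the last sentence of Lemma \ref{secantlem} and Lemma \ref{fulton}." The bookkeeping subtlety you flag (that $\mathfrak p$ lives in $\Hom(F_{s,l},E_{s,l})$ rather than as quadratic forms on all of $X$) is the same point the paper handles in the remark following the theorem, by extending the quadratic forms from $F_1$ to $X$ with kernel $E_1$.
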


\begin{remark} In order to prove Theorem \ref{thmmainsec81} we can also proceed as follows: prove that $\mathfrak p\subset\mathcal I_2(\mathcal T^{l-s-1}\mathfrak C)$, as in the first paragraph of the proof of Lemma \ref{fulton}. Then observe that the space  $\mathcal I_2(\mathcal T^{l-s-1}\mathfrak C)$ can be naturally embedded into the space  $\mathcal I_2\bigl(\mathcal E(\delta_{\mod})\bigr)$ by extension of a quadratic form from $F_1$ to $X$ such that $E_1$ is the kernel of this extension. Then from \eqref{I2above} we have that $\mathcal I_2(\mathcal T^{l-s-1}\mathfrak C)\subset \mathfrak l(X)$  and Theorem \ref{thmmainsec81} follows from Theorem \ref{genpr_i0}, Corollary \ref{corl(X)}, and the last statement of Lemma \ref{secantlem}.
\end{remark}

Let us consider the case $l=s+1$ in more detail. First note that  $\dim \mathbb P F_{s,s+1}=s+1$.
Let $(x_1, \ldots x_{s+2})$ are coordinates in $F_{s, s+1}$ with respect to the basis $(f_1, f_0, \ldots ,f_{-s})$.
We have a well-known
description of $k$-secant varieties of the rational normal
curve $\mathfrak C$:
\begin{lemma}[\cite{Harris}]\label{harris}
The ideal $\mathcal I(\S^k\mathfrak C)$ is generated (as an ideal) by $I_{k+2}(\S^k\mathfrak C)$.
The space $I_{k+2}(\S^k\mathfrak C)$, in its turn, is generated (as a linear
space) by all rank $k+2$ minors of the matrix:
\[
\begin{pmatrix}
x_1 & x_2 & x_3 & \dots & x_{s+2-\alpha} \\
x_2 & x_3 & x_4 & \dots & x_{s+3-\alpha} \\
\vdots \\
x_{\alpha+1} & x_{\alpha+2} & x_{\alpha+3} & \dots & x_{s+2}
\end{pmatrix},
\]
where $\alpha$ is an arbitrary integer between $k+2$ and $s-k-1$.
\end{lemma}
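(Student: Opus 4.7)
The plan is to recognize Lemma \ref{harris} as a classical result on catalecticant ideals for rational normal curves and organize the proof around the two standard steps: (a) exhibiting the $(k+2)$-minors of the catalecticant as elements of the ideal, and (b) showing they generate the whole ideal in the indicated degrees. I would identify the coordinates $x_1,\ldots,x_{s+2}$ with coefficients of a binary form of degree $s+1$, and view the displayed matrix as a \emph{catalecticant} (Hankel) matrix $M_\alpha$ whose $(i,j)$-entry depends only on $i+j$.

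First, I would parametrize a general point of $\S^k\mathfrak{C}$ as $\sum_{r=0}^{k}\lambda_r v(t_r)$, where $v(t)=(1,t,t^2,\ldots,t^{s+1})$ is the standard parametrization of $\mathfrak C$. Substituting $x_j=\sum_{r=0}^k \lambda_r t_r^{j-1}$ into $M_\alpha$ produces a factorization $M_\alpha = V_1\,\Diag(\lambda_0,\ldots,\lambda_k)\,V_2$ with $V_1,V_2$ two Vandermonde-type rectangular matrices in the $t_r$. This makes manifest that $\rank M_\alpha \le k+1$ on $\S^k\mathfrak C$, so every $(k+2)\times(k+2)$ minor vanishes identically there. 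This gives the inclusion of the minors into $\mathcal I_{k+2}(\S^k\mathfrak C)$ and, by multiplication, into $\mathcal I(\S^k\mathfrak C)$.

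For the converse — that these minors generate the whole ideal, and that the $(k+2)$-graded piece they span already equals $\mathcal I_{k+2}(\S^k\mathfrak C)$ — I would appeal to the classical theorem of Gruson--Peskine (or the Eisenbud treatment via the Eagon--Northcott complex) on $1$-generic catalecticant matrices: for a $1$-generic matrix of linear forms of size $(\alpha+1)\times(s+2-\alpha)$ with $k+2\le \alpha+1$ and $k+2\le s+2-\alpha$, the variety cut out by its $(k+2)$-minors is Cohen--Macaulay of the correct codimension, its ideal is generated in degree $k+2$, and this variety is \emph{exactly} $\S^k\mathfrak C$. I would verify that the numerical hypothesis $k+2\le \alpha\le s-k-1$ in the lemma (which should be $k+1\le\alpha\le s-k$, up to the stated indexing convention) is equivalent to $1$-genericity together with the two minor-size bounds. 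Then Gruson--Peskine yields simultaneously that $\mathcal I(\S^k\mathfrak C)$ is generated by those minors and that the span of the minors is independent of the admissible choice of $\alpha$ and equals $\mathcal I_{k+2}(\S^k\mathfrak C)$.

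The main obstacle is the second step: the vanishing of the minors on $\S^k\mathfrak C$ is a clean linear-algebra computation, but the statement that this determinantal ideal actually equals the full ideal of $\S^k\mathfrak C$ (and is generated in a single degree) is non-trivial and is the content of the Gruson--Peskine / Eisenbud theory of $1$-generic matrices. Rather than reproducing that theory, I would cite it directly (as in \cite{Harris}) and limit our contribution to the first, elementary direction, remarking that independence of $\alpha$ in the admissible range follows from the corresponding $\SL_2$-equivariance of the catalecticant construction under reparametrizations of $\mathfrak C$.
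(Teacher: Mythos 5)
The paper offers no proof of this lemma: it is quoted directly from Harris's book, so there is no internal argument to compare yours against. Your outline is nevertheless the standard and correct route. The factorization $M_\alpha = V_1\,\Diag(\lambda_0,\ldots,\lambda_k)\,V_2^{T}$ at a point $\sum_{r}\lambda_r v(t_r)$ of a genuine secant $k$-plane (with $V_1,V_2$ Vandermonde blocks in the $t_r$) does show $\rank M_\alpha\le k+1$, and vanishing of the $(k+2)$-minors then extends to the Zariski closure $\S^k\mathfrak C$; that is exactly the elementary half. For the converse you are right that one must invoke the determinantal-ideal machinery ($1$-generic matrices, Eagon--Northcott, Gruson--Peskine): Harris's own text establishes the set-theoretic equality and the degree-$(k+2)$ generation by reference to that theory, so citing it is what the paper implicitly does as well, and your division of labor matches the actual logical structure of the result. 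Your remark on the indexing is also apt: for a $(k+2)\times(k+2)$ minor of the $(\alpha+1)\times(s+2-\alpha)$ matrix to exist one needs $k+1\le\alpha\le s-k$, which is the range appearing in Harris; the range $k+2\le\alpha\le s-k-1$ printed in the lemma is strictly smaller and can even be empty (e.g.\ when $s=2k+2$), so it should be read as a misprint rather than as a constraint your argument needs to reproduce.
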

In particular, we see that in the considered case  the space $\pg^{(k)}$,
$k\ge0$ coincides with the space $I_{k+2}(\S^k \mathfrak C)$ and is described
explicitly by Lemma~\ref{harris}.

Finally, in order to complete the case of rank $3$ distributions of maximal class with $6$-dimensional square consider the case of Jacobi symbol with rectangular Young diagram:

\begin{theorem}
\label{rank3thm}
If a $(3,n)$- distribution of maximal class with $6$-dimensional square has Jacobi symbol
with rectangular diagram , then the following statements hold:

\begin{enumerate}
\item
The dimension $n$ of the ambient manifold $M$ is even and the (modified) Jacobi symbol is isomorphic to $\mathbb R\delta_{s, 2s}$ with $s=\cfrac{n-4}{2}$.

\item The algebra $\mathfrak u^F(\delta_{mod})$ is isomorphic to the direct product $\sll_2\times \gl_2$ acting on $X\cong E_1\otimes \mathbb R^2$ componentwise such that $\sll_2$  acts irreducibly on $E_1$ and the action of $\gl_2$ on $\mathbb R^2$ is standard. 

\item \cite{yamag} If $n=6$, then $\mathfrak u(\eta, \mathfrak u^F(\mathbb R\delta_{1,2})$ is isomorphic to the algebra $\mathfrak{so}(4,3)$ (as also expected from \cite{Bryant, BryantPhd});

\item  If $n$ is even and greater than $6$, then the first prolongation $\mathfrak u^1\bigl(\eta, \mathfrak u^F(\mathbb R\delta_{\frac{n-4}{2}, n-4})\bigr)$ of the pair $\bigl(\eta, \mathfrak u^F(\mathbb R\delta_{\frac{n-4}{2}, n-4})\bigr)$ vanishes, i.e. $\mathfrak u(\eta, \mathfrak u^F(\mathbb R\delta_{\frac{n-4}{2}, n-4}))$ is isomorphic to the semidirect sum of $\sll_2\times \gl_2$ and  $X\cong E_1\otimes \mathbb R^2$ (with the action of $\sll_2\times \gl_2$ on $E_1\otimes \mathbb R^2$ described above).
\end{enumerate}
\end{theorem}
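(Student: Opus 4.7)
I treat the four items in order. Items~(1) and~(2) are essentially bookkeeping from the framework of Sections~\ref{flagsec} and~\ref{pstandartsec}, while items~(3) and~(4) require a direct computation of the first prolongation.

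\emph{Items~(1) and~(2).} By the classification in Section~\ref{flagsec2}, the Jacobi symbol of a rank $3$ distribution of maximal class is indecomposable of the form $\mathbb R\delta_{s,l}$ with $s\le l\le 2s$, whose skew Young diagram has $2l+2$ boxes; this equals $\rank\Delta=2n-6$. Rectangularity means $l=2s$, so $n=2s+4$ is even and $s=(n-4)/2$, while the requirement $\dim D^2=6$ together with Remark~\ref{6dim} forces $s\ge 1$, yielding~(1). For~(2) I would identify $X=X_{s,2s}$ with $E\otimes\mathbb R^2$, where $E$ is the irreducible $(2s+1)$-dimensional $\sll_2$-module equipped with its invariant symmetric bilinear form $B$, $\mathbb R^2$ carries the standard symplectic form $\omega$, and the symplectic form on $X$ is $B\otimes\omega$; the grading on $X$ comes from the weight grading on $E$. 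Specializing~\eqref{lrse1} to $l=2s$ and invoking~\eqref{ngrseps}, \eqref{ngrseps2}, \eqref{Y1=Y2} gives $\dim\mathfrak l(X)=3$, so by~\eqref{semidir} and~\eqref{speq} we have $\mathfrak u^F(\delta_{\mod})=\mathfrak l(X)\oplus\varphi_{\delta_{\mod}}(\sll_2)\oplus\mathbb R\,\Id_X$, of dimension~$7$. The matching with $\sll_2\times\gl_2$ is then transparent: $\varphi_{\delta_{\mod}}(\sll_2)$ yields the first factor acting as $A\otimes 1$ on $E$, while operators of the form $\Id_E\otimes C$ with $C\in\mathfrak{gl}(\mathbb R^2)$ fill out the second factor $\gl_2=\mathfrak{sp}(\mathbb R^2)\oplus\mathbb R\,\Id_{\mathbb R^2}$; the two commute and their sum coincides with the preceding $7$-dimensional subalgebra.

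\emph{Items~(3) and~(4).} By the analysis preceding~\eqref{prolong1new}, an element of $\mathfrak u^1\bigl(\eta,\mathfrak u^F(\delta_{\mod})\bigr)$ is a pair $(\phi,v)$ with $\phi\in\Hom\bigl(X,\mathfrak u^F(\delta_{\mod})\bigr)$ and $v\in X$ satisfying
\[
\phi(v_1)v_2-\phi(v_2)v_1=\sigma(v_1,v_2)\,v,\qquad\forall\,v_1,v_2\in X.
\]
Writing $X\cong\Pi_{2s}\boxtimes\Pi_1$ and $\mathfrak u^F(\delta_{\mod})\cong(\Pi_2\boxtimes\Pi_0)\oplus(\Pi_0\boxtimes\Pi_2)\oplus(\Pi_0\boxtimes\Pi_0)$ as $\sll_2\times\gl_2$-modules, the Clebsch--Gordan rule decomposes $\Hom\bigl(X,\mathfrak u^F(\delta_{\mod})\bigr)$ into an explicit list of isotypic components, and the prolongation equation restricts in each summand to a linear condition whose solvability can be read off. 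For $n=6$ (so $s=1$) exactly one summand isomorphic to $\Pi_2\boxtimes\Pi_1$ survives, giving a $6$-dimensional $\mathfrak g^1$; matching the resulting graded Lie algebra with the contact grading of $\mathfrak{so}(4,3)$, whose $\mathfrak g^0$ is $\sll_2\times\gl_2$ and whose total dimension is $1+6+7+6+1=21$, identifies $\mathfrak u\bigl(\eta,\mathfrak u^F(\mathbb R\delta_{1,2})\bigr)$ with $\mathfrak{so}(4,3)$, yielding~(3). For $n>6$ (so $s\ge 2$) I would show that every candidate isotypic component gives only the zero solution, so that $\mathfrak u^1=0$ and, by Theorem~\ref{maintheor}, the full prolongation is the semidirect sum $\bigl(\sll_2\times\gl_2\bigr)\ltimes\eta$ of dimension~$14$, establishing~(4).

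\emph{Main obstacle.} The delicate step is the vanishing $\mathfrak u^1=0$ for $s\ge 2$. Here one must track how the prolongation equation interacts with the weights of both $\sll_2$-factors simultaneously; after identifying the possible images of $\phi$ in each isotypic component and imposing the skew-symmetry forced by the right-hand side, one has to show that no compatible solution survives when $s\ge 2$, in contrast to the exceptional behaviour at $s=1$. A clean route may be to apply Theorem~\ref{thmmainsec8} to the subspaces $\mathfrak L_i$ of Section~\ref{secantsec}, embedding $\mathfrak u^1$ into a space of quadratic polynomials vanishing on a suitable projective curve, and then eliminating this space by weight considerations whenever $s\ge 2$.
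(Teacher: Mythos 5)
Your items (1) and (2) are correct and follow the paper's own route: the count $2l+2=2n-6$ with $l=2s$ gives $n=2s+4$, and specializing \eqref{lrse1}, \eqref{ngrseps}, \eqref{ngrseps2} and \eqref{Y1=Y2} to the rectangular case gives the three-dimensional $\mathfrak l(X)$ and hence, via \eqref{semidir} and \eqref{speq}, the seven-dimensional algebra $\sll_2\times\gl_2$ acting on $E_1\otimes\mathbb R^2$.

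The genuine gap is in item (4): the vanishing of $\mathfrak u^1\bigl(\eta,\mathfrak u^F(\delta_{\rm mod})\bigr)$ for $s\ge 2$ is exactly the step you flag as the ``main obstacle,'' and neither of your two proposed routes is carried out. The isotypic decomposition of $\Hom\bigl(X,\mathfrak u^F(\delta_{\rm mod})\bigr)$ under $\sll_2\times\gl_2$ is a workable but fairly heavy computation (the source is $(4s+2)\cdot 7$-dimensional and the prolongation equation couples the components through the auxiliary vector $v$), and you only assert its outcome. Your fallback via Theorem \ref{thmmainsec8} is weaker than you suggest: that theorem only embeds $\mathfrak u^1$ into $\mathcal I_{3}\bigl(S^1\mathcal E_i(\delta_{\rm mod})\bigr)$, and the sharp identification of Theorem \ref{thmmainsec8.5} is unavailable precisely because its assumption (5) excludes the rectangular symbols $\mathbb R\delta_{s,2s}$; in the rectangular case $\mathcal A=\emptyset$, $\mathfrak L_1=X_{j_0}$ is not Lagrangian, and $\mathcal I_2\bigl(\mathcal E_1(\delta_{\rm mod})\bigr)$ is strictly larger than $\pg$, so one would still have to rule out a nonzero cubic vanishing on the first secant variety. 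The paper avoids all of this by first checking that the hypotheses of Theorem \ref{genpr_i0} hold (each row has $2s+1\ge 5$ boxes), which reduces $\mathfrak u^1\bigl(\eta,\mathfrak u^F(\delta_{\rm mod})\bigr)$ to the first \emph{standard} prolongation of the explicit two-dimensional space $\pg=\Span\{p_1,p_2\}$, where $p_1(f_i)=e_i$, $p_1|_{E_1}=0$ and $p_2(e_i)=f_i$, $p_2|_{F_1}=0$ (the two off-diagonal nilpotents of the $\gl_2$-factor acting on $E_1\otimes\mathbb R^2$); the identity $\pg^{(1)}=0$ is then a short direct check. You should either adopt this reduction or actually execute the Clebsch--Gordan computation. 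A parallel, smaller issue affects item (3): you propose to compute $\mathfrak g^1$ and $\mathfrak g^2$ and recognize $\mathfrak{so}(4,3)$, but do not do so; the paper simply cites \cite{yamag}, which is the cleaner course unless you are prepared to exhibit the isomorphism.
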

\begin{proof}
For item (1) as was already discussed in section 4, paragraph before Remark \ref{6dim} a $(3,n)$-distribution of maximal class has Jacobi symbol isomorphic to $\mathbb R\delta_{s, n-4}$ with nonnegative integer $s$ satisfying  $\frac{n-4}{2}\leq s\leq n-4$. This symbol has rectangular diagram if $s=\frac{n-4}{2}$, which implies that $n=2s+4$, i.e. $n$ is even.

Item (2) follows from part b) of the proof of Lemma \ref{tensoroff} applied for the case of rectangular Young diagram: take equations \eqref{Y1=Y2} and \eqref{ngrseps2}, first line there) and plug then into formulas \eqref{semidir} and \eqref{speq}.

For item (3)  we refer to \cite{yamag}, where all cases of semisimple universal algebraic prolongations are considered, including our case here.

 Let us prove item (4). The flag symbol in the considered situation satisfies all assumptions of Theorem \ref{genpr_i0}: we are in the case of $\mathbb Z$ grading and  each row of the corresponding Young diagram consist of at least $5$ boxes. Therefore to prove item (4) it is enough to show that
 \begin{equation}
 \label{p10}
 \pg^{(1)}=0.
 \end{equation}
Note that from part b) of the proof of Lemma \ref{tensoroff} it follows that $\pg$ is $2$-dimensional and, if the Young tableaux of our symbol is as in \eqref{youngodd1} then $\pg$ is spanned by endomorphisms $p_1$ and $p_2$ of $X$ defined as follows:
$$p_1(f_i)=e_i, p_2(e_i)=f_i, \quad \forall -s\leq i\leq s;  p_1|_{E_1}=0, p_2|_{F_1}=0.$$
 In the terminology used in item (2) of the Theorem $p_1$ and $p_2$ are the  endomorphisms induced by the action of $\left(0, \begin{pmatrix} 0& 0\\1&0\end{pmatrix}\right)$ and $\left(0, \begin{pmatrix} 0& 1\\0&0\end{pmatrix}\right)$ (considered as element of  $\sll_2\times \gl_2$) on $X\cong E_1\otimes \mathbb R^2$. Keeping this in mind, the relation \eqref{p10} can be checked directly from the definition of the first standard prolongation and this check is left to the reader.
\end{proof}

\end{document}